\documentclass[letterpaper, twoside, 11pt]{amsproc}

\usepackage{amsmath,amsthm,amsfonts,amssymb,amscd}
\usepackage{tikz}
\usepackage{tikz-cd}
\usepackage{euler}
\usepackage{palatino}
\usepackage[symbol*]{footmisc}
\usepackage{standalone}
\usepackage[hidelinks]{hyperref}
\usetikzlibrary{arrows,chains,matrix,positioning,scopes}
\usepackage[letterpaper, left=2.5cm,right=2.5cm,top=3cm,bottom=3cm]{geometry}

\newcommand{\CC}{{\mathbb C}}
\newcommand{\RR}{{\mathbb R}}

\newcommand{\ZZ}{{\mathbb Z}}
\newcommand{\QQ}{{\mathbb Q}}
\newcommand{\hhom}{{\text{hom}}}
\newcommand{\Spec}{{\text{Spec}}}
\newcommand{\Hom}{{\text{Hom}}}
\newcommand{\CH}{{\text{CH}}}
\newcommand{\Ext}{{\text{Ext}}}
\newcommand{\Jac}{{\text{Jac}}}
\newcommand{\hol}{{\text{hol}}}
\renewcommand{\Im}{{\text{Im}}}
\newcommand{\Per}{{\text{Per}}}

\newcommand{\PD}{{\text{PD}}}
\newcommand{\AJ}{{\text{AJ}}}
\newcommand{\rank}{{\text{rank}}}
\newcommand{\inhom}{{\underline{\text{Hom}}}}
\newcommand{\cupprod}{\mathbin{\smile}}
%\addtolength{\headwidth}{.25\textwidth}

%\renewcommand{\theequation}{\thechapter.\arabic{equation}}

\theoremstyle{definition}

\theoremstyle{definition}
\newtheorem*{rem}{Remark}

\newcounter{subsecnum}[section]

\newcounter{parnum}[section]

\newtheorem{prop}{Proposition}
\newtheorem{lemma}{Lemma}  
\newtheorem{thm}{Theorem} 
\newtheorem{cor}{Corollary}
%\numberwithin{prop}{subsection}
%\numberwithin{lemma}{subsection}
%\numberwithin{thm}{subsection}

\newcounter{subparnum}[parnum]

% Use a generous paragraph indent so numbers can be fit inside the
% indentation space.
\setlength{\parindent}{2em}
\tikzset{node distance=2cm, auto} 

\makeatletter
\tikzset{join/.code=\tikzset{after node path={%
\ifx\tikzchainprevious\pgfutil@empty\else(\tikzchainprevious)%
edge[every join]#1(\tikzchaincurrent)\fi}}}
\makeatother
\tikzset{>=stealth',every on chain/.append style={join},
         every join/.style={->}}
\tikzstyle{labeled}=[execute at begin node=$\scriptstyle,
   execute at end node=$]
	
\DefineFNsymbolsTM{myfnsymbols}{% def. from footmisc.sty "bringhurst" symbols
  \textasteriskcentered *
  \textdagger    \dagger
  \textdaggerdbl \ddagger
  \textsection   \mathsection
  \textbardbl    \|%
  \textparagraph \mathparagraph
}
%%%%%%%%%%%%%%%%%%%%%%%%%%%%%%%%%%%%%%%%%%%%%%%%%%%%%%%%%%%%%
\begin{document}
\title[Algebraic Cycles and $\pi_1$ of a Punctured Curve]{Algebraic Cycles, Fundamental Group of a Punctured Curve, and Applications in Arithmetic}
\author{Payman Eskandari}
\address{Department of Mathematics, University of Toronto, 40 St. George St., Room 6290, Toronto, Ontario, Canada, M5S 2E4}
\email{payman@math.utoronto.ca}
\begin{abstract}
The results of this paper can be divided into two parts, geometric and arithmetic. Let $X$ be a smooth projective curve over $\CC$, and $e,\infty\in X(\CC)$ be distinct points. Let $L_n$ be the mixed Hodge structure of functions on $\pi_1(X-\{\infty\},e)$ given by iterated integrals of length $\leq n$ (as defined by Hain). In the geometric part, inspired by a work of Darmon, Rotger, and Sols \cite{DRS}, we express the mixed Hodge extension $\mathbb{E}^\infty_{n,e}$ given by the weight filtration on $\frac{L_n}{L_{n-2}}$ in terms of certain null-homologous algebraic cycles on $X^{2n-1}$. These cycles are constructed using the diagonal embeddings of $X^{n-1}$ into $X^n$. As a corollary, we show that the extension $\mathbb{E}^\infty_{n,e}$ determines the point $\infty\in X-\{e\}$.\\
\indent The arithmetic part of the paper gives some number-theoretic applications of the geometric part. We assume that $X=X_0\otimes_K\CC$ and $e,\infty\in X_0(K)$, where $K$ is a subfield of $\CC$ and $X_0$ is a projective curve over $K$. Let $\Jac$ be the Jacobian of $X_0$. We use the extension $\mathbb{E}^\infty_{n,e}$ to associate to each $Z\in\CH_{n-1}(X_0^{2n-2})$ a point $P_Z\in\Jac(K)$, which can be described analytically in terms of iterated integrals. The proof of $K$-rationality of $P_Z$ uses that the algebraic cycles constructed in the geometric part of the paper are defined over $K$. Assuming a certain plausible hypothesis on the Hodge filtration on $L_n(X-\{\infty\},e)$ holds, we show that an algebraic cycle $Z$ for which $P_Z$ is torsion, gives rise to relations between periods of $L_2(X-\{\infty\},e)$. Interestingly, these relations are non-trivial even when one takes $Z$ to be the diagonal of $X_0$. In the elliptic curve case, we show unconditionally that a certain relation between periods of $L_2(X-\{\infty\},e)$ (which is induced by the diagonal of $X_0$) exists if and only if $e-\infty$ is torsion.\\ 
\indent The geometric result of the paper in $n=2$ case, and the fact that one can associate to $\mathbb{E}^\infty_{2,e}$ a family of points in $\Jac(K)$, are due to Darmon, Rotger, and Sols \cite{DRS}. Our contribution is in generalizing the picture to higher weights.
\end{abstract}
\maketitle
\vspace{-.4in}
\tableofcontents

\section{Introduction}

Let $U$ be a smooth (connected) variety over $\CC$ and $e\in U(\CC)$. Thanks to the works of Chen, Hain, Deligne, Morgan and others one has, for each $n$, a mixed Hodge structure $L_n(U,e)$ with integral lattice 
\[\left(\frac{\ZZ[\pi_1(U,e)]}{I^{n+1}}\right)^\vee,\] 
where $I\subset\ZZ[\pi_1(U,e)]$ is the augmentation ideal. The filtrations (Hodge and weight) are defined using the characterization of 
\begin{equation}\label{eq1intro}
\left(\frac{\CC[\pi_1(U,e)]}{I^{n+1}}\right)^\vee,
\end{equation}
where $I$ is again the augmentation ideal, as the space of closed (i.e. homotopy invariant) iterated integrals of length $\leq n$ on $U$. One has
\[
L_1(U,e)\simeq \ZZ(0)\oplus H^1(U),
\]
but the $L_n(U,e)$ are more complicated for $n>1$. In particular, they may not be pure even if $U$ is projective.\\

There are two aspects of the Hodge realization of the fundamental group that are of particular interest to us:\\

1. \underline{Connections to null-homologous algebraic cycles}: Over the past few decades, a number of connections have been found between the Hodge theory of the fundamental group and null-homologous algebraic cycles. See for instance \cite{Harris}, \cite{Pulte}, \cite{Colombo}, and the expository paper \cite{Hain2}. More recently, Darmon, Rotger, and Sols in \cite{DRS} considered the extension
\[
0\rightarrow \frac{L_1}{L_0}(U,e)\rightarrow\frac{L_2}{L_0}(U,e)\rightarrow\frac{L_2}{L_1}(U,e)\rightarrow 0,
\] 
where $U$ is obtained from a smooth projective curve $X$ over a subfield $K\subset\CC$ by removing a $K$-rational point, and $e\in U(K)$. They related this extension to the modified diagonal cycle of Gross, Kudla, and Schoen in $X^3$. Using this relation they were able to define a family of rational points on the Jacobian of $X$ parametrized by algebraic cycles in $X^2$. One of the primary goals of this paper is to generalize this picture to higher weights. We will discuss this in more detail shortly.\\

2. \underline{Periods}: Similar to the cohomology case, if $U$ and $e$ are defined over a subfield $K\subset\CC$, $L_n(U,e)$ is endowed with a {\it de Rham lattice}, which is a $K$-lattice inside \eqref{eq1intro}. One then has a $K$-vector space of {\it periods} of $L_n(U,e)$, which contains the periods of $U$ if $n\geq 1$. The new phenomenon here is that because of a formal property of iterated integrals, namely the so called {\it shuffle product}, periods of $\cup L_n(U,e)$ that correspond to the same path in $\pi_1(U,e)$, are closed under multiplication, and form a $K$-subalgebra of $\CC$. One refers to the periods of $\cup L_n(U,e)$ as the periods of $\pi_1(U,e)$. The celebrated multiple zeta values arise as periods of $\pi_1$ of $\mathbb{P}^1-\{0,1,\infty\}$.\\

We proceed to give a review of the results of the paper. The work can be divided into two parts, geometric and arithmetic. Before we discuss the contents of each part, let us fix some notation. We use $\CH_i(-)$ for Chow groups. (As usual, the subscript is the dimension.) By $\CH_i^\hhom(-)$ we mean the subgroup of $\CH_i(-)$ consisting of homologically trivial cycles. We denote by $\inhom$ the internal Hom in the category of mixed Hodge structures, and for a pure Hodge structure $A$ of odd weight $2k-1$, by $JA$ we refer to the ``middle" Carlson Jacobian
\[
JA:=\frac{A_\CC}{F^kA_\CC+A_\ZZ},
\]
where $F^\cdot$ denotes the Hodge filtration. For instance, if $A=H^{2k-1}(U)$ for a smooth projective complex variety $U$, $JA$ is nothing but the Griffiths' intermediate Jacobian.\\

From now on, $X$ is a smooth (connected) projective curve over $\CC$. Let $e,\infty\in X(\CC)$ be distinct. We write $H^1$ for $H^1(X)$, the mixed Hodge structure associated to the degree one cohomology of $X$.\\

{\bf 1. Geometric part:} (Up to Section \ref{geometriccor}) Darmon, Rotger and Sols in \cite{DRS} relate the extension $\mathbb{E}_{2,e}^\infty$
\[
\begin{array}{ccccccccc}
0& \longrightarrow & \displaystyle{\frac{L_{1}}{L_{0}}(X-\{\infty\},e)} & \longrightarrow &\displaystyle{\frac{L_2}{L_0}(X-\{\infty\},e)} &\longrightarrow &\displaystyle{\frac{L_2}{L_{1}}(X-\{\infty\},e)} &\longrightarrow &0,\\
 &             &    \text{\rotatebox{90}{$\cong$}} & & & &\text{\rotatebox{90}{$\cong$}} & &\\
 & & H^1 & & & & (H^1)^{\otimes 2} & &\\
\end{array}
\]
to the modified diagonal cycle of Kudla, Gross and Schoen\footnote[2]{The reason for this non-standard choice of notation will be clear shortly.}
\begin{eqnarray*}
\Delta_{2,e}&:=&\{(x,x,x):x\in X\}-\{(e,x,x):x\in X\}-\{(x,e,x):x\in X\}-\{(x,x,e):x\in X\}\\
&+&~\{(e,e,x):x\in X\}+\{(e,x,e):x\in X\}+\{(x,e,e):x\in X\}\in \CH_1^{\hhom}(X^3)
\end{eqnarray*}
and the cycle
\[
Z^\infty_{2,e}:=\{(x,x,\infty):x\in X\}-\{(x,x,e):x\in X\}\in\CH_1^{\hhom}(X^3).
\]
Let $h_2$ be the composition
\begin{equation}\label{eqsum2}
\CH_1^\hhom(X^3)\stackrel{\text{Abel-Jacobi}}{\longrightarrow}J\inhom(H^3(X^3),\ZZ(0))\stackrel{\text{Kunneth}}{\longrightarrow}J\inhom((H^1)^{\otimes 3},\ZZ(0)),
\end{equation} 
and identify  
\[
\Ext((H^1)^{\otimes2},H^1)\cong J\inhom((H^1)^{\otimes2},H^1)\cong J\inhom((H^1)^{\otimes2}\otimes H^1,\ZZ(0)),
\]
where the first isomorphism is that of Carlson \cite{Carlson}, and the second is given by Poincare duality. Theorem 2.5 of \cite{DRS} asserts\footnote[3]{The result in \cite{DRS} is slightly weaker, but a small modification of its proof implies \eqref{eqsum3}. See Section \ref{section_DRS_result_reformulation}.} that
\begin{equation}\label{eqsum3}
\mathbb{E}_{2,e}^\infty=h_2(-\Delta_{2,e}+Z^\infty_{2,e}).
\end{equation}
Our goal in the first part of the paper is to generalize this result to higher weights. For each $n\geq 2$, we consider the extension $\mathbb{E}^\infty_{n,e}$ 
\[
\begin{array}{ccccccccc}
0& \longrightarrow & \displaystyle{\frac{L_{n-1}}{L_{n-2}}(X-\{\infty\},e)} & \longrightarrow &\displaystyle{\frac{L_n}{L_{n-2}}(X-\{\infty\},e)} &\longrightarrow &\displaystyle{\frac{L_n}{L_{n-1}}(X-\{\infty\},e)} &\longrightarrow &0\\
 &             &    \text{\rotatebox{90}{$\cong$}} & & & &\text{\rotatebox{90}{$\cong$}} & &\\
 & & (H^1)^{\otimes n-1} & & & & (H^1)^{\otimes n} & &\\
\end{array}
\] 
of mixed Hodge structures as an element of $\Ext\left((H^1)^{\otimes n},(H^1)^{\otimes n-1}\right)$. One can show that the weight filtration on 
\[
\frac{L_n}{L_{n-2}}(X-\{\infty\},e)
\] 
is given by 
\[
W_{n-2}=0,\hspace{.2in} W_{n-1}=\frac{L_{n-1}}{L_{n-2}}(X-\{\infty\},e),\hspace{.2in}\text{and}\hspace{.2in}W_n=\frac{L_n}{L_{n-2}}(X-\{\infty\},e),
\] 
so that it gives rise to only one interesting extension, namely $\mathbb{E}^\infty_{n,e}$.\\
 
Let $h_n$ be the composition
\begin{equation*}
\CH_{n-1}^\hhom(X^{2n-1})\stackrel{\text{Abel-Jacobi}}{\longrightarrow}J\inhom(H^{2n-1}(X^{2n-1}),\ZZ(0))\stackrel{\text{Kunneth}}{\longrightarrow}J\inhom((H^1)^{\otimes 2n-1},\ZZ(0)),
\end{equation*}
and identify
\[
\Ext((H^1)^{\otimes n}, (H^1)^{\otimes n-1})\stackrel{\text{Carlson}}{\cong}J\inhom\left((H^1)^{\otimes n}, (H^1)^{\otimes n-1}\right)\stackrel{\text{Poincare duality}}{\cong}J\inhom((H^1)^{\otimes 2n-1},\ZZ(0)).
\]
For each $n$, we define algebraic cycles 
\[\Delta_{n,e}, Z^\infty_{n,e}\in \CH^\hhom_{n-1}(X^{2n-1})\]
such that \eqref{eqsum3} generalizes to the following result.
 \begin{thm}\label{thm1intro}
\[
\mathbb{E}^\infty_{n,e}=(-1)^{\frac{n(n-1)}{2}} h_n\left(\Delta_{n,e}-Z^\infty_{n,e}\right)
\]\\
\end{thm}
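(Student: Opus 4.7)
The plan is to generalize the $n=2$ strategy of \cite{DRS}: compute both sides explicitly in terms of iterated integrals and integrals over bounding chains, and match them under the Carlson/Abel-Jacobi identifications. Fix once and for all a smooth path $\gamma$ from $e$ to $\infty$ in $X-\{\infty\}$; all choices will depend on $\gamma$, and the final answer will be independent of it.

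\textbf{Computing the extension class.} First I would realize $\mathbb{E}^\infty_{n,e}$ concretely via Hain's iterated integral presentation of $L_n(X-\{\infty\},e)$. Choosing dual bases for $H^1$ and $(H^1)^{\otimes n-1}, (H^1)^{\otimes n}$, I would write down an integral retraction $r_\ZZ:\frac{L_n}{L_{n-2}}\rightarrow \frac{L_{n-1}}{L_{n-2}}$ by evaluating against explicit tensors of loops based at $e$, and a Hodge-theoretic retraction $r_F$ that respects $F^n$ (using Chen's description of the Hodge filtration: $F^n$ is spanned by iterated integrals whose entries are holomorphic logarithmic forms on $X-\{\infty\}$, with appropriate adjustments at the bottom of the weight filtration). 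By Carlson's theorem, $\mathbb{E}^\infty_{n,e}$ is represented by $r_F - r_\ZZ$, and via Poincar\'e duality this becomes a linear functional on $F^n((H^1)^{\otimes 2n-1})$. The upshot is that $\mathbb{E}^\infty_{n,e}$ is represented by a sum of length-$n$ iterated integrals along $\gamma$ (from the $\infty$-part of the extension) together with a correction term involving shorter iterated integrals evaluated on loops at $e$ (from the splitting of $\mathbb{E}^\infty_{n-1,e}\otimes H^1$ appearing in $\frac{L_n}{L_{n-2}}$).

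\textbf{Constructing the cycles and their bounding chains.} I would define $\Delta_{n,e}$ as an alternating sum, indexed by subsets $S\subset\{1,\dots,n\}$, of images of diagonal embeddings $X^{n-1}\hookrightarrow X^{2n-1}$ in which the coordinates indexed by $S$ are set to $e$ and the remaining $n$ coordinates are identified in a prescribed ``interleaved'' pattern with the $n-1$ free coordinates; $Z^\infty_{n,e}$ is defined identically but with a single $e$ replaced by $\infty$. The alternating signs force homological triviality (the Kunneth components of the class vanish because each component collapses to a lower-dimensional diagonal that is killed by some $S$-insertion). Next I would write down explicit real bounding chains $\Gamma_{n,e}$ and $\Gamma^\infty_{n,e}$ built by inserting pieces of the path $\gamma$ into the interleaved diagonals: each $S$ contributes a real $(n-1)$-dimensional simplex-like chain obtained from $\gamma$ in the $S$-coordinates and diagonals in the rest. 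A direct boundary calculation (Stokes) shows $\partial \Gamma=\Delta_{n,e}-Z^\infty_{n,e}$ up to cycles supported on products of $\{e,\infty\}$, which vanish in the Kunneth $(H^1)^{\otimes 2n-1}$-component.

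\textbf{Matching.} The Abel-Jacobi class $h_n(\Delta_{n,e}-Z^\infty_{n,e})$ is then given, for $\omega=\omega_1\otimes\cdots\otimes\omega_{2n-1}\in F^n((H^1)^{\otimes 2n-1})$, by integrating $\omega_1\wedge\cdots\wedge\omega_{2n-1}$ over the chain, which by Fubini unwinds to a sum of length-$n$ iterated integrals of the $\omega_i$'s along $\gamma$ together with correction terms of exactly the shape produced on the MHS side. The sign $(-1)^{n(n-1)/2}$ arises from shuffling the $2n-1$ Kunneth factors to separate the $n-1$ ``vertical'' variables (where $\omega$ is restricted to the diagonal) from the $n$ ``horizontal'' ones, and from the orientation on the simplex determined by $\gamma$.

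\textbf{Main obstacle.} The serious work is not in either computation individually; it is in choosing the combinatorial convention for the interleaving pattern in $\Delta_{n,e}$ so that (i) the bounding chain $\Gamma$ has a clean Stokes description compatible with Chen's iterated-integral formulas, and (ii) the resulting iterated integrals on the two sides line up term by term rather than only up to additional shuffle/Chen-coproduct identities. Managing the correction terms coming from the subquotient $W_{n-1}/W_{n-2}=(H^1)^{\otimes n-1}$ (which are not present at $n=2$) is the new feature and the place where higher-weight combinatorics must be handled carefully; I expect this to dictate the precise definition of $\Delta_{n,e}$ and to be responsible for the global sign.
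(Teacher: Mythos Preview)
Your proposal has the right overall shape---compute both sides and match under Carlson/Abel--Jacobi---but there are two genuine gaps, one conceptual and one structural.

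\textbf{The representative of the extension class.} You describe $\mathbb{E}^\infty_{n,e}$ as being ``represented by a sum of length-$n$ iterated integrals along $\gamma$'' (the path from $e$ to $\infty$), with loop-based corrections. This is not how the extension class is computed. Carlson's formula gives $\mathbb{E}^\infty_{n,e}=[r_\ZZ\circ\mathfrak{s}_F]$ where $\mathfrak{s}_F$ is a \emph{Hodge} section of $\mathfrak{q}$; building $\mathfrak{s}_F$ is the nontrivial step and requires logarithmic $(1,0)$-forms on $X$ with a pole at $\infty$, constructed via Green functions (Lemma~\ref{lemlift1}, the map $\nu$). The resulting representative (Proposition~\ref{harmonicvolume}) sends $[\omega_1]\otimes\cdots\otimes[\omega_n]\otimes(PD^{\otimes n-1})^{-1}([\gamma_1]\otimes\cdots\otimes[\gamma_{n-1}])$ to an iterated integral over $(\gamma_1-1)\cdots(\gamma_{n-1}-1)$---loops at $e$---of $\omega_1\cdots\omega_n+\sum_i\omega_1\cdots\nu(\omega_i\otimes\omega_{i+1})\cdots\omega_n$. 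The path $\gamma$ from $e$ to $\infty$ does not appear here at all; the $\infty$-dependence is encoded entirely in the pole of $\nu$. (The path $\gamma_e^\infty$ enters only on the cycle side, in the bounding chain for $Z^\infty_{n,e}$.) Your parenthetical about ``$\mathbb{E}^\infty_{n-1,e}\otimes H^1$ appearing in $\frac{L_n}{L_{n-2}}$'' is also off: the weight-graded pieces of $\frac{L_n}{L_{n-2}}$ are just $(H^1)^{\otimes n-1}$ and $(H^1)^{\otimes n}$, and the previous extension plays no direct role.

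\textbf{The missing reduction to $n=2$.} More importantly, the paper does not attempt the direct term-by-term match you outline. Its key structural observation is that $(P_e)_\ast({}^t\Gamma_{\delta_i})$ is a permuted copy of $\Delta_{2,e}\times\bigl((P_e)_\ast\Delta^{(2)}(X)\bigr)^{n-2}$ (see \eqref{eq1july17}, \eqref{eq2july17}), so a bounding chain for $\Delta_{n,e}$ is obtained by permuting $\partial^{-1}\Delta_{2,e}\times\bigl((P_e)_\ast\Delta^{(2)}(X)\bigr)^{n-2}$. Integrating $\omega_1\otimes\cdots\otimes\omega_{2n-1}$ over this chain then factors as a product: one $\int_{\partial^{-1}\Delta_{2,e}}\omega_i\otimes\omega_{i+1}\otimes\eta_i$ times $n-2$ integrals $\int_X\omega_j\wedge\eta_{j'}$. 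The first factor is handled by the already-established $n=2$ formula \eqref{eqDRSv3}, and the rest are periods. This reduces the general case to bookkeeping plus the $n=2$ case---which itself requires the delicate Green-function/Stokes argument of \cite{DRS} (sharpened in Lemma~\ref{indep_from_infty}). Your proposal would in effect have to redo that analysis from scratch at every $n$, and your ``Main obstacle'' paragraph shows you have not yet found the combinatorial device (the product/permutation structure) that makes this unnecessary. The sign $(-1)^{n(n-1)/2}$ comes out cleanly from the permutations $\sigma_i,\tau_i$ used in this decomposition, not from an ad hoc shuffle of K\"unneth factors.
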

The cycle $\Delta_{n,e}$ is constructed by first taking an alternating sum
\[\sum\limits_i(-1)^{i-1}~{}^t\Gamma_{\delta_i}\]
of the transposes of the graphs of the diagonal embeddings $\delta_i:X^{n-1}\longrightarrow X^n$ defined by
\begin{equation}\label{eqsum4}
(x_1,\ldots,x_{n-1})\mapsto(x_1,\ldots,x_i,x_i,\ldots,x_{n-1}),
\end{equation}
and then using the method of Gross and Schoen \cite{GS} to produce a null-homologous cycle. The cycle $Z^\infty_{n,e}$ is defined as
\[
\sum\limits_{i=1}^{n-1} (-1)^{i-1} \left((\pi_{n+i,\infty})_\ast-(\pi_{n+i,e})_\ast\right) ({}^t\Gamma_{\delta_i}),
\]
where for $x\in X$, $\pi_{i,x}$ is the map $X^{2n-1}\rightarrow X^{2n-1}$ that replaces the $i^\text{th}$ coordinate by $x$, and leaves the other coordinated unchanged.\\

Note that the fact that the diagonal embeddings $\delta_i: X^{n-1}\rightarrow X^n$ appear in the constructions is not surprising. Wojtkowiak used these maps in \cite{Wojt} to form a cosimplicial scheme that gives rise to the de Rham fundamental group, and Deligne and Goncharov used these maps in \cite{DG} to construct their motivic fundamental group.\\

Theorem \ref{thm1intro} has the following corollaries: 
\begin{itemize}
\item[(1)] The function
\[
X(\CC)-\{e\}\rightarrow \Ext((H^1)^{\otimes n}, (H^1)^{\otimes n-1})\hspace{.2in} \infty\mapsto \mathbb{E}^\infty_{n,e}
\]
is injective. 
\item[(2)] If $X$ is of genus 1, $\mathbb{E}^\infty_{n,e}$ is torsion if and only if $\infty-e\in\CH_0^\hhom(X)$ is torsion.
\end{itemize}

We should mention that one motivation for considering extensions of the form
\[
\begin{array}{ccccccccc}
0& \longrightarrow & \displaystyle{\frac{L_{n-1}}{L_{n-2}}} & \longrightarrow &\displaystyle{\frac{L_n}{L_{n-2}}} &\longrightarrow &\displaystyle{\frac{L_n}{L_{n-1}}} &\longrightarrow &0,\\
\end{array}
\]
rather than 
\[
\begin{array}{ccccccccc}
0& \longrightarrow & \displaystyle{L_{n-1}} & \longrightarrow &\displaystyle{L_n} &\longrightarrow & \displaystyle{\frac{L_n}{L_{n-1}}} &\longrightarrow &0,\\
\end{array}
\]
is that the quotients $\{\displaystyle{\frac{L_n}{L_{n-1}}}\}$ are independent of the base point, so that we can think of extensions coming from different base points as elements of the same Ext group. The reason for looking at extensions coming from $\pi_1$ of the punctured curve, rather than the curve $X$ itself, is that the successive quotients $\displaystyle{\frac{L_n}{L_{n-1}}(X,e)}$ for $n>2$ are much more complicated than their counterparts for $X-\{\infty\}$. (See \cite{Stallings}.)\\

{\bf 2. Arithmetic part:} Here we give some number theoretic applications for Theorem \ref{thm1intro}. Suppose $K\subset\CC$ is a subfield, $X=X_0\otimes_K\CC$, where $X_0$ is a (smooth) projective curve over $K$, and $e,\infty\in X_0(K)$. Let $g$ be the genus. Denote the Jacobian of $X_0$ by $\Jac$.\\

2A. \underline{Application to rational points on the Jacobian}: (Section \ref{ch4}) Following the ideas of \cite{DRS}, we associate to the extension $\mathbb{E}^\infty_{n,e}$ a family of points in $\Jac(K)$ parametrized by algebraic cycles of the appropriate dimension in a certain power of $X_0$. Our approach is in line with Darmon's general philosophy of constructing rational points on Jacobians of curves using algebraic cycles on higher dimensional varieties.\\
Throughout, we identify
\[
\Jac(\CC)\cong J\inhom((H^1)^{\otimes 2n-1},\ZZ(0)).
\]
For a Hodge class
\[
\xi\in (H^1)^{\otimes 2n-2},
\] 
let $\xi^{-1}$ be the map
\[
J\inhom((H^1)^{\otimes 2n-1},\ZZ(0))\rightarrow J\inhom(H^1,\ZZ(0))\cong\Jac(\CC)
\]
defined by
\[
\bigm(\text{class of $f:(H^1_\CC)^{\otimes 2n-1}\rightarrow\CC$}\bigm)~~\mapsto~~ \bigm(\text{class of $f(\xi\otimes-)$}\bigm).
\]
For $Z\in\CH_{n-1}(X_0^{2n-2})$, let $\xi_Z$ be the $(H^1)^{\otimes 2n-2}$ Kunneth component of the class of $Z$. In Section \ref{ch4} we prove the following result.
\begin{thm}\label{thm2intro}
Let $Z\in\CH_{n-1}(X_0^{2n-2})$. Then $\xi_Z^{-1}(\mathbb{E}^\infty_{n,e})\in\Jac(K)$. 
\end{thm}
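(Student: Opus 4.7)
The plan is to realize $\xi_Z^{-1}(\mathbb{E}^\infty_{n,e})$ as the Abel--Jacobi image of an explicit $0$-cycle on $X_0$ that is manifestly defined over $K$; the theorem will then follow from the classical fact that the Abel--Jacobi map $\CH_0^\hhom(X_0)\to\Jac(K)$ is $K$-rational.

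First I invoke Theorem~\ref{thm1intro} to write $\mathbb{E}^\infty_{n,e}=h_n(W)$, where $W:=(-1)^{\frac{n(n-1)}{2}}\bigl(\Delta_{n,e}-Z^\infty_{n,e}\bigr)\in\CH_{n-1}^{\hhom}(X^{2n-1})$. The constructions of $\Delta_{n,e}$ and $Z^\infty_{n,e}$ use only the diagonal embeddings $\delta_i:X_0^{n-1}\to X_0^n$ (which are morphisms of $K$-schemes) and insertions of the $K$-rational points $e$ and $\infty$, so $W$ is the base change to $\CC$ of a cycle $W_0\in\CH_{n-1}^{\hhom}(X_0^{2n-1})$.

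Next, decomposing $X_0^{2n-1}=X_0^{2n-2}\times X_0$, I form the $0$-cycle
\[
C(Z,W_0):=\pi_{*}\bigl(p^{*}Z\cdot W_0\bigr)\in\CH_0(X_0),
\]
where $p:X_0^{2n-1}\to X_0^{2n-2}$ and $\pi:X_0^{2n-1}\to X_0$ are the projections onto the first $2n-2$ factors and the last factor, and the intersection is taken via Fulton's refined intersection theory on the smooth variety $X_0^{2n-1}$. A dimension count confirms that $C(Z,W_0)$ is a $0$-cycle, and since $[W_0]=0$ in $H^{*}(X^{2n-1})$, its class is null-homologous. Hence $\AJ(C(Z,W_0))$ defines a $K$-rational point of $\Jac$.

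The core of the argument is then to establish the identity
\[
\xi_Z^{-1}\bigl(h_n(W)\bigr)=\AJ_\CC\bigl(C(Z,W_0)_\CC\bigr)\qquad\text{in }\Jac(\CC),
\]
from which the theorem follows upon combining with the previous step. I would prove this by unwinding the definitions of $h_n$ and $\xi_Z^{-1}$: the left-hand side is the Griffiths Abel--Jacobi class of $W$ in $JH^{2n-1}(X^{2n-1})$, projected to the $(H^1)^{\otimes 2n-1}$ K\"unneth component and then contracted against $\xi_Z\in(H^1)^{\otimes 2n-2}$ via Poincar\'e duality. Under this identification, the contraction by $\xi_Z$ coincides with the cohomological correspondence action of $[Z]$, which is the shadow of the cycle-theoretic correspondence action producing $C(Z,W_0)_\CC$. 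The matching then follows from the functoriality of the Abel--Jacobi map under flat pull-back, proper push-forward, and intersection with cycles on the ambient smooth variety. The main obstacle is precisely this last matching: one has to carefully track signs and Tate twists in the Carlson and Poincar\'e duality identifications that define $\Ext((H^1)^{\otimes n},(H^1)^{\otimes n-1})\cong J\inhom((H^1)^{\otimes 2n-1},\ZZ(0))$, and line them up with the Chow-theoretic operations. A clean way to execute this is to represent Abel--Jacobi by integration over a topological $(2n-1)$-chain bounding $W$ and verify the identity at the level of forms; the remainder of the argument is routine given Theorem~\ref{thm1intro}.
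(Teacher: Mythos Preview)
Your overall strategy is exactly the paper's: invoke Theorem~\ref{thm1intro} to write $\mathbb{E}^\infty_{n,e}=h_n(W)$ with $W$ coming from a $K$-rational cycle $W_0$, then realize $\xi_Z^{-1}(h_n(W))$ as the Abel--Jacobi image of a $K$-rational $0$-cycle obtained from $W_0$ via a correspondence. The gap is in the specific identity you assert,
\[
\xi_Z^{-1}\bigl(h_n(W)\bigr)=\AJ\bigl(\pi_*(p^*Z\cdot W_0)\bigr).
\]
This fails in general. The right-hand side equals, by functoriality of Abel--Jacobi under correspondences, $cl(Z\times\Delta(X_0))\bigl(\AJ(W)\bigr)$, and the relevant K\"unneth component of $cl(Z\times\Delta(X_0))$ acting $JH^{2n-1}(X^{2n-1})^\vee\to J(H^1)^\vee$ is $cl(Z)\otimes\xi_{\Delta}$, involving \emph{all} of $cl(Z)$, not just its $(H^1)^{\otimes 2n-2}$-component $\xi_Z$. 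A concrete counterexample: take $n=2$ and $Z=\{p\}\times X_0$ for a $K$-point $p\neq e$. Then $\xi_Z=0$, so the left-hand side vanishes; but a direct computation gives $\pi_*(p^*Z\cdot W_0)=\infty-e$, whose Abel--Jacobi image is nonzero whenever $\infty-e$ is non-torsion. So ``contraction by $\xi_Z$ coincides with the cohomological correspondence action of $[Z]$'' is the false step.

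The fix is precisely what the paper does: replace $Z\times\Delta(X_0)$ by a cycle $Z'\in\CH_n(X_0^{2n})$ whose class is the $((H^1)^{\otimes 2n})^\vee$-K\"unneth component of $cl(Z\times\Delta(X_0))$. Such a $Z'$ exists over $K$ because the K\"unneth components of the diagonal of a curve are algebraic over the base field; one applies these projectors factor by factor. With $Z'$ in place of $p^*Z$ (equivalently, using $Z'_*$ as a correspondence $X_0^{2n-1}\to X_0$), your argument goes through verbatim and one obtains $\xi_Z^{-1}(h_n(W))=\AJ(Z'_*(W_0))\in\Jac(K)$.
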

Note that this is not a priori obvious, as to define $\mathbb{E}^\infty_{n,e}$ one first goes to analytic topology. The result is a consequence of Theorem \ref{thm1intro} in view of the following two facts:
\begin{itemize}
\item[(i)] The map $\xi_Z^{-1}$ is given by a correspondence. More precisely, it is induced by an element of 
\[
\CH_{n}(X_0^{2n})=\CH_n(X_0^{2n-1}\times X_0)
\]
whose class is the $(H^1)^{\otimes 2n}$ component of
\[
Z\times \Delta(X_0),
\]
where $\Delta(X_0)$ is the diagonal of $X_0$. Denoting the composition
\[
\CH_{n-1}^\hhom(X_0^{2n-1})\stackrel{\text{natural map}}{\rightarrow} \CH_{n-1}^\hhom(X^{2n-1})\stackrel{h_n}{\rightarrow} J\inhom((H^1)^{\otimes 2n-1},\ZZ(0)) 
\] 
also by $h_n$, this gives us a commutative diagram
\begin{equation}
\begin{tikzpicture}
  \matrix (m) [matrix of math nodes, column sep=2em, row sep=2.5em]
    {	  \CH_{n-1}^\hhom(X_0^{2n-1})& J((H^1)^{\otimes 2n-1})^\vee\\
		\CH_{0}^\hhom(X_0)& J(H^1)^\vee.\\};
  { [start chain] \chainin (m-1-1);
    \chainin (m-1-2) [join={node[above,labeled] {h_n}}];}
  {[start chain] \chainin (m-2-1);
    \chainin (m-2-2) [join={node[above,labeled] {\text{Abel-Jacobi}}}];}
	{[start chain] \chainin (m-1-2);
		\chainin (m-2-2) [join={node[right,labeled] {\xi_Z^{-1}}}];}
	{[start chain] \chainin (m-1-1);
		\chainin (m-2-1) [join={node[left,labeled] {}}];}
  \end{tikzpicture}
\end{equation}
\item[(ii)] The algebraic cycles $\Delta_{n,e}$ and $Z^\infty_{n,e}$ are defined over $K$.
\end{itemize}
Theorem \ref{thm2intro} is due to Darmon, Rotger, and Sols \cite{DRS} in the case $n=2$. For each $n$, it associates to the extension $\mathbb{E}^\infty_{n,e}$ a family of rational points on $\Jac$ parametrized by $\CH_{n-1}(X_0^{2n-2})$.\\

To simplify the notation, we will write $P_\xi$ for $\xi^{-1}(\mathbb{E}^\infty_{n,e})$ and $P_Z$ for $P_{\xi_Z}$. The point $P_\xi$ (and in particular $P_Z$) can be described analytically using iterated integrals. Ideally, we would like to have a description in terms of algebraic 1-forms on $X_0$. Let $\Omega^1_\hol(X)$ be the space of holomorphic 1-forms on $X$. Identify
\[
\Jac(\CC)\cong \frac{\Omega^1_\hol(X)^\vee}{H_1(X,\ZZ)}.
\]  
Let $\alpha_1,\ldots,\alpha_{2g}$ be regular algebraic 1-forms on $X_0-\{\infty\}$ whose classes form a basis $H^1_{dR}(X_0)$. Moreover, suppose $\alpha_1,\ldots,\alpha_g$ are holomorphic on $X$. Let $d_1,\ldots,d_{2g}$ form a basis of $H^1_\ZZ$ such that 
\[
\int_X d_i\wedge d_j=1\hspace{.3in}\text{if $i<j$.}
\]
Let $\omega_i$ be the representative of $d_i$ in $\sum\limits_j\CC\alpha_j$. Write
\[
\alpha_i=\sum\limits_j p_{ij}\omega_j.
\]
For each $i$, let $\beta_i\in\pi_1(X-\{\infty\},e)$ be such that
\[
\int\limits_{\beta_i}-=\int\limits_X d_i\wedge-
\] 
on $H^1$. Then, assuming the $\alpha_i$ satisfy a certain hypothesis, which we refer to as Hypothesis $\star=\star(n)$ (see Paragraph \ref{analytic_description_par1}), the point 
\[
P_\xi\in\frac{\Omega^1_\hol(X)^\vee}{H_1(X,\ZZ)}
\]
is represented by
\[
f_\xi:\alpha_l\mapsto \sum\limits_{i,j,k\leq 2g}\mu'_{i,j,k}(\xi;\alpha_l) \int\limits_{\beta_k}\omega_i\omega_j.
\]
Here the coefficients
\[
\mu'_{i,j,k}(\xi;\alpha_l)\in \Per_\QQ(\alpha_l):=\sum\limits_r \QQ\int\limits_{\beta_r}\alpha_l=\sum\limits_{r}p_{lr}\QQ
\]
are explicit linear combinations (in fact, with integer coefficients) of the $p_{lr}$.\\

It will be interesting to investigate when Hypothesis $\star$ holds. We show that in the case $g=1$, the hypothesis is indeed satisfied if $\alpha_2$ has a pole of order 2 at $\infty$. For instance, if $X_0$ is given by the affine equation
\begin{equation}\label{eq4introrevision1}
y^2=4x^3-g_2x-g_3
\end{equation}
and $\infty$ is the point at infinity, Hypothesis $\star$ holds if $\alpha_2=\frac{xdx}{y}$.\\

2B. \underline{Application to periods}: (Sections \ref{ch5} and \ref{ch5section_examples}) Assume for the moment that the Mordell-Weil group $\Jac(K)$ has rank $\geq 1$. A natural question one can ask is whether the families
\[\{P_Z:Z\in\CH_{n-1}(X_0^{2n-2})\subset\Jac(K)\] 
contain non-torsion points.\footnote[2]{One should keep in mind that for different $n$ these families arise from different parts of the weight filtration on the mixed Hodge structure on $\pi_1(X-\{\infty\},e)$.} This led us to ask whether $P_Z$ being torsion will have any interesting consequences.\\

It is well-known that Hodge classes in tensor powers of $H^1$ induce polynomial relations (with integer coefficients) between the periods of $X_0$. In Section \ref{ch5}, we observe that a Hodge class $\xi$ for which $P_\xi$ is torsion, might induce relations between periods of $L_2(X-\{\infty\},e)$. This is an easy consequence of the analytic description of $P_\xi$. Indeed, setting
\[
\mu_{i,j,k}(\xi;\alpha_l)=\mu'_{i,j,k}(\xi;\alpha_l)-\mu'_{j,i,k}(\xi;\alpha_l)\hspace{.2in}(i,j,k\leq2g, i<j),
\]
it is easy to see that if the $\alpha_i$ satisfy Hypothesis $\star$ and $P_\xi$ is torsion, then
\begin{equation}\label{eq1introB}
\sum\limits_{\stackrel{i,j,k\leq 2g}{i<j}}\mu_{i,j,k}(\xi;\alpha_l) \int\limits_{\beta_k}\omega_i\omega_j\in \Per_\QQ(\alpha_l)\hspace{.2in}(l\leq g). 
\end{equation}
The reason for writing these only in terms of the triples $(i,j,k)$ satisfying $i<j$ is that thanks to the shuffle product property of iterated integrals,
\[
\int\limits_{\beta_k}\omega_i\omega_j+\int\limits_{\beta_k}\omega_j\omega_i=\int\limits_{\beta_k}\omega_i \int\limits_{\beta_k}\omega_i.
\] 
Let $\QQ(X_0)$ be the field generated over $\QQ$ by all the numbers $p_{ij}$ ($i,j\leq 2g$). The relations \eqref{eq1introB} can be considered as linear relations in
\begin{equation}\label{eq2introB}
1, \int\limits_{\beta_k}\omega_i\omega_j\hspace{.2in} (i,j,k\leq 2g, i<j)
\end{equation}
with coefficients in $\Per_\QQ(X_0)$. By multi-linearity of iterated integrals, they can be rewritten as linear relations between
\[
1, \int\limits_{\beta_k}\alpha_i\alpha_j\hspace{.2in} (i,j,k\leq 2g, i<j)
\]
with coefficients in $\QQ(X_0)$.\\

We then proceed in Section \ref{ch5section_examples} to specialize to the Hodge classes coming from the diagonal of $X_0$ and $X_0^2$. Even these simplest cases lead to interesting statements. 

\begin{prop}\label{propintro}
Suppose the $\alpha_i$ are chosen so that they satisfy Hypothesis $\star$.\\
(a) Suppose $P_{\Delta(X_0)}$ is torsion. Then the $g$ relations \eqref{eq1introB}, which in this case take the form 
\[
\sum\limits_{\stackrel{i,j,k\leq2g}{i<j}}(-1)^{i+j}p_{lk}\int\limits_{\beta_k}\omega_i\omega_j\in \Per_\QQ(\alpha_l)\hspace{.3in}(l\leq g),
\]
are independent (as linear relations among \eqref{eq2introB} with coefficients in $\QQ(X_0)$).\\
\noindent (b) For $1\leq i,j\leq 2g$ define the numbers $\lambda_{ij}$ by $\lambda_{ij}=(-1)^{i+j}$ if $i<j$ and $\lambda_{ij}=-\lambda_{ji}$. Suppose $P_{\Delta(X_0^2)}$ is torsion. Then the relations \eqref{eq1introB}, which in this case are 
\[
\sum\limits_{\stackrel{i,j,k}{i<j}}\biggm(\lambda_{jk}p_{li}-\lambda_{ik}p_{lj}-2(-1)^{i+j}p_{lk}\biggm)\int\limits_{\beta_k}\omega_i\omega_j~\in\Per_\QQ(\alpha_l)\hspace{.3in}(l\leq g),
\]
are independent.\\
\noindent (c) Let $g=2$. Suppose $P_{\Delta(X_0)}$ and $P_{\Delta(X^2_0)}$ are torsion. Then at least three of the relations given in (a) and (b) are independent.\\ 
\end{prop}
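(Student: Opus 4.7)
The overall plan is to reduce each of (a), (b), (c) to linear algebra over $\QQ(X_0)$, using as input only that the $g\times 2g$ matrix $M:=(p_{lk})_{l\leq g,\, k\leq 2g}$ has rank $g$. This holds because the rows of $M$ express the $\CC$-linearly independent forms $\alpha_1,\ldots,\alpha_g$ in the basis $\omega_1,\ldots,\omega_{2g}$ of $\sum_j\CC\omega_j$. Each of the stated relations is a $\QQ(X_0)$-linear combination of $1$ and the iterated integrals $\int_{\beta_k}\omega_i\omega_j$ (for $i<j,\,k\leq 2g$), so independence of the relations is implied by linear independence (over $\QQ(X_0)$) of the coefficient vectors indexed by the iterated integrals.

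For (a), the coefficient of $\int_{\beta_k}\omega_i\omega_j$ in the $l$-th relation is $(-1)^{i+j}p_{lk}$. A hypothetical dependence $\sum_l c_l\cdot(\mathrm{rel}_l)=0$ gives $(-1)^{i+j}\sum_l c_l p_{lk}=0$ for every triple $(i,j,k)$ with $i<j$, hence $\sum_l c_l p_{lk}=0$ for all $k$; i.e., $(c_1,\ldots,c_g)M=0$. The rank of $M$ forces all $c_l=0$.

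For (b), set $q_m:=\sum_l a_l p_{lm}$. A dependence $\sum_l a_l\cdot(\mathrm{rel}_l)=0$ reduces to the system
\[
\lambda_{jk}q_i-\lambda_{ik}q_j=2(-1)^{i+j}q_k\qquad(i<j,\,k\leq 2g).
\]
Specializing to $(i,j)=(1,2)$: the case $k=1$ gives $q_1=-2q_1$, so $q_1=0$; the case $k=2$ gives $q_2=-2q_2$, so $q_2=0$; for $k\geq 3$ the left-hand side now vanishes, forcing $q_k=0$. Hence all $q_m=0$, and the rank of $M$ gives all $a_l=0$.

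For (c), with $g=2$, consider a dependence $c_1A_1+c_2A_2+d_1B_1+d_2B_2=0$ among the relations of (a) (call them $A_l$) and (b) (call them $B_l$). Setting $q_m^c:=\sum_l c_l p_{lm}$, $q_m^d:=\sum_l d_l p_{lm}$, and $r_k:=2q_k^d-q_k^c$, this becomes
\[
\lambda_{jk}q_i^d-\lambda_{ik}q_j^d=(-1)^{i+j}r_k\qquad(i<j,\,k\leq 4).
\]
Running $(i,j)=(1,2)$ through $k=1,\ldots,4$ yields $q_1^c=3q_1^d$, $q_2^c=3q_2^d$, and $r_3=r_4=0$. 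Then $(i,j)=(1,3)$ with $k=3$ forces $q_3^d=0$; with $k=4$ it forces $q_1^d=0$; with $k=2$ it forces $q_2^d=0$. Finally $(i,j)=(1,4)$ with $k=2$ gives $q_4^d=0$. Thus all $q_m^d=0$, whence all $r_k=0$ and all $q_m^c=0$; the rank of $M$ then gives $c_l=d_l=0$. This in fact shows all four relations are independent, which is stronger than the claim.

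The main obstacle I anticipate is the sign bookkeeping in (b) and (c): one has to choose the right sequence of specializations of $(i,j,k)$ to peel off each $q_m$, using the antisymmetry $\lambda_{ij}=-\lambda_{ji}$ and the parities $(-1)^{i+j}$ carefully. All arguments, however, are elementary once the dictionary to $M$ is in place.
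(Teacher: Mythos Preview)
Your arguments for (a) and (b) are correct. For (a) the paper does essentially the same thing: the $g\times 2g$ block of coefficients attached to the variables $\int_{\beta_k}\omega_1\omega_2$ is (up to sign) the top half of the period matrix. For (b) the paper instead exhibits a specific $g\times 2g$ submatrix of coefficients (those of $\int_{\beta_1}\omega_1\omega_2$ and $\int_{\beta_j}\omega_1\omega_j$, $j>1$) and checks that its columns are $\pm 3$ times the columns of $M$; your equational argument is a clean alternative.

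There is, however, a genuine slip in (c). Taking $(i,j)=(1,2)$ with $k=3,4$ does \emph{not} give $r_3=r_4=0$. With $\lambda_{23}=-1$, $\lambda_{13}=1$, $\lambda_{24}=1$, $\lambda_{14}=-1$ one obtains
\[
-q_1^d-q_2^d=-r_3,\qquad q_1^d+q_2^d=-r_4,
\]
so only $r_3=q_1^d+q_2^d$ and $r_4=-(q_1^d+q_2^d)$. Your subsequent steps (``$(1,3)$, $k=3$ forces $q_3^d=0$'', ``$(1,3)$, $k=2$ forces $q_2^d=0$'') rely on $r_3=r_4=0$ and therefore fail as written. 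In fact, the pairs $(1,2),(1,3),(1,4)$ alone only pin things down to the one-parameter family $(q_1^d,q_2^d,q_3^d,q_4^d)=(0,t,-t,t)$, $(q_1^c,q_2^c,q_3^c,q_4^c)=(0,3t,-3t,3t)$. The repair is easy: bring in $(i,j)=(2,3)$, $k=4$, which reads $-q_2^d-q_3^d=-r_4$; with $q_3^d=-q_2^d$ and $r_4=-q_2^d$ this becomes $0=q_2^d$, after which everything collapses to zero and the rank of $M$ finishes the job. So your method, once corrected, actually shows all four relations are independent, which is stronger than what is claimed (and than what the paper proves).

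For comparison, the paper's argument for (c) is quite different: it first subtracts the $A_l$ from the $B_l$ to remove the $-2(-1)^{i+j}p_{lk}$ term, then for each triple of distinct indices forms a $3\times 3$ minor of the coefficient matrix, computes its determinant as $(\lambda_{jk}p_{li}-\lambda_{ik}p_{lj}+\lambda_{ij}p_{lk})(p_{1i}p_{2j}-p_{1j}p_{2i})$, and runs a case analysis on which $2\times 2$ minors of $M$ vanish to reach a contradiction with $\mathrm{rank}\,M=2$. That route only yields three independent relations.
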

 
Part (c) of the proposition is particularly interesting, as it shows that by digging deeper into the weight filtration the method might indeed give new information about the periods. Also note that thanks to Theorem \ref{thm2intro}, $P_{\Delta(X_0)}$ and $P_{\Delta(X^2_0)}$ are $K$-rational, so that they are guaranteed to be torsion if it happens that $\Jac(K)$ is finite. This happens for instance when $K=\QQ$ and $X_0$ is a Fermat curve of degree an odd prime $\leq 7$ \cite{Faddeev}.\\

In the elliptic curve case, one can be more precise:  
\begin{thm}\label{thm3intro}
Let $g=1$. Suppose that $\alpha_2$ has a pole of order 2 at $\infty$. Then
\begin{equation}\label{eq3introB}
p_{11}\int\limits_{\beta_1}\omega_1\omega_2+p_{12}\int\limits_{\beta_2}\omega_1\omega_2 \equiv \int\limits_e^\infty \alpha_1\hspace{.1in}\mod \frac{1}{4}\Per_\ZZ(\alpha_1),
\end{equation}
where $\Per_\ZZ(\alpha_1)=\sum\limits_r \ZZ\int\limits_{\beta_r}\alpha_1$.
\end{thm}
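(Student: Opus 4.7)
The plan is to specialize Theorem \ref{thm1intro} to $n=2$ and $\xi=\xi_{\Delta(X_0)}$, read off the resulting point $P_{\Delta(X_0)}\in\Jac(K)$ geometrically, and match this with its analytic expression from Paragraph \ref{analytic_description_par1}. The condition that $\alpha_2$ have a pole of order $2$ at $\infty$ is, by the remark in the excerpt, precisely what verifies Hypothesis $\star$ when $g=1$, so the analytic formula applies.

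By Theorem \ref{thm1intro} for $n=2$ (the result of Darmon--Rotger--Sols recalled earlier),
\[
\mathbb{E}^\infty_{2,e} \;=\; -\,h_2\bigl(\Delta_{2,e}-Z^\infty_{2,e}\bigr),
\]
so by fact (i) of the arithmetic part, $P_{\Delta(X_0)}$ is the Abel--Jacobi image in $\Jac$ of a $0$-cycle on $X_0$ obtained from $-\Delta_{2,e}+Z^\infty_{2,e}$ by correspondence pushforward through the $(H^1)^{\otimes 4}$ Kunneth component of $\Delta(X_0)\times\Delta(X_0)\in\CH_2(X_0^4)$. Because $Z^\infty_{2,e}=\{(x,x,\infty)\}-\{(x,x,e)\}$ already lies on the first--two--coordinate diagonal, intersecting its pullback with $\Delta(X_0)\times\Delta(X_0)$ forces the fourth coordinate to be $\infty$ or $e$, and pushing down to $X_0$ yields the class of $\infty-e$, with the lower Kunneth pieces contributing only homologically trivial corrections. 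The Abel--Jacobi image of $\infty-e$ evaluated on $\alpha_1$ is $\int_e^\infty\alpha_1$, which will produce the right-hand side of \eqref{eq3introB}.

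The main obstacle is the analogous computation for the Gross--Kudla--Schoen modified diagonal $\Delta_{2,e}$. Writing out the seven terms of $\Delta_{2,e}$ and intersecting with $\Delta(X_0)\times\Delta(X_0)$ together with its lower Kunneth corrections gives a $0$-cycle on $X_0$ whose Abel--Jacobi image on $\alpha_1$ must lie in $\tfrac14\Per_\ZZ(\alpha_1)$; equivalently, the contribution of $\Delta_{2,e}$ to $P_{\Delta(X_0)}$ must be a $4$-torsion point of $\Jac$. The $S_3$-symmetry of $\Delta_{2,e}$ in its three coordinates will collapse many terms, and the denominator $4$ should emerge from the Kunneth projectors of $\Delta(X_0)\times\Delta(X_0)$: each factor $\Delta(X_0)$ contributes on its $(H^1)^{\otimes 2}$ piece the tensor dual to $d_1\wedge d_2$ under the pairing $\int_X d_i\wedge d_j=1$ (for $i<j$), and once the two factors are rewritten in terms of $\alpha_1,\alpha_2$ via the matrix $(p_{ij})$ they supply the $\tfrac14$. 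This tracking of projectors, signs and multiplicities is the most delicate part of the argument.

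Finally, applying Paragraph \ref{analytic_description_par1}: with $g=1$, the only pair $i<j\leq 2g$ is $(i,j)=(1,2)$, and by the proof of Proposition \ref{propintro}(a) one has $\mu_{1,2,k}(\xi_{\Delta(X_0)};\alpha_1)=-p_{1k}$. Using the shuffle relation $\int_{\beta_k}\omega_i\omega_j+\int_{\beta_k}\omega_j\omega_i=(\int_{\beta_k}\omega_i)(\int_{\beta_k}\omega_j)$ to reduce the $\mu'$-sum defining $P_{\Delta(X_0)}(\alpha_1)$ to strictly increasing pairs, the functional $-P_{\Delta(X_0)}(\alpha_1)$ is represented modulo $\Per_\ZZ(\alpha_1)$ by $p_{11}\int_{\beta_1}\omega_1\omega_2+p_{12}\int_{\beta_2}\omega_1\omega_2$. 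Equating this with the geometric computation from the two previous paragraphs yields the congruence \eqref{eq3introB}.
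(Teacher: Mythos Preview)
Your overall strategy matches the paper's: apply the $n=2$ case of Theorem~\ref{thm1intro}, treat the $\Delta_{2,e}$ and $Z^\infty_{2,e}$ contributions separately, and compare with the analytic description of $P_{\Delta(X_0)}$. However, the handling of the $\Delta_{2,e}$ contribution is not a proof but a hope. You write that the $S_3$-symmetry ``will collapse many terms'' and that ``the denominator $4$ should emerge from the Kunneth projectors of $\Delta(X_0)\times\Delta(X_0)$,'' but you do not carry this out, and the mechanism you suggest is not the right one. The paper instead proves a clean lemma: when $g=1$, one has $2h_2(\Delta_{2,e})=0$ in $J((H^1)^{\otimes 3})^\vee$, \emph{before} applying $\xi_{\Delta(X_0)}^{-1}$. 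The argument is that for any permutation $\sigma\in S_3$ one has $\sigma_\ast\Delta_{2,e}=\Delta_{2,e}$, so $\int_{\partial^{-1}\Delta_{2,e}}\eta_{i_1}\otimes\eta_{i_2}\otimes\eta_{i_3}\equiv\text{sgn}(\sigma)\int_{\partial^{-1}\Delta_{2,e}}\eta_{i_{\sigma(1)}}\otimes\eta_{i_{\sigma(2)}}\otimes\eta_{i_{\sigma(3)}}\pmod\ZZ$; since $g=1$ forces any triple $(i_1,i_2,i_3)$ to have a repeated index, one can always choose $\sigma$ to be a transposition fixing the triple, whence the integral lies in $\tfrac12\ZZ$. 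This gives $2$-torsion, not $4$-torsion, for $h_2(\Delta_{2,e})$.

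The remaining factor of $2$ (to get $\tfrac14\Per_\ZZ(\alpha_1)$) comes from the analytic side, and here your computation is also off. Using Lemma~\ref{lem_xi_Delta} one has $\lambda_{12}(\Delta(X_0))=-1$ and $\lambda_{21}(\Delta(X_0))=1$, so $\mu_{1,2,k}(\xi_{\Delta(X_0)};\alpha_1)=(\lambda_{12}-\lambda_{21})p_{1k}=-2p_{1k}$, not $-p_{1k}$ as you state (the formula in Proposition~\ref{propintro}(a) has already divided out a harmless factor of $2$, which is fine over $\Per_\QQ$ but not over $\Per_\ZZ$). Likewise $\int_{\Delta(X)}\xi_{\Delta(X_0)}=-2$. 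Putting these together with $2h_2(\Delta_{2,e})=0$ gives
\[
-4\Bigl(p_{11}\int_{\beta_1}\omega_1\omega_2+p_{12}\int_{\beta_2}\omega_1\omega_2\Bigr)=2f_{\Delta(X_0)}(\alpha_1)\equiv 2\cdot(-2)\int_e^\infty\alpha_1\pmod{\Per_\ZZ(\alpha_1)},
\]
which is exactly \eqref{eq3introB}. So the $4$ arises as $2\times 2$: one $2$ from the torsion lemma for $h_2(\Delta_{2,e})$, one $2$ from the coefficient in $\xi_{\Delta(X_0)}$, not from tracking Kunneth projectors of $\Delta(X_0)\times\Delta(X_0)$ through a cycle-level pushforward.
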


The condition on the order of the pole at $\infty$ is included only to guarantee that Hypothesis $\star$ is satisfied. To prove Theorem \ref{thm3intro}, one applies $\xi_{\Delta(X_0)}^{-1}$ to \eqref{eqsum3} and uses the fact that when $g=1$, $2h_2(\Delta_{2,e})=0$.\\

Let $X_0$ be given by the affine equation \eqref{eq4introrevision1} and $\infty$ be the point at infinity. Take $\alpha_1=\frac{dx}{y}$ and $\alpha_2=\frac{xdx}{y}$. Then the classical Legendre relation says $p_{11}p_{22}-p_{12}p_{21}=2\pi i$, and \eqref{eq3introB} can be rewritten as
\[
\int\limits_{\beta_1}\alpha_1\int\limits_{\beta_2} (\alpha_1\alpha_2-\alpha_2\alpha_1)~-~\int\limits_{\beta_2}\alpha_1\int\limits_{\beta_1} (\alpha_1\alpha_2-\alpha_2\alpha_1)\equiv 4\pi i\int\limits_e^\infty \alpha_1\hspace{.1in}\mod \pi i\cdot \Per_\ZZ(\alpha_1).
\]

We close this introduction with a word on the structure of the paper. We recall some background material in Sections \ref{Hodgetheorybackground} and \ref{ch1}. Nothing in these two sections is original. Sections \ref{bar2}-\ref{geometriccor} contain the geometric component of the paper. The goal in Sections \ref{bar2}-\ref{proofgeneralcase} is to state and prove Theorem \ref{thm1intro}. In Section \ref{geometriccor} we give two corollaries of Theorem \ref{thm1intro}. The last three sections contain the arithmetic part of the paper. In Section \ref{ch4}, we prove Theorem \ref{thm2intro} and give an analytic description for the point $P_\xi$. Sections \ref{ch5} and \ref{ch5section_examples} apply the earlier results of the paper to periods. Paragraph \ref{ch5philosophy} explains the methodology in detail, namely how Hodge classes may induce relations between periods of $L_2(X-\{\infty\},e)$. Section \ref{ch5section_examples} discusses Proposition \ref{propintro} and Theorem \ref{thm3intro} above.\\

{\bf Acknowledgment.} This article is based on my PhD thesis at the University of Toronto. I am very grateful to my advisor, Professor Kumar Murty, for his continuous encouragement and guidance. I am also grateful to Professor Henri Darmon for reading and providing feedback on my thesis, and to him and Professor Steven Kudla for some enlightening discussions. Finally, I would like to thank Professor Richard Hain for a helpful correspondence. 

\numberwithin{prop}{subsection}
\numberwithin{lemma}{subsection}
\numberwithin{thm}{subsection}

\section{Recollections from Hodge theory}\label{Hodgetheorybackground}
In this section we briefly recall a few basic definitions and facts about mixed Hodge structures.

\subsection{} Unless otherwise stated, by a (pure or mixed) Hodge structure we mean one that is over $\ZZ$. We use the standard notation $F^\cdot$ and $W_\cdot$ for the Hodge and weight filtrations. We denote the category of mixed (resp. pure) Hodge structures by $\mathbf{MHS}$ (resp. $\mathbf{HS}$). We will often denote a Hodge or mixed Hodge structure by a capital English letter, and then decorate it with the subscript $\mathbb{K}\in\{\ZZ,\QQ,\CC\}$ to refer to its corresponding $\mathbb{K}$-module. For example, if $H$ is a mixed Hodge structure, by $H_\ZZ$, $H_\QQ$, and $H_\CC$ we refer to the corresponding $\ZZ$, $\QQ$, and $\CC$ modules. For each integer $n$, we denote by $\ZZ(-n)$ the unique Hodge structure of weight $2n$ with the underlying abelian group $\ZZ$.\\  

Given a mixed Hodge structure $H$, we set $W_nH_\ZZ$ to be the pre-image of $W_nH_\QQ$ under the natural map
\[
H_\ZZ\rightarrow H_\QQ.
\]
This convention is adopted so that the $W_n$ are functors $\mathbf{MHS}\rightarrow\mathbf{MHS}$. The highest (resp. lowest) weight of a mixed Hodge structure $H$ is defined to be the smallest $n$ for which $W_nH=H$ (resp. $W_nH\neq0$).\\

\subsection{Tensor product and internal Homs} Given mixed Hodge structures $A$ and $B$, one has an object $A\otimes B$ in $\mathbf{MHS}$ defined in the obvious way. For each $n$, the {\it twist} $A(n):=A\otimes \mathbb{Z}(n)$ is obtained from $A$ by shifting the filtrations. One clearly has $A(0)=A$. The category $\mathbf{MHS}$ is a tensor abelian category with $\mathbb{Z}(0)$ as the identity of the tensor product.\\

Given objects $A$ and $B$ of $\mathbf{MHS}$, their {\it internal hom} $\inhom(A,B)$ is a mixed Hodge structure defined as follows: Its underlying abelian group is $\Hom_\ZZ(A_\ZZ,B_\ZZ)$, and the filtrations are given by
\[
W_n\Hom_\QQ(A_\QQ,B_\QQ)=\{f:A_\QQ\rightarrow B_\QQ~|~f(W_lA_\mathbb{Q})\subset W_{n+l}B_\mathbb{Q}\hspace{.1in}\text{for all $l$}\}
\]
and
\[
F^p\Hom_\CC(A_\CC,B_\CC)=\{f:A_\CC\rightarrow B_\CC~|~f(F^lA_\CC)\subset F^{p+l}B_\CC\hspace{.1in}\text{for all $l$}\}.
\]
If $A$ and $B$ are pure of weights $a$ and $b$, $\inhom(A,B)$ is pure of weight $b-a$. The {\it dual} to a mixed Hodge structure $A$ is defined to be $A^\vee:=\inhom(A,\mathbb{Z}(0))$. We adopt the convention $A^{\otimes n}:=(A^{\otimes -n})^\vee$ for $n$ negative. One clearly has $\mathbb{Z}(n)=\mathbb{Z}(1)^{\otimes n}$ for all $n$.\\
%For references on the material so far, see \cite{HT2}, \cite{Voisin}, and \cite{2LNM900}.\\

\subsection{Carlson Jacobians} Motivated by Griffiths' intermediate Jacobians of a variety, given a mixed Hodge structure $A$, Carlson \cite{Carlson} defined its $n^{\text{th}}$ {\it Jacobian}\footnote[2]{One should not be misled by the use of the word Jacobian here: Carlson Jacobians of a mixed Hodge structure are often not algebraic.} by 
\[J^n(A):=\frac{A_{\CC}}{F^nA_{\CC}+A_{\ZZ}},\]
where by $A_\ZZ$ we obviously mean its image in $A_\CC$. It is easy to see that for $n$ bigger than half the highest weight of $A$, the natural map
\begin{equation}\label{eqHSbasics1}
A_\RR:=A_\ZZ\otimes\RR\rightarrow \frac{A_{\CC}}{F^nA_{\CC}}
\end{equation}
(given by the inclusion $A_{\RR}\subset A_{\CC}$) is injective, whence $J^n(A)$ is the quotient of a complex vector space by a discrete subgroup. It is easy to see that in general $J^n$ is a functor from $\mathbf{MHS}$ to the category of abelian groups that respects direct sums.\\

\noindent Of special interest to us is the case of the ``middle Jacobian" $JA:=J^nA$ of a pure Hodge structure $A$ of weight $2n-1$ (possibly negative). It is easy to see that in this case, the map \eqref{eqHSbasics1} is an isomorphism, and hence induces an isomorphism of real tori 
\begin{equation}\label{eqHSbasics2}
\frac{A_{\RR}}{A_{\ZZ}}\cong JA.
\end{equation}

We record, for future reference, a few easy statements in the following lemma.

\begin{lemma} \label{basichodge}
Let $A$, $B$ and $C$ be mixed Hodge structures.
\begin{itemize}
\item[(a)] If $B_{\ZZ}$ is free, the canonical isomorphism $\Hom_\ZZ(A_\ZZ,B_\ZZ\otimes C_\ZZ)\cong \Hom_\ZZ\left(A_\ZZ\otimes B_\ZZ^\vee,C_\ZZ\right)$ induces an isomorphism $\displaystyle{\underline{\Hom}(A,B\otimes C)\cong \underline{\Hom}\left(A\otimes B^\vee,C\right)}$.
\item[(b)] The canonical isomorphism $\Hom_\ZZ(A_\ZZ,B_\ZZ)\otimes C_\ZZ\cong \Hom_\ZZ\left(A_\ZZ, B_\ZZ\otimes C_\ZZ\right)$ induces an isomorphism $\underline{\Hom}(A,B)\otimes C\cong \underline{\Hom}\left(A, B\otimes C\right)$.
\item[(c)] $J^nA(-p)=J^{n-p}A$
\item[(d)] If $A$ is pure of odd weight, $JA(-p)=JA$.
\item[(e)] $J^n\underline{\Hom}(A(-p),B)=J^{n+p}\underline{\Hom}(A,B)$.
\item[(f)] If $A$ and $B$ are pure of opposite parity weights, then $J\underline{\Hom}(A(-p),B)=J\underline{\Hom}(A,B)$.
\end{itemize}
\end{lemma}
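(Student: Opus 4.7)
All six assertions are direct unravellings of definitions; there is no serious obstacle, only a bookkeeping exercise on how weight and Hodge filtrations behave under the standard natural isomorphisms of underlying abelian groups. My plan is to verify each part in the order listed, leveraging earlier parts where possible.

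For part (a), I would start with the canonical isomorphism $\varphi:\Hom_\ZZ(A_\ZZ,B_\ZZ\otimes C_\ZZ)\cong\Hom_\ZZ(A_\ZZ\otimes B_\ZZ^\vee,C_\ZZ)$ (freeness of $B_\ZZ$ is needed to identify $B\otimes C$ with $\inhom(B^\vee,C)$ integrally) and check that $\varphi$ carries $W_n$ on the left to $W_n$ on the right, using the formulas $W_n(B\otimes C)=\sum_{i+j=n}W_iB\otimes W_jC$ and $W_nB^\vee=\{\ell:\ell(W_{-n-1}B_\QQ)=0\}$; the same verification works after tensoring with $\CC$ for the Hodge filtration. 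Part (b) is analogous: given the natural map $\Hom_\ZZ(A_\ZZ,B_\ZZ)\otimes C_\ZZ\to\Hom_\ZZ(A_\ZZ,B_\ZZ\otimes C_\ZZ)$, I check compatibility with filtrations by writing a general element as a finite sum $\sum f_\alpha\otimes c_\alpha$ with $f_\alpha\in W_{i_\alpha}\inhom(A,B)$, $c_\alpha\in W_{j_\alpha}C$, and reading off the weight of the associated $A\to B\otimes C$.

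For (c), I would recall that $\ZZ(-p)$ is pure of type $(p,p)$, so $F^k(A(-p))_\CC=F^{k-p}A_\CC$ while the underlying $\ZZ$-module is unchanged. Then by definition
\[
J^nA(-p)=\frac{A(-p)_\CC}{F^nA(-p)_\CC+A(-p)_\ZZ}=\frac{A_\CC}{F^{n-p}A_\CC+A_\ZZ}=J^{n-p}A.
\]
Part (d) is immediate: if $A$ is pure of weight $2n-1$, then by definition $JA=J^nA$, and the same formula shows $JA(-p)=J^nA(-p)=J^{n-p}A=JA$ (note $A(-p)$ has weight $2(n+p)-1$, and $J$ on it denotes $J^{n+p}$, so I need to apply (c) with $n+p$ in place of $n$; the output is $J^{(n+p)-p}A=J^nA=JA$).

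For (e), the key observation is the identification $\inhom(A(-p),B)\cong\inhom(A,B)(p)$ as mixed Hodge structures. This follows by tracking filtrations: on the weight side, $f\in W_n\inhom(A(-p),B)$ says $f(W_{l-2p}A)\subset W_{n+l}B$ for all $l$, equivalently $f\in W_{n+2p}\inhom(A,B)$, which matches $W_n$ of the twist $\inhom(A,B)(p)$; an analogous check handles $F^\cdot$. Applying (c) to the right-hand side then yields $J^n\inhom(A(-p),B)=J^n\inhom(A,B)(p)=J^{n+p}\inhom(A,B)$. Finally, (f) is a corollary: if $A,B$ are pure of weights $a,b$ of opposite parity, then $\inhom(A,B)$ is pure of odd weight $b-a=2n-1$ and $\inhom(A(-p),B)$ is pure of odd weight $2(n-p)-1$, so by definition of the middle Jacobian and by (e),
\[
J\inhom(A(-p),B)=J^{n-p}\inhom(A(-p),B)=J^{(n-p)+p}\inhom(A,B)=J\inhom(A,B).
\]
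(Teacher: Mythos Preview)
Your proposal is correct and follows essentially the same approach as the paper's own proof: both treat the lemma as a routine verification that the canonical linear-algebra isomorphisms respect the weight and Hodge filtrations, with (c) and (e) reduced to the shift formula $F^nA(-p)_\CC=F^{n-p}A_\CC$ and (d), (f) read off as special cases. Your argument for (e) via the identification $\inhom(A(-p),B)\cong\inhom(A,B)(p)$ followed by (c) is a slightly more structured rephrasing of the paper's one-line remark, but not a different route.
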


The proofs are all straightforward. For (a) (resp. (b)) one notes that the canonical isomorphisms $\Hom_\mathbb{K}(A_\mathbb{K},B_\mathbb{K}\otimes C_\mathbb{K})\cong \Hom_\mathbb{K}\left(A_\mathbb{K}\otimes B_\mathbb{K}^\vee,C_\mathbb{K}\right)$ (resp. $\Hom_\mathbb{K}(A_\mathbb{K},B_\mathbb{K})\otimes C_\mathbb{K}\cong \Hom_\mathbb{K}\left(A_\mathbb{K}, B_\mathbb{K}\otimes C_\mathbb{K}\right)$) for $\mathbb{K}=\QQ,\CC$ come from their $\mathbb{K}=\ZZ$ counterpart by extending the scalars, and then checks that the isomorphisms respect the filtrations $W$ and $F$. Parts (c) and (e) are immediate from that $F^nA(-p)_\CC=F^{n-p}A_\CC$. Part (d) (resp. (f)) is a special case of (c) (resp. (e)).

\subsection{Carlson's theorem on classifying extensions in $\mathbf{MHS}$}\label{Carlsoniso}
Let $A$ and $B$ be mixed Hodge structures. By $\Ext(A,B)$ we mean the group of extensions of $A$ by $B$ in the category $\mathbf{MHS}$. Suppose the highest weight of $B$ is less than the lowest weight of $A$. Carlson \cite{Carlson} gave a functorial isomorphism 
\[\text{Ext}(A,B)\cong J^0\underline{\text{Hom}}(A,B).\]
Given an extension $\mathbb{E}$ given by a short exact sequence
\begin{center}
\begin{tikzpicture}
  \matrix (m) [matrix of math nodes, column sep=2em]
    { 0 & B  & E  & A  & 0~,\\};
  { [start chain] \chainin (m-1-1);
    \chainin (m-1-2);
    \chainin (m-1-3);
    \chainin (m-1-4);
    \chainin (m-1-5);}
  \end{tikzpicture}
\end{center}
one way to describe the corresponding element in the Jacobian is as follows: Choose a Hodge section $\sigma_F$ of $E_{\CC}\rightarrow A_{\CC}$, and an integral retraction (i.e. left inverse) $\rho_{\ZZ}$ of $B_{\CC}\rightarrow E_{\CC}$. The extension $\mathbb{E}$ corresponds to the class of $\rho_{\ZZ}\circ\sigma_F$. (By a Hodge section we mean a section that is compatible with the Hodge filtrations, and by integral we mean a map that is induced by a map between the underlying $\ZZ$-modules.)\\

In the interest of simplifying notation, we shall identify $\text{Ext}(A,B)$ and $J^0\underline{\text{Hom}}(A,B)$ via the isomorphism of Carlson.

\subsection{Cohomology of a complex variety}\label{cohomologyvariety} 

Let $U$ be a complex variety. If $U$ is smooth and projective, its degree $n$ cohomology is a pure Hodge structure of weight $n$: The underlying abelian group is the Betti (singular) cohomology group $H^n(U,\ZZ)$ ($U$ with analytic topology). Identifying
\begin{equation}\label{eq1rev2ch2}
H^n(U,\ZZ)\otimes \CC\cong H^n(U,\CC)\stackrel{\text{de Rham iso.}}{\cong} H^n_{dR}(U),
\end{equation}
where $H^n_{dR}(U)$ here is the $n^\text{th}$ cohomology of the complex $E^\cdot_\CC(U)$ of complex-valued smooth differential forms on $U$, the Hodge decomposition is given by the classical result
\[
H^n_{dR}(U)=\bigoplus\limits_{p+q=n} H^{p,q},
\] 
where elements of $H^{p,q}$ are represented by forms of type $(p,q)$.\\

More generally, thanks to a theorem of Deligne, the degree $n$ cohomology of $U$ (which is no longer assumed to be projective or smooth), naturally carries a mixed Hodge structure, which we denote by $H^n(U)$. If $U$ is smooth, $H^n(U)$ can be described as follows: The underlying abelian group is again Betti cohomology with integral coefficients. Via the identifications \eqref{eq1rev2ch2}, we define the weight and Hodge filtrations on $H^n_{dR}(U)$. Realize $U$ as $Y\setminus D$, where $Y$ is smooth projective and $D$ is a normal crossing divisor. Then the complex $E^\cdot(Y\log D)$ of smooth differential forms on $U$ with at most logarithmic singularity along $D$ calculates the cohomology of $U$, i.e. the inclusion
\[
E^\cdot(Y\log D)\hookrightarrow E^\cdot_\CC(U)
\]  
is a quasi-isomorphism. One defines a filtration $F^\cdot$ (resp. $W_\cdot$) on the complex $E^\cdot(Y\log D)$ by holomorphic degree (resp. order of poles along $D$). Then the Hodge and Weight filtration on $H^n_{dR}(U)$ are given by
\[
F^pH^n_{dR}(U)=\Im\bigm(H^nF^pE^\cdot(Y\log D)\longrightarrow H^n_{dR}(U)\bigm)
\]    
and 
\[
W_lH^n_{dR}(U)=\Im \bigm(H^nW_{l-n}E^\cdot(Y\log D)\longrightarrow H^n_{dR}(U)\bigm).
\]
One can show that $W_\cdot$ is defined over $\QQ$, i.e. is induced by a filtration on
\[
H^n(U,\QQ)\subset H^n(U,\CC)\cong H^n_{dR}(U),
\]
and that the structure just defined only depends on $U$ (and not the compactification used in the process).\\

For references on mixed Hodge structures on the cohomology of a variety, the original articles are Deligne's \cite{HT2} (for the smooth case) and \cite{HT3} (for the general case). The reader can also consult \cite{Morgan} and \cite{Voisin}. For more details on the complex $E^\cdot(Y\log D)$ see \cite{Morgan}.\\

Throughout the paper, our varieties will all be smooth and for such a variety $U$ we continue to identify $H^n(U,\CC)$ and $H^n_{dR}(U)$ via the isomorphism of de Rham.

\section{Hodge Theory of $\pi_1$- Recollections from the general theory}\label{ch1}

\subsection{Review of the reduced bar construction}

In this paragraph, we briefly review certain aspects of the reduced bar construction on a differential graded algebra. The construction is due to K.T. Chen, and the reader can refer to \cite{Chen2} and \cite{Hain1} for references. We only discuss a special case that is of interest to us. Throughout this paragraph $\mathbb{K}$ is a field of characteristic 0.\\

By a differential graded algebra over $\mathbb{K}$ we mean one that is concentrated in degree $\geq0$. More precisely, this is a graded $\mathbb{K}$-algebra $\displaystyle{A^\cdot=\bigoplus\limits_{n\geq 0} A^n}$, equipped with a differential $d$ of degree 1 (so that one has a complex    
\[
A^0\stackrel{d}{\longrightarrow}A^1\stackrel{d}{\longrightarrow}A^2\stackrel{d}{\longrightarrow}\cdots
\] 
of $\mathbb{K}$-vector spaces) such that the graded Leibniz rule holds, i.e.
\[
d(ab)=(da)b+(-1)^{\deg(a)}a(db)
\] 
for homogeneous elements $a,b\in A^\cdot$, where $\deg$ is the degree. Moreover, we say $A^\cdot$ is commutative if 
\[
ab=(-1)^{\deg(a)\deg(b)}ba
\]
for all homogeneous $a,b$.\\  

Note that $\mathbb{K}$ itself can be thought of as a differential graded algebra over $\mathbb{K}$ in an obvious way. Suppose $\displaystyle{A^\cdot=\bigoplus\limits_{n\geq 0} A^n}$ is a differential graded algebra over $\mathbb{K}$, with the differential denoted by $d$. Denote the positive degree part by $A^+$. Let $\epsilon:A^\cdot\rightarrow \mathbb{K}$ be an augmentation (i.e. a morphism of differential graded algebras in to $\mathbb{K}$). For any integers $r,s$ ($r\geq0$), let $T^{-r,s}(A^\cdot)$ be the degree $s$ part of $(A^+)^{\otimes r}$, i.e. the $\mathbb{K}$-span of all terms of the form 
\begin{equation}\label{eq3backgroundrevision}
a_1\otimes\ldots\otimes a_r,
\end{equation}   
where $a_i\in A^+$ and $\sum \deg a_i=s$. (By convention, $(A^+)^{\otimes 0}=\mathbb{K}$.) It is customary to use the notation 
\[
[a_1|\ldots|a_r]
\]
for the element \eqref{eq3backgroundrevision}. The $T^{-r,s}(A^\cdot)$ form a second quadrant bicomplex $T^{\cdot,\cdot}(A^\cdot)$, with $T^{-r,s}(A^\cdot)$ being the $(-r,s)$ bidegree component, and anti-commuting differentials both of degree 1 defined below. Here $Ja=(-1)^{\deg a}a$ for any homogeneous element $a\in A^\cdot$.  
\begin{itemize}
\item The horizontal differential $d_h$:
\[
d_h([a_1|\ldots|a_r])= \sum\limits_{i=1}^{r-1}(-1)^{i+1}[Ja_1|\ldots|Ja_{i-1}|(Ja_i)a_{i+1}|a_{i+2}|\ldots|a_r]
\]
\item The vertical differential $d_v$:
\[
d_v([a_1|\ldots|a_r])=\sum\limits_{i=1}^{r}(-1)^i[Ja_1|\ldots|Ja_{i-1}|da_i|a_{i+1}|\ldots|a_r].
\]
\end{itemize}
\noindent The formulas for the differentials are particularly important for us when all the $a_i$ are of degree 1. In this case the formulas simplify to 
\begin{equation}\label{eq_dh}
d_h[a_1\ldots|a_r]=-\sum\limits_i[a_1|\ldots|a_ia_{i+1}|\ldots|a_r]
\end{equation}
and
\begin{equation}\label{eq_dv}
d_v[a_1\ldots|a_r]=-\sum\limits_i [a_1|\ldots|da_i|\ldots|a_r].
\end{equation}
 
\noindent The associated total complex $\text{Tot}\left(T^{\cdot,\cdot}(A^\cdot)\right)$ is concentrated in non-negative degrees, and its degree zero part is $\bigoplus\limits_{s\geq 0}T^{-s,s}(A^\cdot)=\bigoplus\limits_{s\geq 0} (A^1)^{\otimes s}$. The reduced bar construction $\overline{B}(A^{\cdot},\epsilon)=\bigoplus\limits_{n\geq 0}\overline{B}^n(A^{\cdot},\epsilon)$ of $A^{\cdot}$ relative to $\epsilon$ is by definition a certain quotient of $\text{Tot}\left(T^{\cdot,\cdot}(A^\cdot)\right)$, where the subcomplex by which one quotients depends on $\epsilon$. The image of $[a_1|\ldots|a_r]$ is denoted by $(a_1|\ldots|a_r)$. If $A^0=\mathbb{K}$, then $\overline{B}(A^{\cdot},\epsilon)$ is simply $\text{Tot}\left(T^{\cdot,\cdot}(A^\cdot)\right)$. From now on we drop the augmentation $\epsilon$ from our notation for $\overline{B}$ if it will not lead to any confusion.\\

The reduced bar construction is naturally filtered by tensor length: Let 
\[\mathcal{T}_{n}=\bigoplus\limits_{r\leq n}\left(T^{-r,s}(A^\cdot)\right).\]
The filtration $\{\mathcal{T}_n\}$ of the double complex $\left(T^{\cdot,\cdot}(A^\cdot)\right)$ induces a filtration $\{\mathcal{B}_n\}$ on the reduced bar construction. We denote the filtration induced on the cohomology of $\overline{B}(A^{\cdot})$ also by $\{\mathcal{B}_n\}$.\\ 

The reduced bar construction is functorial. In particular, if $A^\cdot$ and $\tilde{A}^\cdot$ are differential graded $\mathbb{K}$-algebras, and $\epsilon:A^\cdot\rightarrow \mathbb{K}$ and $\tilde{\epsilon}:\tilde{A}^\cdot\rightarrow \mathbb{K}$ are augmentations, a morphism $f:A^\cdot\rightarrow \tilde{A}^\cdot$ of differential graded algebras satisfying $\tilde{\epsilon}\circ f=\epsilon$ induces a morphisms of complexes $\overline{B}(A^\cdot)\rightarrow \overline{B}(\tilde{A}^\cdot)$ compatible with the filtrations $\{\mathcal{B}_n\}$. Moreover, if $f$ is a quasi-isomorphism and $H^0(A^\cdot)=\mathbb{K}$, then the induced maps between the reduced bar constructions or the $\mathcal{B}_n$ are also quasi-isomorphisms.\\ 

If $A^\cdot$ is commutative, then $\overline{B}(A^{\cdot})$ is in fact a commutative differential graded algebra\footnote[2]{Actually it is a commutative Hopf algebra, with comultiplication defined by
\[
(a_1|\ldots|a_r)\mapsto \sum_i (a_1|\ldots|a_i)\otimes(a_{i+1}|\ldots|a_r).
\]
We shall not explicitly work with the coalgebra structure in this paper.}, with multiplication given by the so called {\it shuffle product}. For degree zero elements, the multiplication is given by the formula\footnote[3]{Recall that $\sigma\in S_{r+s}$ is an (r,s) shuffle if 
\[
\sigma^{-1}(1)<\cdots<\sigma^{-1}(r)\hspace{.1in}\text{and}\hspace{.1in} \sigma^{-1}(r+1)<\cdots<\sigma^{-1}(r+s)
\].}    
\[
(a_1|\ldots|a_r)\cdot (a_{r+1}|\ldots|a_{r+s})=\sum\limits_{\text{(r,s) shuffles $\sigma$}}(a_{\sigma(1)}|\ldots|a_{\sigma(r+s)}).
\]
The general formula is an alternating sum of the $(a_{\sigma(1)}|\ldots|a_{\sigma(r+s)})$, where the coefficients take into account the signs of the $\sigma$ and the degrees of the $a_i$. In particular, when $A^\cdot$ is commutative, $H^0\overline{B}(A^{\cdot})$ is also a commutative algebra. If $f:A^\cdot\rightarrow B^\cdot$ is a morphism of commutative differential graded algebras, then the induced map between the reduced bar constructions respects the multiplications.

\subsection{} Let $G$ be a finitely generated group and $\mathbb{K}$ a field of characteristic zero. The Malcev or (pro)-unipotent completion of $G$ over $\mathbb{K}$ is a pro-unipotent algebraic group $G_{\mathbb{K}}^{un}$ over $\mathbb{K}$, together with a homomorphism $G\rightarrow G_{\mathbb{K}}^{un}(\mathbb{K})$, such that for any pro-unipotent group $U$ over $\mathbb{K}$ and any homomorphism $G\rightarrow U(\mathbb{K})$, there is a unique morphism $G_{\mathbb{K}}^{un}\rightarrow U$ of group schemes over $\mathbb{K}$ making the obvious diagram commute. It follows immediately that the image of $G$ is dense in $G_{\mathbb{K}}^{un}$. The group $G_{\mathbb{K}}^{un}$ can be defined explicitly as $\Spec(\mathcal{O}_{G_\mathbb{K}^{un}})$, where
\[\mathcal{O}_{G_\mathbb{K}^{un}}=\lim_{\longrightarrow}\left(\frac{\mathbb{K}[G]}{I^{m+1}}\right)^\vee,\]
and $I$ is the augmentation ideal. One can think of 
\[\left(\frac{\mathbb{K}[G]}{I^{m+1}}\right)^\vee\] 
as the space of $\mathbb{K}$-valued functions on $G$ which (after being extended linearly to $\mathbb{K}[G]$) vanish on $I^{m+1}$. For the very last sentence, $\mathbb{K}$ can be a ring.

\subsection{Chen's theory of iterated integrals and the description of $\mathcal{O}({\pi_1}_{\CC}^{un})$}\label{backgrounditint}
We review some results of K.T Chen in this paragraph. For details and proofs, see \cite{Chen1}, \cite{Chen2} and \cite{Chen3}. Throughout this paragraph $\mathbb{K}\in\{\RR,\CC\}$.\\

As a generalization of the notion of a manifold, Chen in \cite{Chen3} defines the notion of a differentiable space. He associates to each differentiable space a {\it de Rham} commutative differential graded algebra of {\it $\mathbb{K}$-valued differential forms}. The degree $0$ forms are, as expected, ``differentiable" functions, and the multiplication on them is simply point-wise multiplication of functions.\\

Let $U$ be a path-connected (smooth) manifold, $e\in U$, and $\Omega_e$ be the (smooth) loop space at $e$. Let $E_\mathbb{K}^\cdot(U)$ be the complex of $\mathbb{K}$-valued differential forms on $U$. The loop space $\Omega_e$ is naturally made into a differentiable space. For every $\omega_1,\ldots,\omega_r\in E_\mathbb{K}^\cdot(U)$ of positive degree, Chen defines a $d$-form on $\Omega_e$ denoted by $\int\omega_1\ldots\omega_r$. A $\mathbb{K}$-valued {\it iterated integral of degree $d$} is by definition a linear combination of the $d$-forms of the form $\int\omega_1\ldots\omega_r$. In the case that $\omega_1,\ldots,\omega_r$ are all 1-forms on $U$, the zero form, i.e. function, $\int\omega_1\ldots\omega_r$ on the loop space is defined by
\[
\biggm(\gamma:[0,1]\rightarrow U\biggm)~\mapsto\hspace{.5in} \int\limits_{0\leq t_1\leq\ldots\leq t_r\leq 1} f_1(t_1)dt_1 \ldots f_r(t_r)dt_r, 
\] 
where $f_i(t)dt=\gamma^\ast(\omega_i)$. If $r=0$, the ``empty" iterated integral is defined to be the constant function 1. The value of $\displaystyle{\int\omega_1\ldots\omega_r}$ on $\gamma$ is denoted by $\displaystyle{\int\limits_\gamma\omega_1\ldots\omega_r}$. It is clear that for $r=1$, this coincides with the usual integral.\\

Following \cite{Chen2}, we denote the space of  $\mathbb{K}$-valued iterated integral of degree $d$ by ${A'_\mathbb{K}}^d$. The space $A'_\mathbb{K}:=\oplus {A'_\mathbb{K}}^d$ is a sub-complex of the de Rham complex on the loop space $\Omega_e$. It is also closed under multiplication (and hence is a sub-differential graded algebra). For degree 0 iterated integrals, this is thanks to the so-called {\it shuffle product} property given by the formula     
\begin{equation}\label{eqshuffleitint}
\int\limits_\gamma\omega_1\ldots\omega_r \int\limits_\gamma\omega_{r+1}\ldots\omega_{r+s}=\sum\limits_{\text{(r,s) shuffles $\sigma$}}\int\limits_\gamma\omega_{\sigma(1)}\ldots\omega_{\sigma(r+s)},
\end{equation}
where $\gamma$ is a loop at $e$.\\

An element of $A'^d_\mathbb{K}$ that can be expressed as a linear combination of $\int\omega_1\ldots\omega_r$ with $r\leq m$ is said to be of length $\leq m$. The elements of $A'_\mathbb{K}$ of length $\leq m$ form a subcomplex $A'_\mathbb{K}(m)$. The complex $A'_\mathbb{K}$ is naturally filtered by length. Since $A'_\mathbb{K}$ is concentrated in degree $\geq 0$, one has
\[
H^0(A'_\mathbb{K}(m))\subset H^0(A'_\mathbb{K}),
\]
and the $\{H^0(A'_\mathbb{K}(m))\}$ is a filtration on $H^0(A'_\mathbb{K})$.\\

From now on, by an iterated integral we mean one of degree zero. The following formula describes how iterated integrals behave relative to composition of paths. Here $\alpha$ and $\beta$ are loops at $e$.
\begin{equation}\label{itintprop}
\int\limits_{\alpha\beta}\omega_1\ldots\omega_r=\sum\limits_{i=0}^{r}\int\limits_\alpha\omega_1\ldots\omega_i~\int\limits_\beta\omega_{i+1}\ldots\omega_r
\end{equation}

\noindent One can show that iterated integrals also satisfy the following relations (as functions on $\Omega_e$). Here $f$ is a (smooth) function on $U$.
\begin{eqnarray}
\int(df)\omega_2\ldots\omega_r&=&\int(f\omega_2)\ldots\omega_r-f(e)\int\omega_2\ldots\omega_r\notag\\
\int\omega_1\ldots\omega_{i-1}(df)\omega_{i+1}\ldots\omega_r&=&\int\omega_1\ldots\omega_{i-1}(f\omega_{i+1})\ldots\omega_r-\int\omega_1\ldots(f\omega_{i-1})\omega_{i+1}\ldots\omega_r\notag\\
\int\omega_1\ldots\omega_{r-1}(df)&=&f(e)\int\omega_1\ldots\omega_{r-1}-\int\omega_1\ldots(f\omega_{r-1})\label{itrel}
\end{eqnarray}

An iterated integral induces a function on $G=\pi_1(U,e)$ if and only if it is locally constant on the loop space if and only if it is closed (as an element of the complex $A'_\mathbb{K}$). It follows from \eqref{itintprop} that a closed iterated integral of length $\leq m$ vanishes on $I^{m+1}\subset \mathbb{K}[G]$, so that one has a natural inclusion 
\[H^0(A'_\mathbb{K}(m))\subset\left(\frac{\mathbb{K}[G]}{I^{m+1}}\right)^\vee.\] 
The main theorem of \cite{Chen1} (Theorem 5.3) asserts that indeed 
\[H^0(A'_\mathbb{K}(m))=\left(\frac{\mathbb{K}[G]}{I^{m+1}}\right)^\vee.\]

The algebraic structure of $H^0(A'_\mathbb{K}(m))$ can be described using the reduced bar construction on the complex $E^\cdot_\mathbb{K}(U)$ of smooth $\mathbb{K}$-valued differential forms on $U$, augmented by ``evaluation at $e$". One has a natural map of differential graded algebras $\overline{B}(E^\cdot_\mathbb{K}(U))\rightarrow A'_\mathbb{K}$ given by integration 
\[
(\omega_1|\ldots|\omega_r)~\mapsto~\int\omega_1\ldots\omega_r.
\]
This map\footnote[2]{The relations by which one mods out $\text{Tot}\left(T^{\cdot,\cdot}(E^\cdot_\mathbb{K}(U)\right)$ to get $\overline{B}(E^\cdot_\mathbb{K}(U))$ are defined exactly based on relations \eqref{itrel} satisfied by iterated integrals, so that the map just described is well-defined.} induces an isomorphism $H^0\overline{B}(E^\cdot_\mathbb{K}(U))\rightarrow H^0(A'_\mathbb{K})$ strictly compatible with the length filtrations, i.e. we have a natural isomorphism
\[
\mathcal{B}_mH^0\overline{B}(E^\cdot_\mathbb{K}(U))\stackrel{\int}{\rightarrow}H^0(A'_\mathbb{K}(m))=\left(\frac{\mathbb{K}[G]}{I^{m+1}}\right)^\vee. 
\]

\begin{rem} If $U$ is (the associated complex manifold to) a smooth complex variety, and $U=Y\setminus D$ where $Y$ is smooth projective and $D$ is a normal crossing divisor, one can replace $E^\cdot_\CC(U)$ by the complex $E^\cdot(Y\log D)$. (See Paragraph \ref{cohomologyvariety}.)
\end{rem}

\subsection{Mixed Hodge structure on $\pi_1$ of a smooth complex variety}\label{par4} 
Let $U$ be a smooth variety over $\CC$, $e\in U(\CC)$, $G=\pi_1(U,e)$, where with abuse of notation we denote a smooth complex variety and its associated complex manifold by the same symbol. Here we briefly recall Hain's mixed Hodge structure on the integral lattice 
\[\left(\frac{\ZZ[G]}{I^{m+1}}\right)^\vee,\]
which we denote by $L_m=L_m(U,e)$. For details and proofs, see \cite{Hain1}.\\

Let $U=Y\setminus D$, where $Y$ is a smooth projective variety and $D$ is a normal crossing divisor. In view of the isomorphism
\[\mathcal{B}_mH^0\overline{B}(E^\cdot(Y\log D))  \stackrel{\int}{\longrightarrow} \displaystyle{\left(\frac{\CC[G]}{I^{m+1}}\right)^\vee={(L_m)}_\CC}\]
the weight and Hodge filtrations on $L_m$ are described as follows:\\ 

\begin{itemize}
\item The weight filtration: $W_n{(L_m)}_\CC$ is the space of those closed iterated integrals that can be expressed as a sum of (not necessarily closed) iterated integrals of the form $\int\omega_1\ldots\omega_r$, with $r\leq m$ and $\omega_i\in E^1(Y\log D)$, such that at most $n-r$ of the $\omega_i$ are not smooth along $D$. One can prove that this filtration is indeed defined over $\QQ$. It is easy to see that $W_n(L_m)\subset L_n$.\\
\item The Hodge filtration: $F^p(L_m)_\CC$ is the space of those closed iterated integrals that can be expressed as a sum of (not necessarily closed) iterated integrals of the form $\int\omega_1\ldots\omega_r$, where $r\leq m$ and $\omega_i\in E^1(Y\log D)$, such that at least $p$ of the $\omega_i$ are of type (1,0).
\end{itemize}

Note that the $L_m$ form a direct system of mixed Hodge structures.

\begin{rem} (1) One can show that $L_m$ only depends on the pair $(U,e)$, and not on the embedding of $U$ as $Y\setminus D$. As in the case of mixed Hodge structure on cohomology, to explicitly describe the Hodge and weight filtrations on $L_m$ one usually embeds $U$ as $Y\setminus D$ as above.\\
(2) $L_m(U,e)$ is functorial in $(U,e)$.
\end{rem}

\subsection{De Rham lattice in $\mathcal{O}(\pi_1^{un})$ and periods of the fundamental group}

Let $K$ be a subfield of $\CC$, $U_0$ be a smooth variety over $K$, $e\in U_0(K)$, and $U=U_0\otimes_K\CC$. We assume moreover that $U_0$ is affine. Let $\Omega^\cdot(U_0)$ (resp. $\Omega^\cdot(U)$) be the complex of global (regular) differential forms on $U_0$ (resp. $U$). Since $U$ is affine, the complex $\Omega^\cdot(U)$ calculates the cohomology of $U$. More precisely, the natural map
\[
\Omega^\cdot(U_0)\otimes_K\CC=\Omega^\cdot(U)\rightarrow E^\cdot(U)
\]
is a quasi-isomorphism. It follows that one has a natural isomorphism
\[
H^0\overline{B}(\Omega^\cdot(U))\cong H^0\overline{B}(E^\cdot(U))
\]
strictly compatible with the filtrations. The de Rham fundamental group $\pi_1^{dR}(U_0,e)$ of $U_0$ with base point $e$ is an affine group scheme over $K$ with coordinate ring 
\[
\mathcal{O}(\pi_1^{dR}(U_0,e))=H^0\overline{B}(\Omega^\cdot(U_0)). 
\]
We refer to the image of $\mathcal{B}_nH^0\overline{B}(\Omega^\cdot(U_0))$ under
\[
\mathcal{B}_nH^0\overline{B}(\Omega^\cdot(U_0))\subset \mathcal{B}_nH^0\overline{B}(\Omega^\cdot(U))\cong \mathcal{B}_nH^0\overline{B}(E^\cdot(U))\stackrel{\int}{\cong} L_n(U,e)_\CC
\]
as the {\it de Rham lattice} in $L_n(U,e)_\CC$. It is easy to see that it is the space of all iterated integrals of length $\leq n$ formed by elements of $\Omega^1(U_0)$. (They will automatically be closed.) We use the notation $L_n(U_0,e)$ to refer to $L_n(U,e)$ together with the data of the de Rham lattice. The space of {\it periods} of $\pi_1(U_0,e)$ is the $K$-span of all the numbers of the form
\[
\int_\gamma \omega_1\ldots\omega_r,
\] 
where the $\omega_i$ are in $\Omega^1(U_0)$ and $\gamma\in\pi_1(U,e)$. The subspace generated by those integrals above with $r\leq n$ is the space of periods of $L_n(U_0,e)$.

\section{Construction of certain elements in the Bar construction}\label{bar2}

In this section, given an augmented differential graded algebra satisfying certain properties, we give a procedure that constructs elements is $H^0\overline{B}$ with prescribed highest length terms. This construction will be particularly important in Section \ref{Hodgetheorypi1}.\\ 

We assume that $A^\cdot$ is an augmented differential graded algebra, and that
\begin{itemize}
\item[(i)] $d(A^1)=(A^1)^2$,
\item[(ii)] for each pair $(a,b)$ of elements of $A^1$, $s(a,b)\in A^1$ is such that $d(s(a,b))=-ab$.
\end{itemize}
Let $a_1,\ldots,a_n\in A^1$ be closed. Our goal is to give a closed element of $\overline{B}^0(A^\cdot)$ of the form
\[
(a_1|\ldots|a_n)~+~\text{lower length terms}.
\]
For this, it suffices to construct a closed element of $\oplus T^{-r,r}(A^\cdot)$ of the form
\[
[a_1|\ldots|a_n]~+~\text{lower length terms}.
\] 
Set $\lambda_n=[a_1|\ldots|a_n]$. Then $d_v(\lambda_n)=0$, and $d_h(\lambda_n)\in T^{-n+1,n}$. The idea is to define, for each $r=n-1,\ldots,1$, an element $\lambda_r\in T^{-r,r}$ such that $d_v(\lambda_r)=-d_h(\lambda_{r+1})$. The element 
\[
\lambda_n+\lambda_{n-1}+\ldots+\lambda_1
\]
will then be closed.\\
For $r=n-1,\ldots,1$, define $\lambda_r$ to be the sum of all simple tensors in $T^{-r,r}$ of the form
\begin{equation}\label{eq1geometryrevision}
[...|...|...|...~...~...|...],
\end{equation}
where each block is formed by (possibly 0) successions of $s(~,~)$, and such that when we remove the symbols ``$|$" and ``$s(~,~)$", we are left with 
\begin{equation}\label{eq2geometryrevision}
[a_1~a_2~\ldots~a_n].
\end{equation}    
For example,
\[
\lambda_{n-1}=\sum\limits_{i=1}^{n-1}[a_1|\ldots|s(a_i,a_{i+1})|\ldots|a_n],
\]
and
\begin{eqnarray*}
\lambda_{n-2}~&=&\sum\limits_{1\leq i<j-1\leq n-2} [a_1|\ldots|s(a_i,a_{i+1})|\ldots|s(a_j,a_{j+1})|\ldots|a_n]\\
              &+&\sum\limits_{i=1}^{n-2} [a_1|\ldots|s(s(a_i,a_{i+1}),a_{i+2})|\ldots|a_n]\\
							&+&\sum\limits_{i=1}^{n-2} [a_1|\ldots|s(a_i,s(a_{i+1},a_{i+2}))|\ldots|a_n].
\end{eqnarray*}
There will be much more variety for $\lambda_{n-3}$: 
\begin{eqnarray*}
\lambda_{n-3}~=&&\sum\limits [a_1|\ldots|s(a_i,a_{i+1})|\ldots|s(a_j,a_{j+1})|\ldots|s(a_k,a_{k+1})|\ldots|a_n]\\
              &+&\sum [a_1|\ldots|s(a_i,a_{i+1})|\ldots|s(s(a_j,a_{j+1}),a_{j+2})|\ldots|a_n]\\
							&+&\sum [a_1|\ldots|s(a_i,a_{i+1})|\ldots|s(a_j,s(a_{j+1},a_{j+2}))|\ldots|a_n]\\
							&+&\sum [a_1|\ldots|s(s(a_i,a_{i+1}),a_{i+2})|\ldots|s(a_j,a_{j+1})|\ldots|a_n]\\
						  &+&\sum [a_1|\ldots|s(a_i,s(a_{i+1},a_{i+2}))|\ldots|s(a_j,a_{j+1})|\ldots|a_n]\\
							&+&\sum [a_1|\ldots|s(s(s(a_i,a_{i+1}),a_{i+2}),a_{i+3})|\ldots|a_n]\\
							&+&\sum [a_1|\ldots|s(s(a_i,s(a_{i+1},a_{i+2})),a_{i+3})|\ldots|a_n]\\
              &+&\sum [a_1|\ldots|s(a_i,s(s(a_{i+1},a_{i+2}),a_{i+3}))|\ldots|a_n]\\
							&+&\sum [a_1|\ldots|s(a_i,s(a_{i+1},s(a_{i+2},a_{i+3})))|\ldots|a_n]\\
							&+&\sum [a_1|\ldots|s(s(a_i,a_{i+1}),s(a_{i+2},a_{i+3}))|\ldots|a_n].
\end{eqnarray*}
Note that in every summand of $\lambda_r$, there are exactly $n-r$ occurrences of $s$.

\begin{lemma}
The element $\lambda_n+\ldots+\lambda_1$ is closed.
\end{lemma}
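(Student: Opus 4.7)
I would decompose the total differential as $d=d_v+d_h$, sort the equation $d(\lambda_1+\cdots+\lambda_n)=0$ by bidegree, and thereby reduce it to three identities:
\[
d_v(\lambda_n)=0,\qquad d_h(\lambda_1)=0,\qquad d_v(\lambda_r)=-d_h(\lambda_{r+1})\quad(1\leq r\leq n-1).
\]
The first two are immediate: closedness of each $a_k$ kills $d_v(\lambda_n)$ via \eqref{eq_dv}, while $d_h$ vanishes on $T^{-1,\cdot}$ for lack of adjacent slots to multiply. Everything therefore reduces to the inner identity.

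To attack it, I would first repackage $\lambda_r$ in terms of planar binary trees. Let $\mathcal{T}[a_i,\ldots,a_j]$ be the set of planar binary trees with leaves labelled $a_i,\ldots,a_j$ left-to-right, and define $\bar s(t)\in A^1$ recursively by $\bar s(a_k)=a_k$ on a leaf and $\bar s((t',t''))=s(\bar s(t'),\bar s(t''))$ at an internal node; property (ii) then yields the key identity $d\bar s(t)=-\bar s(t')\bar s(t'')$ at every non-leaf $t=(t',t'')$. With this notation, $\lambda_r=\sum[\bar s(t_1)|\cdots|\bar s(t_r)]$ summed over ordered partitions $P$ of $(a_1,\ldots,a_n)$ into $r$ non-empty consecutive blocks together with a tree $t_k$ on the $k$-th block. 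Applying \eqref{eq_dv} and substituting the key identity (the two minus signs cancelling) gives
\[
d_v(\lambda_r)=\sum[\bar s(t_1)|\cdots|\bar s(t_j')\bar s(t_j'')|\cdots|\bar s(t_r)],
\]
indexed by pairs (term of $\lambda_r$, non-leaf slot $j$ with decomposition $t_j=(t_j',t_j'')$); meanwhile \eqref{eq_dh} gives
\[
d_h(\lambda_{r+1})=-\sum[\bar s(u_1)|\cdots|\bar s(u_i)\bar s(u_{i+1})|\cdots|\bar s(u_{r+1})],
\]
indexed by pairs (term of $\lambda_{r+1}$, position $i$).

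The last step is a bijection between the two indexing sets: send (term, non-leaf slot $j$) to the term of $\lambda_{r+1}$ obtained by splitting the block $\bar s(t_j)$ into its two subtree-blocks $\bar s(t_j'),\bar s(t_j'')$, with marked position $i=j$; the inverse merges adjacent blocks $u_i,u_{i+1}$ into the tree $(u_i,u_{i+1})$. Under this bijection the tensors appearing in the two sums are literally equal, so the opposite signs force cancellation. The only real care-point, and the main (though mild) obstacle, is the sign bookkeeping---verifying that the two minus signs coming from \eqref{eq_dv} and from $d\bar s(t)=-\bar s(t')\bar s(t'')$ combine to match the single minus sign in \eqref{eq_dh}; everything else is straightforward combinatorics once the tree formalism is in place.
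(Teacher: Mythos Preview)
Your proof is correct and follows essentially the same approach as the paper. The paper's argument is terser---it simply asserts that both $-d_h(\lambda_{r+1})$ and $d_v(\lambda_r)$ equal the same combinatorial sum of simple tensors in $T^{-r,r+1}$---whereas your planar-tree formalism makes the underlying bijection (splitting a block at its root versus merging two adjacent blocks) explicit; but the content is identical.
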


\begin{proof}
Note that $d_v(\lambda_n)=d_h(\lambda_1)=0$. It remains to check that for each $r$, $-d_h(\lambda_{r+1})=d_v(\lambda_r)$. But in view of the formulas \eqref{eq_dh} and \eqref{eq_dv}, both $-d_h(\lambda_{r+1})$ and $d_v(\lambda_r)$ are the sum of all simple tensors in $T^{-r,r+1}$ of the form \eqref{eq1geometryrevision} where each block is formed by (possibly 0) successions of $s(~,~)$, and such that when we remove the symbols ``$|$" and ``$s(~,~)$", we are left with \eqref{eq2geometryrevision}. That each $a_i$ is closed is important to make sure $d_v(\lambda_r)$ is equal to the aforementioned sum.
\end{proof}

\begin{rem} It is easy to see that if $s:A^1\times A^1\rightarrow A^1$ is bilinear, then the above construction gives a linear map $(A^1_\text{closed})^{\otimes n}\rightarrow \mathcal{B}_nH^0\overline{B}(A)$.
\end{rem}

\section{Hodge Theory of $\pi_1$- The case of a punctured curve}\label{Hodgetheorypi1}
From here until the end of the paper, $X$ is a smooth (connected) projective curve over $\CC$ of genus $g$, and $\infty,e\in X(\CC)$ are distinct points. Our main objective in this section is to construct a map (see Lemma \ref{s_F}) which will play a crucial role later on. 

\subsection{}\label{Hodge_filt_L_n_paragraph} Let $S\subset X(\CC)$ be of finite cardinality $|S|\geq1$, $U=X-S$, and $e\in U(\CC)$. Let $G=\pi_1(U,e)$ and $L_m=L_m(U,e)$. Our goal in this paragraph is to study $(L_m)_\CC$ more closely.\\

It is well-known that in this case there are holomorphic differential forms $\alpha_i$ ($1\leq i\leq 2g+|S|-1$) on $U$ whose classes form a basis of $H^1_{dR}(U)$. We can, and will, take these such that $\alpha_1,\ldots,\alpha_{g}$ are of first kind (i.e. holomorphic on $X$), $\alpha_{g+1},\ldots\alpha_{2g}$ are of second kind (i.e. meromorphic on $X$ with zero residue along $S$), and $\alpha_{2g+1},\ldots,\alpha_{2g+|S|-1}$ are of third kind with simple poles at points in $S$. Let $R^\cdot$ be the sub-object of $E^\cdot_\CC(U)$ given by $R^0=\CC$, $R^1=\sum\limits_{i=1}^{2g+|S|-1}\alpha_i\CC$, and $R^2=0$. The inclusion map $R^\cdot\rightarrow E^\cdot_\CC(U)$ is a quasi-isomorphism, so that in particular
\[
\mathcal{B}_mH^0\overline{B}(R^\cdot)\cong \mathcal{B}_mH^0\overline{B}(E^\cdot_\CC(U))\hspace{.5in}\text{and}\hspace{.5in}H^0\overline{B}(R^\cdot)\cong H^0\overline{B}(E^\cdot_\CC(U)).
\]
It is easy to see that $H^0\overline{B}(R^\cdot)$, as a vector space, is the (underlying vector space of the) tensor algebra on $R^1$, and the multiplication is the shuffle product. In other words, $H^0\overline{B}(R^\cdot)$ is the shuffle algebra on the letters $\alpha_i$ ($1\leq i\leq 2g+|S|-1$). The filtration $\mathcal{B}_\cdot$ is the tensor length filtration. The following description of $L_m$ is now immediate.         

\begin{prop}\label{L_nusingholforms} 
The integration map $\displaystyle{H^0\overline{B}(R^\cdot)\rightarrow\lim\limits_{\longrightarrow}\left(\frac{\CC[G]}{I^{m+1}}\right)^\vee}$ which maps
\[
[\alpha_{i_1}|\ldots|\alpha_{i_r}]\mapsto \int\alpha_{i_1}\ldots\alpha_{i_r} 
\] 
is an isomorphism, which maps $\mathcal{B}_m$ onto $(L_m)_\CC$. In particular, any complex valued function on $G$ that (after extending linearly to $\CC[G]$) vanishes on $I^{m+1}$ is given by a unique (linear combination of) iterated integral(s) of length $\leq m$ in the forms $\alpha_i$.
\end{prop}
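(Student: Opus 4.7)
The plan is to reduce the statement to Chen's theorem (Paragraph \ref{backgrounditint}) via the functoriality of the reduced bar construction under quasi-isomorphisms of augmented DGAs (recalled in Section \ref{ch1}). The only new input needed is to exhibit $R^\cdot$ as a sub-DGA of $E^\cdot_\CC(U)$ and to check that the inclusion is a quasi-isomorphism; the description of $H^0\overline{B}(R^\cdot)$ as a shuffle algebra then falls out essentially for free.

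The key observation is that on the Riemann surface $U$ the sheaf $\Omega^{2,0}$ vanishes. Consequently, for each holomorphic 1-form $\alpha_i$ on $U$ one has $d\alpha_i=\partial\alpha_i\in\Omega^{2,0}(U)=0$, and for any pair $i,j$ the wedge $\alpha_i\wedge\alpha_j$ also lies in $\Omega^{2,0}(U)=0$. Hence $d$ vanishes on $R^1$ and the ambient product $R^1\cdot R^1$ already sits in $R^2=0$, so $R^\cdot$ is a sub-DGA of $E^\cdot_\CC(U)$ and the inclusion is a morphism of augmented DGAs. To see that it is a quasi-isomorphism: $H^0$ of both sides is $\CC$ since $U$ is connected; $H^1(R^\cdot)=R^1$ maps isomorphically onto $H^1_{dR}(U)$ because the classes $[\alpha_i]$ form a basis of the target by construction; and $H^n$ of both sides vanishes for $n\geq 2$, the ambient cohomology doing so because the affine curve $U$ has cohomological dimension at most one.

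With the quasi-isomorphism in place, the functoriality statement in Section \ref{ch1} (whose hypothesis $H^0(R^\cdot)=\CC$ is satisfied) yields a filtered quasi-isomorphism $\overline{B}(R^\cdot)\stackrel{\sim}{\to}\overline{B}(E^\cdot_\CC(U))$. Composing with Chen's isomorphism $\mathcal{B}_m H^0\overline{B}(E^\cdot_\CC(U))\stackrel{\sim}{\to}\left(\CC[G]/I^{m+1}\right)^\vee$ produces the desired map, and by naturality of integration the composite sends $[\alpha_{i_1}|\ldots|\alpha_{i_r}]$ to $\int\alpha_{i_1}\ldots\alpha_{i_r}$. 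To identify $H^0\overline{B}(R^\cdot)$ concretely, note that since $R^+=R^1$ is concentrated in degree one and $R^2=0$, the bidegree $(-r,s)$ component $T^{-r,s}(R^\cdot)$ vanishes unless $s=r$, in which case $T^{-r,r}(R^\cdot)=(R^1)^{\otimes r}$. Both differentials of the bar bicomplex therefore vanish in total degree zero, so $H^0\overline{B}(R^\cdot)=\bigoplus_r (R^1)^{\otimes r}$, with $\mathcal{B}_m$ the tensor-length filtration and the shuffle product. The monomials $[\alpha_{i_1}|\ldots|\alpha_{i_r}]$ with $r\leq m$ therefore form a basis of $\mathcal{B}_m$, giving the uniqueness assertion. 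The only step that requires any thought is the initial observation that $\Omega^{2,0}=0$ on a Riemann surface, which is what allows $R^\cdot$ to be treated as an honest sub-DGA rather than merely a sub-complex of $E^\cdot_\CC(U)$; once that is in place, the rest of the argument is purely formal.
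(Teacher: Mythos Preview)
Your proof is correct and follows exactly the approach the paper takes: the paper sets up $R^\cdot$ as a sub-DGA of $E^\cdot_\CC(U)$, asserts the inclusion is a quasi-isomorphism, invokes the functoriality of $\overline{B}$ under quasi-isomorphisms together with Chen's theorem, and identifies $H^0\overline{B}(R^\cdot)$ with the shuffle algebra on $R^1$---precisely the steps you carry out, with somewhat more detail than the paper provides.
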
  

\subsection{}  From now on, let $S=\{\infty\}$. (Thus $U=X-\{\infty\}$ and $L_n=L_n(X-\{\infty\},e)$.) The complex $F^1E^\cdot(X\log\infty)$ is exact in degree 2. For each $a,a'\in E^1(X\log\infty)$, let $s(a,a')\in F^1E^1(X\log\infty)$ be such that $d(s(a,a'))=-a\wedge a'$. If $a\wedge a'=0$, we specifically take $s(a,a')=0$.\\

\noindent The differential graded algebra $E^\cdot(X\log\infty)$ meets the condition of Section \ref{bar2}, and hence for $\omega_1,\ldots,\omega_n$ closed smooth 1-forms on $X$, the construction given in that section gives us a closed element of $\overline{B}^0E^\cdot(X\log\infty)$ of the form
\[
(\omega_1|\ldots|\omega_n)+\text{lower length terms},
\] 
and thus a closed iterated integral on $X-\{\infty\}$ of the form
\begin{equation}\label{lift1}
\int \omega_1\ldots\omega_n+\text{lower length terms},
\end{equation}  
where all the 1-forms involved are in $E^1(X\log\infty)$. Moreover, by construction, in each term of length $r$ above there are $n-r$ occurrences of $s$, and hence at most $n-r$ forms with a pole at $\infty$. In view of the description of the weight filtration given in Paragraph \ref{par4}, this implies the following lemma.

\begin{lemma}\label{lem3.1}
Given closed smooth 1-forms $\omega_1,\ldots,\omega_n$ on $X$, there is an element of $W_n(L_n)_\CC$ of the form \eqref{lift1}.
\end{lemma}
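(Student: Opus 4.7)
The plan is to apply directly the construction of Section \ref{bar2} to $A^\cdot = E^\cdot(X\log\infty)$ with $a_i = \omega_i$. First I would verify that this differential graded algebra satisfies the two hypotheses listed in Section \ref{bar2}. Condition (ii) is essentially packaged into the paragraph just above the lemma: the exactness in degree $2$ of $F^1E^\cdot(X\log\infty)$, combined with the fact that on a curve every $2$-form is automatically of type $(1,1)$ and so lies in $F^1E^2$, yields for each pair $a,a'\in E^1(X\log\infty)$ an element $s(a,a')\in F^1E^1(X\log\infty)$ with $d\,s(a,a')=-a\wedge a'$. The same observation forces the equality $(A^1)^2 = d(A^1)$ required in (i).

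Next I would apply the recipe of Section \ref{bar2} to the closed $1$-forms $\omega_1,\ldots,\omega_n$. This produces a closed element $\lambda_n+\lambda_{n-1}+\cdots+\lambda_1$ of $\bigoplus_r T^{-r,r}(E^\cdot(X\log\infty))$, with $\lambda_n=[\omega_1|\cdots|\omega_n]$ and each $\lambda_r$ a sum of simple tensors of length $r$ formed by grouping the string $\omega_1,\omega_2,\ldots,\omega_n$ into $r$ consecutive blocks and collapsing each block by nested applications of $s(\cdot,\cdot)$. Passing to the reduced bar construction and integrating via the isomorphism of Paragraph \ref{backgrounditint} turns this into a closed iterated integral of the shape
\[
\int\omega_1\cdots\omega_n \;+\; \text{lower length terms}
\]
required in \eqref{lift1}, and in particular an element of $(L_n)_\CC$.

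The one genuine thing to check is the weight bound. In any simple tensor appearing in $\lambda_r$, each of the $r$ blocks is either one of the original $\omega_i$ or a (possibly nested) expression obtained by repeatedly applying $s$ to a consecutive substring of the $\omega_i$. Because the total number of occurrences of $s$ across all blocks is exactly $n-r$, at most $n-r$ of the $r$ blocks can involve $s$ at all. The $\omega_i$ themselves are smooth on the whole of $X$, while each $s$-block lies in $F^1E^1(X\log\infty)$ and is only guaranteed to extend logarithmically to $\infty$. Consequently, in each length-$r$ summand of our iterated integral at most $n-r$ of the $1$-forms fail to be smooth along $\{\infty\}$. This is precisely the combinatorial condition that places the iterated integral in $W_n(L_n)_\CC$ according to the description of the weight filtration recalled in Paragraph \ref{par4}.

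I do not foresee a serious obstacle here: the substantive content has already been done in Section \ref{bar2} (existence of a closed bar-element with prescribed top-length term) and in the choice of $s$ (guaranteed by the exactness of $F^1E^\cdot(X\log\infty)$). What remains is a bookkeeping step, matching the number of $s$-occurrences in each $\lambda_r$ against the defining combinatorial condition of $W_n$ on iterated integrals.
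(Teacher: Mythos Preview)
Your proposal is correct and follows essentially the same route as the paper: the argument there is precisely the paragraph preceding the lemma, which applies the construction of Section~\ref{bar2} to $E^\cdot(X\log\infty)$, counts that each length-$r$ term carries exactly $n-r$ occurrences of $s$ (hence at most $n-r$ forms with a pole at $\infty$), and invokes the description of $W_n$ from Paragraph~\ref{par4}. Your write-up adds a helpful explicit check of hypothesis (i) that the paper leaves implicit, but otherwise the two arguments coincide.
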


\subsection{} \underline{The Weight Filtration of $L_m$}: We now show that the weight filtration on $L_m$ coincides with the length filtration.

\begin{prop}\label{weightdescription}
For $n\leq m$, $W_nL_m=L_n$.
\end{prop}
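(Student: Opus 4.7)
I will prove both inclusions. The easy direction, $W_n L_m \subseteq L_n$, is flagged as ``easy to see'' in Paragraph \ref{par4}: by definition, an element of $W_n(L_m)_\CC$ is a sum of (not necessarily closed) iterated integrals $\int \omega_1 \cdots \omega_r$ with $r \leq m$ and at most $n - r$ of the $\omega_i$ not smooth along $\infty$. Since the number of non-smooth factors is nonnegative, each summand must satisfy $r \leq n$; thus the element is a sum of length-$\leq n$ iterated integrals, and being closed, it lies in $L_n$.

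For the reverse inclusion $L_n \subseteq W_n L_m$ I show by induction on $n$ that $L_n = W_n L_n$. Because $L_n \hookrightarrow L_m$ is a morphism of mixed Hodge structures, $W_n L_n = L_n \cap W_n L_m$, so $L_n = W_n L_n$ implies $L_n \subseteq W_n L_m$. The base case $n = 0$, where $L_0 = \ZZ(0)$, is immediate. For the inductive step, take $f \in (L_n)_\CC$; by Proposition \ref{L_nusingholforms}, $f$ is a linear combination of integrals $\int \alpha_{i_1} \cdots \alpha_{i_r}$ with $r \leq n$. Summands with $r < n$ lie in $(L_{n-1})_\CC \subseteq W_{n-1}(L_{n-1})_\CC \subseteq W_n(L_n)_\CC$ by induction, so it suffices to handle a single pure length-$n$ term $\int \alpha_{i_1}\cdots\alpha_{i_n}$.

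The key input is that $|S| = 1$ forces $H^1(X) \cong H^1(X - \{\infty\})$ (the Gysin sequence collapses). Every $\alpha_i$ can therefore be written as $\alpha_i = \omega_i + df_i$ with $\omega_i$ a smooth closed $1$-form on $X$ and $f_i$ a smooth complex-valued function on $X - \{\infty\}$ (possibly with a pole at $\infty$). Expanding
\[
\int \alpha_{i_1} \cdots \alpha_{i_n} = \int (\omega_{i_1} + df_{i_1}) \cdots (\omega_{i_n} + df_{i_n})
\]
and repeatedly applying the primitive relations \eqref{itrel} to absorb each exact factor $df_{i_j}$, every term containing at least one $df_{i_j}$ reduces to a closed iterated integral of length strictly less than $n$, that is, to an element of $L_{n-1}$. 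Hence modulo $L_{n-1}$,
\[
\int \alpha_{i_1} \cdots \alpha_{i_n} \equiv \int \omega_{i_1} \cdots \omega_{i_n}.
\]
Now Lemma \ref{lem3.1} provides an element of $W_n(L_n)_\CC$ of the form $\int \omega_{i_1}\cdots\omega_{i_n} + (\text{length } <n \text{ terms})$, so $\int \omega_{i_1}\cdots\omega_{i_n} \in W_n(L_n)_\CC + L_{n-1}$; the inductive hypothesis places the $L_{n-1}$ summand into $W_n(L_n)_\CC$, completing the induction.

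The main obstacle is justifying the reduction modulo $L_{n-1}$ cleanly: since the $f_i$ may have poles at $\infty$, the products $f_i\omega_j$ arising after applying \eqref{itrel} need not live in $E^1(X\log\infty)$, and the resulting shorter integrals therefore do not a priori fit into the definition of any $W_{\bullet}$ filtration. What rescues the argument is that these shorter iterated integrals need only be recognized as closed iterated integrals on $X - \{\infty\}$ (which they are, being forced to equal differences of closed elements of $(L_m)_\CC$), thereby placing them in $L_{n-1}$ regardless of what singularities their 1-form factors acquire; the inductive hypothesis then takes over.
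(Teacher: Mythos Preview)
Your proof is essentially identical to the paper's: reduce to showing $W_nL_n=L_n$ by induction, replace each $\alpha_{i_j}$ by a smooth closed form $\omega_{i_j}$ on $X$ plus an exact term, use the relations \eqref{itrel} to push all cross terms to lower length, invoke Lemma~\ref{lem3.1} to produce a closed element of $W_n(L_n)_\CC$ with the same leading term $\int\omega_{i_1}\cdots\omega_{i_n}$, and observe that the remaining difference is closed of length $\le n-1$, hence in $L_{n-1}$ by induction.

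One phrasing point: in your middle paragraph you assert that each individual cross term ``reduces to a closed iterated integral of length $<n$'' and then write $\int\omega_{i_1}\cdots\omega_{i_n}\in W_n(L_n)_\CC+L_{n-1}$. Neither statement is literally correct, since $\int\omega_{i_1}\cdots\omega_{i_n}$ is generally not closed and the individual reduced terms (like $\int f_j\omega_k$) need not be closed either. What is true---and what your final paragraph states correctly---is that $\int\alpha_{i_1}\cdots\alpha_{i_n}$ minus the Lemma~\ref{lem3.1} element is a sum of length-$<n$ iterated integrals which is forced to be closed as a difference of two closed elements, hence lies in $(L_{n-1})_\CC$. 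This is precisely how the paper argues it; just tighten the middle paragraph to match your last one.
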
 

\begin{proof}
It is enough to show $W_nL_n=L_n$ for all $n$, for then, if $n\leq m$, we see in view of $W_nL_m\subset L_n$ that $W_nL_m=L_n$. We argue by induction on $n$. This is trivial for $n=0$. Suppose $W_{n-1}L_{n-1}=L_{n-1}$. In view of Proposition \ref{L_nusingholforms}, it suffices to show that 
\[
\int \alpha_{j_1}\ldots\alpha_{j_n}\in W_n(L_n)_\CC.
\] 
For each $i$, let $\omega_i\in E_\CC^1(X)$ be such that $\alpha_{j_i}=\omega_i+df_i$ on $U$, where $f_i$ is a smooth function on $U$; this can be done because inclusion of $U$ in $X$ gives an isomorphism on the level of $H^1$. Thanks to the relations \eqref{itrel} satisfied by iterated integrals, we have 
\[
\int \alpha_{j_1}\ldots\alpha_{j_n}=\int\omega_1\ldots\omega_n + \text{lower length terms}. 
\]
In view of Lemma \ref{lem3.1} we can write
\begin{eqnarray*}
\int \alpha_{j_1}\ldots\alpha_{j_n}=&&\text{\bigg(an element of $W_n(L_n)_\CC$ of the form}\\
&&\text{$\int\omega_1\ldots\omega_n$ + lower length terms\bigg)}\\
&&+\int\text{terms of length $\leq n-1$}.
\end{eqnarray*}
The left hand side and the first integral on the right are both closed, so that the second integral on the right also has to be closed, hence in $(L_{n-1})_\CC$, and by the induction hypothesis in $W_{n-1}(L_{n-1})_\CC\subset W_n(L_n)_\CC$. The desired conclusion follows.\\
\end{proof}

\subsection{} \label{parq} In this paragraph we review some facts from group theory and then apply them to our setting. Let $\Gamma$ be a finitely generated group, $\mathbb{K}\in\{\ZZ,\QQ,\CC\}$, and $I$ be the augmentation ideal in $\mathbb{K}[\Gamma]$. Let $\Gamma^\text{ab}:=\frac{\Gamma}{[\Gamma,\Gamma]}$. It is well-known that
\begin{equation}\label{ImodI^2}
\frac{I}{I^2}\rightarrow \Gamma^\text{ab}\otimes \mathbb{K}\hspace{.5in}[\gamma-1]\mapsto [\gamma]
\end{equation}   
is an isomorphism. For $n>1$ however, the quotients $\frac{I^n}{I^{n+1}}$ become increasingly more complicated in general. (See Stallings \cite{Stallings}.) On the other hand, if $\Gamma$ is free, these quotients are easy to describe: One has an isomorphism
\begin{equation}\label{quotientsofI}
\frac{I^n}{I^{n+1}}\rightarrow \left(\frac{I}{I^2}\right)^{\otimes n}
\end{equation}
given by
\[
[(\gamma_1-1)\ldots(\gamma_n-1)]\mapsto [\gamma_1-1]\otimes\ldots\otimes[\gamma_n-1].
\]
Let $\Gamma$ be free. Then $\frac{I}{I^2}$, and hence $\frac{I^n}{I^{n+1}}$ for every $n$, is a free $\mathbb{K}$-module. (Of course, this is only interesting when $\mathbb{K}=\ZZ$.) One has for each $n$ an obvious exact sequence (of $\mathbb{K}$-modules)
\begin{equation*}
0\longrightarrow\frac{I^n}{I^{n+1}}\longrightarrow\frac{\mathbb{K}[\Gamma]}{I^{n+1}}\longrightarrow\frac{\mathbb{K}[\Gamma]}{I^n}\longrightarrow 0.
\end{equation*}
We see by induction that each $\displaystyle{\frac{\mathbb{K}[\Gamma]}{I^n}}$ is free, and hence dualizing the previous sequence we get exact
\[
0\longrightarrow\left(\frac{\mathbb{K}[\Gamma]}{I^n}\right)^\vee\longrightarrow\left(\frac{\mathbb{K}[\Gamma]}{I^{n+1}}\right)^\vee\longrightarrow\left(\frac{I^n}{I^{n+1}}\right)^\vee\longrightarrow 0.
\]
Via
\[
\left(\frac{I^n}{I^{n+1}}\right)^\vee\stackrel{\eqref{quotientsofI}}{\simeq}\left(\left(\frac{I}{I^2}\right)^{\otimes n}\right)^\vee\stackrel{\eqref{ImodI^2}}{\simeq}\left(\left(\Gamma^\text{ab}\otimes \mathbb{K}\right)^{\otimes n}\right)^\vee,
\]  
we get a short exact sequence
\begin{equation}\label{q'_k}
0\longrightarrow\left(\frac{\mathbb{K}[\Gamma]}{I^n}\right)^\vee\longrightarrow\left(\frac{\mathbb{K}[\Gamma]}{I^{n+1}}\right)^\vee\stackrel{q_\mathbb{K}}{\longrightarrow} \left((\Gamma^\text{ab}\otimes \mathbb{K})^{\otimes n}\right)^\vee\longrightarrow 0.
\end{equation}
Unwinding definitions, it is easy to see that $q_\mathbb{K}$ sends $\displaystyle{f\in\left(\frac{\mathbb{K}[\Gamma]}{I^{n+1}}\right)^\vee}$ to the map 
\[
[\gamma_1]\otimes\ldots\otimes[\gamma_n]~\mapsto~ f\left([(\gamma_1-1)\ldots(\gamma_n-1)]\right).
\]
It is clear that \eqref{q'_k} is compatible with extending $\mathbb{K}$.\\

We apply this to the group $G=\pi_1(U,e)$. In view of the definition of $(L_n)_\mathbb{K}$, the isomorphism $G^\text{ab}\otimes \mathbb{K}\simeq H_1(U,\mathbb{K})$ given by $[\gamma]\mapsto[\gamma]$, and 
\[
\left(H_1(U,\mathbb{K})^{\otimes n}\right)^\vee\cong \left(H_1(U,\mathbb{K})^\vee\right)^{\otimes n}\cong\left(H^1(U)_\mathbb{K}\right)^{\otimes n},
\]
the sequence \eqref{q'_k} reads
\begin{equation}\label{seq1}
0\longrightarrow (L_{n-1})_\mathbb{K}\stackrel{\text{inclusion}}{\longrightarrow}(L_n)_\mathbb{K}\stackrel{q_\mathbb{K}}{\longrightarrow} \left(H^1(U)_\mathbb{K}\right)^{\otimes n} \longrightarrow 0.
\end{equation}
Compatibility with extending $\mathbb{K}$ implies the maps in this sequence when $\mathbb{K}=\CC$ are defined over $\ZZ$ (i.e. take integral lattices to integral lattices, and hence rationals to rationals), and the sequence when $\mathbb{K}=\ZZ$ (resp. $\mathbb{K}=\QQ$) is the restriction of the sequence for $\mathbb{K}=\CC$ to integral (resp. rational) lattices. In particular, these restrictions are exact.\\  

The inclusion $U\subset X$ gives an isomorphism $H^1(X)\rightarrow H^1(U)$. We will always identify the two Hodge structures via this map, and from now on simply write $H^1$ for $H^1(U)=H^1(X)$. Unwinding definitions, in view of  
\begin{equation}\label{eqnilit}
\int\limits_{(\gamma_1-1)\ldots(\gamma_n-1)} \omega_1\ldots\omega_n+\text{lower length terms}=\int\limits_{\gamma_1}\omega_1~\ldots~\int\limits_{\gamma_n}\omega_n,
\end{equation}
we see that the map $q_\CC$ sends
\begin{equation}\label{eqq}
\int\omega_1\ldots\omega_n+\text{lower length terms}~~\mapsto [\omega_1]\otimes\ldots\otimes[\omega_n],
\end{equation}
where the integral on the left is closed, each $\omega_i$ is a closed smooth 1-form on $U$, and $[\omega_i]$ denotes the cohomology class of $\omega_i$. Note that \eqref{eqnilit} is a consequence of \eqref{itintprop}.\\

It is clear from the description of the weight filtration on $L_n$ given in Proposition \ref{weightdescription} that the map $q_\CC$ is compatible with the weight filtrations. We shall shortly see that it is also compatible with the Hodge filtrations, so that it gives an isomorphism of mixed Hodge structures
\[\frac{L_n}{L_{n-1}}\rightarrow (H^1)^{\otimes n}.\]
We will not try to take the fastest route to this end. Rather, we will conclude this as a consequence of existence of a section of $q_\CC$ respecting the Hodge filtrations. Over the next three paragraphs, we will construct a particular section $s_F$ of $q_\CC$. This map enjoys some nice properties and will play an important role later on.

\subsection{} \label{greenfacts} In this paragraph, we review some basic facts about Green functions. For the proofs and further details, see \cite{lang}.\\

Let $\varphi$ be a real non-exact smooth form of type (1,1) on $X$, $D$ be a nonzero divisor on $X$, and $\text{supp}(D)$ be the support of $D$. Then $\varphi$ is exact on $X-\text{supp}(D)$. Indeed, one can prove that there is a unique (smooth) function $g_{_{D,\varphi}}: X-\text{supp}(D)\rightarrow \RR$, called the Green function for $\varphi$ relative to $D$, satisfying the following properties: 
\begin{itemize}
\item[(1)] If $D$ is represented by a meromorphic function $f$ on an open set (in analytic topology) $V$ of $X$, then the function $V-\text{supp}(D)\rightarrow\RR$ defined by\footnote[2]{The appearance of the extra factor $\int\limits_X\varphi$ compared to Lang comes from the fact that $\varphi$ is not normalized here.} 
\[P\mapsto g_{_{D,\varphi}}(P)+(\int\limits_X\varphi)~\log|f(P)|^2\]
extends smoothly to $V$.
\item[(2)] $dd^cg_{_{D,\varphi}}=(\deg D)\varphi$ on $X-\text{supp}(D)$, where $d^c=\frac{1}{4\pi i}(\partial-\bar{\partial})$ with the $\partial$, $\bar{\partial}$ the usual operators.
\item[(3)] $\displaystyle{\int\limits_X g_{_{D,\varphi}}\varphi=0}$.      
\end{itemize}

\noindent One can show that a function satisfying (1) and (2) is unique up to a constant. Condition (3) is included to guarantee uniqueness. Conditions (1) and (2) are the important ones for us.  Take $D=\infty$. It follows from (1) that locally near the point $\infty$, with a chart taken such that $\infty$ corresponds to $z=0$, the function $g_{\infty,\varphi}$ looks like
\[
-(\int\limits_X\varphi)~\log z\bar{z}+\text{a smooth function}.
\]  
It follows that $\partial g_{\infty,\varphi}$ near $\infty$ (again with $z=0$ corresponding to the point $\infty$) is of the form
\[
-(\int\limits_X\varphi)~\frac{dz}{z}+\text{a smooth 1-form},
\]
so that $\partial g_{\infty,\varphi}$ is in $E^1(X\log\infty)$. By condition (2), $d(\frac{1}{2\pi i}\partial g_{\infty,\varphi})=\varphi$ on $U$. To sum up, given a a non-exact real two-form $\varphi$ on $X$, we have a specific 1-form $\frac{1}{2\pi i}\partial g_{\infty,\varphi}$ of type (1,0) in $E^1(X\log\infty)$ with residue $\displaystyle{-\frac{1}{2\pi i}\int\limits_X\varphi}$ at $\infty$ whose $d$ is $\varphi$ on $U$.

\subsection{}  Throughout this paragraph, $\mathbb{K}=\RR$ or $\CC$. Let $\mathcal{H}_\mathbb{K}^1(X)$ be the space of $\mathbb{K}$-valued harmonic 1-forms on $X$. One has a commutative diagram
\[
\begin{array}{ccc}
\mathcal{H}^1_\RR(X)&\cong&H_\RR^1\\
\cap&&\cap\\
\mathcal{H}^1_\CC(X)&\cong&H_\CC^1.
\end{array}
\]

Via the horizontal isomorphisms we get a pure real Hodge structure $\mathcal{H}^1(X)$ of weight one with $\mathbb{K}$-vector space $\mathcal{H}_\mathbb{K}^1(X)$. The subspace $F^1\mathcal{H}^1_\CC(X)$ is the space of holomorphic 1-forms on $X$. Let $\wedge:\mathcal{H}^1_\mathbb{K}\otimes\mathcal{H}^1_\mathbb{K}\rightarrow E_\mathbb{K}^2(X)$ be the ``wedge product" map, i.e. given by $\wedge(\omega_1\otimes\omega_2)=\omega_1\wedge\omega_2$. The following lemma combines some ideas of Pulte \cite{Pulte} and Darmon, Rotger and Sols \cite{DRS}. 

\begin{lemma}\label{lemlift1}
There is a $\CC$-linear map 
\[
\nu: \mathcal{H}_\CC^1(X)\otimes\mathcal{H}_\CC^1(X)\rightarrow E^1(X\log\infty)
\] 
such that 
\begin{itemize}
\item[(i)] for each $w\in\mathcal{H}_\CC^1(X)\otimes\mathcal{H}_\CC^1(X)$, $d(\nu(w))=-\wedge(w)$ on $U$,
\item[(ii)] $\nu$ respects the Hodge filtration $F$,
\item[(iii)] for each $w\in\mathcal{H}_\RR^1(X)\otimes\mathcal{H}_\RR^1(X)$, there is a smooth real 1-form $\nu_\RR=\nu_\RR(w)$ on $U$ such that $\nu(w)-\nu_\RR$ is exact on $U$, 
\item[(iv)] for every $w\in\mathcal{H}_\CC^1(X)\otimes\mathcal{H}_\CC^1(X)$, the residue of $\nu(w)$ at $\infty$ is $\displaystyle{\frac{1}{2\pi i}\int\limits_X\wedge(w)}$.
\end{itemize}	
\end{lemma}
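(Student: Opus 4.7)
The plan is to build $\nu$ as a composition $\nu=\tau\circ\wedge$, where
\[
\tau\colon\{\text{smooth complex $(1,1)$-forms on $X$}\}\longrightarrow F^1E^1(X\log\infty)
\]
is a $\CC$-linear map with $d\tau(\varphi)=-\varphi$ on $U$, prescribed residue at $\infty$, and the right reality behaviour. Given such a $\tau$, conditions (i) and (ii) come essentially for free: (i) follows from $d\tau=-\wedge$, and (ii) follows because $\tau$ lands in $F^1E^1(X\log\infty)$, while $\wedge$ vanishes on $F^2(\mathcal{H}_\CC^1\otimes\mathcal{H}_\CC^1)=F^1\mathcal{H}_\CC^1\otimes F^1\mathcal{H}_\CC^1$ (two holomorphic 1-forms wedge to a $(2,0)$-form, which is zero on a curve), so $\nu(F^2)=0=F^2E^1(X\log\infty)$.

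To construct $\tau$, fix once and for all a positive real $(1,1)$ volume form $\omega_0$ on $X$. Any smooth complex $(1,1)$-form $\varphi$ on $X$ decomposes $\CC$-linearly as $\varphi=H(\varphi)+\varphi_0$, where $H(\varphi):=\bigl(\int_X\varphi/\int_X\omega_0\bigr)\omega_0$ is the harmonic part and $\varphi_0$ the exact part. For the harmonic piece I will extend the Green function construction of Paragraph \ref{greenfacts} $\CC$-linearly by $g_{\infty,c\omega_0}:=c\,g_{\infty,\omega_0}$, and set $\tau_{\mathrm{harm}}(\varphi):=-\frac{1}{2\pi i}\,\partial g_{\infty,H(\varphi)}$; property (2) of Green functions then gives $d\tau_{\mathrm{harm}}(\varphi)=-H(\varphi)$ on $U$, and property (1) gives residue $\frac{1}{2\pi i}\int_XH(\varphi)=\frac{1}{2\pi i}\int_X\varphi$ at $\infty$. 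For the exact piece I will invoke the $\partial\bar\partial$-lemma on the compact Riemann surface $X$: for every exact smooth $(1,1)$-form $\varphi_0$ there is a unique smooth complex-valued function $u(\varphi_0)$ on $X$ satisfying $i\,\partial\bar\partial u(\varphi_0)=\varphi_0$ and $\int_Xu(\varphi_0)\,\omega_0=0$. This $u(\varphi_0)$ depends $\CC$-linearly on $\varphi_0$, and I set $\tau_{\mathrm{exact}}(\varphi):=i\,\partial u(\varphi_0)$, which is smooth on all of $X$ and satisfies $d\tau_{\mathrm{exact}}(\varphi)=i\,\bar\partial\partial u(\varphi_0)=-i\,\partial\bar\partial u(\varphi_0)=-\varphi_0$. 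Setting $\tau:=\tau_{\mathrm{harm}}+\tau_{\mathrm{exact}}$ then gives a $\CC$-linear map into $F^1E^1(X\log\infty)$ with $d\tau(\varphi)=-\varphi$ on $U$ and residue $\frac{1}{2\pi i}\int_X\varphi$ at $\infty$, settling (i), (ii) and (iv).

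Property (iii) demands more care. For $w\in\mathcal{H}_\RR^1\otimes\mathcal{H}_\RR^1$ the form $\wedge(w)$ is real, so $H(\wedge(w))=c\,\omega_0$ with $c\in\RR$ and, by uniqueness of the normalized solution, $u:=u(\wedge(w)_0)$ is real. Using the identities $\partial=\tfrac12 d+2\pi i\,d^c$ and $\bar\partial=\tfrac12 d-2\pi i\,d^c$ (which follow from $d=\partial+\bar\partial$ and $4\pi i\,d^c=\partial-\bar\partial$), a direct expansion with $g:=g_{\infty,\omega_0}$ yields
\[
\nu(w)=\bigl(-c\,d^c g-2\pi\,d^c u\bigr)+\bigl(-\tfrac{c}{4\pi i}\,dg+\tfrac{i}{2}\,du\bigr).
\]
The first bracket is a real 1-form on $U$ (since $d^c$ takes real functions to real forms), while the second is exact on $U$ ($g$ and $u$ are both smooth there); taking $\nu_\RR(w)$ to be the first bracket gives (iii). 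The main obstacle I anticipate is precisely this reality/type bookkeeping: on a Riemann surface $\partial\bar\partial$ of a real function is purely imaginary, which forces the $\partial\bar\partial$-lemma to be used in the form $i\,\partial\bar\partial u=\varphi_0$ and dictates the factor of $i$ in the definition of $\tau_{\mathrm{exact}}$. Everything else reduces to the standard properties of Green functions recalled in Paragraph \ref{greenfacts} and Hodge theory on the compact Riemann surface $X$.
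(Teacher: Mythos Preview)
Your argument is correct, and it takes a genuinely different route from the paper's. The paper decomposes the \emph{source} $\mathcal{H}^1(X)\otimes\mathcal{H}^1(X)$ as $\mathcal{K}\oplus\mathcal{L}$, where $\mathcal{K}$ is the kernel of the cup product and $\mathcal{L}$ is a one-dimensional complement chosen via semisimplicity of real Hodge structures; on $\mathcal{K}_\CC$ it uses strictness of $d$ with respect to the Hodge filtration together with the orthogonal projection $E^1_\CC(X)\to\mathcal{H}^1_\CC(X)^\perp$, and on $\mathcal{L}_\CC$ it uses the Green function exactly as you do. You instead factor $\nu$ through $\wedge$ and decompose the \emph{target} space of $(1,1)$-forms as harmonic plus exact, handling the exact part with the $\partial\bar\partial$-lemma. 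The two decompositions correspond under $\wedge$ (your ``exact'' piece is the image of the paper's $\mathcal{K}$), so the overall architecture is parallel, but your toolkit is different.

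Your approach is arguably more direct: it avoids the appeal to semisimplicity of real Hodge structures and the Hodge-theoretic orthogonal projection, replacing them with the standard $\partial\bar\partial$-lemma on a compact Riemann surface; it also yields the bonus that $\nu$ visibly factors through $\wedge$. The paper's approach, on the other hand, keeps the construction on $\mathcal{K}$ living in smooth forms on all of $X$ (not just $U$) and frames things in a way that may generalize more readily. Your handling of the reality bookkeeping in (iii) via $\partial=\tfrac12 d+2\pi i\,d^c$ is clean and matches what the paper does on $\mathcal{L}$; the paper's argument for (iii) on $\mathcal{K}$ instead goes through the fact that closed elements of $\mathcal{H}^1_\CC(X)^\perp$ are exact.
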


\begin{proof}
The cup product $H^1\otimes H^1\stackrel{\cupprod}{\rightarrow}H^2(X)$ is a morphism of Hodge structures. Let $K$ be its kernel. Ignoring the rational structures, we can think of $K$ as a sub-Hodge structure of the real Hodge structure $H^1\otimes H^1$. Let $\mathcal{K}$ be its copy in $\mathcal{H}^1(X)\otimes\mathcal{H}^1(X)$. Thus $\mathcal{K}_\mathbb{K}$ consists of those $w\in\mathcal{H}_\mathbb{K}^1(X)\otimes\mathcal{H}_\mathbb{K}^1(X)$ for which $\wedge(w)\in E_\mathbb{K}^2(X)$ is exact. One has a short exact sequence of real Hodge structures
\[
0\longrightarrow\mathcal{K}\stackrel{\text{inclusion}}{\longrightarrow}\mathcal{H}^1(X)\otimes\mathcal{H}^1(X)\cong H^1\otimes H^1\stackrel{\cupprod}{\longrightarrow}H^2(X)\longrightarrow 0.
\]    
The category of pure real Hodge structures is semi-simple, so that there is 
\[\phi\in \mathcal{H}_\RR^1(X)\otimes\mathcal{H}_\RR^1(X)~\cap~F^1(\mathcal{H}_\CC^1(X)\otimes\mathcal{H}_\CC^1(X))\] 
giving rise to a decomposition of $\mathcal{H}^1(X)\otimes\mathcal{H}^1(X)$ as an internal direct sum
\[
\mathcal{H}^1(X)\otimes\mathcal{H}^1(X)=\mathcal{K}~\oplus~\mathcal{L},
\]
where $\mathcal{L}$ is the one dimensional sub-object of $\mathcal{H}^1(X)\otimes\mathcal{H}^1(X)$ generated by $\phi$\footnote[2]{We could have instead worked over $\QQ$ here, as the Mumford-Tate group of $X$ is reductive. But this would not result in any major simplification.}. Because of the linear nature of the requirements, it suffices to define $\nu$ on $\mathcal{K}_\CC$ and $\mathcal{L}_\CC$ satisfying (i)-(iv).\\

\noindent \underline{Definition of $\nu$ on $\mathcal{K}_\CC$}: This part is due to Pulte \cite{Pulte}. The operator $d$ on $X$ is strict with respect to the Hodge filtration, so that one can choose 
\[
\nu':\mathcal{K}_\CC\rightarrow E_\CC^1(X)
\]   
respecting the Hodge filtration such that $d\nu'(w)=-\wedge(w)$ on $X$. Now recall that one has a decomposition $E^1_\mathbb{K}(X)=\mathcal{H}_\mathbb{K}^1(X)\oplus\mathcal{H}_\mathbb{K}^1(X)^\perp$, where $\mathcal{H}_\mathbb{K}^1(X)^\perp$ is the space of $\mathbb{K}$-valued 1-forms orthogonal to $\mathcal{H}_\mathbb{K}^1(X)$ with respect to the inner product defined using the Hodge $\ast$ operator. Recall also that the projections $E^1_\CC(X)\rightarrow\mathcal{H}^1_\CC(X)$ and $E^1_\CC(X)\rightarrow\mathcal{H}^1_\CC(X)^\perp$ preserve type. Define $\nu$ to be the composition of $\nu'$ and the latter projection. Since harmonic forms are closed, we have $d\nu(w)=d\nu'(w)=-\wedge(w)$. Note that condition (iv) holds trivially. We claim that $\nu$ satisfies property (iii) as well. Let $w\in\mathcal{K}_\RR$. Then $\wedge(w)$ is exact and real, so that there is $\nu'_\RR\in E^1_\RR(X)$ such that $d\nu'_\RR=-\wedge(w)$. Let $\nu_\RR$ be the component of $\nu'_\RR$ in ${\mathcal{H}_\RR^1(X)}^\perp$. Then $d\nu_\RR=d\nu'_\RR=-\wedge(w)$, so that $\nu(w)-\nu_\RR\in{\mathcal{H}_\CC^1(X)}^\perp$ is closed. The desired conclusion follows from the general fact that a closed element of $\mathcal{H}_\mathbb{K}^1(X)^\perp$ is necessarily exact. Note that on the subspace $\mathcal{K}_\CC$ the requirements of the lemma hold on all of $X$, not just $U$.\\   

\noindent \underline{Definition of $\nu$ on $\mathcal{L}_\CC$}: Define $\nu$ on the subspace $\mathcal{L}_\CC=\CC\phi$ by $\nu(\phi)=-\frac{1}{2\pi i}\partial g_{\infty,\wedge(\phi)}$. Conditions (i), (ii) and (iv) hold by Paragraph \ref{greenfacts}. As for condition (iii), note that $-d^cg_{\infty,\wedge(\phi)}$ is real, and 
\[
-\frac{1}{2\pi i}\partial g_{\infty,\wedge(\phi)}+d_cg_{\infty,\wedge(\phi)}=-\frac{1}{4\pi i}dg_{\infty,\wedge(\phi)}.
\]
\end{proof}

If the point $\infty$ is not clear from the context, we will write $\nu_\infty$ instead of $\nu$. Note that the map $\nu$ is not natural; it depends on the choices of $\phi$ and $\nu'$.

\subsection{} In this paragraph, we use Lemma \ref{lemlift1} to construct a section $s_{F}$ of $q_\CC:(L_n)_\CC\rightarrow (H^1_\CC)^{\otimes n}$ that is compatible with the Hodge filtrations, and also such that its composition with $(L_n)_\CC\rightarrow\left(\frac{L_n}{L_{n-2}}\right)_\CC$ is defined over $\RR$. This map is of crucial importance in the later parts of the paper.\\ 
%This will give us a real Hodge section of $\mathfrak{q}$.\\ 

By exactness of $F^1E^\cdot(X\log\infty)$ in degree 2, one can (non-uniquely) extend the map $\nu$ of the previous paragraph to a map 
\[\tilde{\nu}: E^1(X\log\infty)\otimes E^1(X\log\infty)\rightarrow E^1(X\log\infty)\]
respecting the Hodge filtrations and satisfying $d(\tilde{\nu}(w))=-\wedge(w)$ for every $w\in E^1(X\log\infty)\otimes E^1(X\log\infty)$. The differential graded algebra $E^\cdot(X\log\infty)$ with the data of $s(a,a')=\tilde{\nu}(a\otimes a')$ for each $a,a'\in E^1(X\log\infty)$ satisfies the conditions of Section \ref{bar2}, and hence in particular for $\omega_1,\ldots,\omega_n\in \mathcal{H}_\CC^1(X)$, we have a closed iterated integral on $U$ of the form 
\begin{equation}\label{eq1ch3revision}
\int~\omega_1\ldots\omega_n+\sum\limits_{i=1}^{n-1}\omega_1\ldots\nu(\omega_i\otimes\omega_{i+1})\ldots\omega_n+\text{terms of length at most $n-2$.}
\end{equation}
(See the construction of Section \ref{bar2}.) In view of $(H^1_\CC)^{\otimes n}\cong(\mathcal{H}^1_\CC)^{\otimes n}$, we define the map $s_F:(H^1_\CC)^{\otimes n}\rightarrow (L_n)_\CC$ by
\[
[\omega_1]\otimes\ldots\otimes[\omega_n]\mapsto \text{the iterated integral described above,}
\]
where $\omega_i\in \mathcal{H}_\CC^1(X)$ and $[\omega_i]$ denotes the cohomology class of $\omega_i$. This is well-defined and linear (see the final remark of Section \ref{bar2}), and in view of \eqref{eqq} it is a section of $q_\CC$ (of Paragraph \ref{parq}). Also, it is apparent from the construction of Section \ref{bar2} that since $\tilde{\nu}$ preserves the Hodge filtration $F$, so does $s_F$. That $s_F$ respects the weight filtration (over $\CC$) is obvious from $W_n(L_n)_\CC=(L_n)_\CC$. We have proved parts (i)-(iii) of the following lemma.

\begin{lemma}\label{s_F}
There is a $\CC$-linear map $s_F:(H^1_\CC)^{\otimes n}\rightarrow (L_n)_\CC$ that satisfies the following properties:
\begin{itemize}
\item[(i)] Given $\omega_1,\ldots,\omega_n\in \mathcal{H}_\CC^1(X)$, $s_F([\omega_1]\otimes\ldots\otimes[\omega_n])$ is of the form \eqref{eq1ch3revision}.
\item[(ii)] $s_F$ is a section of $q_\CC:(L_n)_\CC\rightarrow (H^1_\CC)^{\otimes n}$.
\item[(iii)] $s_F$ respects the Hodge and weight filtrations.
\item[(iv)] The composition
\[
\mathfrak{s}_F:~(H_\CC^1)^{\otimes n}\stackrel{s_F}{\longrightarrow}(L_n)_\CC\stackrel{\text{quotient}}{\longrightarrow}(\frac{L_n}{L_{n-2}})_\CC
\] 
is defined over $\RR$.
\end{itemize}
\end{lemma}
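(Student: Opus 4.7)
The plan is to produce, for real harmonic forms $\omega_1,\ldots,\omega_n\in\mathcal{H}^1_\RR(X)$, a real closed iterated integral $I_\RR\in(L_n)_\RR$ that agrees with $s_F([\omega_1]\otimes\cdots\otimes[\omega_n])$ modulo $L_{n-2}$. Since $(H^1_\RR)^{\otimes n}$ is spanned by such tensors and $\mathfrak{s}_F$ is $\CC$-linear, this will prove (iv).

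To construct $I_\RR$, I would re-run the procedure of Section \ref{bar2} on the real sub-differential graded algebra $E^\cdot(X\log\infty)_\RR$, taking for the primitive operation $s(\omega_i,\omega_{i+1}):=\nu_\RR(\omega_i\otimes\omega_{i+1})$ supplied by Lemma \ref{lemlift1}(iii), and extending $s$ arbitrarily but real-valued on the other pairs that arise from the nested expressions in $\lambda_{n-2},\lambda_{n-3},\ldots$ Such an extension exists because $H^2E^\cdot(X\log\infty)\cong H^2(U)=0$, so every element of $E^2(X\log\infty)_\RR$ is $d$ of some element of $E^1(X\log\infty)_\RR$. The output is
\[
I_\RR=\int\omega_1\cdots\omega_n+\sum_{i=1}^{n-1}\int\omega_1\cdots\nu_\RR(\omega_i\otimes\omega_{i+1})\cdots\omega_n+\text{terms of length }\leq n-2,
\]
a closed iterated integral built entirely from real forms, hence an element of $(L_n)_\RR$.

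To show $s_F-I_\RR\in L_{n-2}$, I would observe that both elements share the same length-$n$ leading term $\int\omega_1\cdots\omega_n$, so the difference lies in $L_{n-1}$. It therefore suffices to check that it is killed by the map $q_\CC:L_{n-1}\to(H^1)^{\otimes n-1}$ from \eqref{seq1}, whose kernel is $L_{n-2}$. The length-$(n-1)$ part of $s_F-I_\RR$ comes solely from the $\lambda_{n-1}$ step of the Section \ref{bar2} construction and equals
\[
\sum_{i=1}^{n-1}\int\omega_1\cdots(\nu-\nu_\RR)(\omega_i\otimes\omega_{i+1})\cdots\omega_n,
\]
so by \eqref{eqq},
\[
q_\CC(s_F-I_\RR)=\sum_{i=1}^{n-1}[\omega_1]\otimes\cdots\otimes[(\nu-\nu_\RR)(\omega_i\otimes\omega_{i+1})]\otimes\cdots\otimes[\omega_n].
\]
By Lemma \ref{lemlift1}(iii) each $(\nu-\nu_\RR)(\omega_i\otimes\omega_{i+1})$ is exact on $U$ as a smooth 1-form, hence has zero class in $H^1(U)=H^1$. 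The displayed expression therefore vanishes, giving $s_F\equiv I_\RR\pmod{L_{n-2}}$.

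The main obstacle I anticipate is the cohomological content of the last step. In the $\mathcal{L}_\CC$ summand of Lemma \ref{lemlift1}, $(\nu-\nu_\RR)(w)$ is a scalar multiple of the exterior derivative of a Green function that has logarithmic growth at $\infty$, so in the logarithmic complex $E^\cdot(X\log\infty)$ it has a nontrivial residue-type behavior near $\infty$. The key point is that the relevant cohomology is that of $U$, which coincides with $H^1E^\cdot(X\log\infty)$ by Deligne's quasi-isomorphism; in this common cohomology an exact smooth 1-form on $U$ has trivial class regardless of its logarithmic behavior. This is precisely the content of the exactness-on-$U$ statement in Lemma \ref{lemlift1}(iii), which is what makes the argument succeed.
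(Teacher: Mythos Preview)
Your proof of (iv) is correct and follows essentially the same strategy as the paper's: construct a real closed iterated integral $I_\RR$ via the Section~\ref{bar2} procedure with $s(\omega_i,\omega_{i+1})=\nu_\RR(\omega_i\otimes\omega_{i+1})$, then show $s_F(w)-I_\RR\in(L_{n-2})_\CC$. The only cosmetic differences are that the paper runs the construction in $E^\cdot_\RR(U)$ rather than in a real form of the log complex (which sidesteps your worry in the last paragraph entirely), and that the paper verifies $s_F(w)-I_\RR\in L_{n-2}$ by directly rewriting $s_F(w)$ using $\nu=\nu_\RR+d(\text{function})$ and the relations~\eqref{itrel}, whereas you phrase the same computation as the vanishing of $q_\CC$ on the difference; these are equivalent, since~\eqref{eqq} is exactly what~\eqref{itrel} buys you at the level of $q_\CC$.
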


\begin{proof}
(of (iv)) We must show that if $w\in (H^1_\RR)^{\otimes n}$, then 
\[\mathfrak{s}_F(w)\in(\frac{L_n}{L_{n-2}})_\RR\subset (\frac{L_n}{L_{n-2}})_\CC,\]
or equivalently, $s_F(w)\in (L_n)_\RR+(L_{n-2})_\CC$. It suffices to consider $w=[\omega_1]\otimes\ldots\otimes[\omega_n]$, where the $\omega_i\in\mathcal{H}^1_\RR(X)$. In view of Lemma \ref{lemlift1}(iii) and the relations \eqref{itrel} satisfied by iterated integrals, we have
\[
s_F(w)=\int~\omega_1\ldots\omega_n+\sum\limits_{i=1}^{n-1}\omega_1\ldots\nu_\RR(\omega_i\otimes\omega_{i+1})\ldots\omega_n+\text{terms of length $\leq n-2$.}
\]
Applying the construction of Section \ref{bar2} to the differential graded algebra $E^\cdot_\RR(U)$ with $s(-,-)$ chosen such that $s(\omega_i,\omega_{i+1})=\nu_\RR(\omega_i\otimes\omega_{i+1})$, we get a closed element of $\overline{B}^0(E^\cdot_\RR(U))$ of the form
\[
(\omega_1|\ldots|\omega_n)+\sum\limits_{i=1}^{n-1}(\omega_1|\ldots|\nu_\RR(\omega_i\otimes\omega_{i+1})|\ldots|\omega_n)+\text{terms of length $\leq n-2$,}
\]
and hence an element of $(L_n)_\RR$ of the form
\[
\int~\omega_1\ldots\omega_n+\sum\limits_{i=1}^{n-1}\omega_1\ldots\nu_\RR(\omega_i\otimes\omega_{i+1})\ldots\omega_n+\text{terms of length $\leq n-2$.}
\]
This differs from $s_F(w)$ by an element of $(L_{n-2})_\CC$, giving the desired conclusion.
\end{proof}

\subsection{}\label{qbarparnew} 
Let $\overline{q}_\CC$ be the isomorphism of vector spaces
\[
\left(\frac{L_n}{L_{n-1}}\right)_\CC\rightarrow (H^1_\CC)^{\otimes n}
\]
induced by $q_\CC$. Let $\overline{s}_F$ be the composition
\[
(H^1_\CC)^{\otimes n}\stackrel{s_F}{\rightarrow} (L_n)_\CC\stackrel{\text{quotient}}{\rightarrow}\left(\frac{L_n}{L_{n-1}}\right)_\CC.
\]
Then $\overline{s}_F$ is the inverse of $\overline{q}_\CC$. By the discussion of Paragraph \ref{parq}, $\overline{q}_\CC$ restricts to an isomorphism of the integral lattices. It follows that the same is true for $\overline{s}_F$. Moreover, $\overline{s}_F$ is compatible with the Hodge and weight filtrations (because so is $s_F$), and hence is a morphism of mixed Hodge structures. In view of strictness of morphisms in $\mathbf{MHS}$ with respect to the Hodge filtration, $\overline{q}_\CC$ is also compatible with the Hodge filtration. The following statement follows. (Compatibility of $\overline{q}_\CC$ with the weight filtration is obvious.)  

\begin{prop}\label{qbar}
The map $q_\CC$ induces an isomorphism of mixed Hodge structures 
\[\overline{q}: \frac{L_n}{L_{n-1}}\rightarrow (H^1)^{\otimes n}.\]
\end{prop}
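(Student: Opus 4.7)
The plan is to deduce the proposition directly from the work already done, by using the section $s_F$ from Lemma \ref{s_F} together with the strictness properties of morphisms of mixed Hodge structures. The key point is that essentially everything has been set up: I have a candidate inverse $\overline{s}_F$ for $\overline{q}_\CC$ and I need only verify that this inverse is a morphism of MHS and then quote strictness.

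First I would check that $\overline{s}_F: (H^1_\CC)^{\otimes n}\to \left(\frac{L_n}{L_{n-1}}\right)_\CC$ is a two-sided inverse to $\overline{q}_\CC$. The identity $\overline{q}_\CC\circ \overline{s}_F = \mathrm{id}$ is immediate from Lemma \ref{s_F}(ii), and since $\overline{q}_\CC$ and $\overline{s}_F$ are maps between vector spaces of the same (finite) dimension (by the analysis of Paragraph \ref{parq}), this forces them to be mutually inverse. Next, I would observe that the short exact sequence \eqref{seq1} is defined over $\ZZ$, so $\overline{q}_\CC$ restricts to an isomorphism on integral lattices; consequently $\overline{s}_F$ does too, and in particular $\overline{s}_F$ is defined over $\ZZ$.

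Now I would verify that $\overline{s}_F$ is a morphism of mixed Hodge structures: compatibility with $W$ is automatic since $W_n(L_n)_\CC = (L_n)_\CC$ and the target is pure of weight $n$, and compatibility with $F$ follows from Lemma \ref{s_F}(iii) (the map $s_F$ preserves $F$, and the quotient is a morphism in $\mathbf{MHS}$). Thus $\overline{s}_F$ is a morphism in $\mathbf{MHS}$ which is bijective on underlying $\ZZ$-modules. By Deligne's theorem that morphisms of mixed Hodge structures are strict with respect to both $W$ and $F$, the inverse $\overline{q}_\CC = \overline{s}_F^{-1}$ must also preserve $W$ and $F$; equivalently, $\overline{s}_F$ is an isomorphism in $\mathbf{MHS}$, which yields the desired $\overline{q}$.

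The main obstacle has already been surmounted, namely the construction of a section $s_F$ of $q_\CC$ that respects the Hodge filtration; this was the content of Lemmas \ref{lemlift1} and \ref{s_F}, where Green functions were used to control the residue at $\infty$ of the primitive of a $(1,1)$-form whose class is not exact. Given that tool, the remaining argument for Proposition \ref{qbar} is entirely formal: it is the standard device of checking that an explicit filtration-preserving section on the level of underlying modules produces an isomorphism of mixed Hodge structures, via the strictness of the $F$-filtration.
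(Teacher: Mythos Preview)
Your proposal is correct and follows essentially the same approach as the paper: you pass to the induced map $\overline{s}_F$, check it is an integral bijection compatible with $W$ and $F$ (hence a morphism in $\mathbf{MHS}$), and then invoke strictness to conclude that its inverse $\overline{q}_\CC$ is likewise compatible with the filtrations. The paper's argument in Paragraph~\ref{qbarparnew} is line-for-line the same, only slightly terser in declaring $\overline{s}_F$ to be the inverse of $\overline{q}_\CC$ without the explicit dimension remark.
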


In the interest of keeping the notation simple, here we did not incorporate $n$ in the notation for $\overline{q}$. When there is a possibility of confusion, we will instead use the decorated notation $\overline{q}_n$ for the isomorphism given in Proposition \ref{qbar}. 

\begin{rem} (1) Note that in particular this says even though the mixed Hodge structure $L_m$ may depend on the base point $e$, the quotient $\text{Gr}^W_nL_m=\frac{L_n}{L_{n-1}}$ does not. In fact, it does not even depend on the point $\infty$ we removed from $X$. It is true in general that for any smooth connected complex variety the quotients $\frac{L_n}{L_{n-1}}$ are independent of the base point. See (3.22) Remark (iii) of \cite{HZ}.\\
(2) It follows from the above that the map $q_\CC$ is also compatible with the Hodge filtration, and that \eqref{seq1} is a short exact sequence of mixed Hodge structures.\\
(3) We should clarify that Proposition \ref{qbar} is not a new result. For instance, it can be deduced from the ideas behind Remark (iii) of Paragraph (3.22) of \cite{HZ}. Here we included a proof as it was easy to do so with the section $s_F$ at hand, and in the interest of making the paper more self-contained. 
\end{rem}

\section{The extension $\mathbb{E}^\infty_{n,p}$}\label{section_extension_E}
\subsection{}\label{RmodZ} Let $A$ be a mixed Hodge structure with torsion-free $A_\ZZ$. The kernel of the surjective map
\[
\Hom_{\ZZ}(A_{\ZZ},\RR)\rightarrow \Hom_{\ZZ}(A_{\ZZ},\RR/\ZZ)
\]
induced by the natural quotient map $\RR\rightarrow\RR/\ZZ$ is $\Hom_{\ZZ}(A_{\ZZ},\ZZ)$. Putting this together with
\[\Hom_{\RR}(A_{\RR},\RR)\cong\Hom_{\ZZ}(A_{\ZZ},\RR),\]
we obtain
\[
\frac{\Hom_{\RR}(A_{\RR},\RR)}{\Hom_{\ZZ}(A_{\ZZ},\ZZ)}\cong \Hom_{\ZZ}(A_{\ZZ},\RR/\ZZ).
\]

\noindent Now suppose $A$ is pure of odd weight. Then so is $A^\vee$, and 
\[
JA^\vee\stackrel{\eqref{eqHSbasics2}}{\cong} \frac{\Hom_{\RR}(A_\RR,\RR)}{\Hom_{\ZZ}(A_\ZZ,\ZZ)}\cong \Hom_{\ZZ}(A_{\ZZ},\RR/\ZZ).
\]
Unwinding definitions, we see that given $f:A_{\CC}\rightarrow\CC$ defined over $\RR$, the class of $f$ in $JA^\vee$ corresponds under the identification to the composition 
\begin{equation}\label{eqR/Zcorel}
A_{\ZZ}\stackrel{\text{inclusion}}{\rightarrow}A_{\RR}\stackrel{f}{\rightarrow}\RR\rightarrow\RR/\ZZ
\end{equation}
in $\Hom_{\ZZ}(A_{\ZZ},\RR/\ZZ)$.

\begin{rem} Here we make an observation that will be useful later on. Let $A$ be of weight $2n-1$, and $f:A_\CC\rightarrow\CC$ be defined over $\RR$. It follows from the above that $f(A_\ZZ)\subset\ZZ$ if and only if the restriction of $f$ to $F^{n}A_\CC$ is equal to that of an element of $\Hom_\ZZ(A_\ZZ,\ZZ)\subset\Hom(A_\CC,\CC)$. Indeed, the first statement is equivalent to that the composition \eqref{eqR/Zcorel} is trivial, which is equivalent to that the class of $f$ is trivial in $JA^\vee$, i.e. $f\in F^{1-n}(A^\vee)_\CC+\Hom_\ZZ(A_\ZZ,\ZZ)$, which, in view of
\[
F^{1-n}(A^\vee)_\CC=\{g:A_\CC\rightarrow\CC: g(F^{n}A_\CC)=0\},
\] 
is equivalent to the second statement. Note that the ``only if" part of the statement is trivial.
\end{rem} 

\subsection{}\label{PhiandPsi} Let $H_1:=(H^1)^\vee$. We identify $(H_1)_\ZZ$ with $H_1(X,\ZZ)$ (the singular homology). One has an isomorphism of Hodge structures $H^1(1)\cong H_1$ given by Poincare duality
\[
PD: H^1(1)\stackrel{\simeq}{\longrightarrow}H_1,\hspace{.5in} [\omega]~\mapsto~\int\limits_X[\omega]\wedge -, 
\]  
where $\omega$ is a smooth closed 1-form on $X$. This gives for each positive $n$ an isomorphism 
\[
PD^{\otimes n}:(H^1)^{\otimes n}(n)\longrightarrow H_1^{\otimes n}\cong (H^1)^{\otimes -n}, 
\]
given by 
\[
[\omega_1]\otimes\ldots\otimes[\omega_n]~\mapsto~PD([\omega_1])\otimes\ldots\otimes PD([\omega_n])=\left([\omega'_1]\otimes\ldots\otimes[\omega'_n]\mapsto \prod\limits_i\int\limits_X[\omega_i]\wedge[\omega'_i]\right).
\]
We have
\begin{eqnarray}
\Ext\left((H^1)^{\otimes n},(H^1)^{\otimes n-1}\right)&\stackrel{\text{Carlson (Par. \ref{Carlsoniso})}}{\cong}&J\inhom\left((H^1)^{\otimes n},(H^1)^{\otimes n-1}\right)\notag\\
&\stackrel{\text{Lemma \ref{basichodge}(a)}}{\cong}&J\inhom\left((H^1)^{\otimes n}\otimes (H^1)^{\otimes 1-n},\ZZ(0)\right)\notag\\
&\stackrel{PD^{\otimes n-1}}{\cong}&J\inhom\left((H^1)^{\otimes n}\otimes {(H^1)}^{\otimes n-1}(n-1),\ZZ(0)\right)\notag\\
%&\stackrel{\text{Lemma \ref{basichodge}(f)}}{\cong}&J\inhom\left((H^1)^{\otimes n}\otimes {H^1}^{\otimes n-1},\ZZ(0)\right)\\
&\stackrel{\text{Lemma \ref{basichodge}(f)}}{\cong}&J((H^1)^{\otimes 2n-1})^\vee\label{eqPsisteps}.
%&\stackrel{\text{Paragraph \ref{RmodZ}}}{\cong}&\Hom_{\ZZ}\left((H_\ZZ^1)^{\otimes 2n-1}, \RR/\ZZ\right).
\end{eqnarray}
Let $\Psi$ be the composition isomorphism
\[
\Ext\left((H^1)^{\otimes n},(H^1)^{\otimes n-1}\right)\longrightarrow J((H^1)^{\otimes 2n-1})^\vee.
\] 
We denote by $\Phi$ the isomorphism
\[
J((H^1)^{\otimes 2n-1})^\vee\longrightarrow \Hom_{\ZZ}\left((H_\ZZ^1)^{\otimes 2n-1}, \RR/\ZZ\right)
\]
given by Paragraph \ref{RmodZ}. (To make the notation slightly simpler we did not include $n$ as a part of the symbol for the maps $\Phi$ and $\Psi$. This should not cause any confusion as $n$ will be clear from the context.)\\

\subsection{Definition of $\mathbb{E}^\infty_{n,e}$}\label{defe} Let $n\geq 2$. In this paragraph, we use $\displaystyle{\frac{L_n}{L_{n-2}}}$ to define an element 
\[
\mathbb{E}_{n,e}^{\infty}\in \Ext((H^1)^{\otimes n},(H^1)^{\otimes n-1}). 
\]
It follows from Proposition \ref{weightdescription} that the weight filtration on $\displaystyle{\frac{L_n}{L_{n-2}}}$ is given by
\[
W_{n-2}=0,\hspace{.5in} W_{n-1}=\frac{L_{n-1}}{L_{n-2}}~,\hspace{.5in}\text{and}\hspace{.2in} W_n=\frac{L_n}{L_{n-2}}. 
\]  
The filtration gives rise to the exact sequence
\[
0\longrightarrow \frac{L_{n-1}}{L_{n-2}}\stackrel{\iota}{\longrightarrow} \frac{L_n}{L_{n-2}}\stackrel{\text{quotient}}{\longrightarrow} \frac{L_n}{L_{n-1}}\longrightarrow 0,
\]
where $\iota$ is the inclusion map. Using the isomorphism of Proposition \ref{qbar} to replace $\frac{L_{n-1}}{L_{n-2}}$ (resp. $\frac{L_n}{L_{n-1}}$) by $(H^1)^{\otimes n-1}$ (resp. $(H^1)^{\otimes n}$), we get the exact sequence  
\begin{equation}\label{seq2}
0\longrightarrow (H^1)^{\otimes n-1}\stackrel{\mathfrak{i}}{\longrightarrow} \frac{L_n}{L_{n-2}}\stackrel{\mathfrak{q}}{\longrightarrow} (H^1)^{\otimes n}\longrightarrow 0.
\end{equation}
Here $\mathfrak{i}=\iota{\overline{q}}~^{-1}$, and $\mathfrak{q}$ is the composition 
\[
\frac{L_n}{L_{n-2}}\stackrel{\text{quotient}}{\longrightarrow} \frac{L_n}{L_{n-1}}\stackrel{\overline{q}}{\longrightarrow}(H^1)^{\otimes n}.
\]
Let $\mathbb{E}_{n,e}^{\infty}\in\Ext((H^1)^{\otimes n},(H^1)^{\otimes n-1})$ be the extension defined by the sequence \eqref{seq2}.\\ 

\begin{rem} One can deduce from a theorem of Pulte \cite{Pulte} that the map
\[
X(\CC)-\{\infty\}\rightarrow \Ext((H^1)^{\otimes 2},H^1)
\] 
defined by $e\mapsto \mathbb{E}_{2,e}^{\infty}$ is injective.
\end{rem}

Our goal in the remainder of this section is to describe the images of $\mathbb{E}^\infty_{n,e}$ under $\Psi$ and $\Phi\circ\Psi$. To this end, in view of Paragraph \ref{RmodZ} and Paragraph \ref{Carlsoniso}, we will define an integral retraction of $\mathfrak{i}$ and a Hodge section of $\mathfrak{q}$ defined over $\RR$. (See \eqref{seq2}.)

\subsection{}\label{intretpar} 
\underline{An integral retraction of $\mathfrak{i}$}: In this paragraph, we define an integral retraction $r_\ZZ$ of $\mathfrak{i}$, i.e. a linear map
\[
r_\ZZ: (\frac{L_n}{L_{n-2}})_\CC\longrightarrow (H_\CC^1)^{\otimes n-1}
\]  
defined over $\ZZ$, that is left inverse to $\mathfrak{i}$.\\

Choose $\beta_1,\ldots,\beta_{2g}\in\pi_1(U,e)$ such that the $[\beta_j]\in H_1(X,\ZZ)$ form a basis. To define an element of $(H_\CC^1)^{\otimes n-1}$, it suffices to specify how it pairs with the elements $[\beta_{j_1}]\otimes\ldots\otimes [\beta_{j_{n-1}}]$ of $H_1(X,\ZZ)^{\otimes n-1}$. Moreover, an element of $(H_\CC^1)^{\otimes n-1}$ is in $(H_\ZZ^1)^{\otimes n-1}$ if and only if it produces integer values when pairing with the $[\beta_{j_1}]\otimes\ldots\otimes [\beta_{j_{n-1}}]$. Given an element
\[
f=\int \sum\limits_{i\leq n} w_i~+~(L_{n-2})_\CC~\in~(\frac{L_n}{L_{n-2}})_\CC,     
\]
where $w_i$ is a sum of terms of length $i$ and the iterated integral is closed, set $r_\ZZ(f)$ to be the element of $(H_\CC^1)^{\otimes n-1}$ satisfying
\begin{equation}\label{eqr_Z}
[\beta_{j_1}]\otimes\ldots\otimes [\beta_{j_{n-1}}](r_\ZZ(f))=\int\limits_{(\beta_{j_1}-1)\ldots(\beta_{j_{n-1}}-1)} \sum\limits_{i\leq n} w_i.
\end{equation}
%\begin{equation}\label{eqr_Z}
%r_\ZZ(f)=~\left([\beta_{j_1}]\otimes\ldots\otimes [\beta_{j_{n-1}}]~\mapsto~\int\limits_{(\beta_{j_1}-1)\ldots(\beta_{j_{n-1}}-1)} \sum\limits_{i\leq n} w_i~\right).
%\end{equation}
Note that
\[
\int\limits_{(\beta_{j_1}-1)\ldots(\beta_{j_{n-1}}-1)} \sum\limits_{i\leq n} w_i=\int\limits_{(\beta_{j_1}-1)\ldots(\beta_{j_{n-1}}-1)} w_n+w_{n-1}.
\]
Since $(L_{n-2})_\CC$ vanishes on $I^{n-1}$, $r_\ZZ$ is well-defined. Moreover, $r_\ZZ$ is defined over $\ZZ$, for if $\displaystyle{f\in(\frac{L_n}{L_{n-2}})_\ZZ}$, the iterated integral $\int\sum w_i$ can be chosen to be integer-valued on $\pi_1(U,e)$, and hence \eqref{eqr_Z} is an integer. Finally, we check that $r_\ZZ$ is a retraction of $\mathfrak{i}$. In view of Lemma \ref{lem3.1} and the formula \eqref{eqq} for $q_\CC$, if $\omega_1,\ldots,\omega_{n-1}$ are smooth closed 1-forms on $X$, $\mathfrak{i}([\omega_1]\otimes\ldots\otimes[\omega_{n-1}])$ is of the form
\[
\int \omega_1\ldots\omega_{n-1}+\text{lower length terms} \mod (L_{n-2})_\CC,
\]
where the iterated integral is closed. We have
\begin{eqnarray*}
[\beta_{j_1}]\otimes\ldots\otimes [\beta_{j_{n-1}}]~\bigm(r_\ZZ\circ\mathfrak{i}([\omega_1]\otimes\ldots\otimes[\omega_{n-1}])\bigm)&=&\int\limits_{(\beta_{j_1}-1)\ldots(\beta_{j_{n-1}}-1)}\omega_1\ldots\omega_{n-1}\\
&=&\int\limits_{\beta_{j_1}}\omega_1\ldots\int\limits_{\beta_{j_{n-1}}}\omega_{n-1},
\end{eqnarray*}
which is the same as
\[[\beta_{j_1}]\otimes\ldots\otimes [\beta_{j_{n-1}}]\left([\omega_1]\otimes\ldots\otimes[\omega_{n-1}]\right),\]
as desired.

\begin{rem} 
The retraction $r_\ZZ$ is by no means natural, as it depends on the choice of the $\beta_j$.
\end{rem}

\subsection{} \underline{A real Hodge section of $\mathfrak{q}$}: The first assertion of the following lemma is immediate from Lemma \ref{s_F} (ii), (iii) and (iv). In view of the sequence \eqref{seq2}, the second assertion follows immediately from the first.

\begin{lemma}\label{realsection} 
The map $\mathfrak{s}_F$ (defined in Lemma \ref{s_F}(iv)) is a section of $\mathfrak{q}: (\frac{L_n}{L_{n-2}})_\CC\longrightarrow (H_\CC^1)^{\otimes n}$ defined over $\RR$ that respects the Hodge and weight filtrations. In particular, it gives an isomorphism 
\[\frac{L_n}{L_{n-2}}\simeq (H^1)^{\otimes n}\oplus (H^1)^{\otimes n-1}\]
as real mixed Hodge structures.
\end{lemma}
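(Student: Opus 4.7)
The plan is to verify both assertions by directly unwinding definitions; the heavy lifting has already been done in the construction of $s_F$ and in Lemma \ref{s_F}. The first assertion has three parts: (a) that $\mathfrak{s}_F$ is a section of $\mathfrak{q}$, (b) that it is defined over $\RR$, and (c) that it respects both filtrations.

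For (a), first I would observe that, by definition, $\mathfrak{s}_F$ is $s_F$ followed by the quotient $(L_n)_\CC\twoheadrightarrow (L_n/L_{n-2})_\CC$, while $\mathfrak{q}$ is the quotient $(L_n/L_{n-2})_\CC\twoheadrightarrow (L_n/L_{n-1})_\CC$ followed by $\overline{q}$. Chaining these, $\mathfrak{q}\circ\mathfrak{s}_F$ collapses to $\overline{q}$ composed with the class of $s_F(-)$ in $(L_n/L_{n-1})_\CC$, which by the very definition of $\overline{q}$ equals $q_\CC\circ s_F=\mathrm{id}$ by Lemma \ref{s_F}(ii). Claim (b) is exactly Lemma \ref{s_F}(iv). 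Claim (c) follows from Lemma \ref{s_F}(iii) together with the trivial observation that the quotient $(L_n)_\CC\twoheadrightarrow (L_n/L_{n-2})_\CC$ is induced by a morphism of mixed Hodge structures, hence strictly respects both filtrations; so the composition does too.

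For the second assertion, I would consider the map
\[
\mathfrak{i}\oplus\mathfrak{s}_F:~(H^1)^{\otimes n-1}\oplus (H^1)^{\otimes n}\longrightarrow \frac{L_n}{L_{n-2}}.
\]
Because \eqref{seq2} is a short exact sequence of mixed Hodge structures and $\mathfrak{s}_F$ splits it over $\RR$, this map is an $\RR$-linear isomorphism. Both $\mathfrak{i}$ (a morphism of MHS, hence a morphism of real MHS after extension) and $\mathfrak{s}_F$ (by the first assertion) respect the Hodge and weight filtrations, and so does the inverse, which is the map $b\mapsto \bigl(\mathfrak{i}^{-1}(b-\mathfrak{s}_F(\mathfrak{q}(b))),~\mathfrak{q}(b)\bigr)$: this is filtration-preserving because $\mathfrak{q}$ is a morphism of MHS and $\mathfrak{s}_F$ preserves the filtrations by the first assertion. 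Consequently $\mathfrak{i}\oplus\mathfrak{s}_F$ is an isomorphism of real mixed Hodge structures.

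I do not anticipate any substantial obstacle; the only subtle point is the verification that the inverse of $\mathfrak{i}\oplus\mathfrak{s}_F$ preserves the filtrations, and this is immediate from the explicit formula for the inverse. All of the real work was done in constructing $s_F$ so that $\mathfrak{s}_F$ would be defined over $\RR$ (which relied on the real 1-forms $\nu_\RR$ from Lemma \ref{lemlift1}(iii)); once that is in place, the present lemma is just bookkeeping.
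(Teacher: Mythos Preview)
Your proposal is correct and follows essentially the same approach as the paper: the paper's own proof is a two-line remark that the first assertion is immediate from Lemma~\ref{s_F}(ii), (iii), (iv) and that the second follows from the first via the exact sequence \eqref{seq2}, which is exactly what you have unpacked. The only minor comment is that your explicit check that the inverse of $\mathfrak{i}\oplus\mathfrak{s}_F$ preserves filtrations is redundant (a bijective morphism in the abelian category of real mixed Hodge structures is automatically an isomorphism by strictness), though it does no harm.
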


\subsection{} In this paragraph, we describe the images of the extension $\mathbb{E}_{n,e}^\infty$ under $\Psi$ and $\Phi\circ\Psi$.

\begin{prop}\label{harmonicvolume}
\begin{itemize}
\item[(a)] $\Psi(\mathbb{E}_{n,e}^\infty)$ is the class of the map that given $c\in (H_\CC^1)^{\otimes n}$, $d\in (H_\CC^1)^{\otimes n-1}$, it sends $c\otimes d$ to $PD^{\otimes n-1}(d)(r_\ZZ\circ\mathfrak{s}_F(c))$. More explicitly, if $\beta_j\in\pi_1(U,e)$ ($1\leq j\leq 2g$) are such that $\{[\beta_j]\}$ is a basis of $H_1(X,\ZZ)$, and $\omega_1,\ldots,\omega_n\in\mathcal{H}_\CC^1(X)$, $\Psi(\mathbb{E}_{n,e}^\infty)$ is the class of the map that sends
\[
[\omega_1]\otimes\ldots\otimes[\omega_n]\otimes (PD^{\otimes n-1})^{-1}([\beta_{j_1}]\otimes\ldots\otimes[\beta_{j_{n-1}}])\]
to
\[\int\limits_{(\beta_{j_1}-1)\ldots(\beta_{j_{n-1}}-1)}\omega_1\ldots\omega_n+\sum\limits_i\omega_1\ldots\nu(\omega_i\otimes\omega_{i+1})\ldots\omega_n.\]
\item[(b)] $\Phi\circ\Psi(\mathbb{E}_{n,e}^\infty)$ is the map that given $c\in (H_\ZZ^1)^{\otimes n}$, $d\in (H_\ZZ^1)^{\otimes n-1}$, it sends $c\otimes d$ to $PD^{\otimes n-1}(d)(r_\ZZ\circ\mathfrak{s}_F(c))$ $\mod \ZZ$. More explicitly, for $\gamma_j\in\pi_1(U,e)$ ($1\leq j\leq n-1$), and $\omega_1,\ldots,\omega_n\in\mathcal{H}_\RR^1(X)$ with integral periods, $\Phi\circ\Psi(\mathbb{E}_{n,e}^\infty)$ sends
\[
[\omega_1]\otimes\ldots\otimes[\omega_n]\otimes (PD^{\otimes n-1})^{-1}([\gamma_1]\otimes\ldots\otimes[\gamma_{n-1}])\]
to
\[\int\limits_{(\gamma_{1}-1)\ldots(\gamma_{n-1}-1)}\omega_1\ldots\omega_n+\sum\limits_i\omega_1\ldots\nu(\omega_i\otimes\omega_{i+1})\ldots\omega_n~\mod\ZZ.\]
\end{itemize}
\end{prop}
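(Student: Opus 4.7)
The proof is essentially an unwinding of definitions, using Carlson's theorem to identify the extension class, and then tracking the class through the chain of isomorphisms that defines $\Psi$. The key inputs are already assembled: by Lemma \ref{realsection} the map $\mathfrak{s}_F$ is a Hodge section of $\mathfrak{q}$, and by Paragraph \ref{intretpar} the map $r_\ZZ$ is an integral retraction of $\mathfrak{i}$. Carlson's description from Paragraph \ref{Carlsoniso} then immediately gives that under
\[
\Ext\bigm((H^1)^{\otimes n},(H^1)^{\otimes n-1}\bigm)\stackrel{\text{Carlson}}{\cong} J\inhom\bigm((H^1)^{\otimes n},(H^1)^{\otimes n-1}\bigm),
\]
the element $\mathbb{E}_{n,e}^\infty$ corresponds to the class of $r_\ZZ\circ\mathfrak{s}_F\colon (H_\CC^1)^{\otimes n}\to(H_\CC^1)^{\otimes n-1}$.

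Next, I would trace this class through the remaining three isomorphisms in the chain \eqref{eqPsisteps}. Each of these is easy to unpack at the level of the underlying $\CC$-linear map: Lemma \ref{basichodge}(a) simply rewrites $r_\ZZ\circ\mathfrak{s}_F$ as the map $c\otimes\phi\mapsto\phi(r_\ZZ\circ\mathfrak{s}_F(c))$; the induced map by $PD^{\otimes n-1}$ reparametrizes $\phi$ as $PD^{\otimes n-1}(d)$ with $d\in(H^1)^{\otimes n-1}(n-1)$; and Lemma \ref{basichodge}(f) is an equality of Jacobians at the level of the underlying complex-linear map, only reshuffling the Tate twist between the two tensor factors. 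This yields the abstract description of $\Psi(\mathbb{E}^\infty_{n,e})$ claimed in part (a).

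To get the explicit formula, I would plug in $c=[\omega_1]\otimes\cdots\otimes[\omega_n]$ with $\omega_i\in\mathcal{H}^1_\CC(X)$. By Lemma \ref{s_F}(i),
\[
s_F(c)=\int\omega_1\cdots\omega_n+\sum_{i=1}^{n-1}\int\omega_1\cdots\nu(\omega_i\otimes\omega_{i+1})\cdots\omega_n+(\text{length }\leq n-2),
\]
so $\mathfrak{s}_F(c)$ is this element modulo $(L_{n-2})_\CC$. Applying $r_\ZZ$ via formula \eqref{eqr_Z} and pairing with $[\beta_{j_1}]\otimes\cdots\otimes[\beta_{j_{n-1}}]$ gives the integral over $(\beta_{j_1}-1)\cdots(\beta_{j_{n-1}}-1)$ of the entire expression; here the lower-length terms drop out because length-$\leq n-2$ iterated integrals vanish on $I^{n-1}$. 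Since $PD^{\otimes n-1}(d)$ acts on $(H_\CC^1)^{\otimes n-1}$ precisely by the integration pairing with $[\beta_{j_1}]\otimes\cdots\otimes[\beta_{j_{n-1}}]$, the formula in (a) follows.

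For part (b), I would apply the remark in Paragraph \ref{RmodZ}: the map $\Phi$ sends the class of any $\RR$-defined representative $f$ to the reduction of $f|_{A_\ZZ}$ modulo $\ZZ$. The representative found in part (a) is the composite $c\otimes d\mapsto PD^{\otimes n-1}(d)(r_\ZZ\circ\mathfrak{s}_F(c))$, and this is defined over $\RR$: $\mathfrak{s}_F$ is real by Lemma \ref{s_F}(iv), $r_\ZZ$ is integral by construction, and $PD^{\otimes n-1}$ comes from integration against integral cycles. Evaluating at $\omega_i\in\mathcal{H}^1_\RR(X)$ with integral periods and $\gamma_j\in\pi_1(U,e)$, and reducing mod $\ZZ$, gives the claimed formula. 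The only mild subtlety to check is that the integral expression in part (a) is $(n-1)$-multilinear in the $[\gamma_j]$, which follows from the composition-of-paths formula \eqref{itintprop}; this allows the statement of (b) to be phrased in terms of arbitrary $\gamma_j$, not just a fixed basis. No step is substantially difficult; the main care required is in bookkeeping of the Tate twists in $\Psi$ and in verifying the $\RR$-structure in the passage from (a) to (b).
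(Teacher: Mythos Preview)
Your proof is correct and follows essentially the same route as the paper: Carlson's theorem gives the class of $r_\ZZ\circ\mathfrak{s}_F$, which is then pushed through the chain \eqref{eqPsisteps}, and the explicit formula comes from Lemma \ref{s_F}(i) together with \eqref{eqr_Z}.

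The one place where your justification is too quick is the final step of (b), passing from the fixed basis $\{\beta_j\}$ to arbitrary $\gamma_j$. You say the integral expression is ``$(n-1)$-multilinear in the $[\gamma_j]$'' and that this ``follows from the composition-of-paths formula \eqref{itintprop}''. Neither claim is correct as stated. The length-$n$ piece $\displaystyle\int_{(\gamma_1-1)\cdots(\gamma_{n-1}-1)}\omega_1\cdots\omega_n$ is \emph{not} determined by the homology classes $[\gamma_j]$; it is only well-defined modulo $\ZZ$ once one invokes the integral-periods hypothesis. And \eqref{itintprop} by itself does not yield this. The paper's argument is: write $[\gamma_1]\otimes\cdots\otimes[\gamma_{n-1}]$ as an integer combination of basis tensors $[\beta_{j_1}]\otimes\cdots\otimes[\beta_{j_{n-1}}]$; then the difference
\[
\lambda=(\gamma_1-1)\cdots(\gamma_{n-1}-1)-\sum c_{j_1,\ldots,j_{n-1}}(\beta_{j_1}-1)\cdots(\beta_{j_{n-1}}-1)
\]
lies in $I^n$ by the isomorphisms \eqref{ImodI^2} and \eqref{quotientsofI}. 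The length-$(n-1)$ terms vanish on $I^n$, and $\displaystyle\int_\lambda\omega_1\cdots\omega_n\in\ZZ$ by \eqref{eqnilit} since the $\omega_i$ have integral periods. This is the content you should supply in place of the appeal to \eqref{itintprop}.
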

 
\begin{proof}
(a) We track $\mathbb{E}^\infty_{n,e}$ through different steps of \eqref{eqPsisteps}. The element in $J\inhom\left((H^1)^{\otimes n},(H^1)^{\otimes n-1}\right)$ corresponding to $\mathbb{E}_{n,e}^\infty$ under the isomorphism of Carlson is the class of $r_\ZZ\circ\mathfrak{s}_F$. (See Paragraph \ref{Carlsoniso}.) That the latter goes to the described element of $J((H^1)^{\otimes 2n-1})^\vee$ is clear. For the second assertion, define $r_\ZZ$ using the basis $\{[\beta_j]\}$, and then the assertion follows on noting that $r_\ZZ\circ\mathfrak{s}_F([\omega_1]\otimes\ldots\otimes[\omega_n])$, by its definition, pairs with the element $[\beta_{j_1}]\otimes\ldots\otimes[\beta_{j_{n-1}}]\in (H_1)_\ZZ^{\otimes n-1}$ in the desired fashion. (See \eqref{eqr_Z} and Lemma \ref{s_F}(i),(iv).)\\
(b) The section $\mathfrak{s}_F$ is defined over $\RR$, and hence so is $r_\ZZ\circ\mathfrak{s}_F$. Thus the map
\[
c\otimes d\mapsto PD^{\otimes n-1}(d)(r_\ZZ\circ\mathfrak{s}_F(c))
\]
of Part (a) is also defined over $\RR$. The first assertion follows. The explicit description of Part (a) implies that (with $\beta_j$ as in Part (a)) $\Phi\circ\Psi(\mathbb{E}_{n,e}^\infty)$ sends
\[
[\omega_1]\otimes\ldots\otimes[\omega_n]\otimes (PD^{\otimes n-1})^{-1}([\beta_{j_1}]\otimes\ldots\otimes[\beta_{j_{n-1}}])\]
to
\[\int\limits_{(\beta_{j_1}-1)\ldots(\beta_{j_{n-1}}-1)}\omega_1\ldots\omega_n+\sum\limits_i\omega_1\ldots\nu(\omega_i\otimes\omega_{i+1})\ldots\omega_n~\mod\ZZ.\]
To get the basis-independent formula, in $H_1(X,\ZZ)^{\otimes n-1}$ we write 
\[
[\gamma_1]\otimes\ldots\otimes[\gamma_{n-1}]=\sum\limits_{j_1,\ldots,j_{n-1}}c_{j_1,\ldots,j_{n-1}}[\beta_{j_1}]\otimes\ldots\otimes[\beta_{j_{n-1}}],
\]
where the coefficients are all integers. In view of the isomorphisms \eqref{ImodI^2} and \eqref{quotientsofI}, the element
\[
\lambda:=(\gamma_1-1)\ldots(\gamma_{n-1}-1)-\sum\limits_{j_1,\ldots,j_{n-1}}c_{j_1,\ldots,j_{n-1}}(\beta_{j_1}-1)\ldots(\beta_{j_{n-1}}-1)\in I^{n-1},
\]
where $I\in\ZZ[\pi_1(U,e)]$ is the augmentation ideal, actually belongs to $I^n$. Thus
\begin{eqnarray*}
\int\limits_\lambda~\omega_1\ldots\omega_n+\sum\limits_i\omega_1\ldots\nu(\omega_i\otimes\omega_{i+1})\ldots\omega_n=\int\limits_{\lambda}\omega_1\ldots\omega_n~\in\ZZ,
\end{eqnarray*}
as $\lambda\in I^n$ and the $\omega_i$ have integer periods. This gives the desired conclusion.
\end{proof}

\begin{rem} (1) The use of a basis in Part (a) of the proposition is just to make the map well-defined.\\ 
(2) Let $K$ be the kernel of the cup product $H^1\otimes H^1\rightarrow H^2(X)$. The map $\Phi\circ\Psi(\mathbb{E}_{n,e}^\infty)$ can be thought of as an analog of the pointed harmonic volume 
\[I_e\in\Hom(K_\ZZ\otimes H^1_\ZZ,\RR/\ZZ)\]
of B. Harris \cite{Harris}. Pulte \cite{Pulte} showed that $I_e$ corresponds under the isomorphisms 
\[
\Ext(K,H^1)\stackrel{\text{Carlson}}{\cong}J\inhom(K,H^1)\stackrel{\text{Poincare duality}}{\cong} J\inhom(K\otimes H^1,\ZZ(0))\cong \Hom(K_\ZZ\otimes H^1_\ZZ,\RR/\ZZ) 
\] 
to the extension given by the sequence
\begin{equation}\label{Pulteseq}
\begin{array}{ccccccccc}
0& \longrightarrow & \displaystyle{\frac{L_{1}}{L_{0}}(X,e)} & \longrightarrow &\displaystyle{\frac{L_2}{L_{0}}(X,e)} &\longrightarrow &\displaystyle{\frac{L_2}{L_{1}}(X,e)} &\longrightarrow &0.\\
 &             &    \text{\rotatebox{90}{$\cong$}} & & & &\text{\rotatebox{90}{$\cong$}} & &\\
 & & H^1 & & & & K & &
\end{array}
\end{equation}
\end{rem}

\section{Algebraic cycles $\Delta_{n,e}$ and $Z^\infty_{n,e}$}\label{construction_of_cycles}

\subsection{Notation} Given a variety $Y$ over a field $K$, $\mathcal{Z}_i(Y)$ (resp. $\mathcal{Z}^i(Y)$) denotes the group of algebraic cycles of dimension (resp. codimension) $i$, and $\CH_i(Y)$ (resp. $\CH^i(Y)$) is $\mathcal{Z}_i(Y)$ (resp. $\mathcal{Z}^i(Y)$) modulo rational equivalence.\footnote[2]{Note that in our notation, $\CH_i(Y)$ is merely an abelian group, and not a functor from $K$-schemes to abelian groups.} As usual $\mathcal{Z}(Y):=\bigoplus\mathcal{Z}^i(Y)$ and $\CH(Y):=\bigoplus \CH^i(Y)$.  Notation-wise, we do not distinguish between an algebraic cycle and its class in the corresponding Chow group. Given $Y$ and $Y'$ of dimensions $d$ and $d'$, the group of degree zero correspondences from $Y$ to $Y'$ is $\text{Cor}(Y,Y'):=\mathcal{Z}_d(Y\times Y')$. If $f:Y\rightarrow Y'$ is a morphism, the graph of $f$ is denoted by $\Gamma_f$; it is an element of $\text{Cor}(Y,Y')$. We use the standard notation (lower star) for push-forwards along morphisms. Given algebraic cycles $Z\in\mathcal{Z}_i(Y)$ and $Z'\in\mathcal{Z}_j(Y')$, $Z\times Z'\in\mathcal{Z}_{i+j}(Y\times Y')$ denotes the Cartesian product. Given $Z\in \mathcal{Z}_i(Y\times Y')$, ${}^tZ$ is the transpose of $Z$; it is an element of $\mathcal{Z}_i(Y'\times Y)$. Finally, if $Y$ is a smooth variety over a subfield of $\CC$, $\mathcal{Z}^\hhom_i(Y)$ (resp. $\CH^\hhom_i(Y)$) refers to the subgroup of null-homologous cycles in $\mathcal{Z}_i(Y)$ (resp. $\CH_i(Y)$).

\subsection{A construction of Gross and Schoen}\label{parGSrecollection}

In this paragraph, we recall a construction of Gross and Shoen \cite{GS}. Let $m$ be a positive integer. By convention, we set $X^0=\Spec~\CC$. For (possibly empty) $T\subset\{1,\ldots,m\}$, let $p_T:X^m\rightarrow X^{|T|}$ be the projection map onto the coordinates in $T$, and $q_T:X^{|T|}\rightarrow X^m$ be the embedding that is a right inverse to $p_T$ and fills the coordinates that are not in $T$ by $e$. For instance, if $m=3$ and $T=\{2,3\}$, 
\[
(x_1,x_2,x_3)\stackrel{p_T}{\mapsto} (x_2,x_3)\hspace{.3in}\text{and}\hspace{.5in} (x_1,x_2)\stackrel{q_T}{\mapsto} (e,x_1,x_2).
\]
In general, the composition $q_T\circ p_T:X^m\rightarrow X^m$ is the morphism that keeps the $T$ coordinates unchanged, and replaces the rest by $e$. Let 
\[P_e=\sum\limits_{T}(-1)^{|T^c|}\Gamma_{q_T\circ p_T} \in \text{Cor}(X^m, X^m),\]
where $T^c$ denotes the complement of $T$. For the proof of the following result, see \cite{GS}.

\begin{thm} 
If $i<m$, the map $(P_e)^h_\ast: H_i(X^m)\rightarrow H_i(X^m)$ induced by $P_e$ on homology is zero.
\end{thm}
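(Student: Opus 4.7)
The plan is to argue Künneth summand by Künneth summand. By the Künneth formula,
\[
H_i(X^m)=\bigoplus_{i_1+\cdots+i_m=i}H_{i_1}(X)\otimes\cdots\otimes H_{i_m}(X),
\]
so it suffices to show that $(P_e)^h_\ast$ annihilates every such summand when $i<m$. I would fix one and a decomposable class $\alpha=\alpha_1\otimes\cdots\otimes\alpha_m$ in it, and set $S=\{j:i_j>0\}$.

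First I would compute $(q_T\circ p_T)^h_\ast(\alpha)$ for each $T\subseteq\{1,\ldots,m\}$. The projection $p_T$ is, up to reordering of factors, of the form $X^{T^c}\times X^T\to X^T$, and its pushforward in homology is the tensor product of the augmentation on $H_\ast(X^{T^c})$ with the identity on $H_\ast(X^T)$; in particular it kills $\alpha$ unless $i_j=0$ for every $j\notin T$, i.e.\ unless $S\subseteq T$. Assuming $S\subseteq T$, the map $(q_T)^h_\ast$ then inserts $[e]\in H_0(X)$ into the slots indexed by $T^c$. Since $X$ is connected, $[e]$ generates $H_0(X)$, so for any $j\notin S$ we have $\alpha_j=\deg(\alpha_j)[e]$; this applies in particular to $j\notin T$. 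Consequently
\[
(q_T\circ p_T)^h_\ast(\alpha)=\begin{cases}\alpha & \text{if }S\subseteq T,\\ 0 & \text{otherwise.}\end{cases}
\]

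Next I would sum with the prescribed signs, obtaining
\[
(P_e)^h_\ast(\alpha)=\Bigl(\sum_{T\supseteq S}(-1)^{|T^c|}\Bigr)\alpha.
\]
Parametrizing $T=S\sqcup U$ with $U\subseteq S^c$, the coefficient equals $(-1)^{m-|S|}\sum_{U\subseteq S^c}(-1)^{|U|}$, which vanishes unless $S^c=\emptyset$. But $S=\{1,\ldots,m\}$ forces $i=\sum i_j\geq m$, contradicting the hypothesis $i<m$. Hence $(P_e)^h_\ast(\alpha)=0$ on every Künneth summand, proving the theorem.

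The only subtlety I anticipate is the (standard but sign-sensitive) computation of pushforward along a projection in homology via Künneth; once this and the reduction $\alpha_j=\deg(\alpha_j)[e]$ on $H_0$ of a connected variety are in hand, the remainder is pure inclusion–exclusion.
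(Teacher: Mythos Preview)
Your argument is correct. The paper does not actually prove this theorem; it simply cites Gross and Schoen \cite{GS} for the proof. Your K\"unneth-plus-inclusion--exclusion argument is precisely the standard one (and essentially what one finds in \cite{GS}): since $H_\ast(X,\ZZ)$ is torsion-free, K\"unneth gives an honest direct sum decomposition, and $q_T\circ p_T$ is a product of maps (identity on the $T$-factors, constant-to-$e$ on the others), so its effect on a decomposable class is exactly as you describe. The vanishing then reduces to the binomial identity $\sum_{U\subseteq S^c}(-1)^{|U|}=0$ whenever $S^c\neq\emptyset$, which is forced by $i<m$. The sign worry you flag is not an issue here: for a product of maps the induced map on homology via K\"unneth is the tensor product of the factorwise pushforwards, with no Koszul signs.
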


\noindent Let $(P_e)_\ast$ be the push forward map $\mathcal{Z}(X^m)\rightarrow \mathcal{Z}(X^m)$ defined by the correspondence $P_e$. Then
\[
(P_e)_\ast=\sum\limits_{T}(-1)^{|T^c|} (q_T\circ p_T)_\ast.
\] 
In view of commutativity of the diagram
\[
\begin{tikzpicture}
  \matrix (m) [matrix of math nodes, column sep=2em, row sep=2.5em]
    {	  \mathcal{Z}_i(X^m)& \mathcal{Z}_i(X^m)\\
		H_{2i}(X^m,\CC)& H_{2i}(X^m,\CC),\\};
  { [start chain] \chainin (m-1-1);
    \chainin (m-1-2) [join={node[above,labeled] {(P_e)_\ast}}];}
  {[start chain] \chainin (m-2-1);
    \chainin (m-2-2) [join={node[above,labeled] {(P_e)^h_\ast}}];}
	{[start chain] \chainin (m-1-2);
		\chainin (m-2-2) [join={node[right,labeled] {}}];}
	{[start chain] \chainin (m-1-1);
		\chainin (m-2-1) [join={node[left,labeled] {}}];}
  \end{tikzpicture}
\]
where the vertical maps are class maps, it follows from the previous theorem that if $2i<m$, then
\[(P_e)_\ast(\mathcal{Z}_i(X^m))\subset \mathcal{Z}_i^\hhom(X^m).\]
This gives a way of constructing null-homologous cycles.\\

\noindent{\bf Example.} For $m\geq 2$, denote by $\Delta^{(m)}(X)$ the diagonal copy of $X$ in $X^m$, i.e. 
\[
\{(x,x,\ldots,x):x\in X\}\in \mathcal{Z}_1(X^m).
\] 
For $m\geq 3$, by the previous observation, the {\it modified diagonal cycle} $(P_e)_\ast(\Delta^{(m)}(X))$ is null-homologous. As it is pointed out in \cite{GS}, this cycle has zero Abel-Jacobi image if $m>3$. On the other hand, if $m=3$, this cycle, which was first defined by Gross and Kudla in \cite{GK} and then studied more by Gross and Schoen in \cite{GS}, is well-known to be interesting. It is easy to see from its definition that
\begin{eqnarray*}
(P_e)_\ast(\Delta^{(3)}(X))&=&\Delta^{(3)}(X)-\{(e,x,x):x\in X\}-\{(x,e,x):x\in X\}-\{(x,x,e):x\in X\}\\
&&+~\{(e,e,x):x\in X\}+\{(e,x,e):x\in X\}+\{(x,e,e):x\in X\}.
\end{eqnarray*} 
We denote this cycle by $\Delta_{KGS,e}$ , the modified diagonal cycle of Kudla, Gross and Schoen.\\

\noindent Note that
\[
(P_e)_\ast(\Delta^{(2)}(X))=\Delta^{(2)}(X)-\{e\}\times X-X\times\{e\},
\]
which is homologically nontrivial.

\subsection{}\label{pardefineDelta_n} Let $n\geq 2$. In this paragraph, we use the construction of Gross and Schoen to define a null-homologous cycle $\Delta_{n,e}\in\mathcal{Z}_{n-1}(X^{2n-1})$, which will play a crucial role in the remainder of the paper. We use the notation of Paragraph \ref{parGSrecollection} with $m=2n-1$.\\

For $0<i<n$, let $\delta_i:X^{n-1}\rightarrow X^{n}$ be the embedding 
\[
(x_1,\ldots,x_{n-1})\mapsto (x_1,\ldots,x_i,x_i,\ldots,x_{n-1}). 
\] 
Then ${}^t\Gamma_{\delta_i}\in\mathcal{Z}_{n-1}(X^{2n-1})$, and thus $(P_e)_\ast ({}^t\Gamma_{\delta_i})$ is null-homologous. We define 
\[
\Delta_{n,e}:=(P_e)_\ast\left(\sum\limits_i(-1)^{i-1}~{}^t\Gamma_{\delta_i}\right)=\sum\limits_i(-1)^{i-1} (P_e)_\ast ({}^t\Gamma_{\delta_i})~\in\mathcal{Z}_{n-1}^\hhom(X^{2n-1}).
\]
It is clear from the definition that $\Delta_{2,e}$ is simply the modified diagonal cycle $\Delta_{KGS,e}$ of Gross, Kudla, and Schoen in $X^3$.

\subsection{}\label{boundry_inverse_Delta_2} In this paragraph, we realize the cycle $\Delta_{n,e}$ as the boundary of a chain. This will be particularly important when later we study the image of $\Delta_{n,e}$ under the Abel-Jacobi map.\\

Let $\Lambda_n$ be the closed subvariety 
\[
\{(x_1,x_1,x_1,x_2,x_2,\ldots,x_{n-1},x_{n-1}): x_i\in X\}
\] 
of $X^{2n-1}$, where each $x_i$ ($i>1$) is appearing in exactly two coordinates. It is a copy of $X^{n-1}$ embedded in $X^{2n-1}$ via the map 
\[
(x_1,\ldots,x_{n-1})\mapsto (x_1,x_1,x_1,x_2,x_2,\ldots,x_{n-1},x_{n-1}),
\]
and can also be thought of as an element of $\mathcal{Z}_{n-1}(X^{2n-1})$. It is easy to see that
\begin{equation}\label{eq1july17}
(P_e)_\ast(\Lambda_n)=\Delta_{2,e}\times \overbrace{(P_e)_\ast(\Delta^{(2)}(X))\times\ldots\times (P_e)_\ast(\Delta^{(2)}(X))}^{\text{$n-2$ factors}}.
\end{equation}
Let $\partial^{-1}(\Delta_{2,e})$ be an integral 3-chain in $X^3$ whose boundary is $\Delta_{2,e}$. (See for instance the proof of Lemma 2.3 of \cite{DRS} for such a 3-chain.) Then $(P_e)_\ast(\Lambda_n)$ is the boundary of 
\[
\partial^{-1}(\Delta_{2,e})\times \overbrace{(P_e)_\ast(\Delta^{(2)}(X))\times\ldots\times (P_e)_\ast(\Delta^{(2)}(X))}^{\text{$n-2$ factors}}~=:\partial^{-1}(P_e)_\ast(\Lambda_n).
\]
\noindent It is clear that each ${}^t\Gamma_{\delta_i}$ is a copy of $\Lambda_n$. Specifically, ${}^t\Gamma_{\delta_i}=(\sigma_i)_\ast(\Lambda_n)$ where $\sigma_i$ is the automorphism of $X^{2n-1}$ that sends $(x_1,\ldots,x_{2n-1})$ to
\[ 
(x_4,x_6,\ldots,x_{2i},x_1,x_2,x_{2i+2},x_{2i+4},\ldots,x_{2n-2},x_5,x_7,\ldots,x_{2i+1}, x_3,x_{2i+3},x_{2i+5},\ldots,x_{2n-1}).  
\]

\begin{lemma}
$(P_e)_\ast$ and $(\sigma_i)_\ast$ commute (as maps $\mathcal{Z}(X^{2n-1})\rightarrow \mathcal{Z}(X^{2n-1})$). 
\end{lemma}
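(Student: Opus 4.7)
The plan is to reduce the claim to the fact that the formal sum defining $P_e$ is stable under relabeling subsets $T\subset\{1,\dots,2n-1\}$ by a permutation of indices, which makes it commute with any coordinate-permuting automorphism of $X^{2n-1}$.

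Concretely, I would first observe that each $\sigma_i$ is of the form $(x_1,\dots,x_{2n-1})\mapsto(x_{\tau(1)},\dots,x_{\tau(2n-1)})$ for some permutation $\tau=\tau_i$ of $\{1,\dots,2n-1\}$. For any subset $T\subset\{1,\dots,2n-1\}$, the composition $\sigma_i\circ(q_T\circ p_T)$ sends $(x_1,\dots,x_{2n-1})$ to the tuple whose $j$-th coordinate is $x_{\tau(j)}$ if $\tau(j)\in T$ and $e$ otherwise. On the other hand, $(q_{T'}\circ p_{T'})\circ\sigma_i$ sends $(x_1,\dots,x_{2n-1})$ to the tuple whose $j$-th coordinate is $x_{\tau(j)}$ if $j\in T'$ and $e$ otherwise. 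Taking $T'=\tau^{-1}(T)$, these two morphisms coincide, so
\[
\sigma_i\circ(q_T\circ p_T)=(q_{\tau^{-1}(T)}\circ p_{\tau^{-1}(T)})\circ\sigma_i.
\]
Since $\tau$ is a bijection, $|\tau^{-1}(T)^c|=|T^c|$.

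Next I would apply $(\sigma_i)_\ast$ to each summand of $(P_e)_\ast=\sum_T(-1)^{|T^c|}(q_T\circ p_T)_\ast$, using functoriality of push-forward for the composition identity above:
\[
(\sigma_i)_\ast\circ(P_e)_\ast
=\sum_T(-1)^{|T^c|}\bigl((q_{\tau^{-1}(T)}\circ p_{\tau^{-1}(T)})\circ\sigma_i\bigr)_\ast
=\Bigl(\sum_T(-1)^{|\tau^{-1}(T)^c|}(q_{\tau^{-1}(T)}\circ p_{\tau^{-1}(T)})_\ast\Bigr)\circ(\sigma_i)_\ast.
\]
Finally, reindexing by $T'=\tau^{-1}(T)$ (which ranges over all subsets as $T$ does) turns the inner sum back into $(P_e)_\ast$, yielding $(\sigma_i)_\ast\circ(P_e)_\ast=(P_e)_\ast\circ(\sigma_i)_\ast$.

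There is no serious obstacle here; the only thing to be careful about is the bookkeeping with $\tau$ versus $\tau^{-1}$, which is why it is cleaner to verify the composition identity pointwise on $X^{2n-1}$ before passing to push-forwards. One could alternatively phrase everything in terms of correspondences and note that $(\sigma_i)_*$ is induced by $\Gamma_{\sigma_i}$, and then use that convolution of correspondences is compatible with the relabeling of the indexing set of $P_e$ by $\tau$.
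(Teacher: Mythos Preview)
Your proof is correct and follows essentially the same approach as the paper: both verify pointwise that conjugating $q_T\circ p_T$ by the coordinate permutation $\sigma_i$ yields $q_{T'}\circ p_{T'}$ for the relabeled subset $T'$, then reindex the sum defining $(P_e)_\ast$. The only cosmetic difference is that the paper writes the permutation as $\sigma_i^{-1}$ (so that $\sigma_i(x)_j=x_{\sigma_i^{-1}(j)}$) and checks the identity in the order $q_T\circ p_T\circ\sigma_i=\sigma_i\circ q_{\sigma_i^{-1}T}\circ p_{\sigma_i^{-1}T}$, which is the same computation with $\tau=\sigma_i^{-1}$.
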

\begin{proof}
With abuse of notation, suppose $\sigma_i$ is the permutation of $1,2,\ldots,2n-1$ such that 
\[
\sigma_i(x_1,\ldots,x_{2n-1})=(x_{\sigma_i^{-1}(1)},x_{\sigma_i^{-1}(2)},\ldots,x_{\sigma_i^{-1}(2n-1)}).
\]
Then for each subset $T$ of $\{1,2,\ldots,2n-1\}$, $q_T\circ p_T\circ\sigma_i=\sigma_i\circ q_{\sigma_i^{-1}T}\circ p_{\sigma_i^{-1}T}$. We have
\begin{eqnarray*}
(P_e)_\ast\circ(\sigma_i)_\ast&=&\left(\sum\limits_T(-1)^{|T^c|}(q_T\circ p_T)_\ast\right)(\sigma_i)_\ast\\
&=&\sum\limits_T(-1)^{|T^c|}(q_T\circ p_T\circ \sigma_i)_\ast\\
&=&\sum\limits_T(-1)^{|T^c|}(\sigma_i\circ q_{\sigma_i^{-1}T}\circ p_{\sigma_i^{-1}T})_\ast\\
&=&(\sigma_i)_\ast\left(\sum\limits_T(-1)^{|T^c|}(q_{\sigma_i^{-1}T}\circ p_{\sigma_i^{-1}T})_\ast\right)\\
&=&(\sigma_i)_\ast\circ (P_e)_\ast.
\end{eqnarray*}
\end{proof}

It follows from the lemma that 
\begin{equation}\label{eq2july17}
(\sigma_i)_\ast\left((P_e)_\ast(\Lambda_n)\right)=(P_e)_\ast~({}^t\Gamma_{\delta_i}),
\end{equation}
and hence 
\[
\partial \left((\sigma_i)_\ast (\partial^{-1}(P_e)_\ast(\Lambda_n))\right)=(P_e)_\ast~({}^t\Gamma_{\delta_i}).
\]
We set
\[
\partial^{-1}\Delta_{n,e}:=\sum\limits_i(-1)^{i-1} (\sigma_i)_\ast (\partial^{-1}(P_e)_\ast(\Lambda_n)).
\]
It is immediate from the above that the boundary of this chain is $\Delta_{n,e}$.

\begin{rem}
In view of \eqref{eq1july17}, \eqref{eq2july17} and definition of $\Delta_{n,e}$, if $\Delta_{2,e}$ is torsion in $\CH_1^\hhom(X^3)$, then so is $\Delta_{n,e}$ in $\CH_{n-1}^\hhom(X^{2n-1})$ for every $n$.
\end{rem}

\subsection{}\label{pardefZ} In this paragraph, we define another family of null-homologous cycles that will be used later on. Let $n\geq 2$. Given $y\in X(\CC)$, for $0<i<n$, let $Z^y_{n,i}\in\mathcal{Z}_{n-1}(X^{2n-1})$ be 
\[
\{(x_1,\ldots,x_{i-1},x_i,x_i,x_{i+1},\ldots,x_{n-1},x_1,\ldots,x_{i-1},y,x_{i+1},\ldots,x_{n-1}):x_1,\ldots,x_{n-1}\in X\}.
\] 
Here each $x_j$ appears in exactly two coordinates. There are different ways of thinking about this cycle. For instance,
\[
Z^y_{n,i}=(\pi_{n+i,y})_\ast {}^t\Gamma_{\delta_i},
\]
where $\pi_{n+i,y}$ is the map $X^{2n-1}\rightarrow X^{2n-1}$ that replaces the $(n+i)$-th coordinate by $y$, and keeps the other coordinates unchanged.\\

It is clear that the cycle $Z^\infty_{n,i}-Z^e_{n,i}$ is null-homologous. For future reference, here we explicitly define a chain whose boundary is $Z^\infty_{n,i}-Z^e_{n,i}$. Choose a path $\gamma^\infty_e$ in $X$ from $e$ to $\infty$, and let 
\[
C^\infty_{n,e}:=\Delta^{(2)}(X)^{n-1}\times \gamma^\infty_e=\{(x_1,x_1,\ldots,x_{n-1},x_{n-1},\gamma^\infty_e(t)):x_i\in X, t\in[0,1]\}.
\]
One clearly has
\[
\partial C^\infty_{n,e}=\Delta^{(2)}(X)^{n-1}\times\{\infty\}-\Delta^{(2)}(X)^{n-1}\times\{e\}.
\]
For $0<i<n$, let $\tau_i$ be the automorphism of $X^{2n-1}$ that maps $(x_1,\ldots,x_{2n-1})$ to 
\[(x_1,x_3,\ldots,x_{2(i-1)-1},x_{2i-1},x_{2i},x_{2(i+1)-1},\ldots,x_{2(n-1)-1},x_2,x_4,\ldots,x_{2(i-1)},x_{2n-1},x_{2(i+1)},\ldots,x_{2(n-1)}),  
\] 
which is designed so that 
\[(\tau_i)_\ast\left(\Delta^{(2)}(X)^{n-1}\times \{y\}\right)=Z^y_{n,i}\] 
for every $y$. Then
\begin{equation}\label{eqpartialZ}
\partial(\tau_i)_\ast (C^\infty_{n,e})=Z^\infty_{n,i}-Z^e_{n,i}.
\end{equation}
We put together all the $Z^\infty_{n,i}-Z^e_{n,i}$ and define
\[
Z^\infty_{n,e}:=\sum\limits_{i=1}^{n-1}(-1)^{i-1}(Z^\infty_{n,i}-Z^e_{n,i})~\in\mathcal{Z}_{n-1}^{\hhom}(X^{2n-1}).
\]

\subsection{Remark}\label{remcyclesoverk} While we worked over $\CC$ in this section, it is clear that the constructions of $\Delta_{n,e}$ and $Z^\infty_{n,e}$ remain valid over any field $K$ that can be embedded into $\CC$. More precisely, if $X_0$ is a geometrically connected smooth projective curve over $K$, and $e,\infty\in X_0(K)$, the above constructions give null-homologous cycles $\Delta_{n,e}$ and $Z^\infty_{n,e}$ in $\mathcal{Z}_{n-1}(X_0^{2n-1})$ (or in $\CH_{n-1}(X_0^{2n-1})$).

\section{Statement of the main theorem}
Our goal in this section is to state the main result of the paper, which expresses the extension $\mathbb{E}^\infty_{n,e}$ in terms of the Abel-Jacobi images of the cycles $\Delta_{n,e}$ and $Z^\infty_{n,e}$.

\subsection{Review of Griffiths' Abel-Jacobi maps}
Let $Y$ be a smooth projective variety over $\CC$. The $n$-th Abel-Jacobi map associated to $Y$ is the map\footnote[2]{That our notation for this map does not incorporate $Y$ or $n$ should not lead to any confusion.}
\[
\AJ: \mathcal{Z}_n^{\hhom}(Y)\rightarrow JH^{2n+1}(Y)^\vee
\] 
defined as follows. First note that the restriction map $\left(H^{2n+1}_\CC(Y)\right)^\vee\rightarrow \left(F^{n+1}H^{2n+1}(Y)\right)^\vee$ gives an isomorphism
\[
JH^{2n+1}(Y)^\vee\cong \frac{\left(F^{n+1}H^{2n+1}(Y)\right)^\vee}{H_{2n+1}(Y,\ZZ)},
\]
where an element of $H_{2n+1}(Y,\ZZ)$ is considered as an element of $\left(F^{n+1}H^{2n+1}(Y)\right)^\vee$ via integration. Thus we can equivalently define $\AJ$ as a map into 
\[\frac{\left(F^{n+1}H^{2n+1}(Y)\right)^\vee}{H_{2n+1}(Y,\ZZ)}.\]
Given a null-homologous $n$-dimensional cycle $Z$ on $Y$, there is an integral chain $C$ such that $\partial C=Z$. Given $c\in F^{n+1}H^{2n+1}(Y)$, take a representative $\omega\in F^{n+1}E^{2n+1}_\CC(Y)$, and set
\[\int\limits_{C}c=\int\limits_C\omega.
\]
One can show that this is independent of the choice of $\omega$. Then 
\[\AJ(Z)\in\frac{\left(F^{n+1}H^{2n+1}(Y)\right)^\vee}{H_{2n+1}(Y,\ZZ)}\]
is defined to be the class of the map 
\[c\mapsto \int\limits_{C}c.\]
The ambiguity in having to choose $C$ is resolved by modding out by $H_{2n+1}(Y,\ZZ)$. If one insists on having $\AJ(Z)\in JH^{2n+1}(Y)^\vee$, it is the class of any map $H_\CC^{2n+1}(Y)\rightarrow \CC$ whose restriction to $F^{n+1}H^{2n+1}(Y)$ is the map $\displaystyle{\int\limits_C}$ above.\\

One can show that $\AJ$ factors through $\CH_n^{\hhom}(Y)$. The induced map 
\[\CH_n^{\hhom}(Y)\rightarrow JH^{2n+1}(Y)^\vee\] 
is also called Abel-Jacobi, and with abuse of notation we denote it by $\AJ$ as well.

\subsection{Notation} We adopt the following notation for the Kunneth decomposition of cohomology. Given manifolds $M$ and $N$, we think of $H^i(M)\otimes H^j(N)$ (singular or de Rham cohomology) as a subspace of $H^{i+j}(M\times N)$. Given $c\in H^i(M)$, $d\in H^j(N)$, the element $c\otimes d$ of $H^{i+j}(M\times N)$ is $pr_1^\ast(c)\wedge pr_2^\ast(d)$, where $pr_i$ the the projection of $M\times N$ onto its $i^\text{th}$ factor. We adopt a similar notation for differential forms: given $\omega$ and $\phi$ differential forms on $M$ and $N$, we refer to the differential form $pr_1^\ast(\omega)\wedge pr_2^\ast(\phi)$ on $M\times N$ by $\omega\otimes\phi$. Similar notation is used for more than two factors.

\subsection{}\label{par2secmaps} For $n\geq 1$, let $h_n$ be the composition of the Abel-Jacobi map 
\[
CH^{\hhom}_{n-1}(X^{2n-1})\longrightarrow J H^{2n-1}(X^{2n-1})^\vee 
\]  
with the map
\[
JH^{2n-1}(X^{2n-1})^\vee\longrightarrow J((H^1)^{\otimes 2n-1})^\vee
\]
induced by the Kunneth inclusion $(H^1)^{\otimes 2n-1}\subset H^{2n-1}(X^{2n-1})$. It is easy to see from definitions that if $Z\in \mathcal{Z}^{\hhom}_{n-1}(X^{2n-1})$ and $C$ is an integral chain in $X^{2n-1}$ whose boundary is $Z$, $h_n(Z)$ is the class of the map that, given harmonic 1-forms $\omega_1,\ldots,\omega_{2n-1}$ on $X$, it sends 
\begin{equation}\label{eq100}
[\omega_1]\otimes\ldots\otimes[\omega_{2n-1}]\mapsto \int\limits_C \omega_1\otimes\ldots\otimes\omega_{2n-1}.
\end{equation} 
Note that $h_1$ is just the ``classical" Abel-Jacobi map $\CH^\hhom_0(X)\rightarrow J (H^1)^\vee$.\\

If $Z$ and $C$ are as above, since the map \eqref{eq100} is defined over $\RR$, 
\[\Phi(h_n(Z)): (H^1_\ZZ)^{\otimes 2n-1}\rightarrow\RR/\ZZ\] 
is the map that, given harmonic forms $\omega_1,\ldots,\omega_{2n-1}$ on $X$ with integral periods, it maps
\[
[\omega_1]\otimes\ldots\otimes[\omega_{2n-1}]\mapsto \int\limits_C \omega_1\otimes\ldots\otimes\omega_{2n-1}~~\mod\ZZ.
\]    
(See Paragraph \ref{RmodZ} and Paragraph \ref{PhiandPsi}.)

\subsection{}
Now we are ready to state the main result. 

\begin{thm}\label{main1}
Let $n\geq 2$. We have
\begin{equation}\label{eqmainthm1}
\Psi(\mathbb{E}^\infty_{n,e})=(-1)^{\frac{n(n-1)}{2}} h_n\left(\Delta_{n,e}-Z^\infty_{n,e}\right).
\end{equation}
\end{thm}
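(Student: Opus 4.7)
The plan is to verify \eqref{eqmainthm1} after applying $\Phi$, i.e.\ to check that both sides, viewed as maps $(H^1_\ZZ)^{\otimes 2n-1}\to\RR/\ZZ$, agree on pure tensors of the form $[\omega_1]\otimes\cdots\otimes[\omega_n]\otimes(PD^{\otimes n-1})^{-1}([\gamma_1]\otimes\cdots\otimes[\gamma_{n-1}])$ with $\omega_i\in\mathcal{H}^1_\RR(X)$ of integer periods and $\gamma_j\in\pi_1(U,e)$. By Proposition \ref{harmonicvolume}(b), the left-hand side is an explicit sum of the iterated integral $\int_{(\gamma_1-1)\cdots(\gamma_{n-1}-1)}\omega_1\cdots\omega_n$ and correction terms $\int\omega_1\cdots\nu(\omega_i\otimes\omega_{i+1})\cdots\omega_n$ taken mod $\ZZ$; by Paragraph \ref{par2secmaps}, the right-hand side equals $(-1)^{n(n-1)/2}\int_C\omega_1\otimes\cdots\otimes\omega_n\otimes\eta_1\otimes\cdots\otimes\eta_{n-1}\mod\ZZ$, where $\eta_j\in\mathcal{H}^1_\RR(X)$ is chosen with integer periods and $PD([\eta_j])=[\gamma_j]$ (so $\int_X\eta_j\wedge\xi=\int_{\gamma_j}\xi$ for every closed $\xi$), and $C$ is any integral chain with $\partial C=\Delta_{n,e}-Z^\infty_{n,e}$.

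First I would take as bounding chain
\[
C=\sum_{i=1}^{n-1}(-1)^{i-1}\Bigl[(\sigma_i)_\ast\partial^{-1}(P_e)_\ast(\Lambda_n)-(\tau_i)_\ast C^\infty_{n,e}\Bigr],
\]
as in Paragraphs \ref{boundry_inverse_Delta_2} and \ref{pardefZ}, and use the factorizations
\[
\partial^{-1}(P_e)_\ast(\Lambda_n)=\partial^{-1}\Delta_{2,e}\times\bigl((P_e)_\ast(\Delta^{(2)}(X))\bigr)^{n-2},\qquad C^\infty_{n,e}=\Delta^{(2)}(X)^{n-1}\times\gamma^\infty_e,
\]
combined with the change-of-variables $\int_{(\sigma_i)_\ast(\cdot)}\Omega=\int_{(\cdot)}\sigma_i^\ast\Omega$, to reduce each summand in $\int_C$ to a product of (a) a single three-chain integral over $\partial^{-1}\Delta_{2,e}$, respectively a one-chain integral over $\gamma^\infty_e$, and (b) a product of wedge pairings $\int_X\alpha\wedge\beta$ on $X$ of the remaining pairs of harmonic forms. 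The vanishing of the restrictions $\alpha\otimes\beta|_{\{e\}\times X}$ and $\alpha\otimes\beta|_{X\times\{e\}}$ for positive-degree $\alpha,\beta$ kills the non-diagonal terms of each $(P_e)_\ast(\Delta^{(2)}(X))$, so each pairing factor is simply $\int_X\omega_i\wedge\eta_j=\int_{\gamma_j}\omega_i$, an element of $\ZZ$.

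Next I would invoke the Darmon--Rotger--Sols identity \eqref{eqsum3} (which is the $n=2$ case of the theorem, cf.\ Section \ref{section_DRS_result_reformulation}) to identify the three-chain integral over $\partial^{-1}\Delta_{2,e}$ with $-\Psi(\mathbb{E}^\infty_{2,e})$ evaluated on the corresponding triple of classes, i.e.\ with $-\int_{(\gamma-1)}\omega\omega'-\int\nu(\omega\otimes\omega')\mod\ZZ$ for an appropriate triple. A parallel, simpler calculation handles the $(\tau_i)_\ast C^\infty_{n,e}$ contribution: integration over the $\gamma^\infty_e$ factor produces the degenerate periods $\int_e^\infty\omega_i$ that appear when $\gamma_j$ is replaced by a path (this is exactly what is encoded in the ``$\infty$ versus $e$'' structure of $Z^\infty_{n,e}$). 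Combining these across $i$ and using the standard expansion \eqref{eqnilit} to convert products $\prod_j\int_{\gamma_j}\omega_{?}$ into the single iterated integral $\int_{(\gamma_1-1)\cdots(\gamma_{n-1}-1)}\omega_1\cdots\omega_n$ modulo lower filtration terms, which themselves land in $\ZZ$ by the integer-period assumption, yields precisely the expression in Proposition \ref{harmonicvolume}(b).

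The hard part will be bookkeeping: (i) tracking the permutation signs of $\sigma_i$ and $\tau_i$ acting on the $(2n-1)$-form $\omega_1\otimes\cdots\otimes\omega_n\otimes\eta_1\otimes\cdots\otimes\eta_{n-1}$ and verifying that the total sign, together with the $(-1)^{i-1}$ in the definitions of $\Delta_{n,e}$ and $Z^\infty_{n,e}$, collapses to the global factor $(-1)^{n(n-1)/2}$; and (ii) showing that for each $i$ the single ``$\nu$-term'' produced by the $n=2$ DRS identity on the $(i,i+1)$-slot of $\partial^{-1}\Delta_{2,e}$ matches, summand-by-summand, the $i$-th term in $\sum_i\int\omega_1\cdots\nu(\omega_i\otimes\omega_{i+1})\cdots\omega_n$ coming from $\Psi(\mathbb{E}^\infty_{n,e})$, with no residual cross-terms. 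Once the combinatorics of $\sigma_i$ is made explicit, both points become linear-algebra verifications; the rest of the argument is formal consequences of Stokes' theorem, the product structure, and the functoriality of $h_n$.
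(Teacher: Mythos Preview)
Your proposal is correct and follows essentially the same approach as the paper's proof in Section \ref{proofgeneralcase}. One clarification: the $n=2$ identity \eqref{eqDRSv3} for $\int_{\partial^{-1}\Delta_{2,e}}\omega\otimes\rho\otimes\eta$ already contains the term $\int_X\omega\wedge\rho\,\int_e^\infty\eta$ (not just the iterated-integral and $\nu$ parts), and it is precisely this extra term---summed over $i$---that is \emph{canceled} by the $Z^\infty_{n,e}$ contribution, rather than produced by it; with that adjustment your bookkeeping in (ii) will go through cleanly, and the identity you need for (I) is the expansion of $\int_{(\gamma_1-1)\cdots(\gamma_{n-1}-1)}\omega_1\cdots\omega_n$ via \eqref{itintprop} (an $n$-form over $n-1$ factors), not \eqref{eqnilit}.
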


When $n=2$, a slightly weaker of this is due to Darmon, Rotger, and Sols \cite{DRS}. (See the next section.) 

\section{$n=2$ case of Theorem \ref{main1} - A formula of Darmon et al revisited}\label{section_DRS_result_reformulation}

\subsection{Independence of $-\Psi(\mathbb{E}^\infty_{2,e})+h_2(Z^\infty_{2,e})$ from $\infty$}

\begin{lemma}\label{indep_from_infty}
The element $-\Psi(\mathbb{E}^\infty_{2,e})+h_2(Z^\infty_{2,e})$ is independent of the point $\infty\neq e$, i.e. if $\infty_1,\infty_2\neq e$, then 
\[
-\Psi(\mathbb{E}^{\infty_1}_{2,e})+h_2(Z^{\infty_1}_{2,e})=-\Psi(e^{\infty_2}_{2,e})+h_2(Z^{\infty_2}_{2,e}).
\]
\end{lemma}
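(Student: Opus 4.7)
The plan is to fix two distinct points $\infty_1,\infty_2 \in X(\CC)-\{e\}$ and verify
\[
\Psi(\mathbb{E}^{\infty_1}_{2,e}) - \Psi(\mathbb{E}^{\infty_2}_{2,e}) \;=\; h_2(Z^{\infty_1}_{2,e} - Z^{\infty_2}_{2,e})
\]
in $J((H^1)^{\otimes 3})^\vee$ by writing down explicit representatives of each side and comparing. The key setup is that the constructions of Lemma \ref{lemlift1} and of the subsequent definition of $\nu$ permit us to fix \emph{one} decomposition $\mathcal{H}^1(X)\otimes\mathcal{H}^1(X)=\mathcal{K}\oplus\mathcal{L}$, \emph{one} generator $\phi$ of $\mathcal{L}$, and \emph{one} retraction $\nu'$ on $\mathcal{K}$, and use them to build both $\nu_{\infty_1}$ and $\nu_{\infty_2}$, so that the dependence on the puncture enters only through the Green-function prescription on $\mathcal{L}$.

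With those common choices, Proposition \ref{harmonicvolume}(a) applied to each extension shows that the length-two term $\int_{\beta-1}\omega_1\omega_2$ is independent of $\infty$ and cancels in the difference, and that $\nu_{\infty_1}$ and $\nu_{\infty_2}$ agree on $\mathcal{K}_\CC$. Writing $\omega_1\otimes\omega_2 = w_\mathcal{K} + \lambda(\omega_1,\omega_2)\,\phi$ with $\lambda(\omega_1,\omega_2) = (\int_X \omega_1\wedge\omega_2)/(\int_X \wedge(\phi))$, the difference $\Psi(\mathbb{E}^{\infty_1}_{2,e}) - \Psi(\mathbb{E}^{\infty_2}_{2,e})$ is represented by the map
\[
[\omega_1]\otimes[\omega_2]\otimes PD^{-1}[\beta] \;\longmapsto\; -\frac{\lambda(\omega_1,\omega_2)}{2\pi i}\int_\beta \partial\bigl(g_{\infty_1,\wedge(\phi)}-g_{\infty_2,\wedge(\phi)}\bigr).
\]
For the cycle side, I would pick a path $\alpha:[0,1]\to X$ from $\infty_2$ to $\infty_1$ (avoiding $e$) and note that the $3$-chain $C=\{(x,x,\alpha(t)) : x\in X,\; t\in[0,1]\}\subset X^3$ satisfies $\partial C = Z^{\infty_1}_{2,e}-Z^{\infty_2}_{2,e}$. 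Because $C$ factors as a product, the formula of Paragraph \ref{par2secmaps} yields that $h_2(Z^{\infty_1}_{2,e}-Z^{\infty_2}_{2,e})$ is represented by
\[
[\omega_1]\otimes[\omega_2]\otimes[\omega_3] \;\longmapsto\; \Bigl(\int_X\omega_1\wedge\omega_2\Bigr)\int_\alpha \omega_3.
\]

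Identifying $[\omega_3]=PD^{-1}[\beta]$ (taking $\omega_3$ harmonic), the sought equality of classes reduces to
\[
-\frac{1}{2\pi i\,\int_X\wedge(\phi)}\int_\beta\partial\bigl(g_{\infty_1,\wedge(\phi)}-g_{\infty_2,\wedge(\phi)}\bigr) \;\equiv\; \int_\alpha \omega_3 \pmod{\text{integral periods}}.
\]
By the singularity description (1) of Paragraph \ref{greenfacts}, $\partial(g_{\infty_1,\wedge(\phi)}-g_{\infty_2,\wedge(\phi)})$ is a meromorphic $1$-form of the third kind on $X$ whose only poles are simple ones at $\infty_1,\infty_2$ with residues $\mp\int_X\wedge(\phi)$; after normalization it is the third-kind differential associated to the divisor $\infty_2-\infty_1$. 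The displayed congruence is then precisely the classical reciprocity (Riemann bilinear) relation between the period of such a differential along $\beta$ and the integral of the Poincar\'e-dual holomorphic form $\omega_3$ along a path joining the two poles. The main obstacle will be to execute this reciprocity calculation with correct signs and normalizations, which can be carried out by a standard cut-and-paste argument on a fundamental polygon of $X\setminus\{\infty_1,\infty_2\}$.
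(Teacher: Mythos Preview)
Your approach is essentially the same as the paper's: both reduce the problem to a reciprocity-type statement for the form $\nu_{\infty_1}-\nu_{\infty_2}$, identified as a differential of the third kind with simple poles at $\infty_1,\infty_2$, and both compute the cycle side as $(\int_X\omega_1\wedge\omega_2)\int_{\infty_2}^{\infty_1}\omega_3$. Your device of fixing common choices on $\mathcal{K}$ so that the difference lives only on $\mathcal{L}$ is a nice streamlining; the paper instead observes directly that if $\omega\wedge\rho$ is exact then $\nu_{\infty_1}-\nu_{\infty_2}$ is closed in $\mathcal{H}^1_\CC(X)^\perp$ and hence exact.

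There is one point where you should be more careful. The congruence you need at the end is \emph{not} literally the classical Riemann bilinear relation: that relation pairs a third-kind differential against a \emph{holomorphic} form, whereas here $\omega_3$ is merely harmonic with integral periods. The cut-and-paste (or covering-space) computation you propose will indeed give an exact identity of the form
\[
\int_X\zeta\wedge\omega_3 \;=\; -\int_{\gamma_{\omega_3}}\zeta \;+\; a\int_{\infty_2}^{\infty_1}\omega_3,
\]
but to conclude the congruence mod $\ZZ$ you must argue that $\int_X\zeta\wedge\omega_3\in\ZZ$. This is where a second property of $\zeta$ enters: its cohomology class in $H^1(X-\{\infty_1,\infty_2\})$ is \emph{real} (which follows from Lemma~\ref{lemlift1}(iii)). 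Given reality, the functional $[\eta]\mapsto\int_X\zeta\wedge\eta$ on $H^1$ is real-valued on $H^1_\RR$ and vanishes on $F^1H^1_\CC$ (since $\zeta$ is of type $(1,0)$), hence by the remark in Paragraph~\ref{RmodZ} it is integer-valued on $H^1_\ZZ$. The paper isolates exactly this argument as a separate lemma and proves it via a covering-space version of your cut-and-paste; you should make the reality condition and its role explicit.
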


\begin{proof}
Let $\infty_1,\infty_2\neq e$ be distinct. After passing to $\Hom\left((H^1_\ZZ)^{\otimes 3},\RR/\ZZ\right)$ via $\Phi$, in view of Proposition \ref{harmonicvolume}(b), we need to show that if $\omega,\rho,\eta$ are harmonic forms with integral periods on $X$, and $\gamma_\eta\in\pi_1(X-\{\infty_1,\infty_2\},e)$ is such that its homology class in $H_1(X,\ZZ)$ is $\PD([\eta])$, then 
\[
-\int\limits_{\gamma_\eta}\omega\rho+\nu_{\infty_1}(\omega\otimes\rho)~+~\int\limits_X\omega\wedge\rho~\int\limits_e^{\infty_1}\eta\stackrel{\ZZ}{\equiv} -\int\limits_{\gamma_\eta}\omega\rho+\nu_{\infty_2}(\omega\otimes\rho)~+~\int\limits_X\omega\wedge\rho~\int\limits_e^{\infty_2}\eta,
\]
or equivalently
\begin{equation}\label{eq11}
-\int\limits_{\gamma_\eta}\nu_{\infty_1}(\omega\otimes\rho)-\nu_{\infty_2}(\omega\otimes\rho)~+~\int\limits_X\omega\wedge\rho\int\limits_{\infty_2}^{\infty_1}\eta \in\ZZ,
\end{equation}
where the integrals of $\eta$ are over any path in $X$ with the specified end points. Fix $\omega$ and $\rho$. For brevity we write $\nu_i$ for $\nu_{\infty_i}(\omega\otimes\rho)$. Note that if $\omega\wedge\rho$ is exact on $X$, then the statement clearly holds, as then $\nu_i\in\mathcal{H}^1_\CC(X)^\perp$ and $\nu_1-\nu_2$, being a closed element of $\mathcal{H}^1_\CC(X)^\perp$, is exact, so that the number above is simply zero. (See the proof of Lemma \ref{lemlift1}.) So we may assume $\omega\wedge\rho$ is not exact on $X$. Then the 1-form $\nu_1-\nu_2$ satisfies the following properties:
\begin{itemize}
\item[(i)] It is meromorphic on $X$, holomorphic on $X-\{\infty_1,\infty_2\}$, with logarithmic poles at $\infty_1$ and $\infty_2$ with residues $\frac{a}{2\pi i}$ and $-\frac{a}{2\pi i}$ respectively for some integer $a\neq 0$.
\item[(ii)] Its cohomology class in $H^1(X-\{\infty_1,\infty_2\})$ is real, i.e. it can be written on $X-\{\infty_1,\infty_2\}$ as the sum of an exact form and a real closed form.
\end{itemize}
Indeed, (i) follows from that both $\nu_1$ and $\nu_2$ are of type (1,0), and $d\nu_1=d\nu_2=-\omega\wedge\rho$ on $X-\{\infty_1,\infty_2\}$, so that $\nu_1-\nu_2$ is holomorphic on $X-\{\infty_1,\infty_2\}$. For the behavior at $\infty_i$, note that $\nu_i\in E^1(X\log\infty_i)$. The statement about the residues is immediate from Lemma \ref{lemlift1}(iv) ($a=\int\limits_X\omega\wedge\rho$). Statement (ii) follows from that each form $\nu_i$ can be written as a real form on $X-\{\infty_i\}$ plus an exact form on the same space. (See Lemma \ref{lemlift1}(iii).)\\     

The statement \eqref{eq11} now follows from the following lemma. 
\end{proof}
\begin{lemma}
Let $\infty_1,\infty_2\neq e$, and $\zeta$ be any 1-form satisfying conditions (i) and (ii) above. Then for any harmonic 1-form $\eta$ on $X$ with integral periods, 
\[
-\int\limits_{\gamma_\eta}\zeta~+a\int\limits_{\infty_2}^{\infty_1}\eta \in\ZZ,
\] 
where $\gamma_\eta\in\pi_1(X-\{\infty_1,\infty_2\},e)$ satisfies $\PD([\eta])=[\gamma_\eta]$.
\end{lemma}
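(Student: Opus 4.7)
\textit{Proof plan.} My approach is a Riemann-bilinear (Stokes) computation on a fundamental polygon of $X$, combined with the observation that $\zeta$ being meromorphic (hence of type $(1,0)$) forces certain wedge products to vanish by type.

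First, I would reduce the loop $\gamma_\eta$ to a canonical integral lift of $\PD([\eta])$. Choose loops $A_1,B_1,\ldots,A_g,B_g$ based at $e$ and avoiding $\infty_1,\infty_2$ that represent a symplectic basis of $H_1(X,\ZZ)$; they also give classes $[\tilde A_j],[\tilde B_j]\in H_1(U,\ZZ)$, where $U=X-\{\infty_1,\infty_2\}$. Since $a_j:=\int_{A_j}\eta$ and $b_j:=\int_{B_j}\eta$ are integers by hypothesis, the class $\tilde c:=\sum_j\bigl(a_j[\tilde B_j]-b_j[\tilde A_j]\bigr)\in H_1(U,\ZZ)$ lifts $\PD([\eta])$ via the standard Poincar\'e duality formula. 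The kernel of $H_1(U,\ZZ)\to H_1(X,\ZZ)$ is $\ZZ\cdot[\ell_1]$, where $\ell_i$ is a small loop around $\infty_i$, so $[\gamma_\eta]-\tilde c=m[\ell_1]$ for some $m\in\ZZ$. By (i), $\int_{\ell_1}\zeta=a\in\ZZ$, and hence $\int_{\gamma_\eta}\zeta\equiv\int_{\tilde c}\zeta\pmod\ZZ$.

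Next, I would apply Stokes on the fundamental polygon $R$ of $X$ obtained by cutting along the $A_j,B_j$ and removing small disks around $\infty_1,\infty_2$, with auxiliary cuts rendering $R$ simply connected. Writing $\eta=d\tilde\eta$ on $R$ and using $\int_R\eta\wedge\zeta=\int_{\partial R}\tilde\eta\,\zeta$: the polygon sides contribute the symplectic bilinear sum $\sum_j\bigl(\int_{A_j}\eta\int_{B_j}\zeta-\int_{B_j}\eta\int_{A_j}\zeta\bigr)=\int_{\tilde c}\zeta$, while the small circles around $\infty_i$ contribute, via the residue theorem and (i), $-a\int_{\infty_2}^{\infty_1}\eta$. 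In the limit of vanishing disk radii,
\[
\int_X\eta\wedge\zeta=\int_{\tilde c}\zeta-a\int_{\infty_2}^{\infty_1}\eta.
\]

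The key step is then to show $\int_X\eta\wedge\zeta=0$. Since $\zeta$ is of type $(1,0)$ and $\eta=\eta^{1,0}+\overline{\eta^{1,0}}$ with $\eta^{1,0}$ holomorphic, the summand $\int_X\eta^{1,0}\wedge\zeta$ vanishes by type. For the other summand, run the same Stokes derivation with the antiholomorphic form $\bar\zeta$ in place of $\zeta$: since $\overline{\eta^{1,0}}\wedge\bar\zeta$ is a $(0,2)$-form on a curve and hence vanishes,
\[
0=\textstyle\sum_j\bigl(\int_{A_j}\overline{\eta^{1,0}}\int_{B_j}\bar\zeta-\int_{B_j}\overline{\eta^{1,0}}\int_{A_j}\bar\zeta\bigr)-a\int_{\infty_2}^{\infty_1}\overline{\eta^{1,0}}.
\]
By (ii), $\zeta-\bar\zeta$ is exact on $U$, so the periods of $\bar\zeta$ over every class in $H_1(U,\ZZ)$ coincide with those of $\zeta$; substituting $\int_{A_j}\zeta,\int_{B_j}\zeta$ into the displayed identity and invoking Stokes for the pair $(\overline{\eta^{1,0}},\zeta)$ forces $\int_X\overline{\eta^{1,0}}\wedge\zeta=0$. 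Summing, $\int_X\eta\wedge\zeta=0$, so the bilinear identity reduces to $\int_{\tilde c}\zeta=a\int_{\infty_2}^{\infty_1}\eta$, which combined with the reduction of Step~1 yields the desired congruence. The main technical obstacle will be the careful bookkeeping of signs and orientations in the cut-polygon computation, and verifying that the Stokes formula extends verbatim to the antiholomorphic $\bar\zeta$ with the expected residue contributions (which reduces to the elementary computation $\int_{\ell_i}\bar\zeta=\int_{\ell_i}\zeta$).
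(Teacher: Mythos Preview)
Your argument is correct and takes a genuinely different route from the paper. The paper computes $\int_X \zeta\wedge\eta$ by passing to the infinite cyclic cover of $X$ obtained by cutting along a simple representative of $\gamma_\eta$, then applies Stokes on a fundamental domain of that cover. This yields the identity $\int_X\zeta\wedge\eta=-\int_{\gamma_\eta}\zeta+a(f(\tilde\infty_1)-f(\tilde\infty_2))$, from which reality of the right-hand side is read off; the paper then invokes the Carlson-Jacobian remark (a linear functional on $H^1$ that is real and vanishes on $F^1$ is integer-valued on $H^1_\ZZ$) to conclude. By contrast, you work on the standard $4g$-gon with disks removed, reduce $\gamma_\eta$ to a symplectic combination via the kernel of $H_1(U,\ZZ)\to H_1(X,\ZZ)$, and prove directly that $\int_X\eta\wedge\zeta=0$ by decomposing $\eta$ into types and comparing the Riemann bilinear identities for $\zeta$ and $\bar\zeta$ using condition~(ii). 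Your approach is more elementary in that it avoids both the covering-space construction and the abstract Hodge-theoretic input, trading these for two parallel Stokes computations; the paper's approach is conceptually cleaner (one Stokes identity plus a general principle) and makes explicit why condition~(ii) enters, namely to force the functional $h$ to be defined over~$\RR$. Note that both arguments in fact prove $\int_X\zeta\wedge\eta=0$, not merely $\in\ZZ$: in the paper's version this follows because a real functional vanishing on $F^1H^1$ must vanish on $\overline{F^1H^1}$ as well, hence identically.
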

\begin{proof}
First note that the integral $\int\limits_X\zeta\wedge\eta$ converges for any $\eta\in\mathcal{H}_\CC^1(X)$, as the integral of $\frac{dzd\bar{z}}{z}$ converges on the unit disk in $\CC$. Thus one gets a map $h:H^1_\CC\rightarrow\CC$ given by $[\eta]\mapsto\int\limits_X\zeta\wedge\eta$. We claim that this map takes integer values on $H^1_\ZZ$. Note that since $h$ vanishes on $F^1H^1$, by the remark in Paragraph \ref{RmodZ}, it suffices to show that it is defined over $\RR$. Suppose $\eta\in\mathcal{H}^1_\RR(X)$ has integer periods. The claim is established if we show $h([\eta])$ is real. We may assume that the map
\begin{equation}\label{eq5ch3revision1}
\int\eta :H_1(X,\ZZ)\rightarrow \ZZ
\end{equation}
is surjective, and that $\gamma_\eta\in\pi_1(X-\{\infty_1,\infty_2\},e)$ (Poincare dual to $[\eta]$ in $H_1(X,\ZZ)$) has a simple representative loop, which we also denote by $\gamma_\eta$. One can show that there is a Riemann surface $\tilde{X}$, a covering projection $\pi:\tilde{X}\rightarrow X$, and a deck transformation $T$ of $\pi$ such that
\begin{itemize}
\item[-] $\pi^\ast\eta=df$ for a real function $f$ on $\tilde{X}$.
\item[-] $fT-f$ is the constant function 1. 
\item[-] There is a lift $\tilde{\gamma}_\eta$ of $\gamma_\eta$, and a submanifold with boundary $X^{(0)}$ of $\tilde{X}$ such that $\partial X^{(0)}=T\tilde{\gamma}_\eta-\tilde{\gamma}_\eta$, and the restriction of $\pi$ to $X^{(0)}-\partial X^{(0)}$ is an isomorphism of Riemann surfaces onto $X-\gamma_\eta$.\footnote[2]{Such a covering projection is obtained by taking a copy $X^{(i)}$ of $X$ for each integer $i$, ``cutting" the $X^{(i)}$ along $\gamma_\eta$, and then gluing $X^{(i)}$ to $X^{(i+1)}$ appropriately along $\gamma_\eta$. The deck transformation simply sends a point in $X^{(i)}$ to its counterpart in $X^{(i+1)}$.}
\end{itemize}
Now let for each $i$, $D_i$ be an open disk around $\infty_i$ in $X$, small enough so that $\overline{D_1}\cap \overline{D_2}=\emptyset$ and $\overline{D_i}\cap \gamma_\eta=\emptyset$ (bar denoting closure). Denote by $\tilde{\infty}_i$ and $\tilde{D}_i$ the lift of $\infty_i$ and $D_i$ in $X^{(0)}$. Then we have
\begin{eqnarray*}
\int\limits_{X-{D_1\cup D_2}} \zeta\wedge\eta=\int\limits_{X^{(0)}-{\tilde{D}_1\cup \tilde{D}_2}} \pi^\ast\zeta\wedge\pi^\ast\eta&=&\int\limits_{X^{(0)}-{\tilde{D}_1\cup \tilde{D}_2}} -df\wedge\pi^\ast\zeta\\
&=&\int\limits_{X^{(0)}-{\tilde{D}_1\cup \tilde{D}_2}} -d(f\pi^\ast\zeta)\\
&=&-\int\limits_{\partial(X^{(0)}-{\tilde{D}_1\cup \tilde{D}_2})}f\pi^\ast\zeta\\
&=& \int\limits_{\tilde{\gamma}_\eta-T\tilde{\gamma}_\eta+\partial\tilde{D}_1+\partial\tilde{D}_2} ~f\pi^\ast\zeta\\
&=& \int\limits_{\tilde{\gamma}_\eta-T\tilde{\gamma}_\eta}f\pi^\ast\zeta~+~\int\limits_{\partial\tilde{D}_1+\partial\tilde{D}_2}f\pi^\ast\zeta.
\end{eqnarray*}
It follows that 
\[
\int\limits_{X-{D_1\cup D_2}} \zeta\wedge\eta=-\int\limits_{\gamma_\eta}\zeta~+~\int\limits_{\partial\tilde{D}_1+\partial\tilde{D}_2}f\pi^\ast\zeta~.
\]
We would like to know what happens as $D_i\rightarrow\{\infty_i\}$. Write
\[
\int\limits_{\partial\tilde{D}_i}f\pi^\ast\zeta=\int\limits_{\partial\tilde{D}_i}f(\tilde{\infty}_i)\pi^\ast\zeta~+~\int\limits_{\partial\tilde{D}_i}\left(f-f(\tilde{\infty}_i)\right)\pi^\ast\zeta.
\] 
Since $\zeta$ is holomorphic on $\tilde{D}_i-\tilde{\infty}_i$ with a pole of order 1 at $\infty_i$, and $f-f(\tilde{\infty}_i)$ is smooth and vanishes at $\tilde{\infty}_i$, the second term goes to zero as $D_i\rightarrow \{\infty_i\}$. The first term is equal to $2\pi i f(\tilde{\infty}_i)\text{res}_{\infty_i}(\zeta)$. Thus 
\begin{equation}\label{eq1111}
\int\limits_{X} \zeta\wedge\eta=-\int\limits_{\gamma_\eta}\zeta~+~a(f(\tilde{\infty}_1)-f(\tilde{\infty}_2)).
\end{equation}
The second term on the right is real as $a$ and $f$ are real. The first term is also real because the cohomology class of $\zeta$ in $H^1(X-\{\infty_1,\infty_2\})$ is real. Thus the claim is established.\\

Now it is easy to conclude the lemma. Let $\eta$ be as described in the statement. Without loss of generality we may assume that \eqref{eq5ch3revision1} is surjective, and that $\gamma_\eta$ has a simple representative loop. Then we know \eqref{eq1111}, and hence
\[
\int\limits_{X} \zeta\wedge\eta\stackrel{\ZZ}{\equiv}-\int\limits_{\gamma_\eta}\zeta~+~a\int\limits_{\infty_2}^{\infty_1}\eta.
\]
The left hand side (which is $h(\eta)$) is an integer.
\end{proof}

\subsection{} When $n=2$, Theorem \ref{main1} asserts that
\begin{equation}\label{eqDRSthmv2}
\Psi(\mathbb{E}^\infty_{2,e})=h_2(-\Delta_{2,e}+Z^\infty_{2,e}).
\end{equation} 
This is a slightly stronger version of a theorem of Darmon, Rotger, and Sols \cite[Theorem 2.5]{DRS}. Their result can be stated as to assert that, for every Hodge class $\xi$ of $(H^1)^{\otimes 2}$, one has
\begin{equation}\label{DRSraw}
\xi^{-1}\left(\Psi(\mathbb{E}^\infty_{2,e})\right)=\xi^{-1}\left(h_2(-\Delta_{2,e}+Z^\infty_{2,e})\right),
\end{equation}
where $\xi^{-1}:J((H^1)^{\otimes 3})^\vee\rightarrow J(H^1)^\vee$ is the map that sends $[f]\mapsto [f(\xi\otimes-)]$ for any $f\in \left((H_\CC^1)^{\otimes 3}\right)^\vee$. (This is well-defined because $\xi$ is a Hodge class.)\\

Let $\{\beta_j\}_j\subset\pi_1(U,e)$ be such that $\{[\beta_j]\}_j$ forms a basis of $H_1(X,\ZZ)$. For each $j$, let $\eta_j$ be the harmonic form on $X$ such that $\PD([\eta_j])=[\beta_j]$. In view of our description of $\Psi(\mathbb{E}^\infty_{2,e})$ given in Proposition \ref{harmonicvolume}, \eqref{DRSraw} is equivalent to that if $\xi=\sum [\omega_i]\otimes[\rho_i]$ with $\omega_i$ and $\rho_i$ harmonic forms on $X$ with integral periods, then the two maps $H^1_\CC\rightarrow \CC$ given by
\[
[\eta_j]\mapsto \int\limits_{\partial^{-1}\Delta_{2,e}}\sum\limits_i\omega_i\otimes\rho_i\otimes\eta_j 
\]
and 
\[
[\eta_j]\mapsto -\left(\int\limits_{\beta_j} \sum\omega_i\rho_i +\nu(\xi)\right)~+~\int\limits_{\Delta^{(2)}(X)} \xi\int\limits_{\gamma_e^\infty} \eta_j,
\]
represent the same class in $J(H^1)^\vee$. For this it suffices to verify that the restrictions of the two maps to $F^1H^1_\CC$ differ by (the restriction of) an element of $(H_1)_\ZZ$, and this is what Darmon, Rotger and Sols do in \cite{DRS}.\\

The argument given in \cite{DRS} combined with Lemma \ref{indep_from_infty} indeed implies \eqref{eqDRSthmv2}. To see this, let us start with an obvious observation. Suppose $A$, $B$, and $C$ are abelian groups. Then a map $f:A\otimes B\rightarrow C$ is zero if and only if, for every $a\in A$, the map $B\rightarrow C$ defined by $b\mapsto f(a\otimes b)$ is zero. Now suppose we have a map $f:(H^1_\CC)^{\otimes3}\rightarrow \CC$, defined over $\RR$. Then $[f]$ is trivial in $J((H^1)^{\otimes3})^\vee$ if and only if $\Phi([f])=0$, and since $f$ is defined over the reals, the latter amounts to that $\pi\circ f|_{(H^1_\ZZ)^{\otimes 3}}=0$, where $\pi:\RR\rightarrow\RR/\ZZ$ is the natural map. This is equivalent to that for every $\xi\in (H^1_\ZZ)^{\otimes 2}$, the map $H_\ZZ^1\rightarrow \RR/\ZZ$ given by $c\mapsto \pi\circ f(\xi\otimes c)$ is zero, or equivalently, the map $\xi^{-1}f:H^1_\CC\rightarrow\CC$ defined by $c\mapsto f(\xi\otimes c)$ is integer-valued on $H_\ZZ^1$. The latter by the remark in Paragraph \ref{RmodZ} is equivalent to that the restriction of $\xi^{-1}f$ to $F^1H^1$ coincides with that of an element of $(H_1)_\ZZ$.\\

In view of the above observation, \eqref{eqDRSthmv2} is equivalent to that, for every $\xi=[\omega]\otimes[\rho] \in (H^1_\ZZ)^{\otimes 2}$, where the $\omega$ and $\rho$ are harmonic forms on $X$ with integral periods, the restriction to $F^1H^1_\CC$ of the map $H^1_\CC\rightarrow \CC$ defined by
\[
[\eta_j]\mapsto \int\limits_{\partial^{-1}\Delta_{2,e}}\omega\otimes\rho\otimes\eta_j~+~\left(\int\limits_{\beta_j} \omega\rho +\nu(\xi)\right)~-~\int\limits_{\Delta^{(2)}(X)} \xi\int\limits_{\gamma_e^\infty} \eta_j
\]  
is equal to that of of an element of $(H_1)_\ZZ$. This is exactly Theorem 2.5 of \cite{DRS}, except that here $\xi$ is not necessarily a Hodge class, but rather merely an integral class. However, the argument in \cite{DRS} works just as well here too, as long as one can replace the point $\infty$ by a point at which certain technical conditions\footnote[2]{on the ``positioning" of $\infty$ relative to $\partial^{-1}\Delta_{2,e}$} hold. Lemma \ref{indep_from_infty} allows one to do this\footnote[3]{In \cite{DRS}, a similar task is performed by Lemma 1.3, which asserts that our Lemma \ref{indep_from_infty} holds after applying $\xi^{-1}$.}.

\subsection{} We close this section by noting that applying the map $\Phi$ to \eqref{eqDRSthmv2}, we see that, if $\omega,\rho,\eta$ are harmonic forms on $X$ with integral periods, and $\gamma_\eta\in\pi_1(U,e)$ is such that $\PD([\eta])=[\gamma_\eta]$ in homology of $X$, then
\begin{equation}\label{eqDRSv3}
\int\limits_{\partial^{-1}\Delta_{2,e}}\omega\otimes\rho\otimes\eta\stackrel{\ZZ}{\equiv}-\int\limits_{\gamma_\eta}(\omega\rho+\nu(\omega\otimes\rho))~+~\int\limits_X \omega\wedge\rho ~\int\limits_e^\infty \eta.
\end{equation}
(See Proposition \ref{harmonicvolume}(b).)

\section{Proof of the general case of Theorem \ref{main1}}\label{proofgeneralcase}

Our goal here is to use the contents of the previous sections to prove Theorem \ref{main1} in $n\geq 3$ case. We will equivalently show that the two sides of \eqref{eqmainthm1} have equal images under $\Phi$. Let $\omega_1,\ldots,\omega_n$ and $\eta_1,\ldots,\eta_{n-1}$ be harmonic forms on $X$ with integral periods, and for each $i$, $\gamma_i\in\pi_1(U,e)$ be such that $[\gamma_i]=\PD([\eta_i])$ in $H_1(X,\ZZ)$. All equalities below take place in $\RR/\ZZ$. We use the notation $[\ldots|\ldots]$ for $\ldots\otimes\ldots$, and for brevity denote $\frac{(n-3)(n-2)}{2}$ by $m$. The reader can refer to Section \ref{construction_of_cycles} to recall the definition of the chains and permutations that appear in the calculations.\\

We have   
\begin{eqnarray*}
\Phi(h_n(\Delta_{n,e}))[\omega_1|\ldots|\omega_n|\eta_1|\ldots|\eta_{n-1}]&=&\int\limits_{\partial^{-1}\Delta_{n,e}}[\omega_1|\ldots|\omega_n|\eta_1|\ldots|\eta_{n-1}]\\
&=& \sum\limits_{i=1}^{n-1}(-1)^{i-1} \int\limits_{(\sigma_i)_\ast (\partial^{-1}(P_e)_\ast(\Lambda_n))}[\omega_1|\ldots|\omega_n|\eta_1|\ldots|\eta_{n-1}].
\end{eqnarray*}
We also have
\[
\int\limits_{(\sigma_i)_\ast (\partial^{-1}(P_e)_\ast(\Lambda_n))}[\omega_1|\ldots|\omega_n|\eta_1|\ldots|\eta_{n-1}]=\int\limits_{\partial^{-1}(P_e)_\ast(\Lambda_n)}(\sigma_i)^\ast ([\omega_1|\ldots|\omega_n|\eta_1|\ldots|\eta_{n-1}]),
\]
and recalling how $\sigma_i$ permutes coordinates of $X^{2n-1}$, we see this is
\begin{eqnarray*}
&=&(-1)^{n+i-1+m}\int\limits_{\partial^{-1}(P_e)_\ast(\Lambda_n)}[\omega_i|\omega_{i+1}|\eta_i|\omega_1|\eta_1|...|\omega_{i-1}|\eta_{i-1}|\omega_{i+2}|\eta_{i+1}|\ldots|\omega_n|\eta_{n-1}]\\
&=&(-1)^{n+i-1+m}\int\limits_{(\partial^{-1}\Delta_{2,e})\times((P_e)_\ast\Delta^{(2)}(X))^{n-2}}[\omega_i|\omega_{i+1}|\eta_i|\omega_1|\eta_1|...|\omega_{i-1}|\eta_{i-1}|\omega_{i+2}|\eta_{i+1}|\ldots|\omega_n|\eta_{n-1}]\\
&=&(-1)^{n+i-1+m}\int\limits_{\partial^{-1}\Delta_{2,e}}[\omega_i|\omega_{i+1}|\eta_i]~\prod\limits_{j=1}^{i-1}\int\limits_{(P_e)_\ast\Delta^{(2)}(X)}[\omega_j|\eta_j]~\prod\limits_{j=i+2}^{n}\int\limits_{(P_e)_\ast\Delta^{(2)}(X)}[\omega_{j}|\eta_{j-1}]\\
&=&(-1)^{n+i-1+m}\int\limits_{\partial^{-1}\Delta_{2,e}}[\omega_i|\omega_{i+1}|\eta_i]~\prod\limits_{j=1}^{i-1}\int\limits_{\Delta^{(2)}(X)}[\omega_j|\eta_j]~\prod\limits_{j=i+2}^{n}\int\limits_{\Delta^{(2)}(X)}[\omega_{j}|\eta_{j-1}],
\end{eqnarray*}
as the other summands in $(P_e)_\ast\Delta^{(2)}(X)$ do not contribute to the integrals. In view of \eqref{eqDRSv3}, the last expression is 
\begin{eqnarray*}
&=&(-1)^{n+i-1+m} \left(-\int\limits_{\gamma_i} \omega_i\omega_{i+1}+\nu([\omega_i|\omega_{i+1}])~+
\int\limits_X\omega_i\wedge\omega_{i+1} \int\limits_e^\infty \eta_i\right) \prod\limits_{j=1}^{i-1}\int\limits_X\omega_j\wedge\eta_j~\prod\limits_{j=i+2}^{n}\int\limits_{X}\omega_{j}\wedge\eta_{j-1}\\
&=&(-1)^{i-1+m} \left(-\int\limits_{\gamma_i} \omega_i\omega_{i+1}+\nu([\omega_i|\omega_{i+1}])~+\int\limits_X\omega_i\wedge\omega_{i+1}\int\limits_e^\infty \eta_i\right) \prod\limits_{j=1}^{i-1}\int\limits_{\gamma_j}\omega_j~\prod\limits_{j=i+2}^{n}\int\limits_{\gamma_{j-1}}\omega_{j}.
\end{eqnarray*}
It follows that 
\begin{equation}\label{eq1proofmain1}
(-1)^m\Phi(h_n(\Delta_{n,e}))[\omega_1|\ldots|\omega_n|\eta_1|\ldots|\eta_{n-1}]=-(I)~+~(II),
\end{equation}
where 
\[
(I)~=~\sum\limits_{i=1}^{n-1}\left(\int\limits_{\gamma_i} \omega_i\omega_{i+1}+\nu([\omega_i|\omega_{i+1}])\right) \prod\limits_{j=1}^{i-1}\int\limits_{\gamma_j}\omega_j~\prod\limits_{j=i+2}^{n}\int\limits_{\gamma_{j-1}}\omega_{j}
\]
and
\[
(II)~=~\sum\limits_{i=1}^{n-1} \int\limits_X\omega_i\wedge\omega_{i+1}~\int\limits_e^\infty \eta_i ~\prod\limits_{j=1}^{i-1}\int\limits_{\gamma_j}\omega_j~\prod\limits_{j=i+2}^{n}\int\limits_{\gamma_{j-1}}\omega_{j}.
\]
In view of \eqref{itintprop}, 
\begin{eqnarray}
(I)&=&\sum\limits_{i=1}^{n-1} \prod\limits_{j=1}^{i-1}\int\limits_{\gamma_j}\omega_j~\int\limits_{\gamma_i} \omega_i\omega_{i+1}~\prod\limits_{j=i+2}^{n}\int\limits_{\gamma_{j-1}}\omega_{j}~+~\sum\limits_{i=1}^{n-1} \prod\limits_{j=1}^{i-1}\int\limits_{\gamma_j}\omega_j~\int\limits_{\gamma_i}\nu([\omega_i|\omega_{i+1}])~\prod\limits_{j=i+2}^{n}\int\limits_{\gamma_{j-1}}\omega_{j}\notag\\
&=&\int\limits_{(\gamma_1-1)\ldots(\gamma_{n-1}-1)}\omega_1\ldots\omega_n~+~\sum\limits_{i=1}^{n-1}\int\limits_{(\gamma_1-1)\ldots(\gamma_{n-1}-1)}\omega_1\ldots\omega_{i-1}\nu([\omega_i|\omega_{i+1}])\omega_{i+2}\ldots\omega_n\notag\\
&=&\Phi(\Psi(\mathbb{E}^\infty_{n,e}))([\omega_1|\ldots|\omega_n|\eta_1|\ldots|\eta_{n-1}]),\label{eq2proofmain1} 
\end{eqnarray}
by Proposition \ref{harmonicvolume}(b).\\

On the other hand, for $1\leq i\leq n-1$, in view of \eqref{eqpartialZ},
\begin{eqnarray*}
\Phi(h_n(Z^\infty_{n,i}-Z^e_{n,i}))([\omega_1|\ldots|\omega_n|\eta_1|\ldots|\eta_{n-1}])&=&\int_{(\tau_i)_\ast (C^\infty_{n,e})}[\omega_1|\ldots|\omega_n|\eta_1|\ldots|\eta_{n-1}]\\
&=&\int_{C^\infty_{n,e}}(\tau_i)^\ast[\omega_1|\ldots|\omega_n|\eta_1|\ldots|\eta_{n-1}],
\end{eqnarray*}
which, in view of the definition of $C^\infty_{n,e}$ and on recalling how $\tau_i$ permutes the coordinates of $X^{2n-1}$, is
\begin{eqnarray*}
&=& (-1)^{m+n+i-1} \int\limits_X\omega_i\wedge\omega_{i+1}~\int\limits_{\gamma^\infty_e}\eta_i~\prod\limits_{j=1}^{i-1}\int\limits_{X}\omega_j\wedge\eta_j~\prod_{j=i+2}^{n} \int\limits_{X}\omega_j\wedge\eta_{j-1}\\
&=& (-1)^{m+i-1} \int\limits_X\omega_i\wedge\omega_{i+1}~\int\limits_{\gamma^\infty_e}\eta_i~\prod\limits_{j=1}^{i-1}\int\limits_{\gamma_j}\omega_j~\prod_{j=i+2}^{n} \int\limits_{\gamma_{j-1}}\omega_j.
\end{eqnarray*}
Thus
\begin{eqnarray}
\Phi(h_n(Z^\infty_{n,e}))([\omega_1|\ldots|\omega_n|\eta_1|\ldots|\eta_{n-1}])&=&\sum\limits_{i=1}^{n-1}(-1)^{i-1}\Phi(h_n(Z^\infty_{n,i}-Z^e_{n,i}))([\omega_1|\ldots|\omega_n|\eta_1|\ldots|\eta_{n-1}])\notag\\
&=&\sum\limits_{i=1}^{n-1}(-1)^m\int\limits_X\omega_i\wedge\omega_{i+1}~\int\limits_{\gamma^\infty_e}\eta_i~\prod\limits_{j=1}^{i-1}\int\limits_{\gamma_j}\omega_j~\prod_{j=i+2}^{n} \int\limits_{\gamma_{j-1}}\omega_j\notag\\
&=&(-1)^m~(II).\label{eq3proofmain1}
\end{eqnarray}
\noindent Finally, combining equations \eqref{eq1proofmain1}, \eqref{eq2proofmain1}, and \eqref{eq3proofmain1}, we have 
\begin{eqnarray*}
(-1)^m\Phi(h_n(\Delta_{n,e}))[\omega_1|\ldots|\omega_n|\eta_1|\ldots|\eta_{n-1}]&=&-\Phi(\Psi(\mathbb{E}^\infty_{n,e}))([\omega_1|\ldots|\omega_n|\eta_1|\ldots|\eta_{n-1}])\\
&+&(-1)^m\Phi(h_n(Z^\infty_{n,e}))([\omega_1|\ldots|\omega_n|\eta_1|\ldots|\eta_{n-1}]),
\end{eqnarray*}
as desired.

\section{Two corollaries of Theorem \ref{main1}}\label{geometriccor}

In this section we give two corollaries of Theorem \ref{main1}. First we establish a lemma.

\begin{lemma}\label{lemjuly17}
The map 
\begin{equation}\label{eq1corsection}
\CH_{0}^\hhom(X)\rightarrow  J((H^1)^{\otimes 2n-1})^\vee\hspace{.2in}\infty-e~\mapsto h_n(Z^\infty_{n,e})
\end{equation}
is injective.
\end{lemma}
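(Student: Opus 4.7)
The plan is to apply $\Phi$ to $h_n(Z^\infty_{n,e})$ and use the explicit formula worked out in the proof of Theorem~\ref{main1} to extract the classical Abel--Jacobi image of $\infty-e \in \Jac(X)$.

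Indeed, the calculation of the quantity $(II)$ in that proof (which relied only on the chains $(\tau_i)_\ast C^\infty_{n,e}$ with boundary $Z^\infty_{n,i} - Z^e_{n,i}$) yields the following identity for harmonic $1$-forms $\omega_1,\ldots,\omega_n,\eta_1,\ldots,\eta_{n-1}$ on $X$ with integer periods and $\gamma_j \in \pi_1(U,e)$ satisfying $[\gamma_j] = \PD([\eta_j])$:
\[
\Phi(h_n(Z^\infty_{n,e}))[\omega_1|\cdots|\omega_n|\eta_1|\cdots|\eta_{n-1}] \equiv (-1)^m \sum_{i=1}^{n-1}\int_X\omega_i\wedge\omega_{i+1}\int_e^\infty\eta_i\prod_{j<i}\int_{\gamma_j}\omega_j\prod_{j>i+1}\int_{\gamma_{j-1}}\omega_j \pmod{\ZZ},
\]
with $m = (n-3)(n-2)/2$.

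Assume $h_n(Z^\infty_{n,e}) = 0$. The case $g = 0$ is vacuous, so, for $g\geq 1$, fix a symplectic basis $\{\alpha_k,\beta_k\}_{k=1}^g$ of $\mathcal{H}^1_\RR(X)$ consisting of harmonic forms with integer periods and satisfying $\int_X\alpha_k\wedge\beta_l = \delta_{kl}$ and $\int_X\alpha_k\wedge\alpha_l = \int_X\beta_k\wedge\beta_l = 0$. For an arbitrary $\eta \in H^1_\ZZ$ (represented by such a form), substitute
\[
\omega_1 = \alpha_1,\qquad \omega_2 = \cdots = \omega_n = \beta_1,\qquad \eta_1 = \eta,\qquad \eta_2 = \cdots = \eta_{n-1} = \alpha_1
\]
into the above identity. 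For every $i\geq 2$ one has $\int_X\omega_i\wedge\omega_{i+1} = \int_X\beta_1\wedge\beta_1 = 0$, so only the $i=1$ summand contributes, and a direct evaluation of the remaining products of pairings reduces the right-hand side to $\pm\int_e^\infty\eta$ modulo $\ZZ$. Since $\eta \in H^1_\ZZ$ was arbitrary, this forces $\AJ(\infty-e) = 0$ in $J(H^1)^\vee$, and Abel's theorem for curves then yields $\infty - e = 0$ in $\CH_0^\hhom(X)$.

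No substantial obstacle arises: everything reduces to the formula already established in the proof of Theorem~\ref{main1}, combined with the elementary observation that the choice $\omega_2 = \cdots = \omega_n = \beta_1$ causes all but the first summand to vanish thanks to $\int_X\beta_1\wedge\beta_1 = 0$.
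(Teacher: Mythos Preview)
Your argument is correct and follows the same strategy as the paper: evaluate $\Phi(h_n(Z^\infty_{n,e}))$ on a carefully chosen test element in $(H^1_\ZZ)^{\otimes 2n-1}$ so that all summands with $i\ge 2$ die (via $\int_X\omega_i\wedge\omega_{i+1}=0$) and the surviving $i=1$ term returns $\pm\int_e^\infty\eta$, thus reducing to the classical Abel--Jacobi map; the paper uses the test element $\omega\otimes\eta^{\otimes n}\otimes\omega^{\otimes n-2}$ with $\int_X\omega\wedge\eta=1$, while you use $\alpha_1\otimes\beta_1^{\otimes n-1}\otimes\eta\otimes\alpha_1^{\otimes n-2}$, but the mechanism is identical. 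One small point of presentation: as written you only treat a kernel element of the form $\infty-e$, whereas for $g\ge 2$ a general element of $\CH_0^\hhom(X)$ is a sum $\sum_j n_j(\infty_j-e)$---the paper handles this explicitly, but your computation already yields a left inverse (namely $\eta\mapsto$ evaluation on your fixed test element) to the map \eqref{eq1corsection}, so the extension is immediate by linearity.
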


\begin{proof}
It is clear from the definition of $Z^\infty_{n,e}$ that \eqref{eq1corsection} is a (well-defined) group map. Now suppose 
\[
\sum\limits_j h_n(Z^{\infty_j}_{n,e})=0.
\]
We will show that $\sum\limits_j (\infty_j-e)$ is zero in $\CH_{0}^\hhom(X)$. Let $\eta$ be a harmonic 1-form on $X$ with integral periods. In view of the isomorphisms 
\[\CH_{0}^\hhom(X)\stackrel{\text{AJ}=h_1}{\cong} J(H^1)^\vee\cong Hom(H^1_\ZZ,\RR/\ZZ),\]
it suffices to show that
\[
\sum\limits_j \int\limits_e^{\infty_j} \eta~\in\ZZ.
\]
We may assume that $\int \eta:H_1(X,\ZZ)\rightarrow\ZZ$ is surjective. Let $\omega$ be a harmonic 1-form with integral periods such that $\int\limits_X \omega\wedge\eta=1$. We shall use the notation as in Paragraph \ref{pardefZ} and write 
\[
Z^{\infty_j}_{n,e}=\sum\limits_i(-1)^{i-1}(Z^{\infty_j}_{n,i}-Z^{e}_{n,i}).
\]
On recalling the definition of the cycles involved in the equation above, one easily sees that in $\RR/\ZZ$,
\begin{eqnarray*}
\Phi(h_n(Z^{\infty_j}_{n,e}))(\omega\otimes\eta^{\otimes n}\otimes\omega^{\otimes n-2})&=&\Phi(h_n(Z^{\infty_j}_{n,1}-Z^{e}_{n,1}))(\omega\otimes\eta^{\otimes n}\otimes\omega^{\otimes n-2})\\
&=&(-1)^{\frac{(n-3)(n-2)}{2}}\int\limits_e^{\infty_j}\eta.
\end{eqnarray*}
The result follows from that $\sum\limits_j \Phi(h_n(Z^{\infty_j}_{n,e}))=0$.
\end{proof}

We now give two consequences of Theorem \ref{main1}. The first is in the spirit of Corollary 5.4 of Pulte \cite{Pulte}.

\begin{cor}
The function
\[
X(\CC)-\{e\}\rightarrow \Ext((H^1)^{\otimes n}, (H^1)^{\otimes n-1})\hspace{.2in} \infty\mapsto \mathbb{E}^\infty_{n,e}
\]
is injective.
\end{cor}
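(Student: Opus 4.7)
The plan is to reduce the injectivity to Lemma~\ref{lemjuly17} via Theorem~\ref{main1}, treating genus $0$ as a triviality. Suppose that for some $\infty_1,\infty_2\in X(\CC)-\{e\}$ we have $\mathbb{E}^{\infty_1}_{n,e}=\mathbb{E}^{\infty_2}_{n,e}$. Applying the isomorphism $\Psi$ and Theorem~\ref{main1}, this equality becomes
\[
(-1)^{\frac{n(n-1)}{2}} h_n(\Delta_{n,e}-Z^{\infty_1}_{n,e})=(-1)^{\frac{n(n-1)}{2}} h_n(\Delta_{n,e}-Z^{\infty_2}_{n,e}).
\]
The common term $h_n(\Delta_{n,e})$ on each side depends only on $e$ (not on $\infty_1$ or $\infty_2$) and the sign is a global nonzero scalar, so after cancellation this collapses to
\[
h_n(Z^{\infty_1}_{n,e})=h_n(Z^{\infty_2}_{n,e}).
\]

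Next I would invoke Lemma~\ref{lemjuly17}, which asserts that the map $\CH_0^\hhom(X)\to J((H^1)^{\otimes 2n-1})^\vee$ sending $\infty-e\mapsto h_n(Z^\infty_{n,e})$ is an injective group homomorphism. Since the difference of the two images corresponds to the class $(\infty_1-e)-(\infty_2-e)=\infty_1-\infty_2$ in $\CH_0^\hhom(X)$, injectivity gives $\infty_1-\infty_2=0$ in $\CH_0^\hhom(X)$, i.e.\ $\infty_1$ and $\infty_2$ are linearly equivalent as divisors on $X$.

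Finally, I would conclude from Abel's theorem that $\infty_1=\infty_2$: for a curve of genus $g\geq 1$ the Abel--Jacobi embedding $X\hookrightarrow \Jac(X)$ is injective, so linearly equivalent points coincide. The remaining case $g=0$ is vacuous, since then $H^1=0$ makes $\Ext((H^1)^{\otimes n},(H^1)^{\otimes n-1})$ trivial and the corollary is only meaningful for positive genus (where one also has $X(\CC)-\{e\}$ containing more than one point). There is essentially no main obstacle here: all the hard work is packaged into Theorem~\ref{main1} (which identifies the extension with a difference of Abel--Jacobi images) and Lemma~\ref{lemjuly17} (which isolates the $\infty$-dependence of that difference and controls it via the classical Abel--Jacobi map for $X$). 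The corollary is essentially the statement that $\mathbb{E}^\infty_{n,e}$ sees, through its weight-$n$/weight-$(n-1)$ extension class, at least as much information about $\infty$ as the image of $\infty-e$ in $\Jac(X)$ does.
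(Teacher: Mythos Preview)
Your argument is correct and follows essentially the same route as the paper: apply Theorem~\ref{main1} to both $\infty_1$ and $\infty_2$, cancel the common $\Delta_{n,e}$ contribution, and invoke Lemma~\ref{lemjuly17}. The paper is marginally slicker in writing the difference directly as $h_n(Z^{\infty_2}_{n,\infty_1})$ (using that the assignment $\infty-e\mapsto h_n(Z^{\infty}_{n,e})$ is a group homomorphism), while you make explicit the final Abel--Jacobi step (linearly equivalent points on a curve of genus $\geq 1$ coincide) that the paper leaves tacit.
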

\begin{proof}
Let $\infty_1,\infty_2\in X(\CC)-\{e\}$. By Theorem \ref{main1},
\[
(-1)^{\frac{n(n-1)}{2}}\Psi(\mathbb{E}^{\infty_1}_{n,e}-\mathbb{E}^{\infty_2}_{n,e})=h_n(Z^{\infty_2}_{n,e}-Z^{\infty_1}_{n,e})=h_n(Z^{\infty_2}_{n,\infty_1}).
\]
The result follows from the previous lemma.
\end{proof}

\begin{cor}
Suppose $X$ has genus 1. Then $\mathbb{E}^\infty_{n,e}$ is torsion if and only if $\infty-e$ is torsion in $\CH_0^\hhom(X)$.
\end{cor}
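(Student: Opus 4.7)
The plan is to combine Theorem \ref{main1}, Lemma \ref{lemjuly17}, and a known torsion property of the modified diagonal in the genus~1 case. Since $\Psi$ is an isomorphism of abelian groups, Theorem \ref{main1} reduces the problem to showing that $h_n(\Delta_{n,e}-Z^\infty_{n,e})$ is torsion in $J((H^1)^{\otimes 2n-1})^\vee$ if and only if $\infty-e$ is torsion in $\CH_0^\hhom(X)$.

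First I would handle the $\Delta_{n,e}$ term. When $g=1$, the Gross--Kudla--Schoen modified diagonal $\Delta_{2,e}$ is well known to be torsion in $\CH_1^\hhom(X^3)$: up to sign it agrees with $(P_e)_\ast\Delta^{(3)}(X)$, and a short direct computation (or an appeal to Gross--Schoen) using the group law on the elliptic curve $(X,e)$ shows that $2\Delta_{2,e}=0$ in $\CH_1(X^3)$. Then the remark at the end of Paragraph \ref{boundry_inverse_Delta_2}, derived from the product formula \eqref{eq1july17} and the permutation identity \eqref{eq2july17}, gives that $\Delta_{n,e}$ is torsion in $\CH_{n-1}^\hhom(X^{2n-1})$ for every $n\geq 2$. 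Applying the group homomorphism $h_n$, the class $h_n(\Delta_{n,e})$ is torsion in $J((H^1)^{\otimes 2n-1})^\vee$, so $\mathbb{E}^\infty_{n,e}$ is torsion if and only if $h_n(Z^\infty_{n,e})$ is torsion.

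Finally, Lemma \ref{lemjuly17} supplies an injective group homomorphism
\[
\CH_0^\hhom(X)\longrightarrow J((H^1)^{\otimes 2n-1})^\vee,\qquad \infty-e\longmapsto h_n(Z^\infty_{n,e}).
\]
Any injective homomorphism of abelian groups reflects torsion: if $m\cdot h_n(Z^\infty_{n,e})=0$ then $m(\infty-e)$ maps to zero, hence $m(\infty-e)=0$ by injectivity, and the converse is immediate. Thus $h_n(Z^\infty_{n,e})$ is torsion if and only if $\infty-e$ is torsion, which combined with the previous paragraph yields the corollary. The only point that is not purely formal is the input that $\Delta_{2,e}$ is torsion for elliptic curves; everything else is a direct chase through the tools already established.
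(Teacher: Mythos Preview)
Your argument is correct and follows essentially the same route as the paper: invoke the torsion of $\Delta_{2,e}$ in genus~1 (the paper cites Gross--Schoen, who give $6\Delta_{2,e}=0$ rather than the $2\Delta_{2,e}=0$ you assert, but only torsion is needed), propagate this to $\Delta_{n,e}$ via the remark in Paragraph~\ref{boundry_inverse_Delta_2}, and then conclude by Theorem~\ref{main1} and the injectivity in Lemma~\ref{lemjuly17}. The only extra content you supply is the explicit observation that an injective group homomorphism reflects torsion, which the paper leaves implicit.
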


\begin{proof}
By a result of Gross and Shoen \cite[Corollary 4.7]{GS}, $\Delta_{2,e}$ is torsion in genus 1 case. It follows that $\Delta_{n,e}$ is torsion for all $n$. (See the remark at the end of Paragraph \ref{boundry_inverse_Delta_2}.) Thus by Theorem \ref{main1}, $\mathbb{E}^\infty_{n,e}$ is torsion if and only if $h_n(Z^\infty_{n,e})$ is torsion. The desired conclusion follows from Lemma \ref{lemjuly17}.
\end{proof}

\section{$\mathbb{E}^\infty_{n,e}$ and Rational Points on the Jacobian}\label{ch4}

In the remainder of the paper we assume that $X, e,\infty$ are defined over a subfield $K\subset\CC$. Our goal is to give some applications of Theorem \ref{main1} in number theory. In this section, we show that one can associate to the extension $\mathbb{E}^\infty_{n,e}$ a family of rational points on the Jacobian of $X$. This generalizes Theorem 1 and Corollary 1 of \cite{DRS}. Our approach follows the ideas leading to those results, and generally speaking, is in line with Darmon's philosophy of trying to construct rational points on Jacobian varieties using higher dimensional varieties.

\subsection{Recollection: Maps between intermediate Jacobians induces by correspondences} Let $Y$ (resp. $Y'$) be a smooth projective variety over $\CC$ of dimension $d$ (resp. $d'$) over $\CC$. Suppose $l\leq d+d'$. One has natural isomorphisms
\begin{eqnarray}
H^{2l}(Y\times Y')^\vee&\cong& \left(\bigoplus_{r}H^r(Y)\otimes H^{2l-r}(Y')\right)^\vee\notag\\
&\cong&\bigoplus_{r}H^r(Y)^\vee \otimes H^{2l-r}(Y')^\vee \notag\\
&\cong&\bigoplus_{r}\inhom\left(H^r(Y),H^{2l-r}(Y')^\vee\right)\notag\\
&\stackrel{\text{Poincare duality}}{\cong}&\bigoplus_{r} \inhom\left(H^{2d-r}(Y)^\vee(-d),H^{2l-r}(Y')^\vee\right)\notag\\
&\cong&\bigoplus_{r} \inhom\left(H^{2d-r}(Y)^\vee,H^{2l-r}(Y')^\vee\right)(d).\notag
\end{eqnarray}

\noindent Let $Z\in\CH_l(Y\times Y')$. Then the class $cl(Z)$ of $Z$ is a Hodge class in 
\[
H^{2l}(Y\times Y')^{\vee},
\] 
which is given by integration over $Z$ (or more precisely, the smooth locus of $Z$) if $Z$ is an irreducible closed subset. In view of the isomorphisms above, $cl(Z)$ decomposes as a sum of Hodge classes in 
\[
\inhom\left(H^{2d-r}(Y)^\vee,H^{2l-r}(Y')^\vee\right).
\]
It follows that for each $r$, $cl(Z)$ gives a morphism of Hodge structures 
\begin{equation}\label{eq1arith}
H^{2d-r}(Y)^\vee(l-d)\rightarrow H^{2l-r}(Y')^\vee.
\end{equation}
If $r$ is odd, this induces a map 
\begin{equation}\label{eq2arith}
J H^{2d-r}(Y)^\vee=JH^{2d-r}(Y)^\vee(l-d) \rightarrow JH^{2l-r}(Y')^\vee.
\end{equation}
With abuse of notation we denote the maps \eqref{eq1arith} and \eqref{eq2arith} also by $cl(Z)$.\\

Let $m\leq d$. The push-forward map
\[
Z_\ast: \CH_m(Y)\rightarrow \CH_{m+l-d}(Y')
\]
restricts to a map
\[
Z_\ast: \CH^\hhom_m(Y)\rightarrow \CH^\hhom_{m+l-d}(Y').
\]
One has a commutative diagram
\[
\begin{tikzpicture}
  \matrix (m) [matrix of math nodes, column sep=2em, row sep=2.5em]
    {	  \CH^\hhom_m(Y)& J H^{2m+1}(Y)^\vee\\
		\CH^\hhom_{m+l-d}(Y')& J H^{2m+2l-2d+1}(Y')^\vee.\\};
  { [start chain] \chainin (m-1-1);
    \chainin (m-1-2) [join={node[above,labeled] {AJ}}];}
  {[start chain] \chainin (m-2-1);
    \chainin (m-2-2) [join={node[above,labeled] {AJ}}];}
	{[start chain] \chainin (m-1-2);
		\chainin (m-2-2) [join={node[right,labeled] {cl(Z)}}];}
	{[start chain] \chainin (m-1-1);
		\chainin (m-2-1) [join={node[left,labeled] {Z_\ast}}];}
  \end{tikzpicture}
\]

\subsection{} Fix a subfield $K\subset\CC$. From now on, we assume that the curve $X$ and the points $e,\infty$ are defined over $K$. More precisely, suppose $X=X_0\times_K\Spec(\CC)$, where $X_0$ is a projective curve over $K$, and that $e,\infty\in X_0(K)$. Let $\Jac=\Jac(X_0)$ be the Jacobian of $X_0$. Throughout, we identify 
\[
\Jac(\CC)=\CH_0^\hhom(X)\stackrel{\AJ}{\cong}J(H^1)^\vee.
\]
Thus in particular, $\Jac(K)$ is identified as a subgroup of $J(H^1)^\vee$. For a Hodge class $\xi$ in $(H^1)^{\otimes 2n-2}$, let
\[
\xi^{-1}: J((H^1)^{\otimes 2n-1})^\vee\rightarrow J(H^1)^\vee
\]
be the map $[f]\mapsto [f(\xi\otimes-)]$. For an algebraic cycle $Z\in\CH_{n-1}(X_0^{2n-2})$, we denote by $\xi_Z$ the $(H^1)^{\otimes 2n-2}$ Kunneth component of 
\[
cl(Z)\in H_\CC^{2n-2}(X^{2n-2})^\vee\stackrel{\text{Poincare duality}}{\cong} H_\CC^{2n-2}(X^{2n-2}).
\]
We have the following result.

\begin{thm}\label{rationalpointjac}
Let $Z\in\CH_{n-1}(X_0^{2n-2})$. Then 
\[
\xi_Z^{-1}(\Psi(\mathbb{E}^\infty_{n,e}))\in\Jac(K).
\] 
\end{thm}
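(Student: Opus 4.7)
The plan is to reduce everything to the $K$-rationality of Abel-Jacobi images of $K$-rational $0$-cycles on $X_0$, using Theorem \ref{main1} together with the functoriality of intermediate Jacobians under algebraic correspondences.

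First I would apply Theorem \ref{main1} to rewrite
\[
\xi_Z^{-1}(\Psi(\mathbb{E}^\infty_{n,e}))\;=\;(-1)^{\frac{n(n-1)}{2}}\,\xi_Z^{-1}\bigl(h_n(\Delta_{n,e}-Z^\infty_{n,e})\bigr).
\]
By Remark \ref{remcyclesoverk}, the cycles $\Delta_{n,e}$ and $Z^\infty_{n,e}$ have natural lifts to $\CH^\hhom_{n-1}(X_0^{2n-1})$; in particular they are $K$-rational. The task then reduces to producing a $K$-rational correspondence $W$ from $X_0^{2n-1}$ to $X_0$ whose induced map on intermediate Jacobians, restricted to the Kunneth summand $J((H^1)^{\otimes 2n-1})^\vee\subset JH^{2n-1}(X^{2n-1})^\vee$, coincides with $\xi_Z^{-1}$. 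Granting this, the standard compatibility of push-forward along a correspondence with Abel-Jacobi (recalled at the beginning of this section) identifies $\xi_Z^{-1}\circ h_n$ with $\AJ\circ W_\ast$; hence $\xi_Z^{-1}(\Psi(\mathbb{E}^\infty_{n,e}))$ is the Abel-Jacobi image of a $K$-rational element of $\CH^\hhom_0(X_0)$, and so lies in $\Jac(K)$.

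The natural candidate is
\[
W:=Z\times\Delta(X_0)\;\in\;\CH_n(X_0^{2n-2}\times X_0^2)=\CH_n(X_0^{2n-1}\times X_0),
\]
which is $K$-rational since $Z$ and the diagonal of $X_0$ are. To verify the claim about the induced map I would use $cl(W)=cl(Z)\otimes cl(\Delta(X_0))$ and decompose both factors via Kunneth. The $H^1(X)\otimes H^1(X)$ component of $cl(\Delta(X_0))$ is, under Poincare duality on $X$, the class representing the identity endomorphism of $H^1$. Combined with the fact that the $(H^1)^{\otimes 2n-2}$ component of $cl(Z)$ is by definition $\xi_Z$, this shows that the piece of $cl(W)$ inside $(H^1)^{\otimes 2n-1}\otimes H^1\subset H^{2n-1}(X^{2n-1})\otimes H^1(X)$ equals $\xi_Z\otimes(\text{identity})$. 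Unwinding the definition of $\xi^{-1}$ given earlier in this section, this is exactly the correspondence-theoretic presentation of $\xi_Z^{-1}$.

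The main obstacle is the bookkeeping in this last identification: one has to trace carefully through the chain of Poincare duality and Kunneth isomorphisms relating cycles in $X_0^{2n-1}\times X_0$ to maps between intermediate Jacobians, and match the signs and orderings against the recipe $\xi\mapsto\xi^{-1}$. Once this identification is pinned down, the theorem follows formally from the commutative square relating push-forward of cycles with Abel-Jacobi.
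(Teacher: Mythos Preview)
Your overall strategy matches the paper's, but there is a genuine gap in the step where you pass from ``$cl(W)$ restricted to the summand $J((H^1)^{\otimes 2n-1})^\vee$ equals $\xi_Z^{-1}$'' to ``$\xi_Z^{-1}\circ h_n=\AJ\circ W_\ast$''. The map $h_n$ is $\AJ$ followed by the Kunneth projection, so the desired equality would require $cl(W)$ to \emph{vanish} on the other Kunneth summands of $JH^{2n-1}(X^{2n-1})^\vee$, not merely to agree with $\xi_Z^{-1}$ on the distinguished one. With $W=Z\times\Delta(X_0)$ this fails: take $n=2$ and $Z=X_0\times\{pt\}$, so that $\xi_Z=0$ and $\xi_Z^{-1}\equiv 0$. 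A direct intersection-theoretic computation gives $W_\ast(Z^\infty_{2,e})=(\infty)-(e)\in\CH_0^{\hhom}(X_0)$, whose Abel--Jacobi image is nonzero whenever $\infty-e$ is non-torsion. Thus $\AJ\circ W_\ast\neq \xi_Z^{-1}\circ h_2$ in general, and your identification breaks down precisely on the cycle $Z^\infty_{n,e}$ you need it for.

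The paper repairs this by replacing $W$ with a cycle $Z'\in\CH_n(X_0^{2n})$ whose cohomology class is exactly the $((H^1)^{\otimes 2n})^\vee$ Kunneth component of $cl(Z\times\Delta(X_0))$. Such a $Z'$ exists over $K$ because the Kunneth components of $cl(\Delta(X_0))$ are themselves classes of $K$-rational cycles (the K\"unneth projectors for a curve with a rational point are algebraic: e.g.\ $\{e\}\times X_0$, $X_0\times\{e\}$, and $\Delta(X_0)-\{e\}\times X_0-X_0\times\{e\}$). Once $cl(Z')$ lives entirely in $((H^1)^{\otimes 2n})^\vee$, the induced map $cl(Z')$ on $JH^{2n-1}(X^{2n-1})^\vee$ automatically factors through the projection to $J((H^1)^{\otimes 2n-1})^\vee$, and your verification that it equals $\xi_Z^{-1}$ there then does give the commuting square you want. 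So the missing ingredient is the $K$-algebraicity of the K\"unneth projectors; once you insert it, your argument becomes the paper's.
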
 
We should point out that this is not a priori obvious, as to get the extension $\mathbb{E}^\infty_{n,e}$ one first extends the scalars to $\CC$. Note that varying $Z$, we get a family of points in $\Jac(K)$ associated to $\mathbb{E}^\infty_{n,e}$ parametrized by $\CH_{n-1}(X_0^{2n-2})$. In other words, the weight filtration on (the mixed Hodge structure associated to) $\pi_1(X-\{\infty\},e)$ is giving rise to families of points in $\Jac(K)$ parametrized by algebraic cycles on powers of $X_0$.\\

With abuse of notation, we denote the compositions
\[
\CH_{n-1}^\hhom(X_0^{2n-1})\stackrel{\text{natural map}}{\rightarrow} \CH_{n-1}^\hhom(X^{2n-1})\stackrel{\AJ}{\rightarrow} JH^{2n-1}(X^{2n-1})^\vee
\]
and
\[
\CH_{n-1}^\hhom(X_0^{2n-1})\stackrel{\text{natural map}}{\rightarrow} \CH_{n-1}^\hhom(X^{2n-1})\stackrel{h_n}{\rightarrow} J((H^1)^{\otimes 2n-1})^\vee 
\]  
by $\AJ$ and $h_n$ respectively. In view of Theorem \ref{main1} and the fact that both $\Delta_{n,e}$ and $Z^\infty_{n,e}$ are defined over $K$ (see Paragraph \ref{remcyclesoverk}), Theorem \ref{rationalpointjac} follows immediately from the following lemma.

\begin{lemma}\label{sec8thm}
Let $Z\in\CH_{n-1}(X_0^{2n-2})$. Then the image of the composition
\[
\CH_{n-1}^\hhom(X_0^{2n-1})\stackrel{h_n}{\rightarrow} J((H^1)^{\otimes 2n-1})^\vee\stackrel{\xi_Z^{-1}}{\rightarrow}J(H^1)^\vee
\]
lies in the subgroup $\Jac(K)$.
\end{lemma}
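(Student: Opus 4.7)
The plan is to realize $\xi_Z^{-1}\circ h_n$ as the effect on Abel-Jacobi images of a $K$-rational correspondence, after which the lemma becomes a formal consequence of the compatibility between Abel-Jacobi and correspondence actions recalled at the start of this section. First I would replace $\Delta(X_0)$ by its ``reduced'' form $\Delta^\circ:=\Delta(X_0)-\{e\}\times X_0-X_0\times\{e\}\in\CH_1(X_0^2)$, which is defined over $K$ and whose cohomology class equals the $(H^1)^{\otimes 2}$ Kunneth component of $cl(\Delta(X_0))$. Taking an external product of $2n-2$ copies of $\Delta^\circ$ gives a correspondence $\Pi\in\CH^{2n-2}(X_0^{2n-2}\times X_0^{2n-2})$, again $K$-rational, whose action on $H^\ast(X^{2n-2})$ is the Kunneth projector onto $(H^1)^{\otimes 2n-2}$. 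Set $\tilde Z:=\Pi_\ast(Z)\in\CH_{n-1}(X_0^{2n-2})$; then $cl(\tilde Z)=\xi_Z$ by construction, and $\tilde Z$ is $K$-rational.

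Next, form $W:=\tilde Z\times\Delta^\circ\in\CH_n(X_0^{2n-2}\times X_0^2)\cong\CH_n(X_0^{2n-1}\times X_0)$, where the identification regroups the $2n-2$ factors of $\tilde Z$ together with the first factor of $\Delta^\circ$ as the source $X_0^{2n-1}$ and leaves the second factor of $\Delta^\circ$ as the target $X_0$. As a product of $K$-rational cycles, $W$ is defined over $K$, so the push-forward $W_\ast:\CH_{n-1}^\hhom(X_0^{2n-1})\to\CH_0^\hhom(X_0)$ is $K$-rational; in particular $\AJ(W_\ast(Y))\in\Jac(K)$ for every $Y$.

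The crux of the proof is the Kunneth computation identifying the map on Jacobians induced by $W$. Since $cl(W)=\xi_Z\otimes cl(\Delta(X_0))^{(1,1)}$, this class lies entirely in $(H^1)^{\otimes 2n}\subset H^{2n}(X^{2n})$; under the decomposition $X^{2n}=X^{2n-1}\times X$ its Kunneth piece in bidegree $(2n-1,1)$ is exactly $\xi_Z\otimes cl(\Delta(X_0))^{(1,1)}\in(H^1)^{\otimes 2n-1}\otimes H^1(X)$. Unwinding the identifications of the recollection and using that $cl(\Delta(X_0))^{(1,1)}$ corresponds under Poincare duality to the identity on $H^1$, the induced map $cl(W)_\ast:JH^{2n-1}(X^{2n-1})^\vee\to J(H^1)^\vee$ vanishes on every Kunneth summand of $JH^{2n-1}(X^{2n-1})^\vee$ other than $J((H^1)^{\otimes 2n-1})^\vee$, on which it agrees with $\xi_Z^{-1}$. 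Hence for any $Y\in\CH_{n-1}^\hhom(X_0^{2n-1})$,
\[\xi_Z^{-1}(h_n(Y))=cl(W)_\ast(\AJ(Y))=\AJ(W_\ast(Y))\in\Jac(K),\]
the middle equality being the commutative square recalled at the start of this section.

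The main obstacle, and really the only non-formal step, is this last Kunneth calculation: carefully checking that with $\tilde Z$ and $\Delta^\circ$ in place of $Z$ and $\Delta(X_0)$ there are no spurious contributions from other Kunneth components of $cl(Z)$ or $cl(\Delta(X_0))$ polluting $cl(W)_\ast$, and that what remains is precisely $\xi_Z^{-1}$ composed with the Kunneth projection defining $h_n$.
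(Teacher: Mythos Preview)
Your proposal is correct and follows essentially the same approach as the paper. The paper posits a cycle $Z'\in\CH_n(X_0^{2n})$ whose class is the $((H^1)^{\otimes 2n})^\vee$ Kunneth component of $cl(Z\times\Delta(X_0))$, remarking only that such a $Z'$ exists because the Kunneth components of $cl(\Delta(X_0))$ are algebraic; your $W=\tilde Z\times\Delta^\circ$ is precisely an explicit construction of this $Z'$, and your final Kunneth verification is exactly the computation the paper carries out in detail (showing $cl(Z')(f)=\xi_Z^{-1}(f)$ for $f\in((H^1)^{\otimes 2n-1})^\vee$).
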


\begin{proof}
Denote the diagonal of $X_0$ by $\Delta(X_0)$. Let $Z'\in\CH_n(X_0^{2n})$ be such that its class in 
\[
H^{2n}(X^{2n})^\vee
\]
is the $((H^1)^{\otimes 2n})^\vee$ Kunneth component of 
\[
cl(Z\times \Delta(X_0))\in H^{2n}(X^{2n})^\vee.
\]
Such $Z'$ can be explicitly constructed using the fact that the Kunneth components of the class of the diagonal $\Delta(X_0)\in\CH_{1}(X_0^2)$ are algebraic. We will show that the diagram
\begin{equation}\label{proofsec8thm}
\begin{tikzpicture}
  \matrix (m) [matrix of math nodes, column sep=2em, row sep=2.5em]
    {	  \CH_{n-1}^\hhom(X_0^{2n-1})& J((H^1)^{\otimes 2n-1})^\vee\\
		\CH_{0}^\hhom(X_0)& J(H^1)^\vee\\};
  { [start chain] \chainin (m-1-1);
    \chainin (m-1-2) [join={node[above,labeled] {h_n}}];}
  {[start chain] \chainin (m-2-1);
    \chainin (m-2-2) [join={node[above,labeled] {h_1=\AJ}}];}
	{[start chain] \chainin (m-1-2);
		\chainin (m-2-2) [join={node[right,labeled] {\xi_Z^{-1}}}];}
	{[start chain] \chainin (m-1-1);
		\chainin (m-2-1) [join={node[left,labeled] {Z'_\ast}}];}
  \end{tikzpicture}
\end{equation}
commutes. This will prove the assertion, as $h_1$ is the map that identifies $\Jac(K)=\CH_{0}^\hhom(X_0)$ as a subgroup of $J(H^1)^\vee$.\\

By functoriality of the Abel-Jacobi maps with respect to correspondences, one has a commutative diagram
\begin{equation}\label{proofsec8thmdiag2}
\begin{tikzpicture}
  \matrix (m) [matrix of math nodes, column sep=2em, row sep=2.5em]
    {	  \CH_{n-1}^\hhom(X_0^{2n-1})& J H^{2n-1}(X^{2n-1})^\vee\\
		\CH_{0}^\hhom(X_0)& J(H^1)^\vee.\\};
  {[start chain] \chainin (m-1-1);
    \chainin (m-1-2) [join={node[above,labeled] {\AJ}}];}
  {[start chain] \chainin (m-2-1);
    \chainin (m-2-2) [join={node[above,labeled] {\AJ}}];}
	{[start chain] \chainin (m-1-2);
		\chainin (m-2-2) [join={node[right,labeled] {cl(Z')}}];}
	{[start chain] \chainin (m-1-1);
		\chainin (m-2-1) [join={node[left,labeled] {Z'_\ast}}];}
	\end{tikzpicture}
\end{equation}
Thus to establish commutativity of \eqref{proofsec8thm}, it suffices to show that
\begin{equation}
\begin{tikzpicture}
  \matrix (m) [matrix of math nodes, column sep=3em, row sep=2.5em]
    {	  J H^{2n-1}(X^{2n-1})^\vee&J((H^1)^{\otimes 2n-1})^\vee\\
		J(H^1)^\vee&\\};
  {[start chain] \chainin (m-1-1);
    \chainin (m-1-2) [join={node[above,labeled] {\text{natural}} node[below,labeled] {\text{projection}}}];}
	{[start chain] \chainin (m-1-2);
		\chainin (m-2-1) [join={node[below,labeled] {\xi_Z^{-1}}}];}
	{[start chain] \chainin (m-1-1);
		\chainin (m-2-1) [join={node[left,labeled] {cl(Z')}}];}
	\end{tikzpicture}
\end{equation}
commutes. This in turn will be established if we verify the commutativity of
\begin{equation}\label{proofsec8thmdiag3}
\begin{tikzpicture}
  \matrix (m) [matrix of math nodes, column sep=3em, row sep=2.5em]
    {	  H_\CC^{2n-1}(X^{2n-1})^\vee&((H_\CC^1)^{\otimes 2n-1})^\vee\\
		(H_\CC^1)^\vee,&\\};
  {[start chain] \chainin (m-1-1);
    \chainin (m-1-2) [join={node[above,labeled] {\text{natural}} node[below,labeled] {\text{projection}}}];}
	{[start chain] \chainin (m-1-2);
		\chainin (m-2-1) [join={node[below,labeled] {\xi_Z^{-1}}}];}
	{[start chain] \chainin (m-1-1);
		\chainin (m-2-1) [join={node[left,labeled] {cl(Z')}}];}
	\end{tikzpicture}
\end{equation}
where with abuse of notation $\xi_Z^{-1}$ denotes the map $f\mapsto f(\xi_Z\otimes-)$. Note that since 
\[cl(Z')\in ((H_\CC^1)^{\otimes 2n})^\vee\subset H_\CC^{2n}(X^{2n})^\vee,\]
we only need to verify commutativity on the direct summand
\[
((H_\CC^1)^{\otimes 2n-1})^\vee\subset H_\CC^{2n-1}(X^{2n-1})^\vee.
\] 
Let $f\in ((H^1_\CC)^{\otimes 2n-1})^\vee$. Suppose $f$ is the Poincare dual of $\alpha\in H_\CC^{2n-1}(X^{2n-1})$, i.e. 
\[f(-)=\int\limits_{X^{2n-1}}\alpha\wedge -.\]
Then $\alpha$ lies in the Kunneth component $(H_\CC^1)^{\otimes 2n-1}$. Let $\beta\in H^1_\CC$. Unwinding definitions, in view of the fact that $cl(Z')$ is the $((H^1)^{\otimes 2n})^\vee$ component of $cl(Z\times\Delta(X_0))$, we have
\[
cl(Z')(f)(\beta)=cl(Z')(\alpha\otimes\beta)=cl(Z\times \Delta(X_0))(\alpha\otimes \beta). 
\] 
Let 
\[
\alpha=\sum\limits_i \alpha^{(i)}_1\otimes\ldots\otimes\alpha^{(i)}_{2n-1}.
\]
Then 
\begin{eqnarray*}
cl(Z')(f)(\beta)&=&\sum\limits_i cl(Z\times\Delta(X_0)) (\alpha^{(i)}_1\otimes\ldots\otimes\alpha^{(i)}_{2n-1}\otimes\beta)\\
&=& \sum\limits_i cl(Z)(\alpha^{(i)}_1\otimes\ldots\otimes\alpha^{(i)}_{2n-2})\int\limits_X\alpha^{(i)}_{2n-1}\wedge\beta\\
&=&\sum\limits_i \int\limits_{X^{2n-2}}\xi_Z\wedge (\alpha^{(i)}_1\otimes\ldots\otimes\alpha^{(i)}_{2n-2})~\int\limits_X\alpha^{(i)}_{2n-1}\wedge\beta\\
&=&\sum\limits_i \int\limits_{X^{2n-1}}\left(\xi_Z\wedge (\alpha^{(i)}_1\otimes\ldots\otimes\alpha^{(i)}_{2n-2})\right)\otimes (\alpha^{(i)}_{2n-1}\wedge\beta)\\
&=&\int\limits_{X^{2n-1}}\alpha\wedge(\xi_Z\otimes \beta)\\
&=&f(\xi_Z\otimes \beta).
\end{eqnarray*}
Thus $cl(Z')(f)=\xi_Z^{-1}(f)$ as desired.
\end{proof}
From now on, in the interest of simplifying the notation, for a Hodge class $\xi\in (H^1)^{\otimes 2n-2}$, we write $P_\xi$ for $\xi^{-1}(\Psi(\mathbb{E}^\infty_{n,e}))$. For $Z\in\CH_{n-1}(X_0^{2n-2})$, we simply write $P_Z$ for $P_{\xi_Z}$.\\

{\bf Remark.} It was pointed out to me by Darmon that the idea of constructing points on the Jacobian of $X_0$ using Hodge classes in $H^2(X^2)$ first arose in the work \cite{YZZ} of W. Yuan, S. Zhang, and W. Zhang in the setting of modular curves.\\

\subsection{An analytic description of $P_Z$}\label{analytic_description_par1}

Proposition \ref{harmonicvolume}(a) gives us a description of $\Psi(\mathbb{E}^\infty_{n,e})$, and hence can be used to give an analytic description of points of the form $P_Z$, or more generally $P_\xi$. The issue with this description will be that it involves the forms $\nu$. More precisely, to do computations with it one needs to know $\nu(\omega_1\otimes\omega_2)$ for harmonic forms $\omega_1,\omega_2$ on $X$. In this paragraph, we try to give a different description of $\Psi(\mathbb{E}^\infty_{n,e})$, and hence $P_Z$ and $P_\xi$, which does not have this issue, as it uses differentials of the second kind as opposed to harmonic forms.\\

Recall that in view of Carlson's theorem (see Paragraph \ref{Carlsoniso}), a Hodge section of $\mathfrak{q}$ and an integral retraction of $\mathfrak{i}$ (see \eqref{seq2}) will give us a description of  
\[\mathbb{E}^\infty_{n,e}\in\Ext((H^1)^{\otimes n},(H^1)^{\otimes n-1})\cong J\inhom((H^1)^{\otimes n},(H^1)^{\otimes n-1}).\] 
We will use the same retraction $r_\ZZ$ of $\mathfrak{i}$ as in Section \ref{section_extension_E}, but seek for a different, rather more simple, Hodge section of $\mathfrak{q}$.\\

Recall that $g$ is the genus of $X$. From now on (to the end of the paper), we fix the following set of data:

\begin{itemize}
\item[(i)] $\alpha_1,\ldots,\alpha_{2g}$ as in Paragraph \ref{Hodge_filt_L_n_paragraph}: For $1\leq i\leq g$, $\alpha_i$ is holomorphic on $X$, and for $g+1\leq i\leq 2g$, $\alpha_i$ is meromorphic on $X$ and holomorphic on $X-\{\infty\}$, and the cohomology classes of the $\alpha_i$ form a basis of $H^1_\CC$.
\item[(ii)] a basis $d_1,\ldots,d_{2g}$ of $H^1_\ZZ$
\item[(iii)] $\beta_1,\ldots,\beta_{2g}\in\pi_1(X-\{\infty\},e)$ such that $[\beta_i]=\PD(d_i)$, i.e.
\[
\int\limits_{\beta_i}-=\int\limits_X d_i\wedge-.
\] 
\end{itemize}
As in Paragraph \ref{Hodge_filt_L_n_paragraph}, let 
\[R^1=\sum\limits_i\CC\alpha_i\subset \Omega^1_{\hol}(X-\{\infty\}),\] 
where for any Riemann surface $M$, by $\Omega^1_\hol(M)$ we denote the space of holomorphic 1-forms on $M$.\\

The map 
\[
(H^1)^{\otimes n}_\CC\rightarrow (L_n)_\CC
\]
defined by
\[
[\alpha_{i_1}]\otimes\ldots\otimes[\alpha_{i_{n}}]\mapsto \int\alpha_{i_1}\ldots\alpha_{i_n}, 
\]
or equivalently by
\begin{equation}\label{eq1ch4revision}
[\omega_{i_1}]\otimes\ldots\otimes [\omega_{i_{n}}]\mapsto\int \omega_{i_1}\ldots\omega_{i_n} \hspace{.2in}(\omega_i\in R^1),
\end{equation}
is a section of $q_\CC$; this is clear from \eqref{eqq}. Thus the composition
\[
\sigma_F:(H^1)^{\otimes n}_\CC\stackrel{\eqref{eq1ch4revision}}{\rightarrow} (L_n)_\CC\stackrel{\text{quotient}}{\rightarrow}(\frac{L_n}{L_{n-2}})_\CC
\]
is a section of $\mathfrak{q}$ (over $\CC$).\\

\noindent {\bf\underline{Hypothesis $\star$}}: We say that the $\alpha_i$ satisfy {\it Hypothesis $\star$} if the map $\sigma_F$ above is compatible with the Hodge filtrations.\\

Recall from Paragraph \ref{intretpar} that our choice of the $\beta_i$ leads to an integral retraction $r_\ZZ$ of $\mathfrak{i}$ given by \eqref{eqr_Z}. In view of Carlson's theorem, if the $\alpha_i$ satisfy Hypothesis $\star$, the extension $\mathbb{E}^\infty_{n,e}\in J\inhom((H^1)^{\otimes n},(H^1)^{\otimes n-1})$ is represented by the map $r_\ZZ\circ\sigma_F$. Thus we have the following description of $\Psi(\mathbb{E}^\infty_{n,e})$. (See the argument for Proposition \ref{harmonicvolume}(a).)
\begin{prop}
If the $\alpha_i$ satisfy Hypothesis $\star$, then $\Psi(\mathbb{E}^\infty_{n,e})$ is represented by the map
\[
(H^1_\CC)^{\otimes 2n-1}\rightarrow\CC\]
given by
\[
[\omega_1]\otimes\ldots\otimes [\omega_n]\otimes d_{i_1}\otimes\ldots\otimes d_{i_{n-1}} \mapsto \int\limits_{(\beta_{i_1}-1)\ldots(\beta_{i_{n-1}}-1)} \omega_1\ldots\omega_n\hspace{.3in}(\omega_i\in R^1).
\]
\end{prop}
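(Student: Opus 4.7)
The plan is to mirror the proof of Proposition \ref{harmonicvolume}(a), simply replacing the real Hodge section $\mathfrak{s}_F$ with the section $\sigma_F$, which by Hypothesis $\star$ is a Hodge section of $\mathfrak{q}$. First, I would combine Hypothesis $\star$ with the integral retraction $r_\ZZ$ of $\mathfrak{i}$ (from Paragraph \ref{intretpar}, which is built using the chosen $\beta_j$) and invoke Carlson's theorem (Paragraph \ref{Carlsoniso}): an extension in $\mathbf{MHS}$ is represented in $J\inhom((H^1)^{\otimes n},(H^1)^{\otimes n-1})$ by $\rho_\ZZ\circ\sigma_F$ for any integral retraction $\rho_\ZZ$ of the monomorphism and any Hodge section $\sigma_F$ of the epimorphism in the defining short exact sequence. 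Applied to \eqref{seq2}, this gives that $\mathbb{E}^\infty_{n,e}$ is represented by the class of $r_\ZZ\circ\sigma_F$.

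Next I would push this class forward through each of the identifications in the definition of $\Psi$ (see \eqref{eqPsisteps}). The argument is formally identical to the one in the proof of Proposition \ref{harmonicvolume}(a); the upshot is that $\Psi(\mathbb{E}^\infty_{n,e})$ is the class of the map which sends $c\otimes d\in (H_\CC^1)^{\otimes n}\otimes(H_\CC^1)^{\otimes n-1}$ to $PD^{\otimes n-1}(d)\bigm(r_\ZZ\circ\sigma_F(c)\bigm)$. Recalling from Paragraph \ref{PhiandPsi} that $PD(d_i)=[\beta_i]\in H_1(X,\ZZ)$, we have $PD^{\otimes n-1}(d_{i_1}\otimes\ldots\otimes d_{i_{n-1}})=[\beta_{i_1}]\otimes\ldots\otimes[\beta_{i_{n-1}}]$.

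Finally, I would evaluate on a basis element. For $c=[\omega_1]\otimes\ldots\otimes[\omega_n]$ with $\omega_i\in R^1$, the definition of $\sigma_F$ (see \eqref{eq1ch4revision}) gives
\[
\sigma_F(c)=\int\omega_1\ldots\omega_n\ \bmod\ (L_{n-2})_\CC,
\]
so the defining formula \eqref{eqr_Z} of $r_\ZZ$ yields
\[
[\beta_{i_1}]\otimes\ldots\otimes[\beta_{i_{n-1}}]\bigm(r_\ZZ\circ\sigma_F(c)\bigm)=\int\limits_{(\beta_{i_1}-1)\ldots(\beta_{i_{n-1}}-1)}\omega_1\ldots\omega_n,
\]
which is exactly the claimed formula. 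I do not anticipate a real obstacle here: Hypothesis $\star$ is precisely the ingredient needed to replace $\mathfrak{s}_F$ by the simpler $\sigma_F$, and every other step is book-keeping already carried out in Proposition \ref{harmonicvolume}(a). The only point worth double-checking is that $\sigma_F$ is automatically compatible with the weight filtration (clear from $W_n(L_n)_\CC=(L_n)_\CC$) and that it is well-defined on the tensor algebra (clear from Proposition \ref{L_nusingholforms}), so Hypothesis $\star$ alone is enough to promote $\sigma_F$ to a morphism of mixed Hodge structures after passing to $\frac{L_n}{L_{n-2}}$.
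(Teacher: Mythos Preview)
Your proposal is correct and follows essentially the same approach as the paper: the paper's own argument is just the one-line remark that, given Hypothesis $\star$, Carlson's theorem applied with the Hodge section $\sigma_F$ and the integral retraction $r_\ZZ$ represents $\mathbb{E}^\infty_{n,e}$ by $r_\ZZ\circ\sigma_F$, and then refers back to the computation in Proposition \ref{harmonicvolume}(a) for the explicit formula --- exactly what you have spelled out. One small wording point: Carlson's theorem only requires $\sigma_F$ to be compatible with the Hodge filtration (a ``Hodge section''), not a full morphism of mixed Hodge structures, so your final remark about promoting $\sigma_F$ to a morphism is stronger than needed.
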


Let 
\[s:H^1_\CC\rightarrow\Omega^1_{\hol}(X-\{\infty\})\]
be the map that sends $c\in H^1_\CC$ to the unique element of $R^1$ representing $c$. From now on, $\omega_i:=s(d_i)$; it is in particular a linear combination of the $\alpha_i$ with integral periods.\\
For $c\in H^1_\CC$, we write
\[
c=\sum\limits_i p_{i}(c) d_i,
\]
which is equivalent to
\[
s(c)=\sum\limits_i p_{i}(c) \omega_i.
\]
For a {\it multi-index} 
\[
I=(i_m,\ldots,i_m)\subset\{1,\ldots,2g\}^{m},
\]
let
\[
d_I=d_{i_1}\otimes\ldots\otimes d_{i_m}\in (H^1)_\ZZ^{\otimes m}.
\]
For a Hodge class $\xi\in (H^1)^{\otimes 2n-2}$, we write
\[
\xi=\sum\limits_{I\subset\{1,\ldots,2g\}^{2n-2}} \lambda_I(\xi) d_I.
\]
Note that the $\lambda_I(\xi)$ are integers. For $Z\in \CH_{n-1}(X_0^{2n-2})$, let $\lambda_I(Z)=\lambda_I(\xi_Z)$.\\

Denote the map of the previous proposition tentatively by $f$. If the $\alpha_i$ satisfy Hypothesis $\star$, by definition, $P_\xi$ is the class of the map
\[
f_\xi:H^1_\CC\rightarrow\CC\hspace{.3in}\text{defined by}\hspace{.3in} c\mapsto f(\xi\otimes c).
\]
We have
\begin{eqnarray*}
f(\xi\otimes c)&=&\sum\limits_j p_j(c)f(\xi\otimes d_j)\\
&=&\sum\limits_j\sum\limits_{I} p_j(c)\lambda_I(\xi) f(d_I\otimes d_j)\\
&=&\sum\limits_j\sum\limits_{I} p_j(c)\lambda_I(\xi)\int\limits_{(\beta_{i_{n+1}}-1)\ldots(\beta_{i_{2n-2}}-1)(\beta_j-1)} \omega_{i_1}\ldots\omega_{i_n},
\end{eqnarray*}  
where in all summations $1\leq j\leq 2g$ and $I=(i_1,\ldots,i_{2n-2})\in \{1,\ldots,2g\}^{2n-2}$. We record the conclusion as a proposition.	
\begin{prop}\label{analytic_description_z(e)}
Suppose the $\alpha_i$ satisfy Hypothesis $\star$. Then $P_\xi$ is the class of the map $f_\xi:H^1_\CC\rightarrow\CC$ defined by
\[
f_\xi(c)=\sum\limits_j\sum\limits_{I} p_j(c)\lambda_I(\xi)\int\limits_{(\beta_{i_{n+1}}-1)\ldots(\beta_{i_{2n-2}}-1)(\beta_j-1)} \omega_{i_1}\ldots\omega_{i_n}.
\]
In particular, for $Z\in\CH_{n-1}(X_0^{2n-2})$, $P_Z$ is the class of $f_{\xi_Z}$.
\end{prop}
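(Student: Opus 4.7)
The plan is to deduce this directly from the previous proposition by unwinding the definitions. Under Hypothesis $\star$, that proposition gives us an explicit representative $f : (H^1_\CC)^{\otimes 2n-1} \to \CC$ of $\Psi(\mathbb{E}^\infty_{n,e})$, namely the map sending
\[
[\omega_1]\otimes\cdots\otimes[\omega_n]\otimes d_{i_1}\otimes\cdots\otimes d_{i_{n-1}} \;\longmapsto\; \int\limits_{(\beta_{i_1}-1)\cdots(\beta_{i_{n-1}}-1)} \omega_1\ldots\omega_n,
\]
for $\omega_i\in R^1$. By definition $P_\xi = \xi^{-1}\bigl(\Psi(\mathbb{E}^\infty_{n,e})\bigr)$, so $P_\xi$ is represented by the map $f_\xi : H^1_\CC \to \CC$ defined by $c \mapsto f(\xi\otimes c)$.

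Next I would expand everything in the chosen bases. Writing $c = \sum_j p_j(c)\,d_j$ and $\xi = \sum_I \lambda_I(\xi)\,d_I$ with $I$ ranging over multi-indices in $\{1,\ldots,2g\}^{2n-2}$, multilinearity of $f$ gives
\[
f_\xi(c) \;=\; \sum_j \sum_I p_j(c)\,\lambda_I(\xi)\, f(d_I\otimes d_j).
\]
Since $\omega_i = s(d_i) \in R^1$ by our conventions, the displayed formula for $f$ immediately evaluates $f(d_I\otimes d_j)$ as the iterated integral appearing in the statement, and the formula for $f_\xi$ follows. The second assertion is just the specialization $\xi = \xi_Z$, $\lambda_I(\xi)=\lambda_I(Z)$.

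There is no real obstacle here: once Hypothesis $\star$ gives access to the Hodge section $\sigma_F$ and the integral retraction $r_\ZZ$ has already been fixed using the $\beta_i$, Carlson's theorem tells us that $r_\ZZ\circ\sigma_F$ represents $\mathbb{E}^\infty_{n,e}$, and the rest is bookkeeping with the two chosen bases. The substantive content — verifying that $\sigma_F$ respects $F^\cdot$ (i.e., Hypothesis $\star$), and that $r_\ZZ$ is a well-defined integral retraction — is already in place from earlier sections, so the proof here amounts to assembling the previous proposition with the definition of $P_\xi$ and the expansions of $\xi$ and $c$.
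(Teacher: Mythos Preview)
Your proposal is correct and follows essentially the same approach as the paper: use the previous proposition to get the representative $f$ of $\Psi(\mathbb{E}^\infty_{n,e})$, apply the definition $P_\xi = \xi^{-1}(\Psi(\mathbb{E}^\infty_{n,e}))$ so that $P_\xi$ is represented by $c\mapsto f(\xi\otimes c)$, and then expand $c$ and $\xi$ in the bases $\{d_j\}$ and $\{d_I\}$ to obtain the stated formula. The paper's argument is precisely this computation, so there is nothing to add.
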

We finish this paragraph by rewriting the formula for $f_\xi$ in a form that will be useful later. For future reference, we record it as a proposition.
\begin{prop}\label{analytic_description_z(e)_V2}
For $i,j,k\leq 2g$, let
\[
\mu'_{ijk}(\xi;c)=\sum\limits_{r=1}^{n-1}~\sum\limits_{\stackrel{i_1,\ldots,i_{2n-1}\leq2g}{(i_r,i_{r+1},i_{r+n})=(i,j,k)}}~\lambda_{(i_1,\ldots,i_{2n-2})}(\xi)~p_{i_{2n-1}}(c)\prod\limits_{l=1}^{r-1}\int\limits_{\beta_{i_{l+n}}}\omega_{i_l}~\prod\limits_{l=r+2}^{n}\int\limits_{\beta_{i_{l+n-1}}}\omega_{i_l}.
\]
Then
\[
f_\xi(c)=\sum\limits_{i,j,k\leq2g}\mu'_{ijk}(\xi;c)\int\limits_{\beta_k}\omega_i\omega_j.
\]
\end{prop}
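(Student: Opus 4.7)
My strategy is to deduce the claimed formula directly from Proposition \ref{analytic_description_z(e)} by expanding the iterated integral
\[
\int_{(\beta_{i_{n+1}}-1)\cdots(\beta_{i_{2n-2}}-1)(\beta_j-1)}\omega_{i_1}\cdots\omega_{i_n}
\]
into single-loop iterated integrals and then reorganizing the resulting sum according to the triple $(i,j,k)$ being summed over. The key identity I will establish is: for any loops $\gamma_1,\ldots,\gamma_{n-1}\in\pi_1(X-\{\infty\},e)$ and any $\omega_{i_1},\ldots,\omega_{i_n}\in R^1$,
\[
\int_{(\gamma_1-1)\cdots(\gamma_{n-1}-1)}\omega_{i_1}\cdots\omega_{i_n}=\sum_{r=1}^{n-1}\left(\int_{\gamma_r}\omega_{i_r}\omega_{i_{r+1}}\right)\prod_{l=1}^{r-1}\int_{\gamma_l}\omega_{i_l}\prod_{l=r+2}^{n}\int_{\gamma_{l-1}}\omega_{i_l}.
\]
Granting this, applying it with $\gamma_l=\beta_{i_{n+l}}$ for $1\leq l\leq n-2$ and $\gamma_{n-1}=\beta_j$ (writing $i_{2n-1}:=j$ for symmetry), substituting into the formula of Proposition \ref{analytic_description_z(e)}, and collecting the summands according to $(i,j,k)=(i_r,i_{r+1},i_{n+r})$ will yield the stated expression with $\mu'_{ijk}(\xi;c)$ exactly as defined.

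\textbf{Proof of the key identity.} By iterating \eqref{itintprop}, for any nonempty $S=\{s_1<\cdots<s_k\}\subseteq\{1,\ldots,n-1\}$ one has
\[
\int_{\gamma_{s_1}\cdots\gamma_{s_k}}\omega_{i_1}\cdots\omega_{i_n}=\sum_{\substack{b_1,\ldots,b_k\geq 0\\ b_1+\cdots+b_k=n}}\prod_{l=1}^{k}\int_{\gamma_{s_l}}\omega_{i_{B_{l-1}+1}}\cdots\omega_{i_{B_l}},\qquad B_l:=b_1+\cdots+b_l.
\]
I then expand $(\gamma_1-1)\cdots(\gamma_{n-1}-1)=\sum_{S}(-1)^{n-1-|S|}\prod_{s\in S}\gamma_s$, reparameterize each inner composition as a tuple $(a_1,\ldots,a_{n-1})$ of non-negative integers summing to $n$ with $a_l=0$ forced whenever $l\notin S$, and swap the order of summation. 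For a fixed such tuple with support $T:=\{l:a_l>0\}$, the coefficient becomes $\sum_{S\supseteq T}(-1)^{n-1-|S|}$, which by the binomial identity $\sum_{k=0}^{m}\binom{m}{k}(-1)^{m-k}=\delta_{m,0}$ (with $m=n-1-|T|$) equals $\delta_{|T|,n-1}$. Hence only tuples with every $a_l\geq 1$ contribute; such compositions of $n$ into $n-1$ positive parts are indexed by a single $r\in\{1,\ldots,n-1\}$ with $a_r=2$ and $a_l=1$ for $l\neq r$, which reproduces exactly the right-hand side of the key identity.

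\textbf{Expected obstacle.} Once \eqref{itintprop} and Proposition \ref{analytic_description_z(e)} are in hand, nothing substantive remains: the argument is entirely combinatorial bookkeeping, whose essential content is the collapse of the inclusion-exclusion sum to the ``all parts positive'' compositions. The only thing demanding care is tracking the shift $\gamma_{l-1}$ versus $\gamma_l$ in the two product ranges, and the asymmetric role of the ``extra'' index $i_{2n-1}=j$, when matching the resulting expression with the definition of $\mu'_{ijk}(\xi;c)$; this is purely notational.
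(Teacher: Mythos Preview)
Your proposal is correct and follows essentially the same route as the paper: the paper's proof consists of invoking Proposition \ref{analytic_description_z(e)} and then citing \eqref{itintprop} to assert precisely your ``key identity'' (the decomposition of $\int_{(\gamma_1-1)\cdots(\gamma_{n-1}-1)}\omega_{i_1}\cdots\omega_{i_n}$ as a sum over $r$), after which the reorganization into $\mu'_{ijk}$ is immediate. Your inclusion--exclusion argument simply supplies the details the paper omits in passing from \eqref{itintprop} to that identity.
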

\begin{proof}
This follows from the previous formula for $f_\xi$ on noting that by \eqref{itintprop},
\[
\int\limits_{(\beta_{i_{n+1}}-1)\ldots(\beta_{i_{2n-2}}-1)(\beta_{i_{2n-1}}-1)} \omega_{i_1}\ldots\omega_{i_n}=\sum\limits_{r=1}^{n-1}~~ \prod\limits_{l=1}^{r-1}\int\limits_{\beta_{i_{l+n}}}\omega_{i_l} ~\int\limits_{\beta_{i_{r+n}}}\omega_{i_r}\omega_{i_{r+1}}~ \prod\limits_{l=r+2}^{n}\int\limits_{\beta_{i_{l+n-1}}}\omega_{i_l}.
\]
\end{proof}

For $Z\in\CH_{n-1}(X_0^{2n-2})$, to simplify the notation we simply write $f_Z$ for $f_{\xi_Z}$. 

\subsection{More on Hypothesis $\star$} 

In this paragraph, we show that in the case of elliptic curves, one can indeed choose the $\alpha_i$ such that they satisfy Hypothesis $\star$. Note that when $g=1$, by assumption, $\alpha_2$ has a pole at $\infty$ and is holomorphic elsewhere. The order of the pole of $\alpha_2$ at $\infty$ is thus $\geq 2$. The form $\alpha_1$ is holomorphic on $X$.

\begin{prop}\label{prop_Hypothesis_star_genus1}
Let $g=1$. Suppose the order of $\infty$ as a pole of $\alpha_2$ is 2. Then the $\alpha_i$ satisfy Hypothesis $\star$. 
\end{prop}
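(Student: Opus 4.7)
The plan is to verify Hypothesis $\star$ by checking the condition on a convenient spanning set of each $F^p(H^1)^{\otimes n}$. Since $g=1$, $F^1 H^1 = \CC[\alpha_1]$ is one-dimensional; expanding $[\alpha_2]$ in the Hodge basis $\{[\alpha_1], \overline{[\alpha_1]}\}$ (with nonzero $\overline{[\alpha_1]}$-coefficient, since $[\alpha_1]$ and $[\alpha_2]$ are linearly independent) one checks directly that $F^p(H^1)^{\otimes n}$ is spanned by the pure tensors $v = [\omega_{i_1}]\otimes\cdots\otimes[\omega_{i_n}]$ with $\omega_{i_j}\in\{\alpha_1,\alpha_2\}$ in which $\alpha_1$ occurs in at least $p$ positions. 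Accordingly, it suffices to show that for each such $v$ with exactly $k$ copies of $\alpha_1$, the iterated integral $\int \omega_{i_1}\ldots\omega_{i_n}$ lies in $F^k L_n + L_{n-2}$.

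The geometric input I will use is a decomposition $\alpha_2 = \tilde{\alpha}_2 + df_2$ on $U$ in which $\tilde{\alpha}_2 \in E^1(X) \subset E^1(X\log\infty)$ is smooth on $X$ and $f_2$ is a smooth function on $U$ that extends meromorphically to $X$ with only a simple pole at $\infty$. This is where the hypothesis on the pole order of $\alpha_2$ enters: since $\alpha_2$ has a pole of order exactly $2$ and zero residue at $\infty$, we may write $\alpha_2 = (c/z^2)\,dz + (\text{hol.})$ near $\infty$ with $c\neq 0$, and then, taking $f_2 := -(c/z)\chi$ for a smooth bump $\chi$ equal to $1$ near $\infty$ and supported away from $e$, a direct calculation shows that $\tilde{\alpha}_2 := \alpha_2 - df_2$ extends smoothly across $\infty$. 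The key properties of this decomposition are that both $f_2\alpha_1$ and $f_2\tilde{\alpha}_2$ belong to $E^1(X\log\infty)$ (since $f_2$ has at worst a simple pole at $\infty$ while $\alpha_1$ and $\tilde{\alpha}_2$ are smooth there), and that $f_2\alpha_1$ is of pure type $(1,0)$.

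The core of the proof is then a substitution-and-reduction procedure. Substituting $\alpha_2 = \tilde{\alpha}_2 + df_2$ at each $\alpha_2$-position in $\int \omega_{i_1}\ldots\omega_{i_n}$ and expanding, one obtains a sum indexed by subsets $S$ of the $\alpha_2$-positions (recording which positions are taken to be $df_2$). For each $S$, I iteratively apply the relations \eqref{itrel} to eliminate every $df_2$ factor (as well as any derived exact factor such as $f_2\,df_2 = d(f_2^2/2)$ arising from merging adjacent $df_2$'s); each application strictly decreases length by one, leaving iterated integrals of length $n-|S|$. For $|S|=0$ the factors are only $\alpha_1$ and $\tilde{\alpha}_2$, and for $|S|=1$ they are among $\alpha_1, \tilde{\alpha}_2, f_2\alpha_1, f_2\tilde{\alpha}_2$; in both cases all factors sit in $E^1(X\log\infty)$, and since $\alpha_1$ and $f_2\alpha_1$ are of pure type $(1,0)$, the resulting integrals have at least $k$ pure $(1,0)$ factors and therefore lie in $F^k L_n$ (respectively $F^k L_{n-1}\subset F^k L_n$). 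Contributions with $|S|\geq 2$ have length $\leq n-2$, so they lie in $L_{n-2}$ and vanish in the quotient $L_n/L_{n-2}$; accordingly, the derived factors $f_2^a\alpha_1, f_2^a\tilde{\alpha}_2$ produced by such iterated merges (which can carry higher-order poles at $\infty$ and hence may fail to lie in $E^1(X\log\infty)$) never need to be analyzed.

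The most delicate point to execute is the length-bookkeeping when several chosen $df_2$'s are adjacent: a single application of \eqref{itrel} merges the chosen $df_2$ into a neighbor, which may itself be another $df_2$, producing a new exact factor $f_2\,df_2 = d(f_2^2/2)$ that must be eliminated by a further application; one must check that each such step truly drops the length by one, so that the total length drop is exactly $|S|$. This is clean because each of the three rules in \eqref{itrel} absorbs an exact factor into a single adjacent position, lowering the total factor count by one at every step; the remaining verifications are straightforward.
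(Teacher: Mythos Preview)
There is a genuine gap in your argument, and it is precisely at the point where you dismiss the $|S|\ge 2$ contributions.

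You need to show that the \emph{closed} iterated integral $\int\omega_{i_1}\ldots\omega_{i_n}$ lies in $F^k(L_n)_\CC+(L_{n-2})_\CC$. Both summands consist of \emph{closed} iterated integrals: by definition, an element of $F^k(L_n)_\CC$ is a closed iterated integral admitting an expression as a sum of (not necessarily closed) terms $\int\omega'_1\ldots\omega'_r$ with $r\le n$, each $\omega'_j\in E^1(X\log\infty)$, and at least $k$ of them of type $(1,0)$; an element of $(L_{n-2})_\CC$ is a closed iterated integral of length $\le n-2$. Your expansion writes $\int\omega_{i_1}\ldots\omega_{i_n}=T_{n,n-1}+T_{\le n-2}$, but neither piece is closed on its own. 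For a concrete instance, take $n=3$ and $\int\alpha_1\alpha_2\alpha_2$: your reduction gives $T_{\le 1}=\tfrac{1}{2}\int f_2^2\alpha_1$, and $d(f_2^2\alpha_1)=2f_2\,\bar{\partial} f_2\wedge\alpha_1\neq 0$ on the support of $d\chi$, so $T_{\le 1}\notin L_1$. Hence you cannot conclude that $T_{\le n-2}\in(L_{n-2})_\CC$, nor that $T_{n,n-1}\in F^k(L_n)_\CC$. Moreover, the alternative of showing directly that the whole expression lies in $F^k(L_n)_\CC$ fails too: the $|S|\ge 2$ terms produce factors such as $f_2^2\alpha_1$ with a pole of order $2$ at $\infty$, which are \emph{not} in $E^1(X\log\infty)$.

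What is missing is a closed reference element $A\in F^k(L_n)_\CC$ with $A\equiv\int\omega_{i_1}\ldots\omega_{i_n}\pmod{(L_{n-2})_\CC}$. The paper supplies exactly this via the Hodge section $\mathfrak{s}_F$ of Lemma~\ref{s_F} (built from harmonic representatives $\eta_i$ and the map $\nu$): since both $\sigma_F$ and $\mathfrak{s}_F$ are sections of $\mathfrak{q}$, their difference automatically lands in $(L_{n-1}/L_{n-2})_\CC\cong (H^1_\CC)^{\otimes n-1}$, and one is reduced to checking that certain cohomology classes of closed $1$-forms---such as $[-f_2\eta_1-\nu(\eta_1\otimes\eta_2)]$---lie in $F^1H^1_\CC$. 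It is at this last step that the pole-order-$2$ hypothesis is used (to bound the singularity of $f_2$ and conclude holomorphy on all of $X$). Your approach, by contrast, never produces a closed candidate in $F^k L_n$, and the length-filtration bookkeeping alone cannot substitute for one.
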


Before we prove the proposition, we state an easy lemma.

\begin{lemma}
Let $D$ be the open unit disc in $\CC$. Suppose $\alpha$ is a holomorphic 1-form on $D-\{0\}$ with a pole of order 2 at $0$, and $\eta$ is a smooth closed 1-form on $D$. Let $f$ be a smooth function on $D-\{0\}$ such that $df=\alpha-\eta$ on $D-\{0\}$. Then $z^2f(z)\rightarrow 0$ as $z\rightarrow 0$.
\end{lemma}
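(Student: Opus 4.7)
The plan is to exploit the existence of the primitive $f$ to pin down the Laurent expansion of $\alpha$ near $0$, and then write $f$ explicitly up to a bounded error term.

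First I would observe that since $\eta$ is smooth and closed on the simply connected disc $D$, Poincar\'e's lemma gives $\eta = dh$ for some smooth function $h$ on $D$; in particular $h$ is bounded on a neighborhood of $0$, so $z^2 h(z,\bar z)\to 0$ as $z\to 0$. Then $d(f+h)=\alpha$ on $D-\{0\}$, so $\alpha$ is exact on $D-\{0\}$. Integrating $\alpha$ around a small loop around $0$ therefore gives $0$, which forces the residue of $\alpha$ at $0$ to vanish.

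Next I would write the Laurent expansion: since $\alpha$ is holomorphic on $D-\{0\}$ with a pole of order at most $2$ and vanishing residue at $0$, we have
\[
\alpha=\Bigl(\tfrac{a_{-2}}{z^2}+g(z)\Bigr)\,dz,
\]
with $g$ holomorphic near $0$. Choosing a holomorphic antiderivative $G$ of $g$, we see that $\alpha=d\phi$ on $D-\{0\}$ where $\phi(z)=-\tfrac{a_{-2}}{z}+G(z)$. Combined with the previous step, this gives $d(f+h-\phi)=0$ on $D-\{0\}$, and since $D-\{0\}$ is connected, $f=\phi-h+C$ for some constant $C$.

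Finally I would conclude by direct computation:
\[
z^2 f(z)\;=\;-a_{-2}\,z+z^2 G(z)-z^2 h(z,\bar z)+Cz^2,
\]
all four terms of which tend to $0$ as $z\to 0$. There is no real obstacle here; the only subtlety is noticing that the existence of the global primitive $f$ forces the residue of $\alpha$ to vanish, which is what ensures $\phi$ blows up no faster than $|z|^{-1}$ near the origin and thus gets killed by the factor $z^2$.
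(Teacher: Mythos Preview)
Your proof is correct and follows essentially the same route as the paper: write $\eta=dh$ for a smooth $h$ on $D$, express $\alpha$ via its Laurent expansion, find an explicit primitive, and conclude that $f$ equals $-a_{-2}/z$ plus bounded terms. The one difference is that the paper simply writes $\alpha=(C/z^{2}+h)\,dz$ with $h$ holomorphic, tacitly taking the residue to be zero, whereas you explicitly deduce the vanishing of the residue from the exactness of $\alpha$ on $D-\{0\}$; this makes your argument slightly more self-contained.
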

\begin{proof}
Write $\alpha=(\frac{C}{z^2}+h)dz$, where $C\neq0$ is a constant and $h$ is a holomorphic function on $D$. Let $F$ be a smooth function on $D$ such that $dF=\eta$. Then
\[
df=(\frac{C}{z^2}+h)dz+dF=d\left(-\frac{C}{z}+H+F\right),
\] 
where $H$ is an anti-derivative of $h$. Thus 
\[
f=-\frac{C}{z}+H+F+\text{constant}.
\]
The desired conclusion follows.
\end{proof}

\underline{Proof of Proposition \ref{prop_Hypothesis_star_genus1}}: For convenience, we adopt the following temporary notation. For $i=1,2$, $\eta_i$ denotes the harmonic 1-form on $X$ whose cohomology class coincides with that of $\alpha_i$. In particular, $\eta_1=\alpha_1$. For each $i$, we write $\alpha_i=\eta_i+df_i$, where $f_i$ is a smooth function on $X-\{\infty\}$ satisfying $f_i(e)=0$. (Thus $f_1$ is just $0$.) Let $a_i=[\alpha_i]$. Note that $a_1\in F^1H^1_\CC$. We will be using the multi-index notation 
\[
a_I=a_{i_1}\otimes\ldots\otimes a_{i_n}
\]
for $I=(i_1,\ldots,i_n)\in \{1,2\}^{n}$.\\

To verify Hypothesis $\star$, we need to show that $\sigma_F$ is compatible with the Hodge filtrations. Since $\mathfrak{s}_F$ is known to be compatible with the Hodge filtrations (see Lemma \ref{s_F}), we can equivalently show that $\sigma_F-\mathfrak{s}_F$ respects the Hodge filtrations. In view of the fact that both $\sigma_F$ and $\mathfrak{s}_F$ are sections of $\mathfrak{q}$, we see that $\sigma_F-\mathfrak{s}_F$ actually maps into the subspace
\[
\left(\frac{L_{n-1}}{L_{n-2}}\right)_\CC=\ker(\mathfrak{q})_\CC\subset\left(\frac{L_n}{L_{n-2}}\right)_\CC
\]
(see Paragraph \ref{defe}). Thus we need to show that
\[(\sigma_F-\mathfrak{s}_F)(F^p(H^1_\CC)^{\otimes n})\subset F^p\left(\frac{L_{n-1}}{L_{n-2}}\right)_\CC.\] 
Equivalently, in view of Proposition \ref{qbar}, we will be done if we show that
\[
\overline{q}_{n-1}\circ(\sigma_F-\mathfrak{s}_F)(F^p(H^1_\CC)^{\otimes n})\subset F^p(H^1_\CC)^{\otimes n-1}.
\]
(Here $\overline{q}_{n-1}$ is the isomorphism $\displaystyle{\frac{L_{n-1}}{L_{n-2}}\rightarrow (H^1_\CC)^{\otimes n-1}}$ given by Proposition \ref{qbar}.)\\
Let
\[
I=(i_1,\ldots,i_n)\in \{1,2\}^{n}
\]
be such that at least $p$ of the $i_r$ are 1. It suffices to show that  
\[
\overline{q}_{n-1}\circ(\sigma_F-\mathfrak{s}_F) (a_I)\in F^p(H^1_\CC)^{\otimes n-1}.
\]
By Lemma \ref{s_F},
\begin{eqnarray*}
\mathfrak{s}_F(a_I)&=&\int~\eta_{i_1}\ldots\eta_{i_n}+\sum\limits_{r=1}^{n-1}\eta_{i_1}\ldots\nu(\eta_{i_r}\otimes\eta_{i_{r+1}})\ldots\eta_{i_n}\\
&+&\text{terms of length at most $n-2$}~\mod L_{n-2}.
\end{eqnarray*}
On the other hand,
\begin{eqnarray*}
\sigma_F(a_I)&=&\int \alpha_{i_1}\ldots\alpha_{i_n} \mod L_{n-2}\\
&=&\int (\eta_{i_1}+df_{i_1})\ldots (\eta_{i_n}+df_{i_n}) \mod L_{n-2}.
\end{eqnarray*}
The integral above expands as the integral of
\begin{eqnarray*}
&&\eta_{i_1}\ldots\eta_{i_n}+\sum\limits_{r} \eta_{i_1}\ldots(df_{i_r})\ldots\eta_{i_n}+\sum\limits_{r<s} \eta_{i_1}\ldots(df_{i_r})\ldots(df_{i_s})\ldots\eta_{i_n}\\
&+&\text{terms with three or more appearances of $df$.}
\end{eqnarray*}
In view of the relations \eqref{itrel} satisfied by iterated integrals, every summand in which two factors $df_{i_r}$ and $df_{i_s}$ with $s>r+1$ appear, can be replaced by terms of length at most $n-2$. In particular, this can be done for terms with three or more appearances of $df$. We get 
\begin{eqnarray*}
\sigma_F(a_I)&=&\int \eta_{i_1}\ldots\eta_{i_n}+\underbrace{\sum\limits_{r} \eta_{i_1}\ldots(df_{i_r})\ldots\eta_{i_n}}_{(I)}\\
&+&\underbrace{\sum\limits_{r<n} \eta_{i_1}\ldots(df_{i_r})(df_{i_{r+1}})\ldots\eta_{i_n}}_{(II)}\\
&+&\text{terms of length at most $n-2$}\hspace{.2in}\mod L_{n-2}. 
\end{eqnarray*}
On recalling $f_j(e)=0$, straightforward computations using \eqref{itrel} show 
\[
\int (I)=\int \sum\limits_{r<n}\eta_{i_1}\ldots\eta_{i_{r-1}}(f_{i_r}\eta_{i_{r+1}}-f_{i_{r+1}}\eta_{i_{r}})\eta_{i_{r+2}}\ldots\eta_{i_n}
\]
and 
\begin{eqnarray*}
\int (II)&=&\int \sum\limits_{r<n} \eta_{i_1}\ldots\eta_{i_{r-1}}(f_{i_r}df_{i_{r+1}})\eta_{i_{r+2}}\ldots\eta_{i_n}\\
&+&\text{terms of length at most $n-2$}.
\end{eqnarray*}
Thus
\begin{eqnarray*}
(\sigma_F-\mathfrak{s}_F)(a_I)&=&\int~ \sum\limits_{r<n}\eta_{i_1}\ldots\eta_{i_{r-1}}\bigm(f_{i_r}\eta_{i_{r+1}}-f_{i_{r+1}}\eta_{i_{r}}-\nu(\eta_{i_r}\otimes\eta_{i_{r+1}})\bigm)\eta_{i_{r+2}}\ldots\eta_{i_n}\\
&+&\sum\limits_{r<n} \eta_{i_1}\ldots\eta_{i_{r-1}}(f_{i_r}df_{i_{r+1}})\eta_{i_{r+2}}\ldots\eta_{i_n}\\
&+&\text{terms of length at most $n-2$}\hspace{.2in}\mod L_{n-2}.
\end{eqnarray*}
Note that each term on the right that appears on the first two lines, has length $n-1$. The integral on the right (which is closed) lives in $L_{n-1}$. We claim that both $f_{i_r}\eta_{i_{r+1}}-f_{i_{r+1}}\eta_{i_{r}}-\nu(\eta_{i_r}\otimes\eta_{i_{r+1}})$ and $f_{i_r}df_{i_{r+1}}$ are closed. This is clear for the latter element. As for the former, if $i_r=i_{r+1}$, then
\[
d\bigm(f_{i_r}\eta_{i_{r+1}}-f_{i_{r+1}}\eta_{i_{r}}-\nu(\eta_{i_r}\otimes\eta_{i_{r+1}})\bigm)=-d\nu(\eta_{i_r}\otimes\eta_{i_{r+1}})=-\eta_{i_r}\wedge\eta_{i_{r}}=0.
\]
On the other hand, if $i_r\neq i_{r+1}=1$, then on recalling $f_1=0$, one has  
\[
f_{i_r}\eta_{i_{r+1}}-f_{i_{r+1}}\eta_{i_{r}}-\nu(\eta_{i_r}\otimes\eta_{i_{r+1}})=f_2\eta_1-\nu(\eta_2\otimes\eta_1),
\]
the latter easily seen to be closed. The case $i_r\neq i_{r+1}=2$ is similar.\\

It follows that
\begin{eqnarray*}
\overline{q}_{n-1}(\sigma_F-\mathfrak{s}_F)(a_I)&=& \sum\limits_{r<n}a_{i_1}\otimes\ldots \otimes a_{i_{r-1}}\otimes b_r \otimes a_{i_{r+2}}\otimes\ldots\otimes a_{i_n}\\
&+&\sum\limits_{r<n} a_{i_1}\otimes\ldots\otimes a_{i_{r-1}}\otimes[f_{i_r}df_{i_{r+1}}]\otimes a_{i_{r+2}}\otimes\ldots\otimes a_{i_n},
\end{eqnarray*}
where 
\[
b_r=[f_{i_r}\eta_{i_{r+1}}-f_{i_{r+1}}\eta_{i_{r}}-\nu(\eta_{i_r}\otimes\eta_{i_{r+1}})].
\]
To complete the proof, it suffices to show that every term in the expansion of $\overline{q}_{n-1}(\sigma_F-\mathfrak{s}_F)(a_I)$ above belongs to $F^p(H^1_\CC)^{\otimes n-1}$. The element
\begin{equation}
a_{i_1}\otimes\ldots\otimes a_{i_{r-1}}\otimes[f_{i_r}df_{i_{r+1}}]\otimes a_{i_{r+2}}\otimes\ldots\otimes a_{i_n}\label{eq1ch4seriousrevision}
\end{equation}
is zero if $i_r$ or $i_{r+1}$ is 1. If both  $i_r$ and $i_{r+1}$ are 2, then by assumption at least $p$ of 
\begin{equation}
i_1,\ldots,i_{r-1},i_{r+2},\ldots i_{n}\label{eq2ch4seriousrevision}
\end{equation}
are 1, and hence \eqref{eq1ch4seriousrevision} belongs to $F^p(H^1_\CC)^{\otimes n-1}$. We show that
\begin{equation*}
a_{i_1}\otimes\ldots \otimes a_{i_{r-1}}\otimes b_r \otimes a_{i_{r+2}}\otimes\ldots\otimes a_{i_n}
\end{equation*} 
%\label{eq3ch4seriousrevision}
is also in $F^p(H^1_\CC)^{\otimes n-1}$. If $i_r=i_{r+1}=1$, then $b_r=0$. (In fact, the differential $f_{i_r}\eta_{i_{r+1}}-f_{i_{r+1}}\eta_{i_{r}}-\nu(\eta_{i_r}\otimes\eta_{i_{r+1}})$ is zero, see Lemma \ref{lemlift1}(ii).) If $i_r=i_{r+1}=2$, then again by assumption as least $p$ of \eqref{eq2ch4seriousrevision} are 1. Finally, suppose $i_r\neq i_{r+1}$. Then at least $p-1$ of \eqref{eq2ch4seriousrevision} are 1, so that
it is enough to show that $b_r\in F^1H^1_\CC$. We consider the case $i_r=1$. (The other case is similar.) The 1-form 
\begin{equation}
f_{i_r}\eta_{i_{r+1}}-f_{i_{r+1}}\eta_{i_{r}}-\nu(\eta_{i_r}\otimes\eta_{i_{r+1}})=-f_2\eta_1-\nu(\eta_1\otimes\eta_2)\label{eq4ch4seriousrevision}
\end{equation}
on $X-\{\infty\}$ is of type (1,0), as $\nu$ preserves the Hodge filtration. It is also closed, and hence is holomorphic on $X-\{\infty\}$. By the previous lemma and the fact that $\nu$ takes values in $E^1(X\log\infty)$, \eqref{eq4ch4seriousrevision} (is meromorphic at $\infty$ and) has a pole of order at most 1 at $\infty$. It follows from the residue theorem that indeed \eqref{eq4ch4seriousrevision} is holomorphic on $X$, and hence $b_r\in F^1H^1_\CC$, as desired.\hfill$\square$\\  

We close this section with a few remarks.
\begin{rem}
(1) Note that by Riemann-Roch, there exists a meromorphic form on $X$ with divisor $\geq-2\infty$, so that by the previous proposition there always exist $\alpha_1,\alpha_2$ satisfying Hypothesis $\star$. More explicitly, if 
$X_0$ is given by the affine equation
\[
y^2=4x^3-g_2x-g_3,
\]
and $\infty$ is the point at infinity, we can take $\alpha_2=\frac{xdx}{y}$. If $\infty$ in not the point at infinity, we can take $\alpha_2$ to be the pullback of $\frac{xdx}{y}$ along a translation. (Meanwhile, $\alpha_1$ can be taken to be any nonzero holomorphic form on $X$.)\\
(2) It is possible that in general (and not just in $g=1$ case), any collection of $\alpha_i$ satisfies Hypothesis $\star$. In fact, it would not be surprising if $F^p(L_n)_\CC$ is the span of iterated integrals of the form
\[
\int \alpha_{i_1}\ldots\alpha_{i_l}\hspace{.3in}(l\leq n)
\]
with at least $p$ of the $\alpha_{i_r}$ of the first kind. One may hope that a similar description (now counting the number of differentials of first or third kind) exists more generally for $F^pL_n(X-S,e)$, where $S$ is any finite nonempty subset of $X(\CC)$.\\      
\end{rem}

\section{Application to Periods}\label{ch5}

\subsection{Some elementary remarks}\label{sec1ch5}

For $c\in H^1_\CC$, define the space of periods of $X$ corresponding to $c$ to be 
\[
\Per_\QQ(c):=(H_1)_\QQ(c)=\sum\limits_{i\leq2g}\QQ\int\limits_{\beta_i}c.
\]
It is easy to see that
\begin{equation}\label{eq2ch5}
\Per_\QQ(c)=\sum\limits_{i\leq2g}\QQ p_i(c).
\end{equation}
Indeed, if $B=(b_{ij})$ where
\[
b_{ij}=\int\limits_{\beta_i}\omega_j, 
\]
then
\[
B\begin{pmatrix}p_1(c)\\
\vdots\\
p_{2g}(c)
\end{pmatrix}=\begin{pmatrix}
\int\limits_{\beta_1}c\\
\vdots\\
\int\limits_{\beta_{2g}}c
\end{pmatrix}.
\]
Since the Poincare pairing is non-degenerate, $B$ is invertible. Hence \eqref{eq2ch5} follows.\\

From now on we assume that the $\alpha_i$ belong to $\Omega^1(X_0)$, i.e. are regular algebraic 1-forms on $X_0$. Then the space of periods of $X_0$ is the $K$-span of the numbers
\[
\int\limits_{\beta_i}\alpha_j.
\]
We denote this space by $\Per(X_0)$. For $1\leq i,j\leq2g $, let 
\[p_{ij}=p_j([\alpha_i]),\] 
so that
\[
\alpha_i=\sum\limits_j p_{ij}\omega_j.
\]
It follows from \eqref{eq2ch5} that $\Per(X_0)$ is spanned (over $K$) by the numbers $p_{ij}$ ($i,j\leq 2g$).\\

Let $\QQ(\Per(X_0))$ be the field generated over $\QQ$ by the periods of $X_0$. It is easy to see that for any $\gamma\in\pi_1(X-\{\infty\},e)$ and $n$, the $\QQ(\Per(X_0))$-span of the numbers
\begin{equation}\label{eq7ch5}
\int\limits_\gamma\omega_{i_1}\ldots\omega_{i_n}\hspace{.2in} (i_1,\ldots,i_n\leq2g)
\end{equation}
is equal to the $\QQ(\Per(X_0))$-span of the numbers
\begin{equation}\label{eq8ch5}
\int\limits_\gamma\alpha_{i_1}\ldots\alpha_{i_n}\hspace{.2in} (i_1,\ldots,i_n\leq2g).
\end{equation}
In fact, each number in \eqref{eq8ch5} (resp. \eqref{eq7ch5}) can be written as a linear combination of the elements of the other set with coefficients being explicit polynomials (resp. rational functions) in the $p_{ij}$. 

\subsection{Methodology}\label{ch5philosophy}
It is well-known that algebraic cycles on products of $X$, or rather Hodge classes in tensor powers of $H^1$, give rise to algebraic relations between periods of $X_0$. In short, this is because these Hodge classes cut down the dimension of the Mumford-Tate group of $X$, which in turn cuts down the transcendence degree over $K$ of the field obtained by adjoining the periods of $X_0$ to $K$.\footnote[2]{It is known that the transcendence degree over $K$ of the field obtained by adjoining the periods of $X_0$ to $K$ is less than or equal to the dimension of the Mumford-Tate group of $X$. It is conjectured that the two quantities are indeed equal. (See \cite{LNM900}.)} Our main objective here is to show how Hodge classes in tensor powers of $H^1$, and hence algebraic cycles on products of $X$, might also give rise to non-trivial relations among the periods of the fundamental group of $X_0-\{\infty\}$ that lie deeper in the weight filtration, at least among the periods of $L_2(X_0-\{\infty\},e)$ (i.e. iterated integrals of length $\leq 2$ in the forms $\alpha_i$).\\  

Throughout, to simplify the notation, we identify 
\[
\Omega^1_\hol(X)=H^{1,0}
\]
via the distinguished isomorphism between them.\\

In the previous section, for each Hodge class $\xi\in(H^1)^{\otimes 2n-2}$ we defined a point
\[
P_\xi=\xi^{-1}(\Psi(\mathbb{E}^\infty_{n,e}))\in J(H^1)^\vee=\Jac(\CC).
\]
We identify
\[
J(H^1)^\vee\cong\frac{\Omega^1_\hol(X)^\vee}{H_1(X,\ZZ)}
\]
via the isomorphism given by
\[
[f]\mapsto[f\bigm|_{\Omega^1_\hol(X)}].
\]
If the $\alpha_i$ satisfy Hypothesis $\star$, the point 
\[
P_\xi\in\frac{\Omega^1_\hol(X)^\vee}{H_1(X,\ZZ)}
\]
is $\displaystyle{[f_\xi\bigm|_{\Omega^1_\hol(X)}]}$. (See Proposition \ref{analytic_description_z(e)}.) 

\begin{lemma}\label{lem1ch5}
Suppose the $\alpha_i$ satisfy Hypothesis $\star$. If $P_\xi$ is torsion, then for every $\alpha\in\Omega^1_\hol(X)$, $f_\xi(\alpha)\in\Per_\QQ(\alpha)$.
%(b) If $g=1$, then $P_\xi$ is torsion if and only if for all (or equivalently for some nonzero) $\alpha\in\Omega^1_\hol(X)$, $f_\xi(\alpha)\in\Per_\QQ(\alpha)$.
\end{lemma}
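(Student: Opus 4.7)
The plan is to unpack the definition of torsion in the Jacobian $\Omega^1_\hol(X)^\vee/H_1(X,\ZZ)$ and combine it with the representative of $P_\xi$ given by Proposition \ref{analytic_description_z(e)}. Since Hypothesis $\star$ is assumed, the discussion preceding the lemma gives
\[
P_\xi = \bigl[f_\xi\bigm|_{\Omega^1_\hol(X)}\bigr] \in \frac{\Omega^1_\hol(X)^\vee}{H_1(X,\ZZ)},
\]
where $H_1(X,\ZZ)$ is embedded in $\Omega^1_\hol(X)^\vee$ by $\gamma \mapsto \bigl(\alpha \mapsto \int_\gamma \alpha\bigr)$.

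First I would assume $P_\xi$ is torsion and produce an integer $N \geq 1$ and a class $\gamma \in H_1(X,\ZZ)$ such that, as elements of $\Omega^1_\hol(X)^\vee$, one has $N f_\xi|_{\Omega^1_\hol(X)} = \int_\gamma (-)$. That is, for every $\alpha \in \Omega^1_\hol(X)$,
\[
N f_\xi(\alpha) = \int_\gamma \alpha.
\]
Next I would expand $\gamma$ in the $\ZZ$-basis $\{[\beta_i]\}_{i \leq 2g}$ of $H_1(X,\ZZ)$, writing $\gamma = \sum_i n_i [\beta_i]$ with $n_i \in \ZZ$, so that
\[
N f_\xi(\alpha) = \sum_{i \leq 2g} n_i \int_{\beta_i}\alpha \in \sum_{i \leq 2g}\ZZ \int_{\beta_i}\alpha \subset \Per_\QQ(\alpha).
\]
Since $\Per_\QQ(\alpha) = \sum_i \QQ \int_{\beta_i} \alpha$ is a $\QQ$-vector space, dividing by $N$ gives $f_\xi(\alpha) \in \Per_\QQ(\alpha)$, as desired.

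This is essentially just an unpacking of definitions; there is no serious obstacle. The only point worth being slightly careful about is the identification between the two models of the Jacobian, $J(H^1)^\vee$ and $\Omega^1_\hol(X)^\vee/H_1(X,\ZZ)$, and checking that under this identification a torsion element really is represented by an integral multiple of $N^{-1}\int_\gamma(-)$ in the way used above; this is exactly how the paragraph just before the lemma identifies the two spaces, so no additional work is needed.
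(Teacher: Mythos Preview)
Your proof is correct and is essentially the same as the paper's: the paper simply notes that $P_\xi$ is torsion if and only if $f_\xi|_{\Omega^1_\hol(X)}$ coincides with an element of $H_1(X,\QQ)$, which is exactly what you have unpacked in detail.
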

\begin{proof}
This is immediate from that $P_\xi$ is torsion if and only if $f_\xi\bigm|_{\Omega^1_\hol(X)}$ coincides with an element of $H_1(X,\QQ)$.
\end{proof}

Suppose the $\alpha_i$ satisfy Hypothesis $\star$, and a Hodge class $\xi\in (H^1)^{\otimes 2n-2}$ is such that $P_\xi$ is torsion. Then by the previous lemma and Proposition \ref{analytic_description_z(e)_V2}, for every $\alpha\in\Omega_\hol^1(X)$ one has
\begin{equation}\label{eq5ch5}
\sum\limits_{i,j,k\leq2g}\mu'_{(i,j,k)}(\xi;\alpha)\int\limits_{\beta_k}\omega_i\omega_j\in\Per_\QQ(\alpha).
\end{equation}  
The $\mu'$ are integral linear combinations of the $p_l(\alpha)$, and by \eqref{eq2ch5} they belong to $\Per_\QQ(\alpha)$. Setting $\alpha=\alpha_1,\ldots,\alpha_g$ , we get linear relations between 
\begin{equation}\label{eq4ch5}
1, \int\limits_{\beta_k}\omega_i\omega_j\hspace{.2in} (i,j,k\leq 2g)
\end{equation}
with coefficients in $\Per(X_0)$.\\ 

One has the formal relations among \eqref{eq4ch5} of the form
\begin{equation}\label{eq6ch5}
\int\limits_{\beta_k}\omega_i \int\limits_{\beta_k}\omega_j=\int\limits_{\beta_k}\omega_i\omega_j+\int\limits_{\beta_k}\omega_j\omega_i,
\end{equation}  
which come from the shuffle product property of iterated integrals. These will enable us to write the relations \eqref{eq5ch5} in fewer ``variables". For $\alpha\in\Omega^1_\hol(X)$ and a Hodge class $\xi\in (H^1)^{\otimes 2n-2}$, and $i,j,k$ such that
\[
i,j,k\leq 2g, i<j,
\]
let
\[
\mu_{(i,j,k)}(\xi;\alpha)=\mu'_{(i,j,k)}(\xi;\alpha)-\mu'_{(j,i,k)}(\xi;\alpha).
\]
\\

\begin{prop}\label{mainpropsec2ch5}
Suppose a Hodge class $\xi\in (H^1)^{\otimes 2n-2}$ is such that $P_\xi$ is torsion. If the $\alpha_i$ satisfy Hypothesis $\star$, then for every $\alpha\in\Omega^1_\hol(X)$,  
\begin{equation*}
\sum\limits_{\stackrel{i,j,k\leq2g}{i<j}}\mu_{(i,j,k)}(\xi;\alpha)\int\limits_{\beta_k}\omega_i\omega_j\in\Per_\QQ(\alpha).
\end{equation*}  
\end{prop}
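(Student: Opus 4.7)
The plan is to combine Lemma~\ref{lem1ch5} with the formula of Proposition~\ref{analytic_description_z(e)_V2} and then reorganize the resulting sum by means of the shuffle product. First, since $P_\xi$ is torsion and Hypothesis~$\star$ holds, Lemma~\ref{lem1ch5} gives, for every $\alpha\in\Omega^1_\hol(X)$,
\[
f_\xi(\alpha)\;=\;\sum_{i,j,k\leq 2g}\mu'_{(i,j,k)}(\xi;\alpha)\int_{\beta_k}\omega_i\omega_j\;\in\;\Per_\QQ(\alpha).
\]

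Next I would split the triple sum over $(i,j,k)$ according to the sign of $i-j$ and reduce the $i=j$ and $i>j$ parts into an $i<j$ part modulo $\Per_\QQ(\alpha)$. The key elementary observation is that each number $\int_{\beta_k}\omega_i=\int_X d_k\wedge d_i$ is an integer, being the Poincar\'e pairing of two integral cohomology classes on $X$; therefore products of the form $\int_{\beta_k}\omega_i\cdot\int_{\beta_k}\omega_j$ lie in $\ZZ$, and so when multiplied by the coefficients $\mu'_{(\cdot,\cdot,\cdot)}(\xi;\alpha)$ (which are integral linear combinations of the $p_l(\alpha)$, hence in $\Per_\QQ(\alpha)$ by \eqref{eq2ch5}) the result lies in $\Per_\QQ(\alpha)$.

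With this in hand, the diagonal terms $i=j$ satisfy
\[
\mu'_{(i,i,k)}(\xi;\alpha)\int_{\beta_k}\omega_i\omega_i\;=\;\tfrac12\,\mu'_{(i,i,k)}(\xi;\alpha)\,\Bigl(\int_{\beta_k}\omega_i\Bigr)^{\!2}\;\in\;\Per_\QQ(\alpha),
\]
using the shuffle identity \eqref{eq6ch5} with $i=j$. For the $i>j$ terms I would reindex by swapping $i$ and $j$ and then apply \eqref{eq6ch5} to write
\[
\int_{\beta_k}\omega_j\omega_i\;=\;\int_{\beta_k}\omega_i\cdot\int_{\beta_k}\omega_j\;-\;\int_{\beta_k}\omega_i\omega_j \qquad(i<j).
\]
Substituting back and grouping coefficients of $\int_{\beta_k}\omega_i\omega_j$ for $i<j$ yields
\[
f_\xi(\alpha)\;=\;\sum_{\substack{i,j,k\leq 2g\\ i<j}}\bigl(\mu'_{(i,j,k)}(\xi;\alpha)-\mu'_{(j,i,k)}(\xi;\alpha)\bigr)\int_{\beta_k}\omega_i\omega_j\;+\;R\;=\;\sum_{\substack{i,j,k\leq 2g\\ i<j}}\mu_{(i,j,k)}(\xi;\alpha)\int_{\beta_k}\omega_i\omega_j\;+\;R,
\]
where $R$ collects the diagonal contributions together with the shuffle remainders $\mu'_{(j,i,k)}(\xi;\alpha)\cdot\int_{\beta_k}\omega_i\cdot\int_{\beta_k}\omega_j$; by the observation above, $R\in\Per_\QQ(\alpha)$. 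Since $f_\xi(\alpha)\in\Per_\QQ(\alpha)$ as well, the stated conclusion follows by subtraction.

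There is no real obstacle: once Lemma~\ref{lem1ch5} and Proposition~\ref{analytic_description_z(e)_V2} are available, the argument is essentially bookkeeping with the shuffle identity, and the only point requiring a moment's care is the integrality of $\int_{\beta_k}\omega_i$ and hence of the quadratic products $\int_{\beta_k}\omega_i\cdot\int_{\beta_k}\omega_j$.
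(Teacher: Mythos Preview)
Your proposal is correct and follows essentially the same route as the paper: start from $f_\xi(\alpha)\in\Per_\QQ(\alpha)$ via Lemma~\ref{lem1ch5} and Proposition~\ref{analytic_description_z(e)_V2}, split the triple sum into $i<j$, $i=j$, $i>j$, and use the shuffle identity \eqref{eq6ch5} to reduce everything modulo $\Per_\QQ(\alpha)$ to the $i<j$ part with coefficients $\mu_{(i,j,k)}=\mu'_{(i,j,k)}-\mu'_{(j,i,k)}$. Your explicit remark that $\int_{\beta_k}\omega_i=\int_X d_k\wedge d_i\in\ZZ$ is precisely the point the paper uses (implicitly) when it writes ``since the $\mu'$ belong to $\Per_\QQ(\alpha)$.''
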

\begin{proof}
We know \eqref{eq5ch5} is true. Now note that by \eqref{eq6ch5},
\begin{eqnarray*}
\sum\limits_{i,j,k\leq2g}\mu'_{(i,j,k)}(\xi;\alpha)\int\limits_{\beta_k}\omega_i\omega_j&=&\sum\limits_{\stackrel{i,j,k\leq2g}{i<j}}\mu'_{(i,j,k)}(\xi;\alpha)\int\limits_{\beta_k}\omega_i\omega_j\\
&+&\sum\limits_{i,k\leq2g}\frac{1}{2}\mu'_{(i,i,k)}(\xi;\alpha)(\int\limits_{\beta_k}\omega_i)^2\\
&+&\sum\limits_{\stackrel{i,j,k\leq2g}{i>j}}\mu'_{(i,j,k)}(\xi;\alpha)\left(\int\limits_{\beta_k}\omega_i \int\limits_{\beta_k}\omega_j-\int\limits_{\beta_k}\omega_j\omega_i\right)\\
&\stackrel{\Per_\QQ(\alpha)}{\equiv}&\sum\limits_{\stackrel{i,j,k\leq2g}{i<j}}\mu_{(i,j,k)}(\xi;\alpha)\int\limits_{\beta_k}\omega_i\omega_j~,
\end{eqnarray*}
since the $\mu'$ belong to $\Per_\QQ(\alpha)$.
\end{proof}

Suppose the $\alpha_i$ satisfy Hypothesis $\star$, and that $P_\xi$ is torsion. Taking $\alpha=\alpha_1,\ldots,\alpha_g$, we get $g$ linear relations between
\begin{equation}\label{eq9ch5}
1, \int\limits_{\beta_k}\omega_i\omega_j\hspace{.2in} (i,j,k\leq 2g, i<j)
\end{equation}
with coefficients in $\Per(X_0)$. In view of the last comment in Paragraph \ref{sec1ch5} and the shuffle product property of iterated integrals, each of these relations can be rewritten as a linear relation in
\begin{equation}\label{eq9'ch5}
1, \int\limits_{\beta_k}\alpha_i\alpha_j\hspace{.2in} (i,j,k\leq 2g, i<j)
\end{equation}
with coefficients in $\QQ(\Per(X_0))$.

\begin{rem}
(1) Suppose the $\alpha_i$ satisfy Hypothesis $\star$. Recall that if $\xi=\xi_Z$ for an algebraic cycle $Z\in\CH_{n-1}(X_0^{2n-2})$, then $P_\xi$ is in $\Jac(K)$. (See Theorem \ref{rationalpointjac}.) If the Mordell-Weil group $\Jac(K)$ is finite, then $P_\xi$ will automatically be torsion, and hence in view of Proposition \ref{mainpropsec2ch5} we get relations among \eqref{eq9ch5}. We will pursue this further in the next section.\\
(2) As it was mentioned earlier, it may be the case that Hypothesis $\star$ in fact always holds. Recall that at least we know it does hold if $g=1$ and $\alpha_2$ has order 2 at $\infty$. (See Proposition \ref{prop_Hypothesis_star_genus1}.)
\end{rem}

\section{Relations between periods- Some explicit calculations}\label{ch5section_examples}
Here we carry out the method of the previous section in some cases. In order to simplify the calculations, we will assume from now on that the cohomology classes $d_i$ are chosen in such a way that 
\[
\int\limits_X d_i\wedge d_j=1 \hspace{.2in}\text{if}\hspace{.2in}i<j.
\] 

\subsection{Relations coming from the diagonal of $X_0$}
In this paragraph, we show that interestingly, the diagonal $\Delta(X_0)$ of $X_0$ can give rise to relations between \eqref{eq9ch5} that do not seem to be trivial. This is in particular interesting, because $\Delta(X_0)$ does not give rise to a relation between the periods of $X_0$ itself.  The following lemma, whose proof we postpone until the appendix, describes $\xi_{\Delta(X_0)}$. Recall that in our notation $\xi_Z=\sum\limits_I\lambda_I(Z)d_I$.

\begin{lemma}\label{lem_xi_Delta}
We have
\[
\lambda_{ij}(\Delta(X_0))=\begin{cases}
(-1)^{i+j}\hspace{.2in}&\text{if $i<j$}\\
0&\text{if $i=j$}\\
(-1)^{i+j+1}&\text{if $i>j$}.
\end{cases}
\]
\end{lemma}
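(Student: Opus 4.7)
The approach is to characterize $\xi_{\Delta(X_0)} \in (H^1)^{\otimes 2}$ by its defining Poincare-duality relation on $X \times X$, translate this relation into a matrix identity, and verify the identity by a direct combinatorial computation.

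By the definition of $\xi_Z$ recalled in Section~\ref{ch4}, $\xi_{\Delta(X_0)}$ is the $(H^1)^{\otimes 2}$-Kunneth component of the Poincare dual on $X \times X$ of the class of $\Delta(X_0)$. Equivalently, it is characterized by
\[
\int_{X \times X} \xi_{\Delta(X_0)} \wedge (\alpha \otimes \beta) \;=\; \int_{\Delta(X_0)} \alpha \otimes \beta \;=\; \int_X \alpha \wedge \beta \qquad\text{for all } \alpha,\beta\in H^1_\CC.
\]
Setting $\alpha = d_k$, $\beta = d_l$, and commuting $pr_2^\ast d_j$ past $pr_1^\ast d_k$ (both $1$-forms on $X\times X$), one finds $(d_i\otimes d_j)\wedge(d_k\otimes d_l) = -(d_i\wedge d_k)\otimes(d_j\wedge d_l)$, hence
\[
-\sum_{i,j}\lambda_{ij}(\Delta(X_0))\, E_{ik}\, E_{jl} \;=\; E_{kl}, \qquad\text{where}\quad E_{ij}:=\int_X d_i\wedge d_j.
\]
In matrix form, writing $\Lambda = (\lambda_{ij}(\Delta(X_0)))$ and using $E^T = -E$, this reads $E\Lambda E = E$. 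Since the intersection pairing on $H^1$ of a curve is non-degenerate, $E$ is invertible, and we conclude $\Lambda = E^{-1}$.

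It then remains to verify that the matrix $F$ with $F_{ii} = 0$, $F_{ij} = (-1)^{i+j}$ for $i<j$, and $F_{ij} = (-1)^{i+j+1}$ for $i>j$, is the inverse of $E$. For the diagonal, $(EF)_{ii} = \sum_{k\neq i}(-1)^{k+i+1} = 1$, using $\sum_{k=1}^{2g}(-1)^k = 0$ (as $2g$ is even). For $i<j$, splitting the index $k$ into the ranges $k<i$, $i<k<j$, $k>j$ (the terms $k=i,j$ vanishing), one obtains
\[
(EF)_{ij} \;=\; \sum_{k<i}(-1)^{k+j+1} \;+\; \sum_{i<k<j}(-1)^{k+j} \;+\; \sum_{k>j}(-1)^{k+j+1},
\]
which vanishes upon separating cases based on the parity of $j-i$; the case $i>j$ follows by the antisymmetry of $E^{-1}$. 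The main obstacle is the sign-bookkeeping in this final verification—elementary but requiring careful case analysis—and no conceptual difficulty is expected.
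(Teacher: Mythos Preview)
Your proof is correct and follows essentially the same route as the paper's: both arguments reduce to the identity $\Lambda = E^{-1}$ (the paper reaches it via the dual basis $\{c_i\}$, you via the matrix equation $E\Lambda E = E$) and then compute the inverse of the all-ones skew-symmetric matrix explicitly. One small point: the phrase ``the case $i>j$ follows by the antisymmetry of $E^{-1}$'' is not quite the right justification---antisymmetry of $F$ and $E^{-1}$ alone does not force the lower-triangular entries of $EF$ to vanish---but the same three-range computation you wrote out for $i<j$ applies verbatim with $i$ and $j$ interchanged, so the verification is complete.
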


Let $\alpha\in\Omega^1_\hol(X)$. One has 
\begin{equation}\label{mu'_diagonal_X}
\mu'_{ijk}(\xi_{\Delta(X_0)};\alpha)=\lambda_{ij}(\Delta(X_0))p_k(\alpha),
\end{equation}
and hence for $i<j$,
\begin{eqnarray*}
\mu_{ijk}(\xi_{\Delta(X_0)};\alpha)&=&p_k(\alpha)\left(\lambda_{ij}(\Delta(X_0))-\lambda_{ji}(\Delta(X_0))\right)\\
&=&2(-1)^{i+j}p_k(\alpha).
\end{eqnarray*}

\begin{prop}\label{prop_relations_given_by_diagonal}
Suppose the $\alpha_i$ satisfy Hypothesis $\star$. If $P_{\Delta(X_0)}$ is torsion, then 
\begin{equation}\label{eq10ch5}
\sum\limits_{\stackrel{i,j,k\leq2g}{i<j}}(-1)^{i+j}p_{lk}\int\limits_{\beta_k}\omega_i\omega_j\in \Per_\QQ(\alpha_l)\hspace{.3in}(l=1,\ldots,g). 
\end{equation}
Moreover, these, as linear relations among \eqref{eq9ch5} with coefficients in $\QQ(\Per(X_0))$, are independent.
\end{prop}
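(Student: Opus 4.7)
The plan is to derive part (a) as a direct consequence of Proposition \ref{mainpropsec2ch5} combined with the explicit description of $\xi_{\Delta(X_0)}$ from Lemma \ref{lem_xi_Delta}, and then to deduce part (b) from a simple linear-algebra observation about the periods of the holomorphic forms $\alpha_1,\ldots,\alpha_g$.

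For part (a), since $\Delta(X_0)\in\CH_1(X_0^2)$ we are in the $n=2$ case, and the general definition of $\mu'$ collapses (via empty products) to $\mu'_{ijk}(\xi;c)=\lambda_{ij}(\xi)\,p_k(c)$, matching \eqref{mu'_diagonal_X}. Using Lemma \ref{lem_xi_Delta} I would read off, for $i<j$,
\[
\mu_{ijk}(\xi_{\Delta(X_0)};\alpha_l)=\mu'_{ijk}-\mu'_{jik}=(-1)^{i+j}p_{lk}-(-1)^{j+i+1}p_{lk}=2(-1)^{i+j}p_{lk}.
\]
Since $P_{\Delta(X_0)}$ is torsion, Proposition \ref{mainpropsec2ch5} applied with $\xi=\xi_{\Delta(X_0)}$ and $\alpha=\alpha_l$ then gives
\[
\sum_{\stackrel{i,j,k\leq 2g}{i<j}}2(-1)^{i+j}p_{lk}\int_{\beta_k}\omega_i\omega_j\in \Per_\QQ(\alpha_l),
\]
and since $\Per_\QQ(\alpha_l)$ is a $\QQ$-vector space, dividing by $2$ yields the statement of (a).

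For part (b), I would write the $l$-th relation as $\sum_{i<j,k}a_l^{(i,j,k)}\int_{\beta_k}\omega_i\omega_j+a_l^{(0)}\cdot 1=0$ with $a_l^{(i,j,k)}=(-1)^{i+j}p_{lk}$ and $a_l^{(0)}\in \Per_\QQ(\alpha_l)\subset\QQ(\Per(X_0))$. Independence of these relations over $\QQ(\Per(X_0))$ is equivalent to the $g$ coefficient vectors being linearly independent, and for this it suffices to exhibit a sub-block of rank $g$. I would restrict to the columns indexed by $(i,j,k)=(1,2,k)$, $k=1,\ldots,2g$: this sub-block has $(l,k)$-entry equal to $-p_{lk}$, i.e.\ it is $-B$ with $B=(p_{lk})_{1\le l\le g,\,1\le k\le 2g}$. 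The rows of $B$ record the coordinates of $[\alpha_1],\ldots,[\alpha_g]$ in the basis $d_1,\ldots,d_{2g}$; since these holomorphic classes form a basis of $F^1H^1_\CC$, they are $\CC$-linearly independent, and therefore linearly independent over the subfield $\QQ(\Per(X_0))\subset\CC$. Hence $B$ has rank $g$, and part (b) follows.

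The only real subtlety is bookkeeping the sign in Lemma \ref{lem_xi_Delta}: it is crucial that $\lambda_{ji}(\Delta(X_0))=-\lambda_{ij}(\Delta(X_0))$ for $i<j$, so that the two terms in $\mu_{ijk}=\mu'_{ijk}-\mu'_{jik}$ \emph{add} rather than cancel, yielding the factor $2$ (and not $0$) that makes the resulting relation non-trivial. No deeper obstacle is anticipated, as everything else is linear and the holomorphic period matrix $B$ has the expected maximal rank.
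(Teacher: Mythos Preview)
Your proof is correct and follows essentially the same approach as the paper: part (a) is obtained by plugging Lemma \ref{lem_xi_Delta} into Proposition \ref{mainpropsec2ch5} (with the same computation $\mu_{ijk}=2(-1)^{i+j}p_{lk}$), and for part (b) both you and the paper isolate the $g\times 2g$ sub-block of coefficients of $\int_{\beta_k}\omega_1\omega_2$, which is $-(p_{lk})_{l\le g,\,k\le 2g}$. The paper justifies rank $g$ by noting that the full $2g\times 2g$ period matrix $(p_{ij})$ is invertible, while you argue directly that $[\alpha_1],\ldots,[\alpha_g]$ are linearly independent as a basis of $F^1H^1_\CC$; these are equivalent observations.
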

\begin{proof}
The first assertion is a special case of Proposition \ref{mainpropsec2ch5}. As for the independence of the relations, note that the $g\times 2g$ matrix whose $lk$-entry is the coefficient in the relation corresponding to $\alpha_l$ of 
\[
\int\limits_{\beta_k}\omega_1\omega_2,
\] 
is minus the top half of the matrix $(p_{ij})_{i,j\leq 2g}$ of periods. The latter matrix is invertible and hence the former has rank $g$.\\
\end{proof}

By Theorem \ref{rationalpointjac}, $P_{\Delta(X_0)}$ is in $\Jac(K)$, so that the torsion condition automatically holds if the Mordell-Weil group $\Jac(K)$ is finite. In particular, one obtains: 
\begin{cor}\label{cor1sec3ch5}
Let $K=\QQ$ and $X_0$ be either the hyper-elliptic curve given by the affine equation
\[
y^2=x(x-3)(x-4)(x-6)(x-7),
\]
or the Fermat curve given by the affine equation
\[
x^p+y^p=1,
\]
where $p$ is an odd prime $\leq 7$. Suppose (in each case) the $\alpha_i$ satisfy Hypothesis $\star$. Then one has $g$ (the genus of $X_0$ in each case) independent relations as in \eqref{eq10ch5}.
\end{cor}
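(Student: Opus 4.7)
The plan is to deduce the corollary directly from Proposition \ref{prop_relations_given_by_diagonal}, so the only thing left to check is that, in each of the two families listed, the point $P_{\Delta(X_0)}$ is torsion. By Theorem \ref{rationalpointjac} we already know $P_{\Delta(X_0)}\in \Jac(K)=\Jac(\QQ)$, hence it will suffice to show that in each case the Mordell--Weil group $\Jac(\QQ)$ is finite, since then \emph{every} $\QQ$-rational point on $\Jac$ is automatically torsion. Once this is in hand, Proposition \ref{prop_relations_given_by_diagonal} delivers the $g$ relations together with their asserted linear independence over $\QQ(\Per(X_0))$, with no further work required.

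For the Fermat curves $x^p+y^p=1$ with $p$ an odd prime $\leq 7$, the finiteness of $\Jac(\QQ)$ is exactly the theorem of Faddeev \cite{Faddeev} already invoked in the introduction, so in this case I would simply cite that result. For the hyperelliptic example $y^2=x(x-3)(x-4)(x-6)(x-7)$, which has genus $2$ and all five finite Weierstrass points $\QQ$-rational, the $2$-torsion subgroup $\Jac(\QQ)[2]$ is already a large subgroup of $\Jac(\QQ)$, and the rank of $\Jac(\QQ)$ can be shown to be zero by a standard $2$-descent on the Jacobian; I would carry this out (or, in practice, appeal to a computer algebra verification in Magma/Sage) to conclude that $\Jac(\QQ)$ is finite, hence torsion.

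The main obstacle, and the only non-formal step, is the rank computation for the hyperelliptic curve: the Fermat case is immediate from Faddeev, whereas the rank-$0$ assertion for $y^2=x(x-3)(x-4)(x-6)(x-7)$ needs a genuine (if routine by modern standards) $2$-descent argument. I expect this curve was selected precisely so that the $2$-descent terminates cleanly, with the Selmer group matching the visible $2$-torsion and thereby forcing the rank to vanish. Once this rank bound is established, the torsion hypothesis of Proposition \ref{prop_relations_given_by_diagonal} is met automatically, and the corollary follows with no additional calculation beyond what is already packaged into that proposition.
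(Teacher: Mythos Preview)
Your proposal is correct and follows exactly the same approach as the paper: reduce to Proposition \ref{prop_relations_given_by_diagonal} via Theorem \ref{rationalpointjac} by showing $\Jac(\QQ)$ is finite in each case. The only difference is bibliographic: for the hyperelliptic curve the paper simply cites Gordon--Grant \cite{Gordon-Grant}, who carried out precisely the $2$-descent you describe and found rank zero, so no computation on your part is needed.
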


Indeed, in each of these situations $\Jac(\QQ)$ is known to be finite. See \cite{Gordon-Grant} for the hyper-elliptic curve and and \cite{Faddeev} for the given Fermat curves. Note that the points $e,\infty$ must be in $X_0(\QQ)$.
   
%Of course, if $X_0$ is an elliptic curve over $K$ with finite $X_0(K)$, then again a relation of the form \eqref{eq10ch5} is guaranteed. The reason we did not include this as a part of the corollary is that one may not need $X_0(K)$ to be finite in this case, as we will see in the next paragraph.

\subsection{More on the genus one case} One can state a more precise variation of Proposition \ref{prop_relations_given_by_diagonal} in the case $g=1$. We first prove a lemma.

\begin{lemma}
Let $g=1$. Then $2h_2(\Delta_{2,e})=0$.
\end{lemma}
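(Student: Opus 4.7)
The plan is to exploit the involution $[-1]: X \to X$ coming from the elliptic curve structure on $X$ with origin $e$. Since $[-1]$ fixes $e$ and acts as multiplication by $-1$ on $H^1(X)$, the automorphism $\sigma := [-1]\times [-1]\times [-1]$ on $X^3$ should fix the cycle $\Delta_{2,e}$ while acting as $-1$ on $(H^1)^{\otimes 3}$, which will force $h_2(\Delta_{2,e})$ to be $2$-torsion.

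More concretely, first I would check that $\sigma_*\Delta_{2,e} = \Delta_{2,e}$ in $\CH_1^{\hhom}(X^3)$. Recall from Paragraph \ref{parGSrecollection} that
\begin{eqnarray*}
\Delta_{2,e}&=&\{(x,x,x)\}-\{(e,x,x)\}-\{(x,e,x)\}-\{(x,x,e)\}\\
&&+\{(e,e,x)\}+\{(e,x,e)\}+\{(x,e,e)\}.
\end{eqnarray*}
Since $\sigma$ is an automorphism of $X^3$ that fixes the point $(e,e,e)$ and preserves every ``coordinate stratum'' of the form $\{e\}^I\times X^{I^c}$, each of the seven irreducible subvarieties above is sent to itself by $\sigma$ (e.g.\ $\sigma$ sends $\{(x,x,x):x\in X\}$ to $\{(-x,-x,-x):x\in X\}$, which is the same underlying subvariety reparametrized). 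Hence $\sigma_*\Delta_{2,e} = \Delta_{2,e}$ on the nose as a cycle.

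Next, I would observe that $\sigma$ acts as $(-1)^{\otimes 3} = -1$ on the Kunneth component $(H^1)^{\otimes 3}\subset H^3(X^3)$. Since $\sigma$ is an involution, its pushforward on cohomology agrees with its pullback (up to the standard identifications via Poincar\'e duality), so $\sigma_*$ also acts as $-1$ on $((H^1)^{\otimes 3})^\vee$ and therefore as $-1$ on $J((H^1)^{\otimes 3})^\vee$. Combining this with the naturality of the Abel--Jacobi map under proper pushforwards and the compatibility of $\sigma$ with the Kunneth decomposition, one has $h_2(\sigma_* Z) = \sigma_* h_2(Z)$ for every $Z\in\CH_1^{\hhom}(X^3)$.

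Putting the two halves together gives
\[
h_2(\Delta_{2,e}) \;=\; h_2(\sigma_* \Delta_{2,e}) \;=\; \sigma_* h_2(\Delta_{2,e}) \;=\; -h_2(\Delta_{2,e}),
\]
so $2h_2(\Delta_{2,e}) = 0$, as claimed. The only mildly delicate point to be careful about is the sign computation on $J((H^1)^{\otimes 3})^\vee$, i.e.\ verifying that the induced action of $\sigma$ on the target of $h_2$ is indeed multiplication by $-1$ and not by $+1$; but this is a purely formal consequence of $\sigma$ being an involution acting as $-1$ on the odd-degree piece $(H^1)^{\otimes 3}$.
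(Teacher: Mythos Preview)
Your argument is correct and takes a genuinely different route from the paper's. The paper exploits the $S_3$ permutation symmetry of $\Delta_{2,e}$: for any coordinate permutation $\sigma\in S_3$ one has $\sigma_\ast\Delta_{2,e}=\Delta_{2,e}$, and hence $\Phi(h_2(\Delta_{2,e}))$ evaluated on $\eta_{i_1}\otimes\eta_{i_2}\otimes\eta_{i_3}$ picks up a factor $\text{sgn}(\sigma)$ when $\sigma$ permutes the indices $(i_1,i_2,i_3)$. Since $g=1$ forces $i_1,i_2,i_3\in\{1,2\}$, some transposition fixes the triple, so every value lies in $\frac{1}{2}\ZZ/\ZZ$ and $2\Phi(h_2(\Delta_{2,e}))=0$. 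Your approach instead uses the elliptic curve involution $[-1]$ with origin $e$, which fixes each irreducible component of $\Delta_{2,e}$ setwise and acts as $(-1)^3=-1$ on $(H^1)^{\otimes 3}$, yielding $h_2(\Delta_{2,e})=-h_2(\Delta_{2,e})$ directly by functoriality of Abel--Jacobi.

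Both proofs rest on the same underlying principle---a symmetry fixing the cycle but acting by $-1$ on the relevant cohomology---but locate that symmetry differently. Your version is shorter and avoids passing through $\Phi$ and the explicit $\RR/\ZZ$-valued pairing; it also makes transparent that the result is really about the action of $\Aut(X,e)$. The paper's argument, on the other hand, uses only the coordinate-permutation symmetry (available for any curve) and isolates the genus-$1$ hypothesis in a clean pigeonhole step on indices, which fits more naturally with the computational framework already set up around $\Phi$ in Section~\ref{section_extension_E}. The one point you flagged as delicate---that the induced action on $J((H^1)^{\otimes 3})^\vee$ is $-1$---is indeed fine: if $\partial C=\Delta_{2,e}$ then $AJ(\sigma_\ast\Delta_{2,e})$ is represented by $c\mapsto\int_C\sigma^\ast c$, and $\sigma^\ast=-1$ on $(H^1)^{\otimes 3}$ gives exactly what you need.
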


\begin{proof}
We will equivalently show that
\[2\Phi(h_2(\Delta_{2,e}))\in\Hom((H^1_\ZZ)^{\otimes 3},\RR/\ZZ)\]
is zero. For a permutation $\sigma\in S_3$, denote the map
\[
X^3\longrightarrow X^3\hspace{.5in}(x_1,x_2,x_3)\mapsto (x_{\sigma(1)},x_{\sigma(2)},x_{\sigma(3)})
\]
also by $\sigma$. It is easy to see that $\sigma_\ast(\Delta_{2,e})=\Delta_{2,e}$. Let $\partial^{-1}(\Delta_{2,e})$ be as in Section \ref{construction_of_cycles}, i.e. a chain whose boundary is $\Delta_{2,e}$. Then 
\[
\partial\sigma_\ast(\partial^{-1}(\Delta_{2,e}))=\sigma_\ast\partial(\partial^{-1}(\Delta_{2,e}))=\sigma_\ast(\Delta_{2,e})=\Delta_{2,e},
\] 
so that $\sigma_\ast\partial^{-1}(\Delta_{2,e})$ can also be used to calculate $\Phi(h_2(\Delta_{2,e}))$.\\
 
Let $\eta_1,\eta_2$ be harmonic 1-forms on $X$ with integral periods whose images in cohomology form a basis of $H^1_\ZZ$. Then
\begin{eqnarray*}
\int\limits_{\partial^{-1}(\Delta_{2,e})}\eta_{i_1}\otimes\eta_{i_2}\otimes\eta_{i_3}&\stackrel{\ZZ}{\equiv}&\int\limits_{\sigma_\ast\partial^{-1}(\Delta_{2,e})}\eta_{i_1}\otimes\eta_{i_2}\otimes\eta_{i_3}\\
&=&\int\limits_{\partial^{-1}(\Delta_{2,e})}\sigma^\ast(\eta_{i_1}\otimes\eta_{i_2}\otimes\eta_{i_3})\\
&=&\text{sgn}(\sigma)\int\limits_{\partial^{-1}(\Delta_{2,e})}\eta_{i_{\sigma(1)}}\otimes\eta_{i_{\sigma(2)}}\otimes\eta_{i_{\sigma(3)}}.
\end{eqnarray*}
So far $\sigma$ was arbitrary. Now given a triple $(i_1,i_2,i_3)$, take $\sigma$ to be a transposition that fixes the triple. (Such transposition exists because $g=1$.) Then it follows from the above that 
\[
\int\limits_{\partial^{-1}(\Delta_{2,e})}\eta_{i_1}\otimes\eta_{i_2}\otimes\eta_{i_3}\in \frac{1}{2}\ZZ.
\] 
Thus the image of $\Phi(h_2(\Delta_{2,e})$ lies in $(\frac{1}{2}\ZZ)/\ZZ$, i.e. $2\Phi(h_2(\Delta_{2,e})=0$.\\
\end{proof}

\begin{rem}
Gross and Schoen \cite[Corollary 4.7]{GS} showed that when $g=1$, $6\Delta_{2,e}$ is zero in $\CH^\hhom_1(X^3)$. 
\end{rem}

\begin{thm}\label{thmch5}
Let $g=1$. Suppose the $\alpha_i$ satisfy Hypothesis $\star$. (Recall that this is guaranteed for instance if $\alpha_2$ has order 2 at $\infty$). Then 
\begin{equation}\label{eq16ch5}
p_{11}\int\limits_{\beta_1}\omega_1\omega_2+p_{12}\int\limits_{\beta_2}\omega_1\omega_2 \equiv \int\limits_e^\infty \alpha_1\hspace{.1in}\mod \frac{1}{4}\Per_\ZZ(\alpha_1),
\end{equation}
where $\Per_\ZZ(\alpha_1)=(H_1)_\ZZ(\alpha_1)$. In particular, 
\[
p_{11}\int\limits_{\beta_1}\omega_1\omega_2+p_{12}\int\limits_{\beta_2}\omega_1\omega_2\in \Per_\QQ(\alpha_1)
\]
if and only if $\infty-e$ is torsion in $\CH^\hhom_0(X_0)$ (or equivalently, in $X_0(K)$).
\end{thm}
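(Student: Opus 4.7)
The plan is to apply $\xi_{\Delta(X_0)}^{-1}$ to the $n=2$ case of Theorem \ref{main1}, namely
\[
\Psi(\mathbb{E}^\infty_{2,e})=h_2\bigl(-\Delta_{2,e}+Z^\infty_{2,e}\bigr),
\]
and to exploit the lemma established immediately before the theorem, which states that $2h_2(\Delta_{2,e})=0$ when $g=1$. Doubling the identity annihilates the $\Delta_{2,e}$-contribution, so I would obtain
\[
2\,P_{\Delta(X_0)}=2\,\xi_{\Delta(X_0)}^{-1}\bigl(h_2(Z^\infty_{2,e})\bigr)\quad\text{in }\Jac(\CC).
\]

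The first main step is the direct computation of $\xi_{\Delta(X_0)}^{-1}(h_2(Z^\infty_{2,e}))$. From Paragraph \ref{pardefZ}, with $n=2$ one has $Z^\infty_{2,e}=\Delta^{(2)}(X)\times\{\infty\}-\Delta^{(2)}(X)\times\{e\}$, which is bounded by the chain $C=\Delta^{(2)}(X)\times\gamma_e^\infty$. Hence $h_2(Z^\infty_{2,e})$ is represented by the functional $[\omega]\otimes[\rho]\otimes[\eta]\mapsto \int_X\omega\wedge\rho\cdot\int_e^\infty\eta$. Pairing with $\xi_{\Delta(X_0)}=-d_1\otimes d_2+d_2\otimes d_1$ (given by Lemma \ref{lem_xi_Delta} in the $g=1$ case) and using $\int_X\omega_1\wedge\omega_2=1$, the functional collapses to $c\mapsto -2\int_e^\infty\omega_c$, i.e.\ to $-2\AJ(\infty-e)=-2(\infty-e)$ in $\Jac(\CC)$. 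Consequently $2P_{\Delta(X_0)}=-4(\infty-e)$.

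The second main step is the analytic description of $P_{\Delta(X_0)}$ via Proposition \ref{analytic_description_z(e)_V2}, which is available precisely because of Hypothesis $\star$. Combined with Lemma \ref{lem_xi_Delta} this yields
\[
f_{\Delta(X_0)}(\alpha_1)=\sum_k p_{1k}\Bigl[-\!\int_{\beta_k}\omega_1\omega_2+\!\int_{\beta_k}\omega_2\omega_1\Bigr].
\]
The shuffle identity rewrites the bracket as $-2\int_{\beta_k}\omega_1\omega_2+\int_{\beta_k}\omega_1\cdot\int_{\beta_k}\omega_2$. The cross term vanishes because the normalization $\int_X d_i\wedge d_j=1$ for $i<j$ forces $\int_{\beta_1}\omega_1=\int_{\beta_2}\omega_2=0$, so that $f_{\Delta(X_0)}(\alpha_1)=-2\bigl(p_{11}\int_{\beta_1}\omega_1\omega_2+p_{12}\int_{\beta_2}\omega_1\omega_2\bigr)$. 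Equating the two descriptions of $2P_{\Delta(X_0)}$ evaluated on $\alpha_1$, dividing by $-4$, and remembering that two functionals on $\Omega^1_\hol(X)$ represent the same class in $\Jac(\CC)$ iff they differ by an element of $H_1(X,\ZZ)$, produces precisely the asserted congruence modulo $\tfrac14\Per_\ZZ(\alpha_1)$.

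The ``in particular'' assertion then follows immediately: since $\tfrac14\Per_\ZZ(\alpha_1)\subset\Per_\QQ(\alpha_1)$, the left side of the congruence lies in $\Per_\QQ(\alpha_1)$ if and only if $\int_e^\infty\alpha_1$ does, which via the identification $\Jac(\CC)=\CC/\Per_\ZZ(\alpha_1)$ is equivalent to $\AJ(\infty-e)$ being torsion, and hence (by Abel's theorem) to $\infty-e$ being torsion in $\CH_0^\hhom(X_0)$. No serious obstacle is anticipated; the only real care required is the sign bookkeeping through $\Psi$, Poincar\'e duality, and the Kunneth components, all of which are already pinned down in the earlier sections.
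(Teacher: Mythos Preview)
Your proposal is correct and follows essentially the same route as the paper: apply $\xi_{\Delta(X_0)}^{-1}$ to the $n=2$ case of Theorem \ref{main1}, kill the $\Delta_{2,e}$ term using $2h_2(\Delta_{2,e})=0$, compute both sides on $\alpha_1$ via the chain $\Delta^{(2)}(X)\times\gamma_e^\infty$ and via Proposition \ref{analytic_description_z(e)_V2}, and divide by $-4$. Your explicit handling of the shuffle cross term (using $\int_{\beta_1}\omega_1=\int_{\beta_2}\omega_2=0$) is a detail the paper leaves implicit under ``straightforward calculations,'' but the argument is the same.
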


\begin{proof}
In view of the previous lemma and Theorem \ref{main1},
\begin{equation}\label{aug27eq1}
2\xi_{\Delta(X_0)}^{-1}(\Psi(\mathbb{E}^\infty_{2,e}))=2\xi_{\Delta(X_0)}^{-1}(h_2(Z^\infty_{2,e}))\in J(H^1)^\vee\cong\frac{\Omega^1_\hol(X)^\vee}{H_1(X,\ZZ)}.
\end{equation}
Fix a path $\gamma_e^\infty$ from $e$ to $\infty$ in $X$. The elements $\xi_{\Delta(X_0)}^{-1}(\Psi(\mathbb{E}^\infty_{2,e}))$ and $\xi_{\Delta(X_0)}^{-1}(h_2(Z^\infty_{2,e}))$ of $\displaystyle{\frac{\Omega^1_\hol(X)^\vee}{H_1(X,\ZZ)}}$ are respectively represented by $f_{\Delta(X_0)}$ and the map
\[
\alpha\mapsto \int\limits_{\Delta(X)}\xi_{\Delta(X_0)}\int\limits_{\gamma_e^\infty}\alpha.
\]
Thus \eqref{aug27eq1} gives 
\[
2f_{\Delta(X_0)}(\alpha_1) \equiv 2\int\limits_{\Delta(X)}\xi_{\Delta(X_0)}\int\limits_{\gamma_e^\infty}\alpha_1\hspace{.1in}\mod \Per_\ZZ(\alpha_1).
\]
Straightforward calculations using \eqref{mu'_diagonal_X}, Lemma \ref{lem_xi_Delta}, and Proposition \ref{analytic_description_z(e)_V2} show 
\[\int\limits_{\Delta(X)}\xi_{\Delta(X_0)}=-2\hspace{.2in}\text{and}\hspace{.2in} f_{\Delta(X_0)}(\alpha_1)=-2 \left(p_{11}\int\limits_{\beta_1}\omega_1\omega_2+p_{12}\int\limits_{\beta_2}\omega_1\omega_2\right).
\]
The first assertion follows. The second assertion follows from the first and the classical Abel-Jacobi theorem.
\end{proof}

\begin{rem}
(1) Using 
\[
p_{l1}=-\int\limits_{\beta_2}\alpha_l~,~~p_{l2}=\int\limits_{\beta_1}\alpha_l\hspace{.2in}(l=1,2)
\]
and the shuffle product property of iterated integrals, the left hand side of \eqref{eq16ch5} can be rewritten as
\[
\frac{1}{2(\int_{\beta_1}\alpha_1\int_{\beta_2}\alpha_2-\int_{\beta_1}\alpha_2\int_{\beta_2}\alpha_1)}\left(\int\limits_{\beta_1}\alpha_1\int\limits_{\beta_2} (\alpha_1\alpha_2-\alpha_2\alpha_1)~-~\int\limits_{\beta_2}\alpha_1\int\limits_{\beta_1} (\alpha_1\alpha_2-\alpha_2\alpha_1)\right).
\]
(2) Suppose $X_0$ is given by the affine equation
\[
y^2=4x^3-g_2x-g_3.
\] 
Let $\infty$ be the point at infinity. Take $\alpha_1=\frac{dx}{y}$ and $\alpha_2=\frac{xdx}{y}$. One then has the {\it Legendre relation}
\[
\int_{\beta_1}\alpha_1\int_{\beta_2}\alpha_2-\int_{\beta_1}\alpha_2\int_{\beta_2}\alpha_1=2\pi i.
\]
Equation \eqref{eq16ch5} can be rewritten as
\[
\int\limits_{\beta_1}\alpha_1\int\limits_{\beta_2} (\alpha_1\alpha_2-\alpha_2\alpha_1)~-~\int\limits_{\beta_2}\alpha_1\int\limits_{\beta_1} (\alpha_1\alpha_2-\alpha_2\alpha_1)\equiv 4\pi i\int\limits_e^\infty \alpha_1\hspace{.1in}\mod \pi i\cdot\Per_\ZZ(\alpha_1).
\]
\end{rem}

\subsection{Relations coming from the diagonal of $X_0^2$}
So far in this section we considered relations that can arise from a Hodge class in $(H^1)^{\otimes 2}$ (namely, the class of the diagonal of $X_0$), and hence only used $n=2$ case of Theorem \ref{main1} and Theorem \ref{rationalpointjac}. In fact, we did not even need the full machinery of the former: We only needed \eqref{DRSraw} of Darmon, Rotger, and Sols. Our goal in this paragraph is to provide evidence for that, applying the method of Section \ref{ch5} to Hodge classes in higher tensor powers of $H^1$, or algebraic cycles in higher powers of $X$, and hence using the results of the previous sections in $n>2$ setting, one may indeed obtain new information about the periods. To this end, we will study the relations that can arise from $\Delta(X^2_0)\in \CH_2(X_0^4)$, where $\Delta(X^2_0)$ is the diagonal of $X^2_0$. We will then show that at least in $g=2$ case, these relations are not the same as the ones arising from $\Delta(X_0)$.\\

Throughout, for simplicity, we write $\lambda_{ij}$ for $\lambda_{ij}(\Delta(X_0))$ (given in Lemma \ref{lem_xi_Delta}). 
\begin{lemma}\label{lem_mu_diagonal_X^2}
Let $\alpha\in\Omega^1_\hol(X)$. Then for $i,j,k\leq 2g$, $i<j$,
\[
\mu_{ijk}(\xi_{\Delta(X_0^2)};\alpha)=\lambda_{jk}p_i(\alpha)-\lambda_{ik}p_j(\alpha)-2(-1)^{i+j}p_k(\alpha).
\]
\end{lemma}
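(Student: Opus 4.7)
The plan is to first determine $\xi_{\Delta(X_0^2)}$ in terms of the already known class $\xi_{\Delta(X_0)}$, then specialize Proposition \ref{analytic_description_z(e)_V2} to $n=3$, and finally antisymmetrize in $i,j$ to pass from $\mu'$ to $\mu$.

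For the first step, note that $\Delta(X_0^2) = \{(x,y,x,y)\} \subset X_0^4$ is the image of $\Delta(X_0) \times \Delta(X_0) = \{(x,x,y,y)\}$ under the involution $\sigma$ of $X_0^4$ swapping the second and third coordinates; hence $cl(\Delta(X_0^2)) = \sigma^*\, cl(\Delta(X_0) \times \Delta(X_0))$. By the Kunneth formula, the $(H^1)^{\otimes 4}$ component of $cl(\Delta(X_0) \times \Delta(X_0))$ is simply $\xi_{\Delta(X_0)} \otimes \xi_{\Delta(X_0)}$. Since $d_{i_2}$ and $d_{i_3}$ are both of odd degree, graded-commutativity of the cup product gives
\[
\sigma^*(d_{i_1} \otimes d_{i_2} \otimes d_{i_3} \otimes d_{i_4}) = -\, d_{i_1} \otimes d_{i_3} \otimes d_{i_2} \otimes d_{i_4},
\]
and combining with Lemma \ref{lem_xi_Delta} I expect
\[
\lambda_{i_1 i_2 i_3 i_4}(\Delta(X_0^2)) = -\lambda_{i_1 i_3}\,\lambda_{i_2 i_4}.
\]

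For the second step, setting $n=3$ in Proposition \ref{analytic_description_z(e)_V2} splits $\mu'_{ijk}(\xi_{\Delta(X_0^2)};\alpha)$ into two sums corresponding to $r=1$ and $r=2$; plugging in the formula above, each becomes a quadruple sum that decouples as a product. The key simplification is that the matrix $L = (\lambda_{ab})$ is the inverse of the intersection matrix $E=(\int_X d_a \wedge d_b)$, which one can verify directly from Lemma \ref{lem_xi_Delta} or extract from the defining identity
\[
\int_{X^2}\xi_{\Delta(X_0)} \wedge (d_a \otimes d_b) = \int_{\Delta(X_0)} d_a \otimes d_b = \int_X d_a \wedge d_b.
\]
Using this together with $\int_{\beta_a}\omega_b = \int_X d_a \wedge d_b$ and $\alpha = \sum_c p_c(\alpha)\,\omega_c$, each $r$-summand collapses via the identity $\sum_b \lambda_{ab}\int_X d_b \wedge d_c = \delta_{ac}$, and I expect to obtain
\[
\mu'_{ijk}(\xi_{\Delta(X_0^2)};\alpha) = \lambda_{jk}\,p_i(\alpha) - \lambda_{ij}\,p_k(\alpha).
\]

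Finally, since $\mu_{ijk} = \mu'_{ijk} - \mu'_{jik}$ and $\lambda_{ji} = -\lambda_{ij}$, the two $p_k(\alpha)$ contributions reinforce, yielding
\[
\mu_{ijk}(\xi_{\Delta(X_0^2)};\alpha) = \lambda_{jk}\,p_i(\alpha) - \lambda_{ik}\,p_j(\alpha) - 2\lambda_{ij}\,p_k(\alpha),
\]
which for $i<j$ matches the statement once $\lambda_{ij}=(-1)^{i+j}$ is substituted. The only delicate point is the sign introduced by $\sigma^*$: without it, the term $-2(-1)^{i+j}p_k(\alpha)$ would appear with the wrong sign, and in particular Proposition \ref{propintro}(c) (the main payoff of this lemma) would look quite different. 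Everything else is bookkeeping with the relation $LE = I$.
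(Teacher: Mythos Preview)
Your proof is correct, and it is considerably shorter than the paper's own argument.

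The paper's proof begins the same way you do: it writes down $\mu'_{ijk}(\xi;\alpha)=\sum_{l,m}\bigl(p_m\lambda_{ijlk}+p_k\lambda_{lijm}\bigr)a_{ml}$ with $a_{ml}=\int_{\beta_m}\omega_l$, and it records (with a harmless typo) the same identity $\lambda_{ijkl}(\Delta(X_0^2))=-\lambda_{ik}\lambda_{jl}$ that you derive from the swap $\sigma$. From there, however, the paper antisymmetrizes in $i,j$ first and then carries out a rather long bare-hands combinatorial evaluation: it expands everything in terms of the explicit entries $a_{ml}\in\{0,\pm1\}$ determined by the normalization $\int_X d_m\wedge d_l=1$ for $m<l$, and reduces the verification to two parity identities \eqref{eq22ch5} and \eqref{eq23ch5}, each of which is checked by a case analysis depending on the parities and relative sizes of the indices.

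Your route bypasses all of that case analysis by observing (as is shown in the proof of Lemma~\ref{lem_xi_Delta}) that the matrix $L=(\lambda_{ab})$ is the inverse of the intersection matrix $A=(a_{ab})$; since $A$ is skew-symmetric this gives $\sum_l\lambda_{il}a_{ml}=-\delta_{im}$ and $\sum_m\lambda_{im}a_{ml}=\delta_{il}$, which collapse the $r=1$ and $r=2$ sums from Proposition~\ref{analytic_description_z(e)_V2} to $\lambda_{jk}p_i$ and $-\lambda_{ij}p_k$ respectively. Thus you obtain the clean intermediate formula $\mu'_{ijk}=\lambda_{jk}p_i-\lambda_{ij}p_k$ before antisymmetrizing, something the paper never isolates. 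Your approach uses only the abstract relation $LA=I$ and would work for any symplectic basis, whereas the paper's computation is tied to the particular choice with all super-diagonal entries equal to~$1$. The only thing to tighten in your write-up is to replace ``I expect'' by actual one-line verifications; both expectations are correct as stated.
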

The proof of this lemma is a fairly long computation. We postpone it to the appendix.\\

Suppose the $\alpha_i$ satisfy Hypothesis $\star$ and $P_{\Delta(X^2_0)}$ is torsion. (The latter for instance will automatically hold if $\Jac(K)$ is finite, e.g. in the cases as in Corollary \ref{cor1sec3ch5}.) Then by Proposition \ref{mainpropsec2ch5}, 
\begin{equation}\label{eq15ch5}
\sum\limits_{\stackrel{i,j,k}{i<j}}\biggm(\lambda_{jk}p_{li}-\lambda_{ik}p_{lj}-2(-1)^{i+j}p_{lk}\biggm)\int\limits_{\beta_k}\omega_i\omega_j~\in\Per_\QQ(\alpha_l)\hspace{.3in}(l\leq g).
\end{equation}

\begin{prop}  
The relations \eqref{eq15ch5} are independent (as linear relations among \eqref{eq9ch5} with coefficients in $\QQ(\Per(X_0)$).
\end{prop}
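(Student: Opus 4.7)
The strategy is to adapt the argument of Proposition \ref{prop_relations_given_by_diagonal} and exhibit a $g \times 2g$ submatrix of the coefficient matrix of \eqref{eq15ch5} whose rank over $\QQ(\Per(X_0))$ equals $g$. The natural choice is the submatrix whose columns record the coefficients of $\int_{\beta_k}\omega_1\omega_2$ as $k$ ranges over $1,\ldots,2g$, in the $g$ relations indexed by $l$. By Lemma \ref{lem_mu_diagonal_X^2}, the $(l,k)$-entry of this submatrix is
\[
M_{lk} = \lambda_{2k}\, p_{l1} - \lambda_{1k}\, p_{l2} + 2\, p_{lk}.
\]

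First I would substitute the explicit values of the $\lambda_{ij}$. Since $\lambda_{11}=\lambda_{22}=0$, $\lambda_{21}=1$, and $\lambda_{12}=-1$, columns $k=1$ and $k=2$ collapse to $M_{l1}=3 p_{l1}$ and $M_{l2}=3 p_{l2}$, which are nonzero rational multiples of the first two columns of the period matrix $(p_{ij})$. For $k \geq 3$ one has $\lambda_{2k}=(-1)^{k}$ and $\lambda_{1k}=-(-1)^{k}$, so
\[
M_{lk} = (-1)^{k}(p_{l1}+p_{l2}) + 2\, p_{lk},
\]
a $\QQ$-linear combination of three columns of the period matrix.

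The next step is to apply $\QQ$-rational column operations: subtract appropriate multiples of columns $1$ and $2$ from each column $k \geq 3$ to eliminate the $p_{l1}$ and $p_{l2}$ contributions. What results is the $g \times 2g$ matrix with columns $3 p_{l1},\, 3 p_{l2},\, 2 p_{l3},\, \ldots,\, 2 p_{l,2g}$, i.e., the top $g$ rows of the period matrix with columns rescaled by nonzero rational constants. Since the full period matrix is invertible, its top $g$ rows have rank $g$; column rescaling and $\QQ$-column operations preserve rank over any extension of $\QQ$, so the submatrix has rank $g$ over $\QQ(\Per(X_0))$, and the $g$ relations are independent.

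I do not anticipate a serious conceptual obstacle. The only delicate point is the sign bookkeeping for the $\lambda_{ij}$ across the case distinctions $k=1$, $k=2$, and $k \geq 3$, which is routine once Lemma \ref{lem_mu_diagonal_X^2} is in hand. Conceptually the argument runs parallel to Proposition \ref{prop_relations_given_by_diagonal}; the slight enrichment comes from the higher tensor weight of $\xi_{\Delta(X_0^2)}$, which is precisely what allows the $2\, p_{lk}$ term in $M_{lk}$ to ``spread'' across all $k$ and recover the full top half of the period matrix after column reduction.
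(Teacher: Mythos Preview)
Your argument is correct. The paper takes a slightly different (and marginally shorter) route: instead of fixing $(i,j)=(1,2)$ and varying $k$, it selects the columns corresponding to $\int_{\beta_1}\omega_1\omega_2$ and $\int_{\beta_j}\omega_1\omega_j$ for $1<j\leq 2g$. With that choice one computes $\mu_{1,2,1}(\xi_{\Delta(X_0^2)};\alpha_l)=3p_{l1}$ and $\mu_{1,j,j}(\xi_{\Delta(X_0^2)};\alpha_l)=3(-1)^j p_{lj}$, so each column is already $\pm 3$ times a column of the top half of the period matrix, and no column operations are needed. Your choice of columns mirrors Proposition~\ref{prop_relations_given_by_diagonal} exactly and is arguably the more natural first attempt; the price is the extra (but routine) column-reduction step. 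Either way the conclusion rests on the same fact, that the top $g$ rows of $(p_{ij})$ have rank $g$.
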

\begin{proof}
Let $A$ be the matrix formed by the coefficients of 
\[
\int\limits_{\beta_1}\omega_1\omega_2,\hspace{.1in}\text{and}\hspace{.3in} \int\limits_{\beta_j}\omega_1\omega_j\hspace{.2in} (1<j\leq 2g)
\]
in the relations. (In other words, the $l1$-entry of $A$ is the coefficient of $\int\limits_{\beta_1}\omega_1\omega_2$ in the relation corresponding to $\alpha_l$, and for $j>1$, its $lj$-entry is the coefficient of $\int\limits_{\beta_j}\omega_1\omega_j$ in the relation corresponding to $\alpha_l$.) It is enough to show that $A$ has rank $g$. But this is clear, since one has
\[
\mu_{1,2,1}(\Delta(X_0^2);\alpha_l)=3p_{l1}
\]
and
\[
\mu_{1,j,j}(\Delta(X_0^2);\alpha_l)=3(-1)^jp_{lj},
\]
so that the $j^\text{th}$ column of $A$ is $\pm3$ the $j^\text{th}$ column of the top half of the period matrix $(p_{ij})_{i,j\leq 2g}$.
\end{proof}

Suppose both $P_{\Delta(X_0)}$ and $P_{\Delta(X_0^2)}$ are torsion, and that the $\alpha_i$ satisfy Hypothesis $\star$. Then one has two sets of $g$ independent relations given in \eqref{eq10ch5} and \eqref{eq15ch5}. In $g=1$ case, the two relations are trivially dependent. On the other hand, one has:

\begin{prop}
Let $g=2$ and the $\alpha_i$ satisfy Hypothesis $\star$. If $P_{\Delta(X_0)}$ and $P_{\Delta(X_0^2)}$ are both torsion, then among relations \eqref{eq10ch5} and \eqref{eq15ch5}, there are at least 3 (i.e. $g+1$) independent ones.
\end{prop}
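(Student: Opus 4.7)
The plan is to leverage the independence of $\{R_1, R_2\}$ (established in Proposition~\ref{prop_relations_given_by_diagonal}) and show that at least one of $S_1, S_2$ fails to lie in the $\QQ(\Per(X_0))$-linear span of $\{R_1, R_2\}$; together with Proposition~\ref{prop_relations_given_by_diagonal}, this will produce three independent relations among the four. Since $R_l$ itself is in that span, the task is equivalent to showing that the modified relations $T_l := S_l + 2R_l$ ($l=1,2$) are not both in $\mathrm{span}(R_1, R_2)$.

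First I will compute the coefficient of $\int_{\beta_k}\omega_i\omega_j$ ($i<j$) in $T_l$: the summand $-2(-1)^{i+j}p_{lk}$ in $S_l$ cancels, leaving $\lambda_{jk}p_{li} - \lambda_{ik}p_{lj}$. The corresponding coefficient in any combination $c_{l1}R_1 + c_{l2}R_2$ (with $c_{l1},c_{l2}\in\QQ(\Per(X_0))$) is $(-1)^{i+j}(c_{l1}p_{1k} + c_{l2}p_{2k})$, which for fixed $k$ depends on $(i,j)$ only through the sign $(-1)^{i+j}$. Hence, if $T_l \in \mathrm{span}(R_1, R_2)$, the scalar $(-1)^{i+j}[\lambda_{jk}p_{li} - \lambda_{ik}p_{lj}]$ must be independent of the pair $i<j$ for each fixed $k$.

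The main computational step will be to specialize to $k=1$ and tabulate this scalar over the six pairs $(i,j) \in \{(1,2),(1,3),(1,4),(2,3),(2,4),(3,4)\}$, using $\lambda_{11}=0$, $\lambda_{21}=1$, $\lambda_{31}=-1$, $\lambda_{41}=1$. A short calculation will produce the values $-p_{l1}$, $-p_{l1}$, $-p_{l1}$, $p_{l2}+p_{l3}$, $p_{l2}-p_{l4}$, $-p_{l3}-p_{l4}$, respectively. Equating these will force $p_{l3} = -p_{l4}$, $p_{l2} = p_{l4}$, and $p_{l1}=0$, so that the $l$-th row of the period matrix $(p_{ij})_{i,j\leq 4}$ becomes a scalar multiple of the fixed vector $(0,1,-1,1)$.

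If both $T_1, T_2$ were in $\mathrm{span}(R_1, R_2)$, rows $1$ and $2$ of $(p_{ij})$ would each be scalar multiples of $(0,1,-1,1)$ and hence linearly dependent, contradicting the linear independence of $\alpha_1$ and $\alpha_2$ in $H^1_\CC$ (guaranteed because they belong to a basis). This contradiction will complete the argument. The only delicate point is careful sign-keeping in the $k=1$ tabulation; the structural driver of the proof is the observation that any element of $\mathrm{span}(R_1, R_2)$ has $(i,j)$-dependence factoring through $(-1)^{i+j}$, whereas the coefficients of $T_l$ do not admit such a factorization.
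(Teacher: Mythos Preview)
Your argument is correct, and it is noticeably cleaner than the paper's own proof. Both proofs begin by replacing $S_l$ with $T_l=S_l+2R_l$ (the paper calls this $R'_l$), reducing to the claim that not both $T_1,T_2$ lie in the $\QQ(\Per(X_0))$-span of $R_1,R_2$. From there the routes diverge. The paper fixes a triple $(i,j,k)$ of distinct indices, forms the $3\times 3$ minor of coefficients of $\int_{\beta_i}\omega_i\omega_j,\ \int_{\beta_j}\omega_i\omega_j,\ \int_{\beta_k}\omega_i\omega_j$ in $R_1,R_2,T_l$, and shows its determinant is $(\lambda_{jk}p_{li}-\lambda_{ik}p_{lj}+\lambda_{ij}p_{lk})(p_{1i}p_{2j}-p_{1j}p_{2i})$; it then runs a two-step case analysis on which $2\times 2$ minors of the period matrix vanish to reach a contradiction with $\rank P=2$.

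Your key structural observation---that in any combination $c_1R_1+c_2R_2$ the coefficient of $\int_{\beta_k}\omega_i\omega_j$ factors as $(-1)^{i+j}$ times a function of $k$ alone---short-circuits all of that. Specializing to a single value $k=1$ and running over the six pairs $(i,j)$ already forces the $l$-th row of the period matrix to be proportional to $(0,1,-1,1)$, so both rows being of this form immediately contradicts $\rank P=2$. Your approach avoids the determinant identity and the multi-case analysis entirely, at the cost of being slightly less symmetric (you privilege $k=1$). The paper's determinant formula, on the other hand, packages the obstruction in a coordinate-free way that might generalize more readily to higher genus, but for the stated $g=2$ result your argument is both shorter and more transparent.
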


\begin{proof}
In view of \eqref{eq10ch5}, we can replace \eqref{eq15ch5} by 
\begin{equation}\label{eq17ch5}
\sum\limits_{\stackrel{i,j,k}{i<j}}\biggm(\lambda_{jk}p_{li}-\lambda_{ik}p_{lj}\biggm)\int\limits_{\beta_k}\omega_i\omega_j~\in\Per_\QQ(\alpha_l)\hspace{.3in}(l=1,2).
\end{equation}
We refer to the relations \eqref{eq10ch5} by $R_1$, $R_2$ ($R_l$ for the one corresponding to $\alpha_l$), and to the relations \eqref{eq17ch5} by $R'_1$, $R'_2$. Suppose both $\{R_1,R_2,R'_1\}$ and $\{R_1,R_2,R'_2\}$ are dependent. We claim that for all distinct $i,j,k\leq 4$, $i<j$, and $l\leq 2$,
\begin{equation}\label{eq18ch5}
(\lambda_{jk}p_{li}-\lambda_{ik}p_{lj}+\lambda_{ij}p_{lk})(p_{1i}p_{2j}-p_{1j}p_{2i})=0. 
\end{equation}
Indeed, given $i,j,k,l$ as above, form the $3\times3$ matrix whose columns are the coefficients of 
\[
\int\limits_{\beta_i}\omega_i\omega_j,\hspace{.2in} \int\limits_{\beta_j}\omega_i\omega_j,\hspace{.2in} \int\limits_{\beta_k}\omega_i\omega_j
\] 
in $E_1, E_2, E'_l$. One easily calculates its determinant to be 
\[
(\lambda_{jk}p_{li}-\lambda_{ik}p_{lj}+\lambda_{ij}p_{lk})(p_{1i}p_{2j}-p_{1j}p_{2i}),
\]
so that the claim follows.\\

Next, we show that the equations \eqref{eq18ch5} contradict the fact that the matrix 
\[
P:=(p_{ij})_{i\leq g, j\leq 2g}
\]
has rank $g$. This will be done in two steps. Let $P_j$ be the $j^{\text{th}}$ column of $P$.\\

\underline{Step 1}: Consider the following situations:
\[
(i) \det\begin{pmatrix}
p_{11}&p_{12}\\
p_{21}&p_{22}
\end{pmatrix}\neq 0,\hspace{.2in}(ii) \det\begin{pmatrix}
p_{11}&p_{14}\\
p_{21}&p_{24}
\end{pmatrix}\neq 0,\]
\[(iii) \det\begin{pmatrix}
p_{13}&p_{14}\\
p_{23}&p_{24}
\end{pmatrix}\neq 0,\hspace{.2in}(iv) \det\begin{pmatrix}
p_{12}&p_{13}\\
p_{22}&p_{23}
\end{pmatrix}\neq 0.
\]
Suppose (i) holds. Then in view of \eqref{eq18ch5},
\begin{equation}\label{eq19ch5}
\lambda_{2k}P_1-\lambda_{1k}P_2+\lambda_{12}P_k=0 \hspace{.2in}(k=3,4).
\end{equation}
Setting $k=3,4$ it follows $P_3=-P_4$. On the other hand, \eqref{eq19ch5} gives $P_3=-(P_1+P_2)$, so that
\[
P=\begin{pmatrix}
p_{11}&p_{12}&-(p_{11}+p_{12})&p_{11}+p_{12}\\
p_{21}&p_{22}&-(p_{21}+p_{22})&p_{21}+p_{22}\end{pmatrix}.
\]
It follows that (ii) holds.\\

Similarly, one can check that 
\begin{itemize}
\item[-] (ii) implies (iii) and that $P_2=-P_3$,
\item[-] (iii) implies (iv) and that $P_1=-P_2$, and finally
\item[-] (iv) implies (i) and that $P_1=P_4$.
\end{itemize}
Since $P$ has rank 2, it follows none of $(i)-(iv)$ hold, i.e.
\[
\det\begin{pmatrix}
p_{11}&p_{12}\\
p_{21}&p_{22}
\end{pmatrix}=\det\begin{pmatrix}
p_{11}&p_{14}\\
p_{21}&p_{24}
\end{pmatrix}=\det\begin{pmatrix}
p_{13}&p_{14}\\
p_{23}&p_{24}
\end{pmatrix}=\det\begin{pmatrix}
p_{12}&p_{13}\\
p_{22}&p_{23}
\end{pmatrix}=0.
\]

\underline{Step 2}: Since 3rd and 4th columns of $P$ are linearly dependent and $P$ has rank 2, one of the first two columns must be nonzero. We assume the first column is not zero; the other case is similar. By the previous step, $P$ must look like
\[
\begin{pmatrix}
p_{11}&0&p_{13}&0\\
p_{21}&0&p_{23}&0\end{pmatrix}.
\]
Indeed, $P_2$ and $P_4$ are scalar multiples of $P_1$, so that $\rank(P)=2$ forces $P_1,P_3$ to be linearly independent. Each of $P_2,P_4$ is a scalar multiple of both $P_1$ and $P_3$, and hence is zero. Now taking $(i,j,k)=(1,3,2)$ in \eqref{eq18ch5} we see $P_1=-P_3$, contradicting $\rank(P)=2$.  
\end{proof}

\appendix
\section{Proofs of Lemmas \ref{lem_xi_Delta} and \ref{lem_mu_diagonal_X^2}}

\begin{proof}[Proof of Lemma \ref{lem_xi_Delta}] Let $\{c_i\}$ be the basis of $H^1_\ZZ$ that is dual to $\{d_i\}$ with respect to Poincare duality, i.e.
\[
\int\limits_X c_i\wedge d_j=\delta_{ij}:=\begin{cases}
1\hspace{.2in}&\text{if $i=j$}\\
0&\text{otherwise}.\end{cases}
\]
We use the multi-index notation for the $\{c_i\}$ as well: $c_{ij}$ means $c_i\otimes c_j$. For simplicity, write $\lambda_{ij}$ for $\lambda_{ij}(\Delta(X_0))$. One has for each $i,j$, 
\[
\int\limits_{\Delta(X)}c_{ij}=\int\limits_{X^2} \sum\limits_{k,l}\lambda_{kl}d_{kl} \wedge c_{ij},
\]
which can be rewritten as
\[
\int\limits_{X}c_i\wedge c_j=-\sum\limits_{k,l}\lambda_{kl}\int\limits_{X^2} (d_k\wedge c_i)\otimes (d_l\wedge c_j),
\]
the latter being clearly equal to 
\[
-\sum\limits_{k,l}\lambda_{kl}\int\limits_{X}(d_k\wedge c_i) \int\limits_{X}(d_l\wedge c_j).
\]
It follows that 
\begin{equation}\label{eq11ch5}
\lambda_{ij}=\int\limits_{X}c_j\wedge c_i.
\end{equation}
Let $A=(a_{ij})$, where 
\[
a_{ij}=\int\limits_X d_i\wedge d_j,
\]
so that $A$ is a $2g$ by $2g$ skew-symmetric matrix with the entries above the diagonal all equal to 1. For each $i$, let
\[
c_i=\sum\limits_j b_{ij}d_j.
\]
Let $B=(b_{ij})$. One has
\[
\delta_{ij}=\int\limits_X c_i\wedge d_j=\int\limits_X \sum\limits_k b_{ik}d_k\wedge d_j=\sum\limits_k b_{ik}a_{kj},
\]
so that $BA$ is identity, $B=A^{-1}$. It follows that
\[
b_{ij}=\begin{cases}
(-1)^{i+j}\hspace{.2in}&\text{if $i<j$}\\
0&\text{if $i=j$}\\
(-1)^{i+j+1}&\text{if $i>j$}.
\end{cases}
\]
On the other hand, by \eqref{eq11ch5},
\[
\lambda_{ij}=\sum\limits_{k,l} b_{jk}b_{il}a_{kl},
\]
which is the $ij$-entry of the matrix $B(BA)^t=B$. The result follows.
\end{proof}

\begin{proof}[Proof of Lemma \ref{lem_mu_diagonal_X^2}] For the moment, let $\xi\in (H^1)^{\otimes 4}$ be an arbitrary Hodge class. For simplicity, we write $\lambda_{ijkl}$ for $\lambda_{ijkl}(\xi)$. Let $\alpha\in\Omega^1_\hol(X)$. We will simply write $p_j$ for $p_j(\alpha)$. One easily sees
\[
\mu_{ijk}'(\xi;\alpha)=\sum\limits_{l,m}~p_m\lambda_{ijlk}a_{ml}+p_k\lambda_{lijm}a_{ml},
\]
where $\displaystyle{a_{ml}=\int\limits_{\beta_m}\omega_l}$. Thus for $i<j$,
\begin{eqnarray*}
\mu_{ijk}(\xi;\alpha)&=&\sum\limits_{l,m}~a_{ml}\biggm(p_m\lambda_{ijlk}+p_k\lambda_{lijm}-p_m\lambda_{jilk}-p_k\lambda_{ljim}\biggm)\notag\\
&=&\sum\limits_{l,m} a_{ml}\biggm(p_m\left(\lambda_{ijlk}-\lambda_{jilk}\right)+p_k\left(\lambda_{lijm}-\lambda_{ljim}\right)\biggm).
\end{eqnarray*} 
In view of $a_{ml}=-a_{lm}$ and $a_{ml}=1$ if $m<l$, this can be rewritten as
\[
\sum\limits_{m<l} \biggm(p_m\left(\lambda_{ijlk}-\lambda_{jilk}\right)+p_k\left(\lambda_{lijm}-\lambda_{ljim}-\lambda_{mijl}+\lambda_{mjil}\right)+p_l\left(\lambda_{jimk}-\lambda_{ijmk}\right)\biggm),
\]
which can again be rewritten as
\begin{equation}\label{eq21ch5}
\sum\limits_{m}p_m\left(\sum\limits_{l=m+1}^{2g}(\lambda_{ijlk}-\lambda_{jilk})+\sum\limits_{l=1}^{m-1}(\lambda_{jilk}-\lambda_{ijlk})\right)~+~p_k\sum\limits_{m<l} \left(\lambda_{lijm}-\lambda_{ljim}-\lambda_{mijl}+\lambda_{mjil}\right).
\end{equation}
Now let $\xi=\xi_{\Delta(X_0^2)}$. We will simply write $\mu_{ijk}$ for $\mu_{ijk}(\xi;\alpha)$, and continue to write $\lambda_{ij}$ (resp. $\lambda_{ijkl}$) for $\lambda_{ij}(\Delta(X_0)$ (resp. $\lambda_{ijkl}(\Delta(X_0^2)$). Since $\Delta(X_0^2)$ is obtained from $\Delta(X_0)\times \Delta(X_0)$ by switching the 2nd and 3rd coordinates, one has
\[
\lambda_{ijkl}=-\lambda_{ik}\lambda_{jk}.
\]
In view of $\lambda_{ij}=-\lambda_{ji}$, \eqref{eq21ch5} simplifies to
\[
\sum\limits_{m}p_m\left(\sum\limits_{l=m+1}^{2g}(\lambda_{ijlk}-\lambda_{jilk})+\sum\limits_{l=1}^{m-1}(\lambda_{jilk}-\lambda_{ijlk})\right)~+~2p_k\sum\limits_{m<l} (\lambda_{lijm}-\lambda_{ljim}),
\]
Thus so far we know
\[
\mu_{ijk}=\sum\limits_{m}a_mp_m~+~2p_k \sum\limits_{m<l} (\lambda_{lijm}-\lambda_{ljim}),
\]
where
\begin{eqnarray*}
a_m&=&\sum\limits_{l=m+1}^{2g}(\lambda_{ijlk}-\lambda_{jilk})+\sum\limits_{l=1}^{m-1}(\lambda_{jilk}-\lambda_{ijlk})\\
&=&\left(\sum\limits_{l=m+1}^{2g}-\sum\limits_{l=1}^{m-1}\right)(\lambda_{ijlk}-\lambda_{jilk}).
\end{eqnarray*}
Thus we will be done if we show
\begin{equation}\label{eq22ch5}
\sum\limits_{m<l} (\lambda_{lijm}-\lambda_{ljim})=(-1)^{i+j+1} \hspace{.3in}(\text{for all $i<j$})
\end{equation}
and
\begin{equation*}
\left(\sum\limits_{l=m+1}^{2g}-\sum\limits_{l=1}^{m-1}\right) \lambda_{ijlk}=\begin{cases}\lambda_{jk}\hspace{.3in}&\text{if $m=i$}\\
0&\text{if $m\neq i$}.\end{cases}\hspace{.3in}(\text{for all distinct $i,j$})
\end{equation*}
The latter is equivalent to that for all $i$ and $m$,
\begin{equation}\label{eq23ch5}
\left(\sum\limits_{l=m+1}^{2g}-\sum\limits_{l=1}^{m-1}\right) \lambda_{li}=\begin{cases}1\hspace{.3in}&\text{if $m=i$}\\
0&\text{if $m\neq i$}.\end{cases}
\end{equation}
Before we try to verify these, note that for any fixed $i$ and $r$, one has:
\begin{itemize}
\item[(i)] If $r<i$, then 
\[
\sum_{l\leq r}\lambda_{li}=\begin{cases}
\lambda_{1i}=\lambda_{ri}\hspace{.3in}&(r\stackrel{2}{\not\equiv}0)\\
0&(r\stackrel{2}{\equiv}0)\end{cases}.
\]
\item[(ii)] If $r\geq i$, then 
\[
\sum_{i<l\leq r}\lambda_{li}=\begin{cases}
\lambda_{(i+1)i}=\lambda_{ri}\hspace{.3in}&(r\stackrel{2}{\not\equiv}i)\\
0&(r\stackrel{2}{\equiv}i)\end{cases}.
\]
\end{itemize}
For $r\geq i$, writing
\[
\sum_{l\leq r}\lambda_{li}=\left(\sum\limits_{l\leq i-1}+\sum\limits_{i<l\leq r}\right)\lambda_{li},
\]
we see that for any $r,i$,  
\[
\sum\limits_{l\leq r}\lambda_{li}=\begin{cases}
\lambda_{1i}=(-1)^{i+1}\hspace{.2in}&(r<i, r\stackrel{2}{\not\equiv} 0)\\
0&(r<i, r\stackrel{2}{\equiv} 0)\\
\lambda_{1i}=(-1)^{i+1}&(r\geq i, r\stackrel{2}{\equiv}i\stackrel{2}{\equiv}0)\\
\lambda_{1i}+\lambda_{ri}=0&(r\geq i, r\stackrel{2}{\not\equiv}i\stackrel{2}{\equiv}0)\\
0&(r\geq i, r\stackrel{2}{\equiv}i\stackrel{2}{\not\equiv}0)\\
\lambda_{ri}=(-1)^{i+1}&(r\geq i, r\stackrel{2}{\not\equiv}i\stackrel{2}{\not\equiv}0),
\end{cases}
\]
or in short,
\begin{equation}\label{eq24ch5}
\sum\limits_{l\leq r}\lambda_{li}=\begin{cases}
(-1)^{i+1}\hspace{.2in}&(r<i, r\stackrel{2}{\not\equiv} 0)~\text{or}~(r\geq i, r\stackrel{2}{\equiv}0)\\
0&(r<i, r\stackrel{2}{\equiv} 0)~\text{or}~(r\geq i, r\stackrel{2}{\not\equiv}0).
\end{cases}
\end{equation}

Now we verify \eqref{eq22ch5} and \eqref{eq23ch5}. Writing
\[
\left(\sum\limits_{l=m+1}^{2g}-\sum\limits_{l=1}^{m-1}\right) \lambda_{li}=-\lambda_{mi}+\left(\sum\limits_{l\leq 2g}-2\sum\limits_{l\leq m-1}\right)\lambda_{li},
\]
a straightforward computation using \eqref{eq24ch5} gives \eqref{eq23ch5}.\\ 

Turning our attention to \eqref{eq22ch5}, start by breaking the sum as
\begin{equation*}
\sum\limits_{m<l} (\lambda_{lijm}-\lambda_{ljim})=\sum\limits_{m<l}\lambda_{lijm}~-~\sum\limits_{m<l}\lambda_{ljim}.
\end{equation*}
We have
\begin{equation*}
\sum\limits_{m<l}\lambda_{lijm}=\sum\limits_{l}\lambda_{lj}\sum\limits_{m=1}^{l-1}\lambda_{mi}=(-1)^j\left(\overbrace{\sum\limits_{l<j}}^{(I)}-\overbrace{\sum\limits_{l>j}}^{(II)}\right)(-1)^{l}\sum\limits_{m=1}^{l-1}\lambda_{mi}.
\end{equation*}
Before we proceed any further, it is convenient to use the following notation. Given a subset $S\subset\RR$, we denote by $E(S)$ (resp. $O(S)$) the number of even (resp. odd) numbers in $S$. In view of \eqref{eq24ch5},
\[
(I)=(-1)^{i+1}\bigm(E((0,i])-O((i,j))\bigm)
\]
and 
\[
(II)=(-1)^{i}O((j,2g]).
\]
(since $i<j$). Thus
\begin{equation}\label{eq25ch5}
\sum\limits_{m<l}\lambda_{lijm}=(-1)^{i+j+1}\bigm(E((0,i])-O((i,j))+O((j,2g])\bigm).
\end{equation}
Similarly,
\begin{eqnarray*}
\sum\limits_{m<l}\lambda_{ljim}&=&\sum\limits_{l}\lambda_{li}\sum\limits_{m=1}^{l-1}\lambda_{mj}\\
&=&(-1)^i\left(\sum\limits_{l<i}-\sum\limits_{l>i}\right)(-1)^{l}\sum\limits_{m=1}^{l-1}\lambda_{mj}.
\end{eqnarray*}
In view of \eqref{eq24ch5}, keeping in mind $i<j$, we get
\begin{equation}\label{eq26ch5}
\sum\limits_{m<l}\lambda_{ljim}=(-1)^{i+j+1}\bigm(E((0,i))-E((i,j])+O((j,2g])\bigm).
\end{equation}
Now \eqref{eq22ch5} follows from \eqref{eq25ch5} and \eqref{eq26ch5} on noting that
\[
E((0,i])-O((i,j))-E((0,i)))+E((i,j])=E([i,j])-O((i,j))=1.
\]
\end{proof}

\end{document}